\newtheorem{remark}{Remark}[section]
\title{Image Reconstruction via Discrete Curvatures
}
\author{Qiuxiang Zhong\thanks{Center for Applied Mathematics, Tianjin University, Tianjin, 300072, China, {\tt E-mail: zhongqiuxiang@tju.edu.cn}}
  \and {Ke Yin}\thanks{{Center for Mathematical Sciences, Huazhong Univeristy of Science and Technology, Wuhan, Hubei, 430070, China, {\tt E-mail: kyin@hust.edu.cn}}}
  \and {Yuping Duan}\thanks{Corresponding author. Center for Applied Mathematics, Tianjin University, Tianjin, 300072, China, {\tt E-mail: yuping.duan@tju.edu.cn}}}
\begin{document}
\maketitle
\slugger{sisc}{xxxx}{xx}{x}{x--x}

\begin{abstract}
The curvature regularities are well-known for providing strong priors in the continuity of edges, which have been applied to a wide range of applications in image processing and computer vision. However, these models are usually non-convex, non-smooth and highly non-linear, the first-order optimal condition of which are high-order partial differential equations. Thus, the numerical computation are extremely challenging. In this paper, we propose to estimate the discrete curvatures, i.e., mean curvature and Gaussian curvature, in the local neighborhood according to differential geometry theory. By minimizing certain functions of curvatures on all level curves of an image, it yields a kind of weighted total variation minimization problem, which can be efficiently solved by the proximal alternating direction method of multipliers (ADMM). Numerical experiments are implemented to demonstrate the effectiveness and superiority of our proposed variational models for different image reconstruction tasks.
\end{abstract}

\begin{keywords}
Image reconstruction, differential geometry, curvature regularity, mean curvature, Gaussian curvature, total variation
\end{keywords}

\begin{AMS}

\end{AMS}

\pagestyle{myheadings}
\thispagestyle{plain}
\markboth{ Image Reconstruction via Discrete Curvatures}{Image Reconstruction via Discrete Curvatures}

\section{Introduction}

Curvatures are important geometric concepts, which depict the amount of a curve from being straight as in the case of a line or a surface deviating from being a flat plane. In the literature, the curvature-based regularities have achieved great success for image processing tasks. Compared to the well-known total variation (TV) regularization, the curvature models can not only remove the staircase effect, but also keep the edges and corners of objects.

Nitzberg, Mumford and Shiota \cite{nitzberg1993filtering} observed that the line energies such as Euler's elastica can be used as regularization for the completion of missing contours in images. Masnou and Morel \cite{masnou1998level} used the level lines structure to minimize the Euler's elastica energy subject to certain boundary conditions. The Masnou-Moral Euler's elastica model for image denoising can be written as follows
\begin{align}\label{Euler's-elastica-denoising}
\min_{u}~\int_{\Omega}\bigg[1+\alpha \Big(\nabla\cdot\frac{\nabla u}{|\nabla u|}\Big)^2\bigg]|\nabla u|dx + \frac{\lambda}{2}\int_\Omega(u-f)^2dx,
\end{align}
where ${\Omega}$ is a bounded domain of $\mathbb{R}^n$ (a rectangle, typically), $f:\Omega\rightarrow \mathbb{R}$ is a given image defined on $\Omega$, $u:\Omega\rightarrow \mathbb{R}$ is the latent clean image, and $\lambda$, $\alpha$ are two positive parameters. According to the Euler's elastica energy, denoised images have smooth connections in the level curves of images. Due to the non-smoothness, nonlinearity and nonconvexity, the numerical minimization of Euler's elastica is highly challenging. The gradient flow was used to solve a set of coupled second order partial differential equations in \cite{ballester2001filling, shen2003euler} for minimizing the Euler's elastica energy, which usually takes high computational cost in imaging applications. Schoenemann, Kahl and Cremers \cite{schoenemann2009curvature} solved the associated linear programming relaxation and thresholded the solution to approximate the original integer linear program regarding to curvature regularity.
Discrete algorithms based on graph cuts methods were studied in \cite{el2010fast,bae2011graph}. Thanks to the development of operator splitting technique and augmented Lagrangian algorithm, fast solvers for Euler's elastica models have been presented in \cite{tai2011fast,duan2013fast,yashtini2016fast,bae2017augmented}. Recently, Deng, Glowinski and Tai \cite{deng2019new} proposed a Lie operator-splitting based time discretization scheme, which is applied to the initial value problem associated with the optimality system. A convex, lower semi-continuous, coercive approximation of Euler's elastica energy via functional lifting  was studied in \cite{bredies2015convex}. Later, Chambolle and Pock \cite{chambolle2019total} used a lifted convex representation of curvature depending variational energies in the roto-translational space, which yields a natural generalization of the total variation to the roto-translational space.

By considering the image surface or graph in $\mathbb R^3$ characterized by $z = u(x,y)$, $(x,y)\in\Omega$, the image minimization problems are then transferred to the corresponding surface minimization problems.
Both mean curvature (MC) and Gaussian curvature (GC) have been used as the regularization to preserve geometric features of the image surface for different image processing tasks. The mean curvature  was first introduced for noise removal as mean curvature driven diffusion algorithms \cite{el1997mean,yezzi1998modified}, which evolved the image surface at a speed proportional to its mean curvature. Zhu and Chan \cite{zhu2012image} proposed to employ the $L^1$-norm of mean curvature of the image surface for image denoising, i.e.,
\begin{equation}\label{mean-curvature-l1}
\min_{u}~\int_{\Omega}\bigg|\nabla\cdot\bigg(\frac{\nabla u}{\sqrt{1+|\nabla u|^2}}\bigg)\bigg|dx + \frac{\lambda}{2}\int_\Omega(u-f)^2dx,
\end{equation}
which has been proven can keep corners of objects and greyscale intensity contrasts of images and also remove the staircase effect. Originally, the smoothed MC model was numerically solved by the gradient decent method, which involves high order derivatives and converges slowly in practice. To deal with this difficulty, some effective and efficient numerical algorithms for MC model \eqref{mean-curvature-l1} were proposed based on augmented Lagrangian method \cite{zhu2013augmented,myllykoski2015new}. However, there always exists some inevitable problems in this kind of methods, such as the choices of the algorithm parameters and the slow convergence rate.

Gaussian curvature-driven diffusion was first studied in \cite{lee2005noise} for noise removal, which is shown to be superior in preserving image structures and details. Lu, Wang and Lin \cite{lu2011high} proposed a energy functional based on Gaussian curvature for image smoothing, which is solved by a diffusion process. Gong and Sbalzarini \cite{gong2013local} presented a variational model with local weighted Gaussian curvature as regularizer, which can be solved by the splitting techniques. In \cite{brito2016image}, the authors minimized the $L^1$-norm of gaussian curvature for image denoising, i.e.,
\begin{equation}\label{guass-curvature-l1}
\min_{u} ~\int_\Omega\frac{|\mbox{det } \nabla^2 u|}{(1+|\nabla u|^2)^2}dx + \frac{\lambda}{2}\int_\Omega(u-f)^2dx ,
\end{equation}
where $ \nabla^2 u$ is the Hessian of function $u$ and
\[\mbox{det } \nabla^2 u = \frac{\partial^2u}{\partial x^2}\frac{\partial^2u}{\partial y^2}-\Big|\frac{\partial^2u}{\partial x\partial y}\Big|^2.\]
Although these methods are desirable, MC and GC regularizer are limited by two main issues: Firstly, the algorithms available either converge slowly or contain too many parameters. Secondly,  such regularizers require the image to be at least twice differentiable function (cf. equation \eqref{mean-curvature-l1} and \eqref{guass-curvature-l1}).

Besides, Goldluecke and Cremers \cite{goldluecke2011introducing} used a convex approximation of the $p$-Menger-Melnikov curvature, called the total curvature, which measures theoretic formulation of curvature mathematically related to mean curvature.
Recently, Gong and Sbalzarini \cite{gong2017curvature} presented a filter-based approach to use the pixel-local analytical solutions to approximate the TV, MC and GC by enumerating the constant, linear and developable surfaces in the $3\times3$ pixel neighborhood. Although the curvature filter avoids to solve the high-order partial differential equations associated with the curvature-based variational models, it still has two crucial limitations: (i) There is no rigorous definition and accurate estimation of the curvatures,  which were numerically approximated by certain distances monotone with respect to curvatures. (ii) For specific image processing tasks, such as denoising, registration, etc., it requires to alternatively solve the curvature regularization and data fidelity term using the gradient descent, which is also time consuming.

In this work, we aim to precisely define the discrete curvatures for the points on image surface over a $3\times3$ pixel neighborhood, and consider the following curvature-based regularization for image denoising problem
\begin{equation}\label{curvature_model}
   \min_{u}~ \int_{\rm{\Omega}} g(\kappa)|\nabla u|dx + \frac{\lambda}{2} \int_{\rm{\Omega}}(u-f)^2dx,
\end{equation}
where $g(\kappa)$ denotes a function of curvature.
According to \cite{chambolle2019total}, the following three typical energies are adopted in this work, where $\alpha$ is a positive parameter to balance the curvature and arclength.
\begin{itemize}
\item[1)]Total absolute curvature (TAC): measures the sum of length and absolute curvature
\begin{align}
g_1(\kappa) & = 1+\alpha|\kappa|,
\label{kappa1}
\end{align}
which allows for sharp corners in the level sets of the images and has been studied in
\cite{nitzberg1993filtering,bredies2013convex,he2019segmentation}.
\item[2)] Total square curvature (TSC): penalizes the length and the squared curvature
\begin{align}
g_2(\kappa) & = 1+\alpha|\kappa^2|,
\label{kappa2}
\end{align}
which is equivalent to the Euler's elastica energy being discussed in our introduction. It is well-known the Euler's elastica energy favors long connectivity and smooth shapes in the images.
\item[3)] Total roto-translational variation (TRV): measures the length and curvature through an Euclidean metric
\begin{align}
g_3(\kappa) & = \sqrt{1+\alpha|\kappa^2|},
\label{kappa3}
\end{align}
which corresponds to the total variation of the lifted curve in the roto-translational space and has been explored in \cite{sarti2001subjective,chambolle2019total}. It prefers smooth shapes, but allows sharp discontinuities.
\end{itemize}

Because the curvature can be computed explicitly, we regard the minimization problem \eqref{curvature_model} as a re-weighted TV model, and use the ADMM to efficiently solve it.
We prove the existence of a solution and discuss the convergence of the ADMM algorithm under certain assumption. Numerous applications to image denoising and inpainting show the efficiency of the proposed method.
Compared to the-state-of-the-art variational curvature models, our method has the following advantages:
\begin{itemize}
\item[1)] By computing the normal curvatures in the local pixel neighborhood, we can estimate both MC and GC in terms of principal curvatures without requiring the image to be twice differentiable. Thus, by taking either MC or GC into \eqref{curvature_model}, our model can not only achieve good image restoration results but also preserve the geometric properties, such as edges, corners etc., very well.
\item[2)] Because we only introduce one artificial variable, our ADMM has less parameters than other curvature-based models. More specifically, our algorithm has only one parameter of the penalty term to be selected while the ALM for Euler's elastica model in \cite{tai2011fast} has three such kind of parameters.
\item[3)] Our model is more flexible to adapt with the different combinations of the function-type and curvature-type without affecting the way of the operator-splitting and the associated ADMM-based algorithm.
\item[4)] By evaluating our model with different functions of MC and GC, we conclude that the best choice for natural images denoising is the absolute GC regularity, while the absolute MC regularity usually achieves better restoration results on smooth images.
\item[5)] For the same stopping criterion, our model has lower computational cost per iteration and  requires less iterations than ALM in \cite{tai2011fast}. The advantages is significantly shown by the experiments on color image denoising such that our method only need around 1/2 of the CPU time used by ALM in \cite{tai2011fast}.
\end{itemize}

This paper is organized as follows. We introduce some neccessary definitions and notations of parametric curves and surface in differential geometry theory in section \ref{sect2}. The discrete curvatures, the curvature regularized model and ADMM-based algorithm are  discussed in section \ref{sect3}. Section \ref{sect4} is dedicated to numerical experiments on image reconstruction problems to demonstrate the efficiency and superiority of the proposed approach. Finally, we draw some conclusions in section \ref{sect5}.

To summarize this section, we would like to mention that the aforementioned curvature-based variational problems, i.e., \eqref{Euler's-elastica-denoising}-\eqref{guass-curvature-l1}, are largely mathematically formals. To the best of our knowledge, the proper functional framework to formulate these problems has not been identified yet. Similarly, we do not know much about the function space of our model \eqref{curvature_model}, which has to be a subspace of $L^2(\Omega)$. Obviously, the discrete problems largely ignore these functional analysis considerations. Thus, we discuss our model under the discrete setting in the followings.

\section{Parametric curves, surface and curvatures}
\label{sect2}
Since we are going to estimate discrete curvatures using the differential geometry theory, we first give a brief introduction of curve and surface to make the paper reasonably self-contained.

Let ${\bm r} = {\bm r}(x,y): \Omega\subset\mathbb R^2\rightarrow {\mathbb R}^3$ be a regular parametric surface $S$ and $(x,y)$ be the coordinates on surface $\Omega$. Therefore, an arbitrary continuous differentiable curve $C$ lying on $S$ can be denoted by parametric function ${{\bm c}(t)} = {\bm r}(x(t),y(t))~(a \leq t \leq b)$, the derivative of which is given as
\begin{equation*}
  {\bm c}'(t) = x'(t){\bm r}_x(x(t),y(t))+y'(t){\bm r}_y(x(t),y(t))
\end{equation*}
associated to the tangent vector of arbitrary point on the curve. All tangent vectors of a point $p$ on surface $S$ constitute the tangent space $T_pS$ with $\{{\bm r}_x,{\bm r}_y\}$ being its basis.

\begin{definition}\label{tangentplane}
In ${\mathbb R}^3$, the two-dimensional plane expanded in the basis $\{{\bm r}_x,{\bm r}_y\}$ is called the tangent plane of point $P$ on surface $S$, whose parametric function is
\begin{equation*}
   {\bm X}(\lambda,\mu) = {\bm r}(x,y)+\lambda{\bm r}_x(x,y)+\mu{\bm r}_y(x,y),
\end{equation*}
where $\lambda,\mu$ are the parameters of the moving point on the tangent plane.
\end{definition}

The length of parametric curve $C$ can be measured as
\[l(C) =\int_{C\in S}ds = \int_C\Big|\frac{d \bm r}{dt}\Big|dt = \int_C\Big|\bm r_x\frac{dx}{dt} + \bm r_y \frac{dy}{dt}\Big|dt = \int_C\sqrt{Edx^2 + 2F dxdy + G dy^2},\]
where $E = \bm r_x\cdot\bm r_x, ~ F= \bm r_x\cdot \bm r_y, ~ G =  \bm r_y\cdot\bm r_y$. The \emph{first fundamental form} is defined as
\begin{equation}\label{firstform}
{\rm{\uppercase\expandafter{\romannumeral1}}} = ds^2 =d\bm r\cdot d\bm r = Edx^2 + 2F dxdy + G dy^2,
\end{equation}
and the $E$, $F$, $G$ are called the first fundamental form coefficients, which plays important roles in many intrinsic properties of a surface.

In order to quantify the curvature of a surface $S$, we consider a curve $C$ on $S$ passing through point $O$ shown in FIG. \ref{Curvaturevector}. The \emph{curvature vector} is used to measure the rate of change of the tangent along the curve, which can be defined using the \emph{unit tangent} vector $\bm t$ and the \emph{unit normal} vector $\bm n$ of the curve $C$ at point $O$ as
\[\bm k = \frac{d\bm t}{ds} = \bm k_n + \bm k_g,\]
with $\bm k_n$ being the \emph{normal curvature} vector and $\bm k_g$ being the \emph{geodesic curvature} vector. Let $\bm N$ be the \emph{surface unit normal vector}, which is defined as
\[\bm N = \frac{{\bm r}_x \times {\bm r}_y}{|{\bm r}_x \times {\bm r}_y|}.\]
By differentiating $\bm N\cdot \bm t =0$ along the curve with respect to $s$, we obtain
\[\frac{d\bm t}{ds}\cdot \bm N + \bm t\cdot\frac{d\bm N}{ds} =0.\]
Thus, the normal curvature of the surface at $O$ in the direction $\bm t$ can be expressed as
\begin{equation}\label{normal_curvature}
\kappa_n = \frac{d\bm t}{ds}\cdot \bm N=-\bm t \cdot\frac{d\bm N}{ds} = -\frac{d\bm r}{ds}\cdot\frac{d\bm N}{ds} =\frac{Ldx^2+2Mdxdy+Ndy^2}{Edx^2+2Fdxdy +Gdy^2},
\end{equation}
where $L = \bm r_{xx}\cdot\bm N$, $M = \bm r_{xy}\cdot\bm N$, $N = \bm r_{yy}\cdot\bm N$. We call the numerator of \eqref{normal_curvature} the \emph{second fundamental form} such that
\begin{equation}\label{secondform}
{\rm{\uppercase\expandafter{\romannumeral2}}} = Ldx^2 + 2Mdxdy + Ndy^2,
\end{equation}
and $L$, $M$, $N$ are called second fundamental form coefficients.

\begin{figure}
\begin{minipage}[t]{0.5\textwidth}
\centering
\includegraphics[width=2.0in]{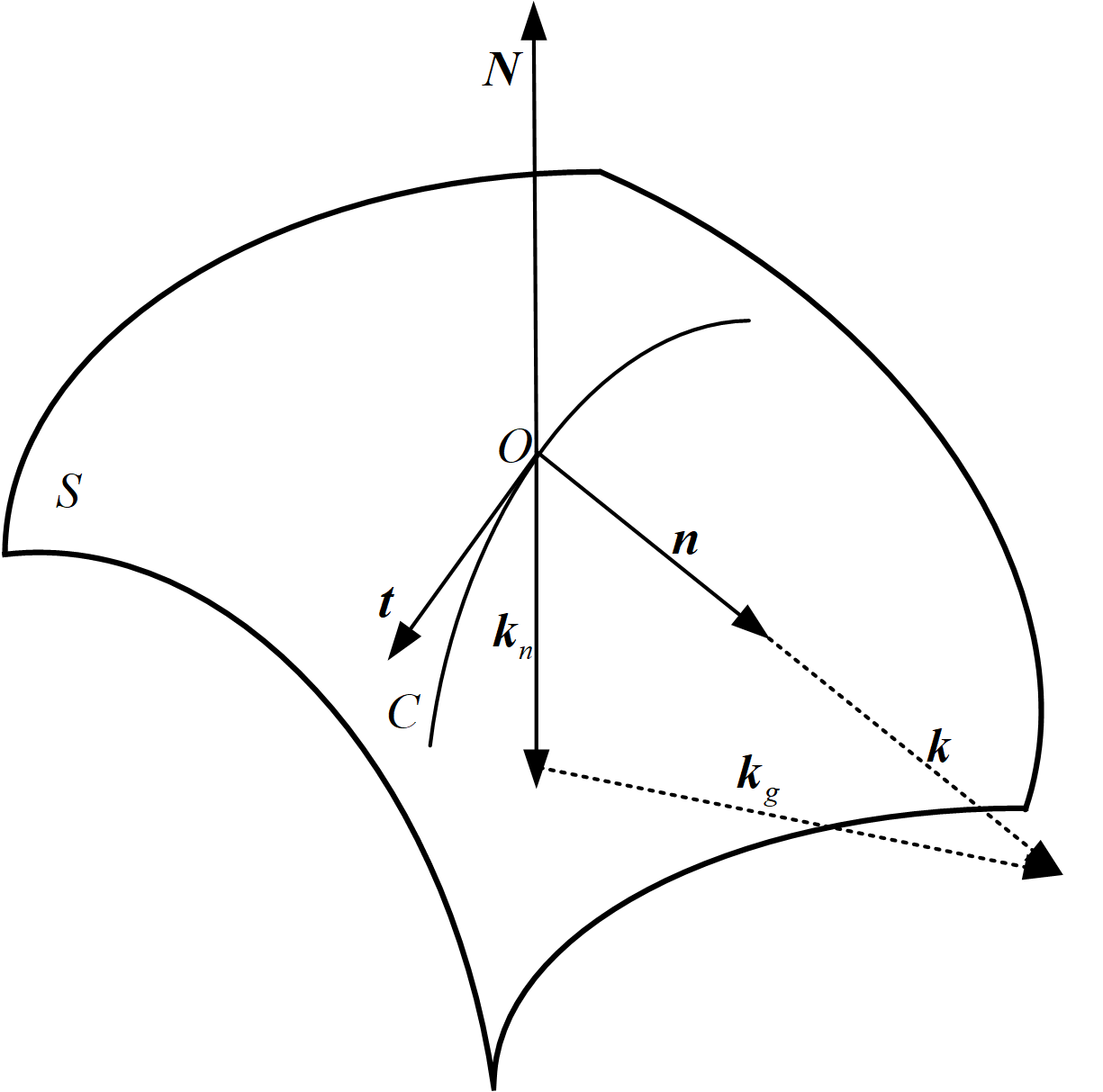}
\caption{Curvature vector}
\label{Curvaturevector}
\end{minipage}%
\begin{minipage}[t]{0.5\textwidth}
\centering
\includegraphics[width=2.3in]{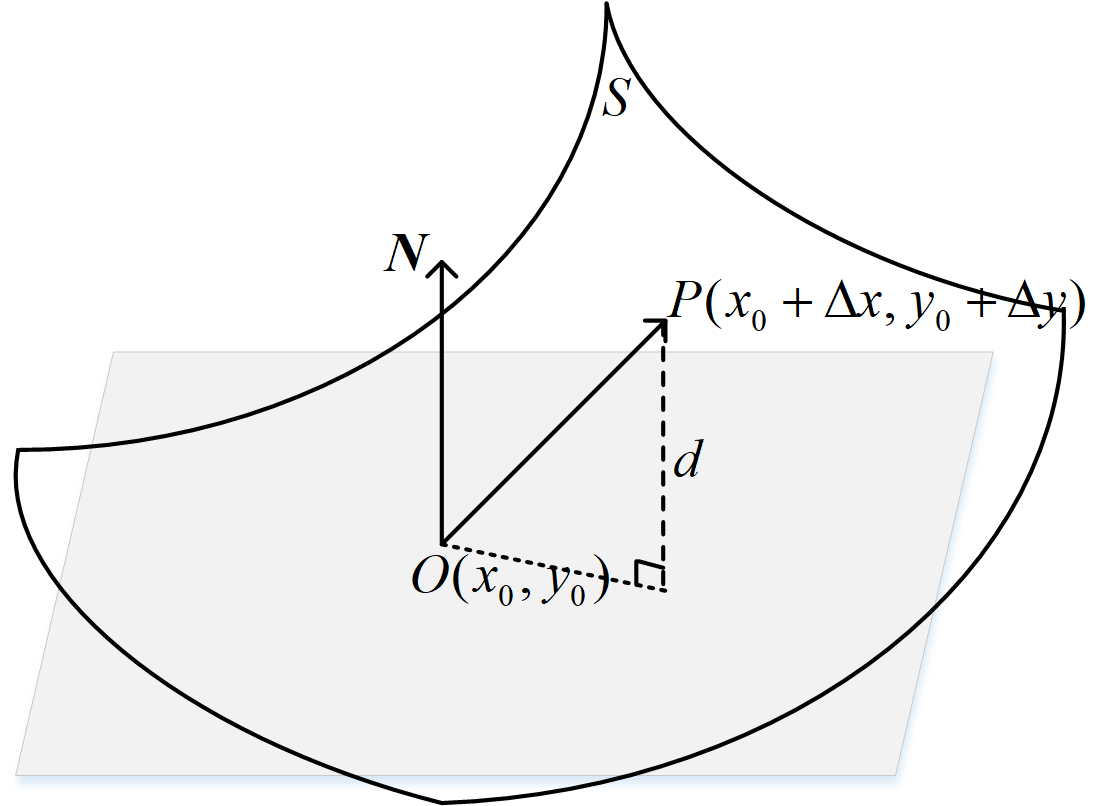}
\caption{The distance of a proximal point to its tangent plane}
\label{distance}
\end{minipage}
\end{figure}

\begin{proposition}\label{distancetheorem}
Suppose $S$: ${\bm r} = {\bm r}(x,y)$ is a regular parametric surface, $O(x_0,y_0)$ is a arbitrary point on $S$, then the distance of the proximal point $P(x_0+\Delta x,y_0+\Delta y)$ to its tangent plane can be estimated as follow
\begin{equation}\label{dist}
  d \approx \frac{1}{2} \rm{\uppercase\expandafter{\romannumeral2}},
\end{equation}
\vspace{.1in}
where $\rm{\uppercase\expandafter{\romannumeral2}}$ denotes the second fundamental form.
\end{proposition}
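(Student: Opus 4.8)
The plan is to recognize the distance from $P$ to the tangent plane at $O$ as the length of the orthogonal projection of the displacement vector $\overrightarrow{OP} = \bm r(x_0+\Delta x,y_0+\Delta y) - \bm r(x_0,y_0)$ onto the surface unit normal $\bm N$ at $O$; that is, $d = |\overrightarrow{OP}\cdot\bm N|$. This follows directly from Definition \ref{tangentplane}: every point $\bm X(\lambda,\mu)$ of the tangent plane differs from $\bm r(x_0,y_0)$ by a vector in $\mathrm{span}\{\bm r_x,\bm r_y\}$, which is exactly the orthogonal complement of $\bm N$ in $\mathbb R^3$, so the foot of the perpendicular from $P$ to the plane is obtained by removing the $\bm N$-component of $\overrightarrow{OP}$, and what remains of that component is the distance.

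Next I would Taylor-expand $\bm r$ about $(x_0,y_0)$ to second order (assuming $\bm r\in C^2$),
\[
\bm r(x_0+\Delta x,y_0+\Delta y) = \bm r(x_0,y_0) + \bm r_x\Delta x + \bm r_y\Delta y + \tfrac12\big(\bm r_{xx}\Delta x^2 + 2\bm r_{xy}\Delta x\Delta y + \bm r_{yy}\Delta y^2\big) + o\big(\Delta x^2+\Delta y^2\big),
\]
all derivatives being evaluated at $(x_0,y_0)$. Dotting with $\bm N$ and using $\bm r_x\cdot\bm N = \bm r_y\cdot\bm N = 0$ annihilates the first-order terms, leaving
\[
\overrightarrow{OP}\cdot\bm N = \tfrac12\big((\bm r_{xx}\cdot\bm N)\Delta x^2 + 2(\bm r_{xy}\cdot\bm N)\Delta x\Delta y + (\bm r_{yy}\cdot\bm N)\Delta y^2\big) + o\big(\Delta x^2+\Delta y^2\big).
\]
Identifying $L=\bm r_{xx}\cdot\bm N$, $M=\bm r_{xy}\cdot\bm N$, $N=\bm r_{yy}\cdot\bm N$ and setting $(dx,dy)=(\Delta x,\Delta y)$, the quadratic term is precisely the second fundamental form \eqref{secondform}; hence $d \approx \tfrac12\,{\rm \uppercase\expandafter{\romannumeral2}}$ up to terms $o(\Delta x^2+\Delta y^2)$, which is \eqref{dist}.

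There is no substantial obstacle in this argument; the only points deserving care are (i) justifying that the distance to the tangent plane equals the magnitude of the normal component of $\overrightarrow{OP}$, which rests on the orthogonal decomposition $\mathbb R^3 = T_OS \oplus \mathbb R\bm N$ for the regular surface $S$, and (ii) making the meaning of $\approx$ explicit, namely equality modulo $o(\Delta x^2+\Delta y^2)$ as $(\Delta x,\Delta y)\to(0,0)$, which is exactly the order of the Taylor remainder under the $C^2$ assumption. As the statement does, one may also read $d$ as a signed distance relative to the chosen orientation of $\bm N$, in which case the absolute value is dropped and the estimate reads $d \approx \tfrac12\,{\rm \uppercase\expandafter{\romannumeral2}}$ verbatim.
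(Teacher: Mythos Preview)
Your proposal is correct and follows essentially the same approach as the paper: express the signed distance as $\overrightarrow{OP}\cdot\bm N$, Taylor-expand $\bm r$ to second order, use $\bm r_x\cdot\bm N=\bm r_y\cdot\bm N=0$ to kill the linear terms, and identify the surviving quadratic form as $\tfrac12\,{\rm II}$ modulo $o(\Delta x^2+\Delta y^2)$. Your write-up is in fact slightly more careful than the paper's, in that you explicitly justify why the distance equals the normal component of $\overrightarrow{OP}$ and clarify the signed-versus-unsigned reading of $d$.
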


\begin{proof}\label{distanceproof}
As shown in FIG. \ref{distance}, the distance of the proximal point $P(x_0+\Delta x,y_0+\Delta y)$ to its tangent plane is obtained as follows
\[d(\Delta x,\Delta y) = ({\bm r}(x_0+\Delta x,y_0+\Delta y)-{\bm r}(x_0,y_0)) \cdot {\bm N}.\]
By Taylor's formula we have
\begin{align*}
   & {\bm r}(x_0+\Delta x,y_0+\Delta y)-{\bm r}(x_0,y_0) \\
   & =({\bm r}_x \Delta x + {\bm r}_y \Delta y)+\frac{1}{2}({\bm r}_{xx} (\Delta x)^2+2{\bm r}_{xy}\Delta x\Delta y+{\bm r}_{yy} (\Delta y)^2) + {\bm o}((\Delta x)^2+(\Delta y)^2),
\end{align*}
and
\begin{equation*}
  \lim_{(\Delta x)^2+(\Delta y)^2 \rightarrow 0} \frac{{\bm o}((\Delta x)^2+(\Delta y)^2)}{(\Delta x)^2+(\Delta y)^2} = 0.
\end{equation*}
Owing to ${\bm r}_x \cdot {\bm N} = {\bm r}_y \cdot {\bm N} = 0$, it follows that
\begin{equation*}
 d(\Delta x,\Delta y) = \frac{1}{2}[L(\Delta x)^2+2M\Delta x\Delta y+N(\Delta y)^2]+o((\Delta x)^2+(\Delta y)^2),
\end{equation*}
where the formula $L(\Delta x)^2+2M\Delta x\Delta y+N(\Delta y)^2$ is the \emph{second fundamental form}.
Therefore, when $\sqrt{(\Delta x)^2+(\Delta y)^2} \rightarrow 0$, we obtain
\[d(\Delta x,\Delta y) \approx \frac{1}{2}{\rm{\uppercase\expandafter{\romannumeral2}}},\]
which completes the proof.
\end{proof}

The two \emph{principal curvatures} of $S$ at point $P$ measure how the surface bends by different amounts in different directions at that point, which are defined as
\begin{align}\label{principal_curvatures}
\kappa_1 &= \kappa_1(P) = \emph{the maximum normal curvature at P},\\
\kappa_2 &= \kappa_2(P) = \emph{the minimum normal curvature at P}.
\end{align}
With the principal curvatures, we can further define the \emph{Guassian curvature} and \emph{mean curvature} as follows.
\begin{definition}\label{def2.3}
The Gaussian curvature of $S$ at point $P$, $K=K(P)$, and the mean curvature of $S$ at point $P$, $H=H(P)$ are defined as follows,
\begin{equation}
K = \kappa_{1}\kappa_{2},\quad H = \frac12(\kappa_{1}+\kappa_{2}).
\end{equation}
\end{definition}
The Gaussian curvature is also known as the \emph{curvature} of a surface, which is intrinsic measure of the curvature,  depending only on distances that measured on the surface, not on the way it is isometrically embedded in Euclidean space. Although the mean curvature is not intrinsic, a surface with zero mean curvature at all points is called the minimal surface.

\section{The curvature-based variational model and numerical algorithm}
\label{sect3}
Without loss of generality, we represent a gray image as an $m\times m$ matrix and the grid ${\rm{\Omega}}=\{(i,j):1\leq i\leq m,1\leq j\leq m\}$.

\begin{figure}[t]
\centering
\includegraphics[width=0.5\linewidth]{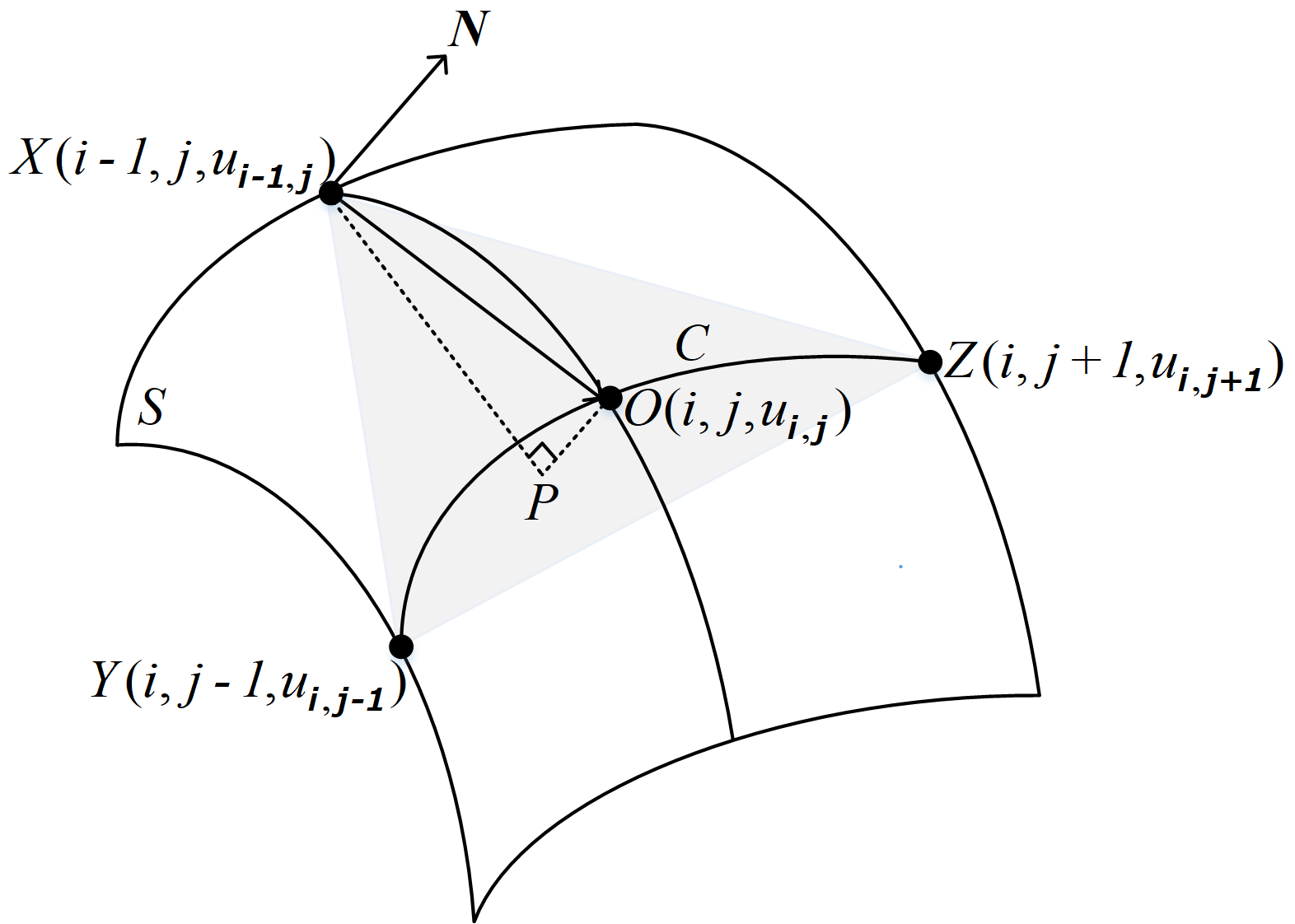}
\caption{Calculate the normal curvature on image surface.}
\label{surface}
\end{figure}

\subsection{Calculation of normal curvatures}
In order to quantify the curvatures of the image surface $S$, we can consider a curve $C$ on $S$ which passes through $O$ in a local window as shown in FIG. \ref{surface}, where $(i,j)$ indicates the coordinates and $u$ denotes the image intensity function. The normal curvature of the curve $C$ at point $O$ in the direction $\overrightarrow{OX}$ can be defined by the quotient of the second and the first fundamental form, i.e.,
\begin{equation}\label{curvature}
 \kappa_n = \frac{\rm{\uppercase\expandafter{\romannumeral2}}}{\rm{\uppercase\expandafter{\romannumeral1}}} = -\frac{d\bm r\cdot d\bm N}{ds^2} \approx \frac{2d}{ds^2} = \frac{2(\overrightarrow{PO} \cdot {\bm N})}{\widehat{OX}^2}.
\end{equation}

Because the normal vector ${\bm N}$ of tangent plane $T_{XYZ}$ can be decided by the cross product of the vector $\overrightarrow{XY}$ and $\overrightarrow{XZ}$, i.e.,
\begin{equation}\label{N}
 {\bm N} = \overrightarrow{XY} \times \overrightarrow{XZ} = (2u_{i-1,j}-u_{i,j-1}-u_{i,j+1},u_{i,j-1}-u_{i,j+1},2),
\end{equation}
we can approximate the projection distance $d$ using the point $O$ by computing its projection to the tangent plane $T_{XYZ}$ 
\begin{equation}\label{di}
 d= \overrightarrow{PO}\cdot {\bm N} =\frac{2u_{i,j}-u_{i,j-1}-u_{i,j+1}}{\sqrt{(2u_{i-1,j}-u_{i,j-1}-u_{i,j+1})^2+(u_{i,j-1}-u_{i,j+1})^2+4}}.
\end{equation}
On the other hand, the arclength $\widehat{OX}$ can be approximated in the following way
\begin{equation}\label{op}
\widehat{OX} \thickapprox \sqrt{(u_{i-1,j}-u_{i,j})^2+h^2},
\end{equation}
where $h$ is space step size along the $x$-axis and the $y$-axis.

Therefore, the normal curvature of point $O$ in direction $X$ can be expressed as follows

\begin{equation}\label{kn}
\kappa_n \thickapprox \frac{2(2u_{i,j}-u_{i,j-1}-u_{i,j+1})}{((u_{i-1,j}-u_{i,j})^2+h^2) \sqrt{(2u_{i-1,j}-u_{i,j-1}-u_{i,j+1})^2+(u_{i,j-1}-u_{i,j+1})^2+4}}.
\end{equation}
\subsection{Mean curvature and Gaussian curvature}
In order to compute the normal curvatures in the $3\times3$ local window, we first define eight triangular tangent planes (i.e., T1-T8) as shown in FIG. \ref{tangentplanes}, which are the physically nearest tangent planes to the center pixel (black one). It is important to calculate the distance of the center pixel to these tangent planes in order to estimate the normal curvatures in the $3\times3$ local window.
\begin{figure*}[t]
\centering
\includegraphics[width=1.0\linewidth]{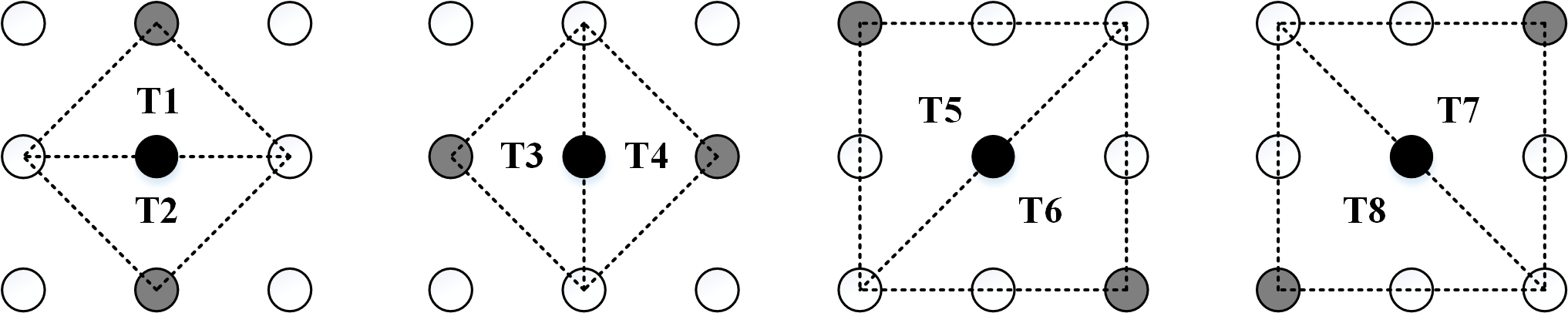}
\caption{The eight tangent planes of the center point in a $3\times3$ local window, where black node denotes the center point and the grey nodes denote the normal direction.}
\label{tangentplanes}
\end{figure*}

Similar to the computation of Euler's elastica energies \cite{shen2003euler,tai2011fast,chambolle2019total}, we use the staggered grid in the $x-y$ plane. Both the staggered grid and the corresponding image surface are shown in FIG. \ref{grid} (a) and (b), where the $\bullet$-nodes denote the original grids, and the $\Box$-nodes and $\vartriangle$-nodes are half grids. The intensity values on $\vartriangle$-nodes are estimated as the mean of its two neighboring $\bullet$-nodes, while on $\Box$-nodes are estimated as the mean of the four surrounding $\bullet$-nodes.

Now, we can calculate the distance $d_\ell$, $\ell=1,\ldots,8$, of $(i,j, u_{i,j})$ to its eight tangent planes according to \eqref{di}, which are given as
\[d_1 = \frac{2u_{i,j}-u_{i,j-1}-u_{i,j+1}}{\sqrt{(2u_{i-1,j}-u_{i,j-1}-u_{i,j+1})^2+(u_{i,j-1}-u_{i,j+1})^2+4}},\]
\[d_2 = \frac{u_{i,j-1}+u_{i,j+1}-2u_{i,j}}{\sqrt{(2u_{i+1,j}-u_{i,j-1}-u_{i,j+1})^2+(u_{i,j+1}-u_{i,j-1})^2+4}},\]
\[d_3 = \frac{u_{i-1,j}+u_{i+1,j}-2u_{i,j}}{\sqrt{(u_{i+1,j}-u_{i-1,j})^2+(u_{i-1,j}+u_{i+1,j}-2u_{i,j-1})^2+4}},\]
\[d_4 = \frac{2u_{i,j}-u_{i-1,j}-u_{i+1,j}}{\sqrt{(u_{i-1,j}-u_{i+1,j})^2+(u_{i-1,j}+u_{i+1,j}-2u_{i,j+1})^2+4}},\]
\[d_5 = \frac{u_{i-1,j+1}+u_{i+1,j-1}-2u_{i,j}}{\sqrt{(u_{i+1,j-1}-u_{i-1,j-1})^2+(u_{i-1,j+1}-u_{i-1,j-1})^2+4}},\]
\[d_6 = \frac{2u_{i,j}-u_{i-1,j+1}-u_{i+1,j-1}}{\sqrt{(u_{i-1,j+1}-u_{i+1,j+1})^2+(u_{i+1,j-1}-u_{i+1,j+1})^2+4}},\]
\[d_7 = \frac{2u_{i,j}-u_{i-1,j-1}-u_{i+1,j+1}}{\sqrt{(u_{i-1,j+1}-u_{i+1,j+1})^2+(u_{i-1,j-1}-u_{i-1,j+1})^2+4}},\]
\[d_8 = \frac{u_{i-1,j-1}+u_{i+1,j+1}-2u_{i,j}}{\sqrt{(u_{i+1,j-1}-u_{i-1,j-1})^2+(u_{i+1,j+1}-u_{i+1,j-1})^2+4}}.\]
Simultaneously, we estimate the arclength of the central point $(i,j)$ to the neighboring points in the $3\times3$ neighborhood, which is defined as the square root of the quadratic sum of two pixel differences and grid distance between two points according to \eqref{op}. As a result, the eight normal curvatures can be calculated using \eqref{kn}, which gives
\begin{align}\label{ki}
\kappa_\ell & \thickapprox
\begin{cases}
\frac{2d_\ell}{(u_\ell-u_{i,j})^2+h^2}, & \ell = 1,~2,~3,~4,\\
\frac{2d_\ell}{(u_\ell-u_{i,j})^2+2h^2},& \ell = 5,~6,~7,~8,
\end{cases}
\end{align}
with $u_\ell$ being the intensity of the grey node on the tangent plane as shown in FIG. \ref{tangentplanes}.

Then, the principal curvature $\kappa_1$ and $\kappa_2$ can be obtained as follows
\begin{equation}\label{kminmax}
 \kappa_{1} = {\rm{max}}\{\kappa_\ell\},~~ \kappa_{2} = {\rm{min}}\{\kappa_\ell\},\quad \mbox{for}~{\ell =1,2,\cdots,8}.
\end{equation}
According to Definition \ref{def2.3}, we can calculate the MC and GC on each point of the image surface using the principal curvatures from
\begin{equation}\label{MCGC}
 H= \frac{\kappa_{1} + \kappa_{2}}{2}~~\mbox{and}\quad K= \kappa_{1}\kappa_{2}.
\end{equation}

\begin{figure}[t]
      \centering
      \subfigure[The staggered grid]{
      \includegraphics[width=0.36\linewidth]{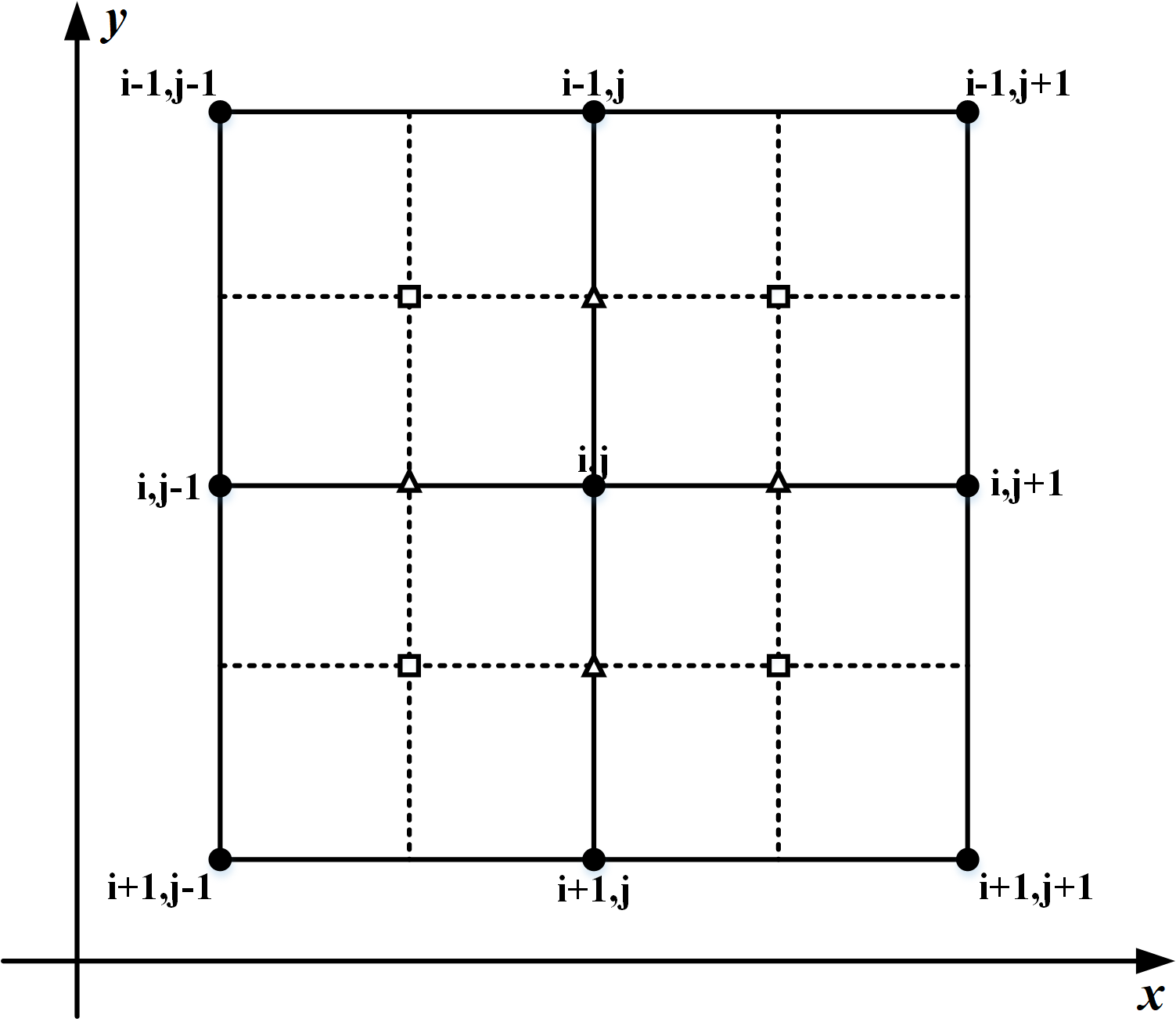}}
      \hspace{.2in}
      \subfigure[The 3-D grid]{
      \includegraphics[width=0.4\linewidth]{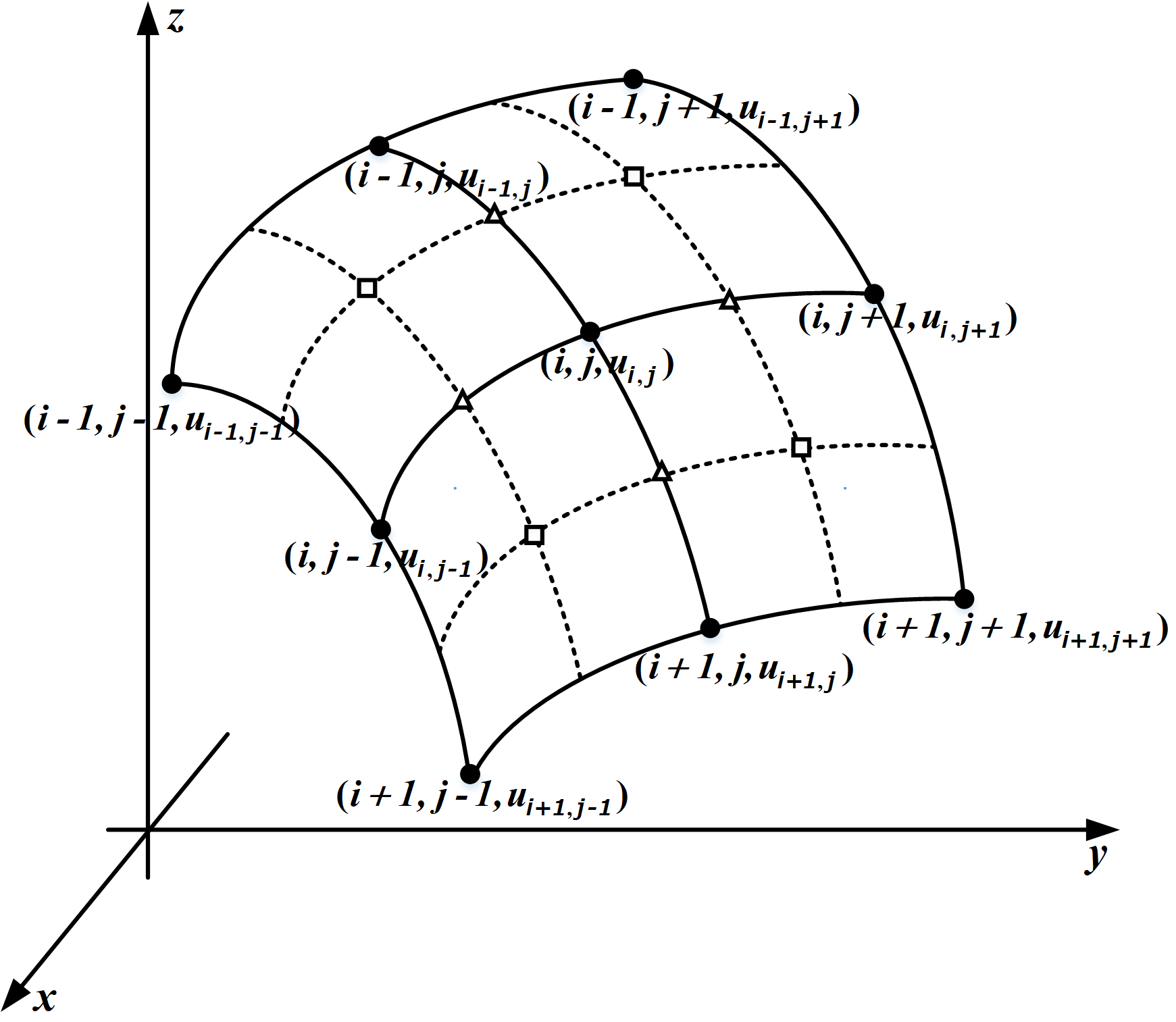}}
    \caption{The discrete staggered grid and 3-D grid.}
	\label{grid}
\end{figure}

\subsection{ADMM-based numerical Algorithm}
With the discrete curvatures, we can rewrite the minimization problem \eqref{curvature_model} into the following discrete form
\begin{equation}\label{dis_model}
\min_{u} \sum_{1\leq i,j\leq m} g(\kappa_{i,j}) |\nabla u_{i,j}| + \frac{\lambda}{2}\|u-f\|^2,
\end{equation}
which $\kappa_{i,j}$ denotes either mean curvature $H$ or Gaussian curvature $K$ in \eqref{MCGC} on point $(i,j)$, $|\cdot|$ is the usual Euclidean norm in $\mathbb R^2$ and $\|\cdot\|$ is the $L^2$ norm. Note that all the matrix multiplication and divisions in this paper are element-wise. The discrete gradient operator $\nabla:\mathbb R^{m^2}\rightarrow \mathbb R^{m^2\times m^2}$ is defined by
\[(\nabla u)_{i,j} = ((\nabla u)^x_{i,j}, (\nabla u)^y_{i,j})\]
with
\begin{align*}
(\nabla u_{i,j})^x =
\begin{cases}
u_{i+1,j}-u_{i,j} ,& \mathrm{if}~{1 \leq i<m},\\
u_{1,j}-u_{i,j}   ,& \mathrm{if}~{i=m},
\end{cases}~
(\nabla u_{i,j})^y =
\begin{cases}
u_{i,j+1}-u_{i,j} ,& \mathrm{if}~{1 \leq j<m},\\
u_{i,1}-u_{i,j}   ,& \mathrm{if}~{j=m},
\end{cases}
\end{align*}
for $i,j=1,\cdots,m$.

As long as the discrete MC and GC can be estimated based on the current value of the image, fast algorithms can be applied to the discrete re-weighted TV model such as split Bregman method \cite{goldstein2009split}, primal-dual splitting method \cite{chambolle2011first} and augmented Lagrangian method \cite{wu2010augmented}. Here, we adopt the proximal ADMM \cite{shefi2014rate,yashtini2016fast}, which can guarantee the convergence in theory.

More specifically, we introduce an auxiliary variable $\bm v$ to rewrite the original unconstrained optimization problem \eqref{curvature_model} into an equivalent discrete constrained minimization as follows
\begin{equation}\label{constrained version}
\begin{split}
& \min_{u,\bm v}~ \sum_{1\leq i,j\leq m}g(\kappa_{i,j})|\bm v_{i,j}| +\frac{\lambda}{2}\|u-f\|^2 \\
& ~\mathrm{s.t.}~~\bm v_{i,j}=\nabla u_{i,j}.
\end{split}
\end{equation}

Given some  $(u^k,\bm v^k)\in \mathbb R^{m^2} \times \mathbb R^{m^2\times m^2}$, the proximal augmented Lagrangian is defined as
\begin{align*}
  \mathcal{L}(u,\bm v; \bm\Lambda) = & \sum_{1\leq i,j\leq m}g(\kappa_{i,j})|\bm v_{i,j}|  + \frac{\lambda}{2}\|u-f\|^2 \\
  & + <\bm\Lambda,\bm v-\nabla u> + \frac{\mu}{2}\|\bm v-\nabla u\|^2 + \frac{\tau}{2}\|u-u^k\|^2+\frac{\sigma}{2}\|\bm v-\bm v^k\|^2,
\end{align*}
where $\bm\Lambda$ represents the Lagrangian multiplier, and $\mu,\tau,\sigma$ are the positive parameters. Then, we iteratively and alternatively solve the $u$- and $\bm v$-subproblem until reaching the terminating condition; see Algorithm 3.1.
\vspace{4mm} \hrule\hrule  \vspace{2mm} {\noindent\bf ADMM-based Algorithm 3.1} \nolinebreak
\vspace{2mm}
\hrule\hrule
\begin{itemize}
\item[1:] \textbf{Input:} Degraded image $f$, model parameter $\lambda$, $\mu, \tau,\sigma$, maximum iteration $T_{max}$, and stopping threshold $\epsilon$.
\item[2:] \textbf{Initialize:} $u^0=f$, ${\bm v}^0=0$, ${\bm\Lambda}^0=0$.
\item[3:] \textbf{while} (not converged and $k\leq T_{max}$) do
\begin{itemize}
\item[(i)] Compute $u^{k+1}$ from:
\begin{equation}
u^{k+1} = \arg\min_u\Big\{\frac{\lambda}{2}\|u-f\|^2+\frac{\mu}{2}\Big\|\nabla u-\bm v-\frac{\bm\Lambda}{\mu}\Big\|^2+ \frac{\tau}{2}\|u-u^k\|^2\Big\};
\label{sub-u}
\end{equation}
\item[(ii)]Compute $H(u^{k+1})$ or $K(u^{k+1})$ according to \eqref{MCGC} using the latest estimation $u^{k+1}$ and take it into $g(\kappa)$;
\item[(iii)] Compute $ \bm v^{k+1}$ from:
\begin{equation}
{\bm v}^{k+1}=\arg\min_{\bm v}\Big\{\sum_{1\leq i,j\leq m}g(\kappa_{i,j})|\bm v_{i,j}|+\frac{\mu}{2}\Big\|\bm v-\nabla u+\frac{\bm\Lambda}{\mu}\Big\|^2+\frac{\sigma}{2}\|\bm v-\bm v^k\|^2\Big\};
\label{sub-v}
\end{equation}
\item[(iv)] Update $ \bm\Lambda^{k+1}$ from:
\begin{equation}\label{multipliers}
 \bm\Lambda^{k+1} = \bm\Lambda^{k}+\mu(\bm v^{k+1}-\nabla u^{k+1});
\end{equation}
\item[(v)] Check convergence condition:
\begin{equation*}
  \frac{\|u^{k+1}-u^{k}\|_1}{\|u^{k}\|_1}\leq \varepsilon.
\end{equation*}
\end{itemize}
\item[4:] \textbf{end while}
\item[5:] \textbf{output:} Restored image.
\end{itemize}
\hrule\hrule  \vspace{4mm}

\subsubsection{The $u$-subproblem}
The first-order optimality condition of \eqref{sub-u} gives a linear equation as follows
\begin{equation*}
\big((\lambda+\tau)\mathcal I-\mu\nabla \cdot \nabla\big) u^{k+1} = \lambda f + \tau u^k-\nabla\cdot(\mu\bm v^k+\bm\Lambda^k)
\end{equation*}
with $\mathcal I$ being the identity matrix.
Under the periodic boundary condition, we can solve the above equation by the fast Fourier Transform (FFT), i.e.,
\begin{equation}\label{sol-u}
u^{k+1}= \mathcal{F}^{-1} \bigg(\frac{\mathcal{F}({\lambda f}-\nabla\cdot(\mu\bm v^k+\bm\Lambda^k)+ \tau u^k)}{(\lambda+\tau)\mathcal I-\mu\mathcal{F}(\Delta)} \bigg),
\end{equation}
where $\mathcal F$ and $\mathcal F^{-1}$ denote the Fourier transform and inverse Fourier transform, respectively.
\subsubsection{The $\bm v$-subproblem}
We first estimate the curvatures based on the latest value $u^{k+1}$ according to \eqref{ki}-\eqref{MCGC} and take them into the curvature functions. Then, the minimization problem w.r.t. $\bm v$ becomes straightforward, which has the unique minimizer by the shrinkage operator \cite{beck2009fast}
\begin{equation}\label{sol-v}
\bm v^{k+1} = {\rm{shrinkage}}\bigg(\frac{\mu\nabla u^{k+1}-\bm\Lambda^k+\sigma{\bm v}^k}{\mu+\sigma} ,\frac{g(\kappa(u^{k+1}))}{\mu+\sigma} \bigg)
\end{equation}
with the shrinkage operator being defined as
\begin{equation*}
{\rm{shrinkage}}(a,b)={\rm{max}}\{|a|-b,0\} \circ \frac{a}{|a|},
\end{equation*}
and $\circ$ being the element-wise multiplication.

\subsection{Convergence Analysis}
In this subsection, we give the  convergence result for Algorithm 3.1. First, we prove that a solution of the discrete curvature-based regularization model \eqref{dis_model} exists.
\begin{lemma}
\label{coercivelemma}
There exists a minimizer $u^*\in \mathbb{R}^{m^2}$ for the discrete minimization problem \eqref{dis_model}.
\end{lemma}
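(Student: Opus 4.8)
The plan is to invoke the direct method of the calculus of variations. Write the objective as
\[
E(u) \;=\; \sum_{1\le i,j\le m} g\bigl(\kappa_{i,j}(u)\bigr)\,|\nabla u_{i,j}| \;+\; \frac{\lambda}{2}\,\|u-f\|^2 ,
\]
keeping in mind that $\kappa_{i,j}$ (being the mean curvature $H$ or the Gaussian curvature $K$ of \eqref{MCGC}) is itself a nonlinear function of the unknown $u$. Since the minimization is carried out over the finite-dimensional space $\mathbb{R}^{m^2}$, it is enough to prove that $E$ is continuous and coercive; then every sublevel set $\{u:E(u)\le c\}$ is closed and bounded, hence compact, and $E$ attains its infimum there.

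First I would verify that $u\mapsto E(u)$ is continuous, the only delicate point being the continuity of $u\mapsto\kappa_{i,j}(u)$. In the formulas for $d_\ell$ that precede \eqref{ki}, each numerator is affine in $u$, while each denominator has the form $\sqrt{(\,\cdot\,)^2+(\,\cdot\,)^2+4}\ge 2>0$; hence every $d_\ell$ is a continuous function of $u$. Dividing by $(u_\ell-u_{i,j})^2+h^2\ge h^2>0$ (respectively by $+2h^2$) in \eqref{ki} preserves continuity, so each of the eight normal curvatures $\kappa_\ell$ depends continuously on $u$. Consequently $\kappa_1=\max_\ell\kappa_\ell$ and $\kappa_2=\min_\ell\kappa_\ell$ are continuous (a finite max/min of continuous functions), and so are $H=\tfrac12(\kappa_1+\kappa_2)$, $K=\kappa_1\kappa_2$, and finally $g(\kappa_{i,j})$ for each of the three choices \eqref{kappa1}--\eqref{kappa3}, each being a composition with $|\,\cdot\,|$, $(\,\cdot\,)^2$ and $\sqrt{1+\alpha(\,\cdot\,)}$. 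Since $u\mapsto|\nabla u_{i,j}|$ and $u\mapsto\tfrac{\lambda}{2}\|u-f\|^2$ are manifestly continuous, $E$ is continuous on $\mathbb{R}^{m^2}$.

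Next I would establish coercivity. Each weight satisfies $g(\kappa)\ge 1$ for every choice in \eqref{kappa1}--\eqref{kappa3}, so the regularization term is bounded below by $\sum_{i,j}|\nabla u_{i,j}|\ge 0$, and therefore $E(u)\ge\tfrac{\lambda}{2}\|u-f\|^2\to\infty$ as $\|u\|\to\infty$. In particular $E$ is bounded below, so its infimum is finite; combining this with the continuity just shown, the Weierstrass theorem applied to a minimizing sequence (which eventually lies in a compact sublevel set) yields a minimizer $u^*\in\mathbb{R}^{m^2}$ of \eqref{dis_model}.

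The one step that needs genuine care is the continuity of $\kappa_{i,j}(u)$: one must confirm that none of the denominators in $d_\ell$ or in \eqref{ki} can ever vanish, which is exactly guaranteed by the additive constant $4$ stemming from the surface normal \eqref{N} and by the grid-spacing terms $h^2$ and $2h^2$ in the arclength approximation \eqref{op}. By contrast, the non-differentiability introduced by the $\max/\min$ in \eqref{kminmax} is harmless for existence, since it affects only smoothness and not continuity --- although it is precisely why the convergence analysis of Algorithm~3.1 will require an extra assumption. I would also remark that the data-fidelity term is essential here: the weighted total-variation term alone vanishes on constant images and cannot by itself control $\|u\|$, so coercivity would fail without it.
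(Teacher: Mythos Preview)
Your proof is correct and follows essentially the same route as the paper: continuity of $u\mapsto g(\kappa_{i,j}(u))$ via continuity of the $\kappa_\ell$ and of $\max/\min$, together with coercivity coming from $g(\kappa)\ge 1$, then Weierstrass on $\mathbb{R}^{m^2}$. The only minor difference is that the paper bounds the objective below by the full TV\,+\,fidelity functional and invokes an external lemma for its coercivity, whereas you more directly use $E(u)\ge\tfrac{\lambda}{2}\|u-f\|^2$; your version is cleaner and more self-contained, and your explicit check that the denominators in $d_\ell$ and \eqref{ki} never vanish is a detail the paper leaves implicit.
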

\begin{proof}
By the definitions of $g$ in \eqref{kappa1}, \eqref{kappa2} and \eqref{kappa3}, $g(\kappa(u))\geq 1$. According to Lemma 3.8 of \cite{Huang2009A}, we have $\sum\limits_{1\leq i,j\leq m}|\nabla u_{i,j}| +\frac{\lambda}{2}\|u-f\|^2$ is coercive. Then there is
\begin{equation}
\sum_{1\leq i,j\leq m}g(\kappa_{i,j})|\nabla u_{i,j}| +\frac{\lambda}{2}\|u-f\|^2 \geq \sum\limits_{1\leq i,j\leq m}|\nabla u_{i,j}| +\frac{\lambda}{2}\|u-f\|^2
\end{equation}
is also coercive.
By definition of $\kappa=H,K$ as defined in \eqref{kminmax}, \eqref{MCGC} and continuity of the min/max functions, $\kappa=\kappa(\{\kappa_1,\kappa_2\})$ is continuous on $\{\kappa_\ell:\ell=1,\cdots,8\}$. Moreover by \eqref{ki}, $\kappa_\ell$ ($\ell=1,\cdots,8$) are continuous functions on $u$. Therefore, $\sum\limits_{1\leq i,j\leq m}g(\kappa_{i,j})|\nabla u_{i,j}| +\frac{\lambda}{2}\|u-f\|^2$ is continuous on $u$. Together with coercivity and continuity, we have that the discrete minimization problem \eqref{dis_model} has a minimizer $u^*\in  \mathbb{R}^{m^2}$.
\end{proof}

In the followings, we analyze the convergence theoretically for the proposed ADMM-based numerical algorithm under certain conditions. We first give a useful lemma.

\begin{lemma}
\label{convexlemma}
Suppose $T(x)=\frac{1}{2}\|Ax-b\|^2+N(x)$ with a convex function $N$. Assuming $x^\ast$ be a stationary point of $T(x)$, i.e., $0\in\partial T(x^\ast)$, then we obtain
\begin{equation*}
 T(x)-T(x^\ast)\geq\frac{1}{2}\|A(x-x^\ast)\|^2.
\end{equation*}
\end{lemma}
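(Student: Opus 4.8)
The plan is to exploit convexity of the function $N$ together with the exact quadratic structure of the term $\tfrac12\|Ax-b\|^2$, and to compute the first-order behaviour of $T$ around the stationary point $x^\ast$ directly. First I would expand the quadratic term: for any $x$,
\begin{equation*}
\tfrac12\|Ax-b\|^2 = \tfrac12\|Ax^\ast-b\|^2 + \langle A^\top(Ax^\ast-b),\,x-x^\ast\rangle + \tfrac12\|A(x-x^\ast)\|^2,
\end{equation*}
which is an identity, not an approximation, because the quadratic has no higher-order terms. Next I would invoke the subgradient inequality for the convex function $N$: there is a subgradient $p\in\partial N(x^\ast)$ with $N(x)-N(x^\ast)\ge \langle p,\,x-x^\ast\rangle$ for all $x$.

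The stationarity hypothesis $0\in\partial T(x^\ast)$ is what ties these two pieces together. Since $\tfrac12\|Ax-b\|^2$ is smooth with gradient $A^\top(Ax-b)$, the subdifferential sum rule gives $\partial T(x^\ast) = A^\top(Ax^\ast-b) + \partial N(x^\ast)$, so $0\in\partial T(x^\ast)$ means precisely that we may take the subgradient $p = -A^\top(Ax^\ast-b)$ in $\partial N(x^\ast)$. Substituting this choice of $p$ into the subgradient inequality, the linear term $\langle A^\top(Ax^\ast-b),\,x-x^\ast\rangle$ appearing in the quadratic expansion is exactly cancelled by $\langle p,\,x-x^\ast\rangle = -\langle A^\top(Ax^\ast-b),\,x-x^\ast\rangle$ from the bound on $N$. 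Adding the two relations then yields
\begin{equation*}
T(x)-T(x^\ast) = \tfrac12\|A(x-x^\ast)\|^2 + \langle A^\top(Ax^\ast-b),x-x^\ast\rangle + N(x)-N(x^\ast) \ge \tfrac12\|A(x-x^\ast)\|^2,
\end{equation*}
which is the claimed inequality.

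There is no real obstacle here; the only point requiring a little care is the justification of the subdifferential sum rule and the interpretation of $0\in\partial T(x^\ast)$, which is immediate since one summand is everywhere differentiable (so the sum rule $\partial(f+g)=\nabla f+\partial g$ holds with no qualification). If one prefers to avoid even mentioning subdifferential calculus, an equivalent route is to note that $T(x)-\tfrac12\|A(x-x^\ast)\|^2$ is itself a convex function of $x$ (it equals an affine function of $x$ plus $N(x)$) whose subdifferential at $x^\ast$ contains $0$ by the stationarity assumption, hence $x^\ast$ is its global minimiser, giving $T(x)-\tfrac12\|A(x-x^\ast)\|^2 \ge T(x^\ast)$ for all $x$, which rearranges to the desired bound. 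I would present the argument in this second, cleaner form.
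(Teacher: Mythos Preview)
Your proposal is correct and follows essentially the same route as the paper: use stationarity $0\in\nabla M(x^\ast)+\partial N(x^\ast)$ (with $M(x)=\tfrac12\|Ax-b\|^2$) to pick the subgradient $p=-\nabla M(x^\ast)\in\partial N(x^\ast)$, apply the subgradient inequality for $N$, and observe that the resulting linear term cancels against the cross term in the exact quadratic expansion of $M$, leaving $\tfrac12\|A(x-x^\ast)\|^2$. The paper's write-up is terser (it jumps directly to $M(x)-M(x^\ast)-\langle\nabla M(x^\ast),x-x^\ast\rangle=\tfrac12\|A(x-x^\ast)\|^2$), but the argument is the same.
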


\begin{proof}\label{convexproof}
Let $M(x)=\frac{1}{2}\|Ax-B\|^2$. Since $x^\ast$ is a stationary point, i.e., $0\in\nabla M(x^\ast)+\partial N(x^\ast)$, we have
\begin{equation*}
  N(x)-N(x^\ast)\geq \langle -\nabla M(x^\ast),x-x^\ast \rangle,~\forall x.
\end{equation*}
It follows that
\begin{equation*}
 T(x)-T(x^\ast)\geq M(x)-M(x^\ast)-\langle \nabla M(x^\ast),x-x^\ast \rangle = \frac{1}{2}\|A(x-x^\ast)\|^2,
\end{equation*}
which concludes the lemma.
\end{proof}

\begin{theorem}\label{convergenttheorem}
Assume $\{(u^k, \bm v^k; \bm\Lambda^{k})\}_{k \in \mathbb N}$ is the sequence generated by proposed ADMM-based Algorithm 3.1 and $({\bar u}, \bar {\bm v}; \bar {\bm\Lambda})$ is a point satisfying the first-order optimality conditions
\begin{eqnarray}\label{optimal conditions}
\left \{
\begin{array}{ll}
\lambda(u-f)+\nabla\cdot \bm\Lambda=0,\\
g(\kappa)s+\bm\Lambda=0,~{\rm where}~s\in \partial |\bm v|,\\
\bm v-\nabla u=0.
\end{array}
\right.
\end{eqnarray}
If for any $s^k\in \partial |\bm v^k|$ and any $\bar s\in \partial |\bar {\bm v}|$ satisfy
\begin{equation}\label{deltak}
\Delta_k:=\langle(g(\kappa^k)-g(\bar \kappa))s^k,\bm v^k-\bar {\bm v}\rangle \geq 0,~\forall k\in\mathbb{N}.
\end{equation}
Then, we have \\
$(\rm a)$ The Lagrangian functional is monotonically decreasing, i.e.,
\begin{equation}\label{functional}
\begin{split}
\mathcal{L}(u^k,\bm v^k;\bm\Lambda^k)-\mathcal{L}(u^{k+1},\bm v^{k+1}; \bm\Lambda^{k+1})&\geq\frac{\tau}{2}\|u^{k+1}-u^k\|^2+\frac{\mu}{2}\|\nabla u^{k+1}-\bm v^k\|^2\\&+\frac{\sigma}{2}\|\bm v^{k+1}-\bm v^k\|^2+\frac{1}{2\mu}\|\bm\Lambda^{k+1}-\bm\Lambda^{k}\|^2.
\end{split}
\end{equation}
\\
$(\rm b)$ The successive errors $u^{k+1}-u^k\rightarrow 0$, $\bm v^{k+1}-\bm v^k\rightarrow 0$, $\nabla u^{k+1}-\bm v^k\rightarrow 0$, and $\bm\Lambda^{k+1}-\bm\Lambda^k\rightarrow 0$ as $k\rightarrow\infty$.\\
$(\rm c)$ The sequence $\{(u^k, \bm v^k; \bm\Lambda^{k})\}_{k \in \mathbb N}$ converges to a limit point $(u^\ast, \bm v^\ast; \bm\Lambda^{\ast})$ that satisfies the first-order optimality conditions \eqref{optimal conditions}.
\end{theorem}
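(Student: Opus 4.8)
The plan is to follow the standard descent analysis of a proximal three-block ADMM, the one non-classical feature being that the weight $g(\kappa)$ used in the $\bm v$-step is recomputed from $u^{k+1}$ in step (ii), so $u\mapsto g(\kappa(u))$ is not the (sub)gradient of any convex potential. For part~(a) I would first record that both subproblems are strongly convex with unique minimizers: \eqref{sub-u} is the quadratic whose Hessian is $(\lambda+\tau)\mathcal I-\mu\nabla\cdot\nabla\succeq(\lambda+\tau)\mathcal I$, and \eqref{sub-v} is $(\mu+\sigma)$-strongly convex plus the convex term $\sum_{i,j}g(\kappa_{i,j})|\bm v_{i,j}|$, so Lemma~\ref{convexlemma} applies to each. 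Then I would telescope $\mathcal L(u^k,\bm v^k;\bm\Lambda^k)-\mathcal L(u^{k+1},\bm v^{k+1};\bm\Lambda^{k+1})$ over the elementary moves $(u^k,\bm v^k,\bm\Lambda^k)\to(u^{k+1},\bm v^k,\bm\Lambda^k)\to(u^{k+1},\bm v^{k+1},\bm\Lambda^k)\to(u^{k+1},\bm v^{k+1},\bm\Lambda^{k+1})$ (the proximal terms in $\mathcal L$ vanish when evaluated at the current iterate): by Lemma~\ref{convexlemma} the $u$-move decreases $\mathcal L$ by at least $\tfrac{\tau}{2}\|u^{k+1}-u^k\|^2$ plus a surplus $\tfrac{\lambda+\tau}{2}\|u^{k+1}-u^k\|^2+\tfrac{\mu}{2}\|\nabla(u^{k+1}-u^k)\|^2$, up to the sign-indefinite reweighting term $\langle g(\kappa(u^k))-g(\kappa(u^{k+1})),|\bm v^k|\rangle$; the $\bm v$-move decreases $\mathcal L$ by at least $\tfrac{\sigma}{2}\|\bm v^{k+1}-\bm v^k\|^2$ plus a surplus $\tfrac{\mu+\sigma}{2}\|\bm v^{k+1}-\bm v^k\|^2$; and the multiplier move contributes exactly $-\tfrac1{\mu}\|\bm\Lambda^{k+1}-\bm\Lambda^k\|^2$, using $\bm\Lambda^{k+1}-\bm\Lambda^k=\mu(\bm v^{k+1}-\nabla u^{k+1})$. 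Assembling these, with the delicate part being the bookkeeping that converts the two surpluses into the term $\tfrac{\mu}{2}\|\nabla u^{k+1}-\bm v^k\|^2$ and half of the multiplier term — for which one uses the multiplier recursion $\bm v^k-\nabla u^k=\tfrac1{\mu}(\bm\Lambda^k-\bm\Lambda^{k-1})$ — then yields \eqref{functional}.

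The main obstacle, as the hypothesis \eqref{deltak} already signals, is the reweighting: no convex cancellation is available for $g(\kappa(\cdot))$. In (a) the term $\langle g(\kappa(u^k))-g(\kappa(u^{k+1})),|\bm v^k|\rangle$ must be controlled by the continuity — in fact local Lipschitzness on bounded sets — of $u\mapsto g(\kappa(u))$, which is the same continuity established inside the proof of Lemma~\ref{coercivelemma}, together with the a~posteriori bound $u^{k+1}-u^k\to0$; equivalently one may read $\mathcal L$ with the weights held fixed within each outer iteration, so that this term does not arise in (a) and the whole effect of the reweighting is concentrated in (c). Recognising that $\Delta_k$ is exactly the residual left by the nonmonotone reweighting in the convergence estimate of (c) is the conceptual crux.

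For part~(b) I would first show that $\mathcal L(u^k,\bm v^k;\bm\Lambda^k)$ is bounded from below: write the coupling plus penalty as $\tfrac{\mu}{2}\|\bm v^k-\nabla u^k+\bm\Lambda^k/\mu\|^2-\tfrac1{2\mu}\|\bm\Lambda^k\|^2$, use $g\ge1$ and the coercivity of the primal objective (as in Lemma~\ref{coercivelemma}), and bound $\|\bm\Lambda^k\|$ through the $\bm v$-subproblem optimality $\bm\Lambda^k=-g(\kappa^k)s^k-\sigma(\bm v^k-\bm v^{k-1})$ with $s^k\in\partial|\bm v^k|$ and $\partial|\cdot|$ uniformly bounded; breaking the circular dependence between bounds on $u^k$ and on $\bm\Lambda^k$ is the one careful point here. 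Together with the monotone decrease from (a) this makes $\sum_k\bigl(\tfrac{\tau}{2}\|u^{k+1}-u^k\|^2+\tfrac{\mu}{2}\|\nabla u^{k+1}-\bm v^k\|^2+\tfrac{\sigma}{2}\|\bm v^{k+1}-\bm v^k\|^2+\tfrac1{2\mu}\|\bm\Lambda^{k+1}-\bm\Lambda^k\|^2\bigr)$ finite, so every summand tends to $0$, which is (b).

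For part~(c), the same estimates give boundedness of $\{(u^k,\bm v^k;\bm\Lambda^k)\}$, hence a subsequence converges to some $(u^\ast,\bm v^\ast;\bm\Lambda^\ast)$; by (b) the shifted subsequence has the same limit and $\kappa(u^{k_j})\to\kappa(u^\ast)$ by continuity, so passing to the limit in the first-order condition of \eqref{sub-u}, in that of \eqref{sub-v} (using closedness of the graph of $\partial|\cdot|$ to pass to the limit in $s^{k_j}$), and in \eqref{multipliers}, shows $(u^\ast,\bm v^\ast;\bm\Lambda^\ast)$ satisfies \eqref{optimal conditions}. To upgrade subsequential convergence to convergence of the whole sequence I would expand the one-step change of the Fej\'er-type quantity $\|u^k-\bar u\|^2+\|\bm v^k-\bar{\bm v}\|^2+\tfrac1{\mu^2}\|\bm\Lambda^k-\bar{\bm\Lambda}\|^2$ using the three relations of \eqref{optimal conditions} and the optimality relations of \eqref{sub-u}, \eqref{sub-v} and \eqref{multipliers}; the quadratic cross-terms cancel or combine into nonpositive quantities exactly as in convex ADMM, the subgradient cross-term splits as a monotonicity term of $\partial|\cdot|$ (nonnegative) plus $\Delta_k$, and \eqref{deltak} makes $\Delta_k\ge0$, so the quantity is nonincreasing and hence convergent. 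Applying the same argument with $(\bar u,\bar{\bm v};\bar{\bm\Lambda})$ replaced by the cluster point $(u^\ast,\bm v^\ast;\bm\Lambda^\ast)$ — which also satisfies \eqref{optimal conditions} — and using that a subsequence already converges to it, the whole sequence converges to $(u^\ast,\bm v^\ast;\bm\Lambda^\ast)$.
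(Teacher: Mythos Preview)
Your plan diverges from the paper's in one essential respect: \emph{where} the hypothesis $\Delta_k\ge0$ enters. You hold it in reserve for a Fej\'er-monotonicity argument in (c); the paper spends it already in (a), inside the $\bm v$-step. Concretely, the paper's $\bm v$-descent estimate is not the bare Lemma~\ref{convexlemma} bound $\tfrac{\sigma}{2}\|\bm v^{k+1}-\bm v^k\|^2$ but
\[
\mathcal L(u^{k+1},\bm v^k;\bm\Lambda^k)-\mathcal L(u^{k+1},\bm v^{k+1};\bm\Lambda^k)
\ \ge\ \tfrac{\sigma}{2}\|\bm v^{k+1}-\bm v^k\|^2+\bigl\langle g(\kappa^{k+1})s^{k+1}-g(\bar\kappa)\bar s,\ \bm v^{k+1}-\bar{\bm v}\bigr\rangle,
\]
the cross term being split as $\Delta_{k+1}+g(\bar\kappa)\langle s^{k+1}-\bar s,\bm v^{k+1}-\bar{\bm v}\rangle\ge0$ by \eqref{deltak} and monotonicity of $\partial|\cdot|$. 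So the reference point $(\bar u,\bar{\bm v};\bar{\bm\Lambda})$ already participates in the descent estimate, not only in the convergence-to-a-point argument.

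This matters for your (a). Your ``delicate bookkeeping'' has to manufacture both $\tfrac{\mu}{2}\|\nabla u^{k+1}-\bm v^k\|^2$ and $\tfrac{1}{2\mu}\|\bm\Lambda^{k+1}-\bm\Lambda^k\|^2$ out of the primal surpluses while simultaneously absorbing the multiplier \emph{ascent} $-\tfrac{1}{\mu}\|\bm\Lambda^{k+1}-\bm\Lambda^k\|^2$; the recursion $\bm v^k-\nabla u^k=\tfrac{1}{\mu}(\bm\Lambda^k-\bm\Lambda^{k-1})$ you invoke ties the current step to the \emph{previous} multiplier increment, so the telescope does not close at a single iteration. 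The paper sidesteps all of this: it reads $\tfrac{\mu}{2}\|\nabla u^{k+1}-\bm v^k\|^2$ directly from Lemma~\ref{convexlemma} in the $u$-step (not from any surplus), and for the multiplier step it records a \emph{decrease} $\tfrac{1}{2\mu}\|\bm\Lambda^{k+1}-\bm\Lambda^k\|^2$ --- the opposite sign from yours. Whatever one thinks of that last claim, it is the route the paper takes, and without either it or the use of $\Delta_k$ in (a) your version of (a) does not close.

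In (b) you supply what the paper simply asserts (a reason $\mathcal L$ is bounded below), and in (c) you go beyond the paper: the paper only extracts a convergent subsequence and passes to the limit in the three optimality relations, stopping at subsequential convergence, whereas your Fej\'er argument would give convergence of the whole sequence. That extra step is reasonable, but note it re-uses \eqref{deltak} with $(\bar u,\bar{\bm v};\bar{\bm\Lambda})$ replaced by the cluster point $(u^\ast,\bm v^\ast;\bm\Lambda^\ast)$, which is a slightly stronger hypothesis than the one literally stated.
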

\begin{proof}\label{convergeproof}
$(\rm a)$ For $u$-subproblem, according to Lemma \ref{convexlemma}, it follows that
\begin{equation}\label{uconvex}
\mathcal{L}(u^k,\bm v^k;\bm\Lambda^k)-\mathcal{L}(u^{k+1},\bm v^{k}; \bm\Lambda^{k})\geq\frac{\tau}{2}\|u^{k+1}-u^k\|^2+\frac{\mu}{2}\|\nabla u^{k+1}-\bm v^k\|^2.
\end{equation}

Similarly, for $\bm v$-subproblem, by Lemma \ref{convexlemma} and the Theorem 3 in \cite{yashtini2016fast}, we have

\begin{align}\label{vconvex}
 \mathcal{L}(u^{k+1},\bm v^k;\bm\Lambda^k)-\mathcal{L}(u^{k+1},\bm v^{k+1}; \bm\Lambda^{k})& \geq\frac{\sigma}{2}\|\bm v^{k+1}-\bm v^k\|^2 \\ \nonumber
 & +\langle g(\kappa^{k+1})s^{k+1}-g(\bar \kappa)\bar s,\bm v^{k+1}-\bar{\bm v} \rangle.
\end{align}
Note that $\langle g(\kappa^{k+1})s^{k+1}-g(\bar \kappa)\bar s,\bm v^{k+1}-\bar{\bm v} \rangle=\Delta_{k+1}+g(\bar \kappa)\langle s^{k+1}-\bar s, \bm v^{k+1}-\bar{\bm v}\rangle$. Referring to Lemma 3.3 in \cite{chen2013bregman}, the term $\langle s^{k+1}-\bar s, \bm v^{k+1}-\bar{\bm v}\rangle \geq 0$ for any $s^k\in \partial |\bm v^k|$ and $\bar s\in \partial |\bar {\bm v}|$. In addition, $\Delta_{k} \geq 0$ for all $k$ by the assumption \eqref{deltak} of proposed theorem. Therefore $\langle g(\kappa^{k+1})s^{k+1}-g(\bar \kappa)\bar s,\bm v^{k+1}-\bar{\bm v} \rangle \geq 0$.

Using \eqref{multipliers} and Lemma \ref{convexlemma}, it is immediate that
\begin{equation}\label{lambdaconvex}
\mathcal{L}(u^{k+1},\bm v^{k+1}; \bm\Lambda^k)-\mathcal{L}(u^{k+1},\bm v^{k+1};\bm\Lambda^{k+1})
\geq \frac{1}{2\mu}\|\bm\Lambda^{k+1}-\bm\Lambda^{k}\|^2.
\end{equation}

Then, by adding \eqref{uconvex}-\eqref{lambdaconvex} and dropping the nonnegative term, we complete the proof of part $(\rm a)$.

$(\rm b)$ Due to the boundedness of the sequence $\mathcal{L}(u^k,\bm v^k;\bm\Lambda^k)$, we sum the inequality \eqref{functional} in part $(\rm a)$ from $k=1$ to $\infty$ to obtain
\begin{equation*}
\sum_{k=1}^\infty \|u^{k+1}-u^k\|^2+\|\nabla u^{k+1}-\bm v^k\|^2+\|\bm v^{k+1}-\bm v^k\|^2+\|\bm\Lambda^{k+1}-\bm\Lambda^k\|^2<\infty.
\end{equation*}
This further gives
\begin{equation*}
\lim_{k\rightarrow\infty}(\|u^{k+1}-u^k\|=\|\nabla u^{k+1}-\bm v^k\|=\|\bm v^{k+1}-\bm v^k\|=\|\bm\Lambda^{k+1}-\bm\Lambda^k\|)=0.
\end{equation*}

$(\rm c)$ According to part $(\rm a)$ and $(\rm b)$, the sequence $\{(u^k,\bm v^k;\bm\Lambda^k)\}_{k\in \mathbb N}$ generated by Algorithm 3.1 is uniformly bounded on $\rm{\Omega}$. Therefore, there exists a weakly convergent subsequence $\{(u^{k_l},\bm v^{k_l};\bm\Lambda^{k_l})\}_{l \in \mathbb N}$, which has a limit point $(u^\ast,\bm v^\ast;\bm\Lambda^\ast)$. Analogously, due to $\bm v^{k_l} \rightarrow \bm v^\ast$ a.e. in $\rm{\Omega}$ as $l \rightarrow \infty$ and $s^{k_l} \in \partial |\bm v^{k_l}|$, there exists a subsequence of $\{s^{k_l}\}_{l \in \mathbb N}$ that converges weakly to $s^\ast \in \partial |\bm v^\ast|$.

\begin{figure*}[t]
      \centering
      \subfigure[TAC-MC]{
			\includegraphics[width=0.35\linewidth]{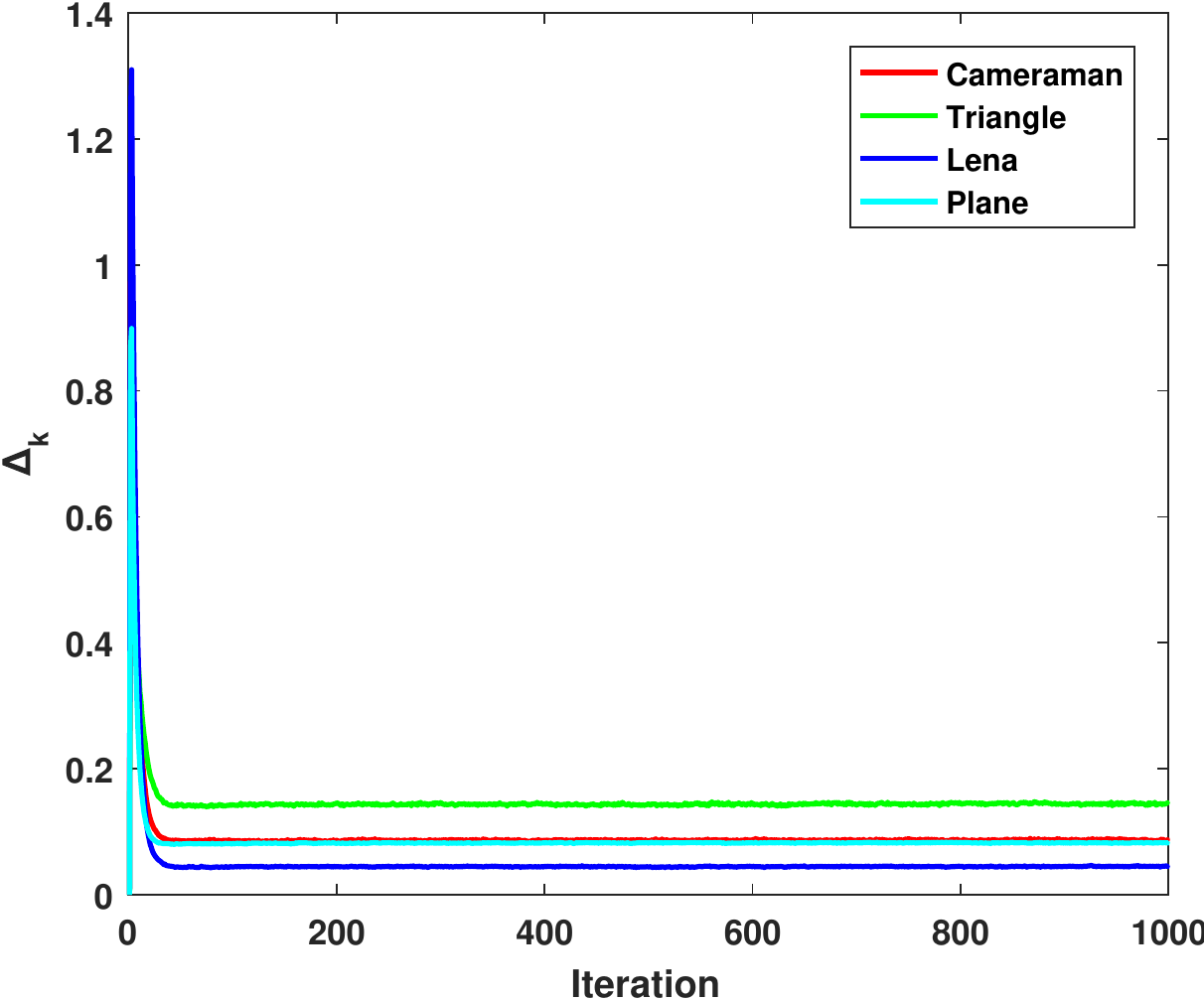}}
      \hspace{.2in}
      \subfigure[TAC-GC]{
			\includegraphics[width=0.35\linewidth]{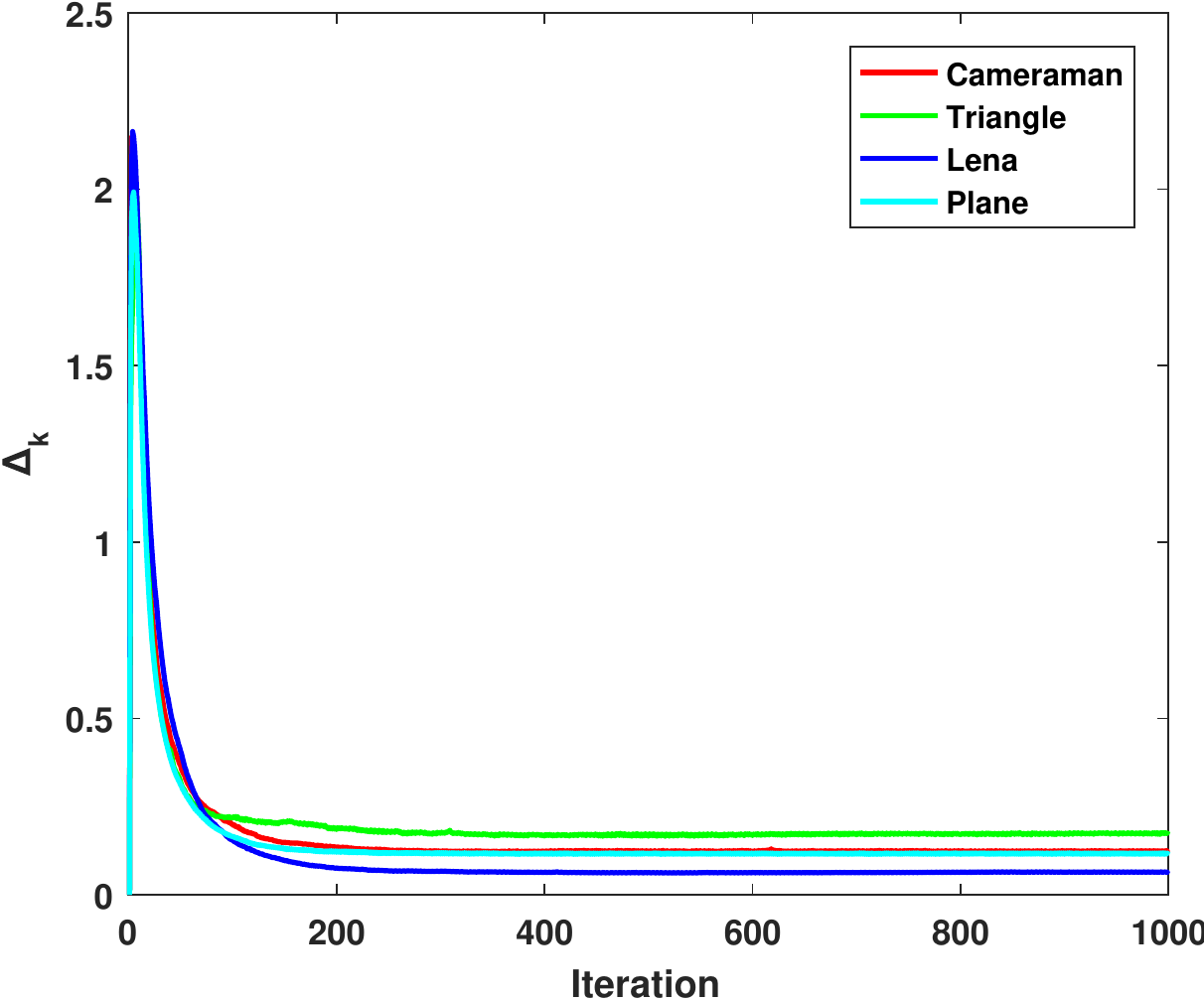}}
	\caption{The behavior of $\Delta_k$ with iteration numbers in TAC method on the different test images with $\tau=0$ and $\sigma=0$. Note that $\Delta_k\geq0$ for all iterations.}
	\label{DeltakTAC}
\end{figure*}

The sequence $\{(u^{k_l},\bm v^{k_l};\bm \Lambda^{k_l})\}_{l \in \mathbb N}$ satisfies the optimality conditions in Algorithm 3.1, i.e.,
\begin{eqnarray*}
\left \{
\begin{array}{ll}
{\lambda}(u^{k_l+1}-f)-\mu\nabla \cdot \big(\nabla u^{k_l+1}-\bm v^{k_l}-\frac{\bm\Lambda^{k_l}}{\mu}\big)+\tau (u^{k_l+1}-u^{k_l})= 0,\\
g(\kappa^{k_l+1})s^{k_l+1}+\mu\big(\bm v^{k_l+1}-\nabla u^{k_l+1}+\frac{\bm \Lambda^{k_l}}{\mu}\big)+\sigma(\bm v^{k_l+1}-\bm v^{k_l})= 0, \\
\bm\Lambda^{k_l+1}=\bm\Lambda^{k_l}+\mu(\bm v^{k_l+1}-\nabla u^{k_l+1}).
\end{array}
\right.
\end{eqnarray*}

Taking the limit from the convergent subsequence, we obtain
\begin{eqnarray*}
\left \{
\begin{array}{ll}
\lambda(u^\ast-f)+\nabla \cdot \bm\Lambda^\ast=0,\\
g(\kappa^\ast)s^\ast+\bm\Lambda^\ast=0,~s^\ast\in \partial |\bm v^\ast|,\\
\bm v^\ast-\nabla u^\ast=0,
\end{array}
\right.
\end{eqnarray*}
for almost every point in $\rm{\Omega}$. This implies that the generated limit point $(u^\ast, \bm v^\ast; \bm\Lambda^{\ast})$ by sequence $\{(u^k,\bm v^k;\bm\Lambda^k)\}_{k\in \mathbb N}$ satisfies the first-order optimality conditions \eqref{optimal conditions}.
\end{proof}

\begin{remark}
The proof of Theorem 3.1 requires $\Delta_k\geq 0$. Indeed, it is difficult to find any lower bound theoretically. As shown in FIG. \ref{DeltakTAC}, the numerical experiments show that the behavior of $\Delta_k$ satisfies the assumption even when $\tau$ and $\sigma$ are fixed as $0$. Thus, it is somehow reasonable to make such assumption on $\Delta_k$.
\end{remark}
\begin{remark}
We always set $\tau=0$ and $\sigma=0$ in the numerical implementations, which is the case in FIG. \ref{DeltakTAC}.
\end{remark}

\section{Experiments}
\label{sect4}

In this section, comprehensive experiments on both synthetic and real image restoration with different noise distributions are implemented to verify the efficiency and superiority of our curvature-based variational models. These experimental images are composed of different edges and texture structures as well as homogenous regions. All numerical experiments are performed utilizing Matlab R2016a on a machine with 3.40GHz Intel(R) Core(TM) i7-6700 CPU and 32GB RAM.

In our experiments, we adopt the popular peak signal-to-noise ratio (PSNR) and structural similarity (SSIM) \cite{wang2004image} to quantitatively evaluate the imaging performance under different image degradation conditions. In addition, the variation of the residuals as well as the relative errors and numerical energy are provided to illustrate the convergence of the ADMM algorithm versus the iterations, which are defined as
\[ R(\bm v^k, u^k) = \|{\bm v}^k-\nabla u^k\|_1,\]and
\[ReErr(\bm\Lambda^k) = \frac{\|\bm\Lambda^k-\bm\Lambda^{k-1}\|_{1}}{\|\bm\Lambda^{k-1}\|_{1}} ~~~\mbox{and}~~~ReErr(u^k)=\frac{\|u^k-u^{k-1}\|_{1}}{\|u^{k-1}\|_{1}},
\]
and
\[E(u^k) =\sum_{1\leq i,j\leq m} g\big(\kappa(u^{k-1}_{i,j})\big) |\nabla u^k_{i,j}| + \frac{\lambda}{2}\|u^k-f\|^2.\]

\subsection{Parameters discussing}

There are total three parameters in the proposed algorithm such that $\lambda,\alpha, \mu$. The most important parameter in our model is the $\lambda$, which is used to balance the contribution between the data fidelity and regularization term. The smaller the $\lambda$ is, the smoother the restoration is. If $\lambda$ is too large, the model fails to remove the noises, while if $\lambda$ is too small,  the restoration becomes over-smoothed and some features will be lost. The positive parameter $\alpha$ can balance the influence between the curvature and arclength, which should be chosen  appropriately to smooth the homogenous regions as well as preserve the image details. The penalty parameter $\mu$ controls the convergent speed and stability of the proposed algorithm, we notice that large $\mu$ reduces both efficiency of the algorithm and restoration quality, while too small $\mu$ can not guarantee the stability of proposed algorithm. The specific values of $\lambda$, $\alpha$ and $\mu$ are given in each experiment. Besides, we choose $h=1$ throughout the experiments for the best balance between the smoothness and fine details.

\begin{figure*}[t]
      \centering
      \subfigure{
			\includegraphics[width=0.18\linewidth]{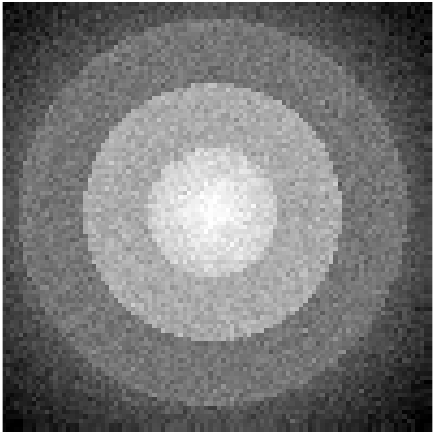}}\hspace{-1ex}
      \subfigure{
			\includegraphics[width=0.18\linewidth]{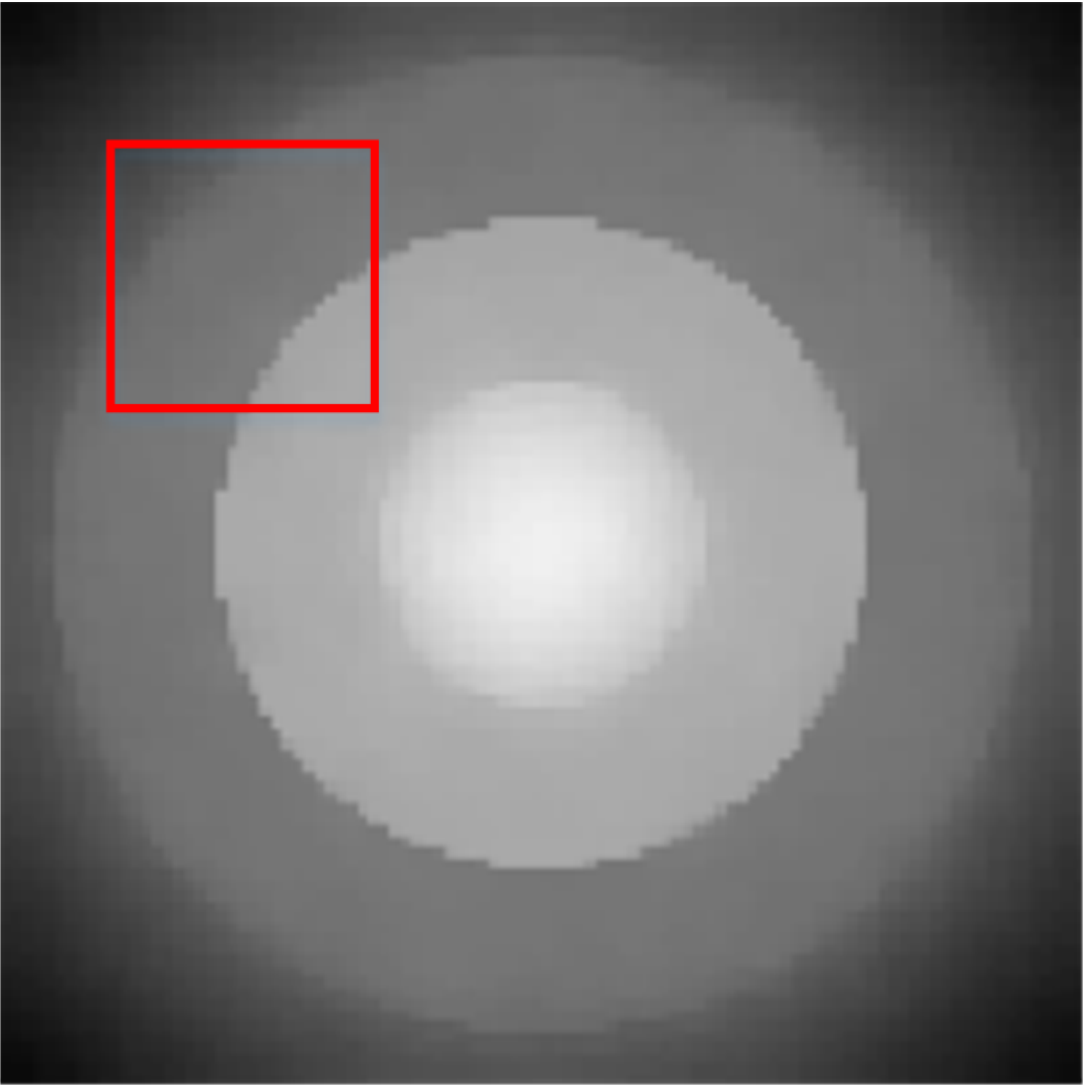}}\hspace{-1ex}
      \subfigure{
            \includegraphics[width=0.18\linewidth]{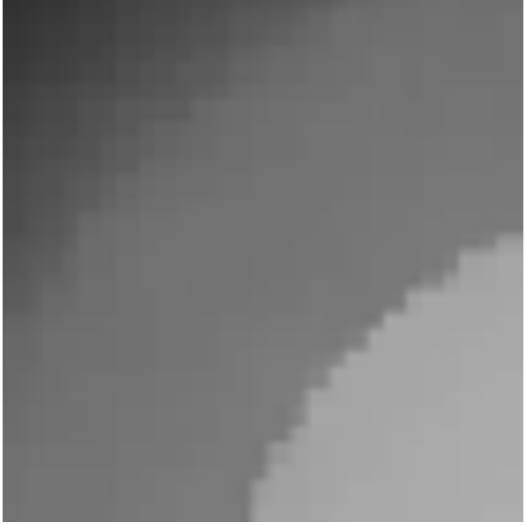}}\hspace{-1ex}
      \subfigure{
			\includegraphics[width=0.18\linewidth]{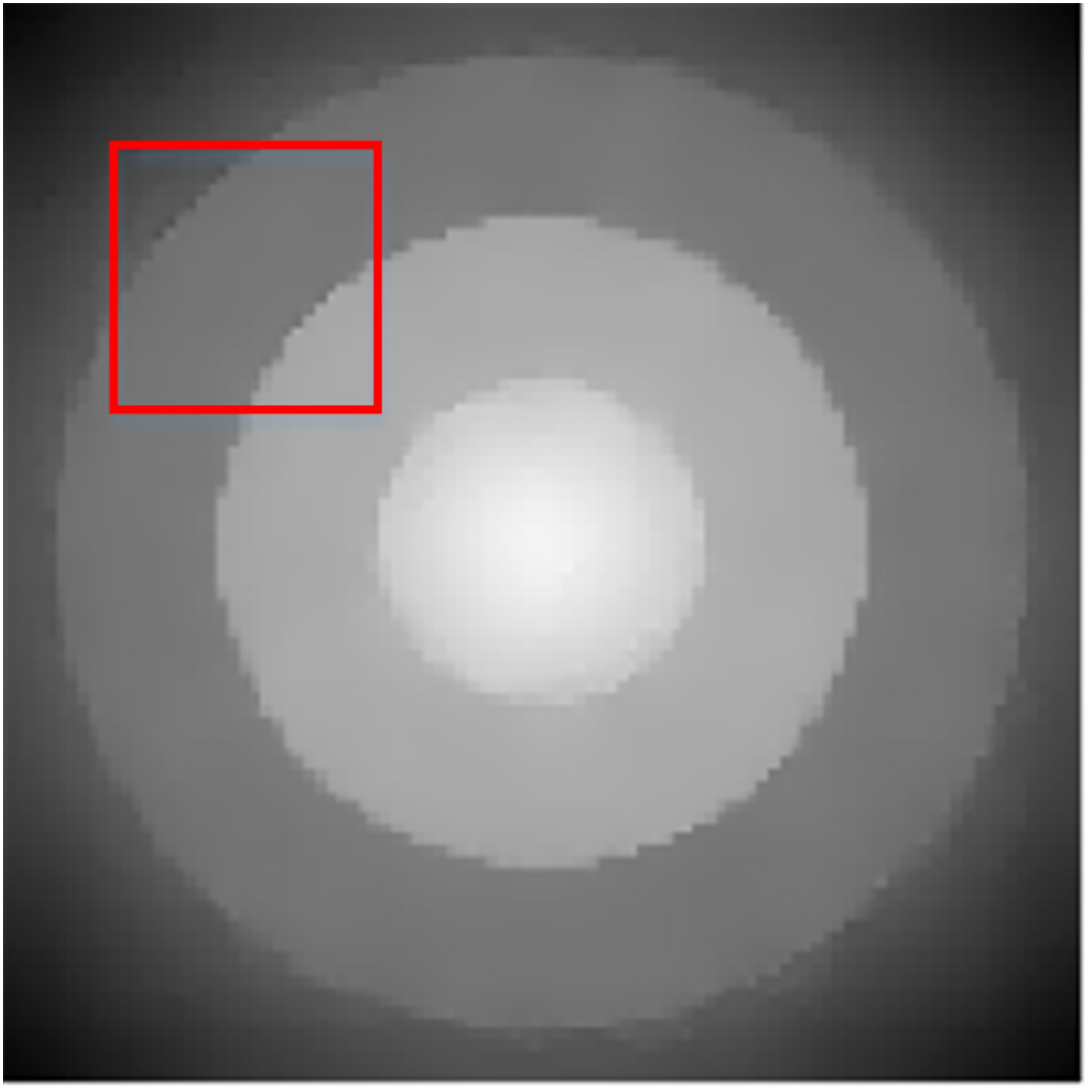}}\hspace{-1ex}
      \subfigure{
            \includegraphics[width=0.18\linewidth]{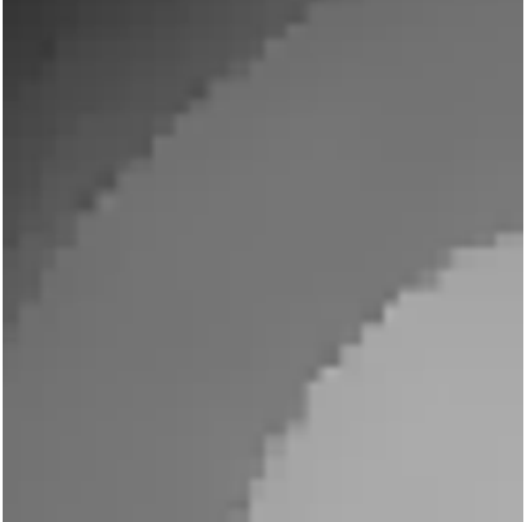}}
      \subfigure{
			\includegraphics[width=0.18\linewidth]{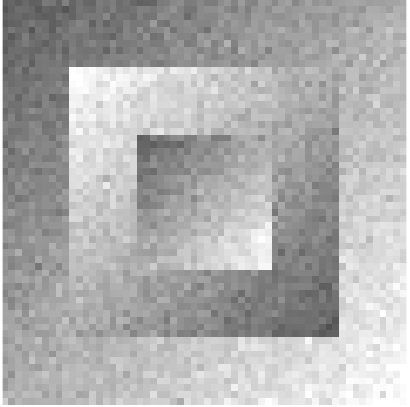}}\hspace{-1ex}
      \subfigure{
			\includegraphics[width=0.18\linewidth]{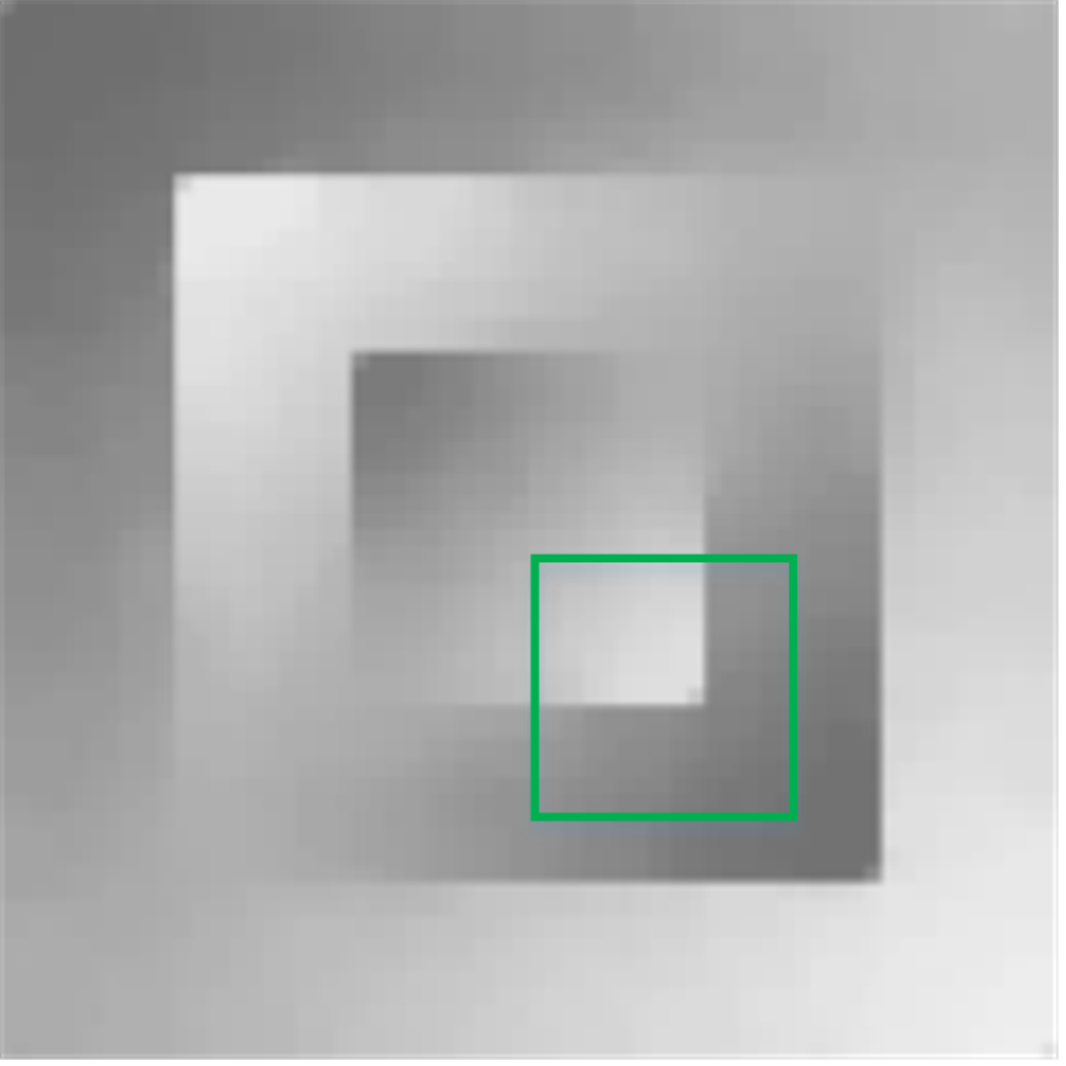}}\hspace{-1ex}
      \subfigure{
            \includegraphics[width=0.18\linewidth]{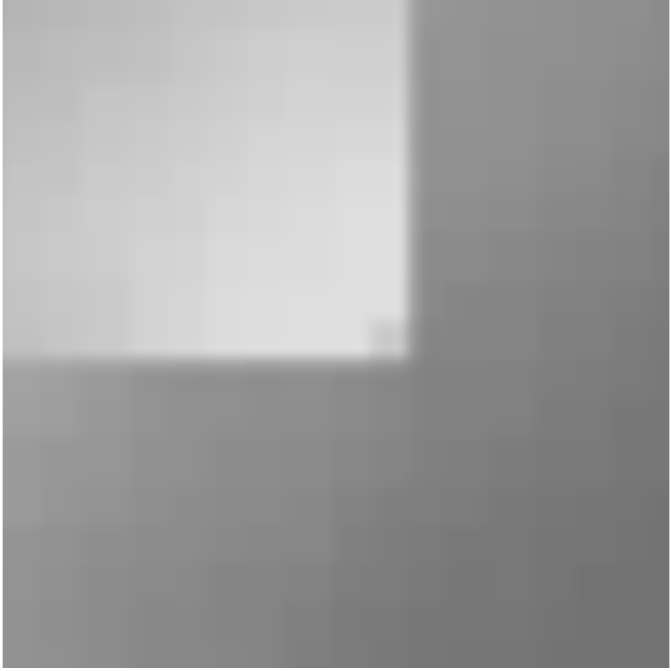}}\hspace{-1ex}
      \subfigure{
			\includegraphics[width=0.18\linewidth]{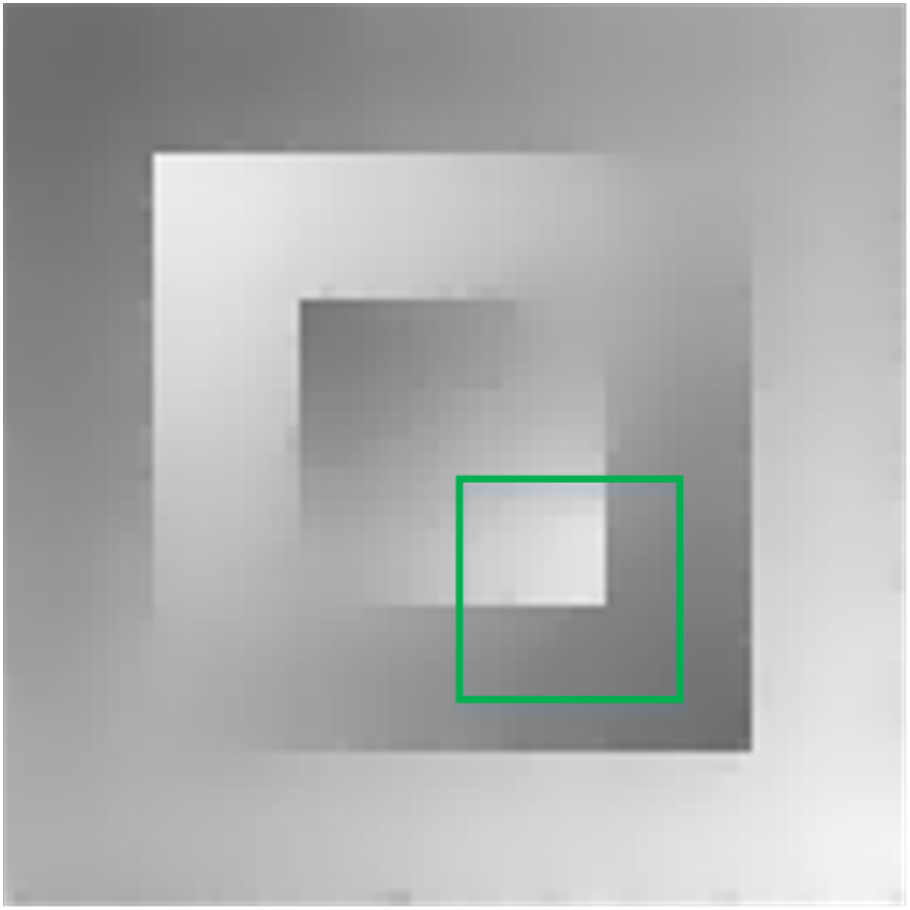}}\hspace{-1ex}
      \subfigure{
            \includegraphics[width=0.18\linewidth]{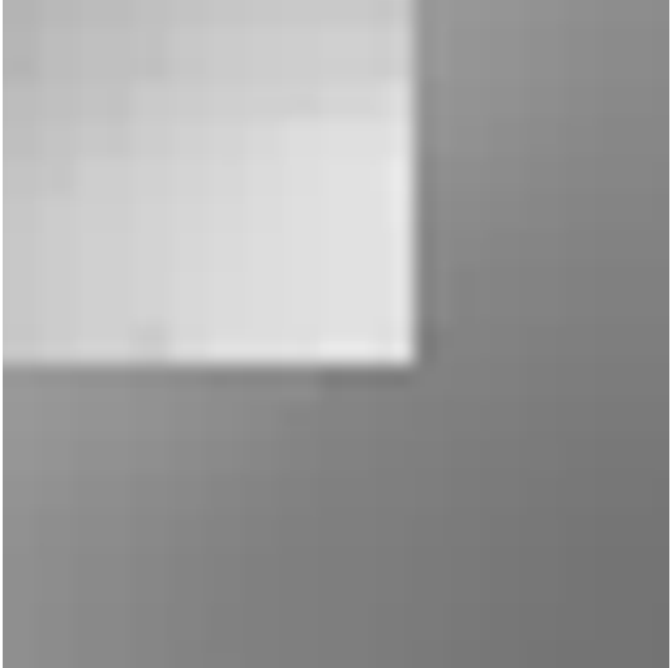}}
      \setcounter{subfigure}{0}
      \subfigure[Noisy]{
			\includegraphics[width=0.18\linewidth]{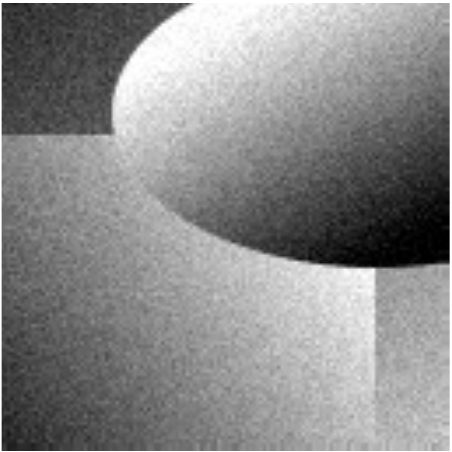}}\hspace{-1ex}
      \subfigure[Euler's elastica]{
			\includegraphics[width=0.18\linewidth]{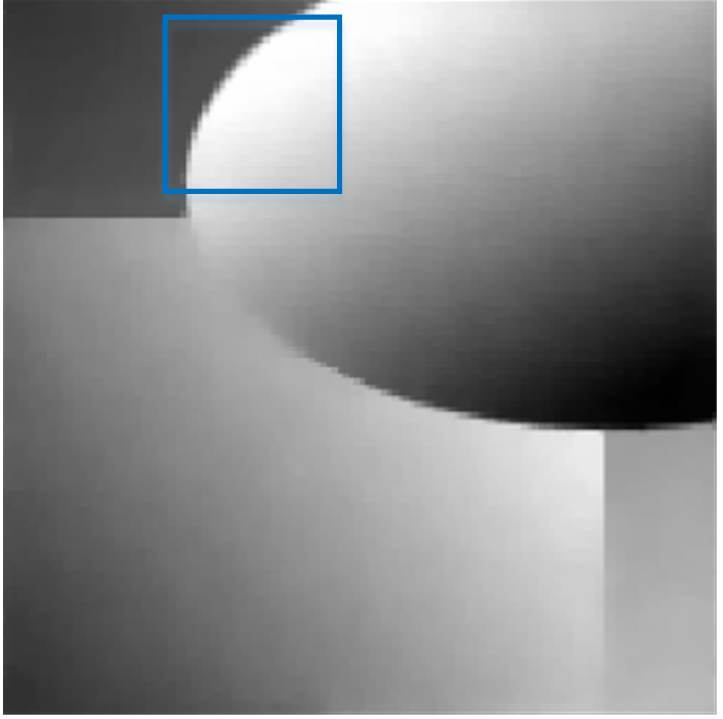}}\hspace{-1ex}
      \subfigure[zoomed]{
			\includegraphics[width=0.18\linewidth]{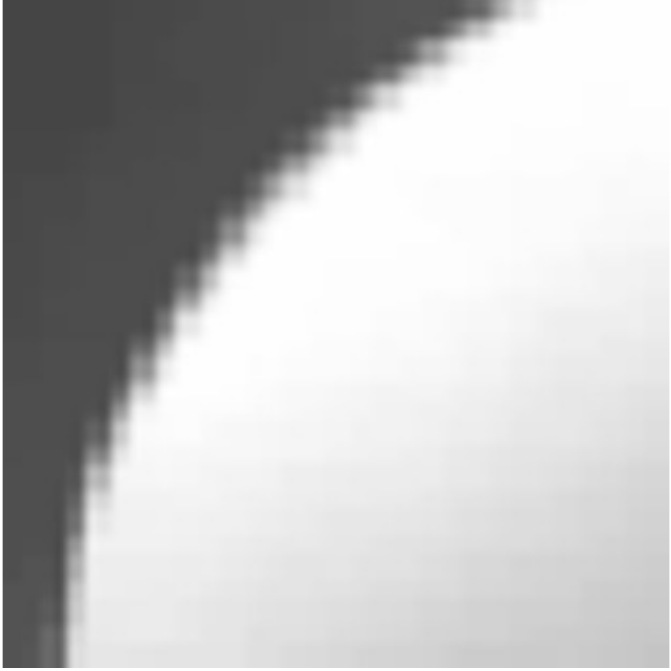}}\hspace{-1ex}
      \subfigure[TAC-MC]{
			\includegraphics[width=0.18\linewidth]{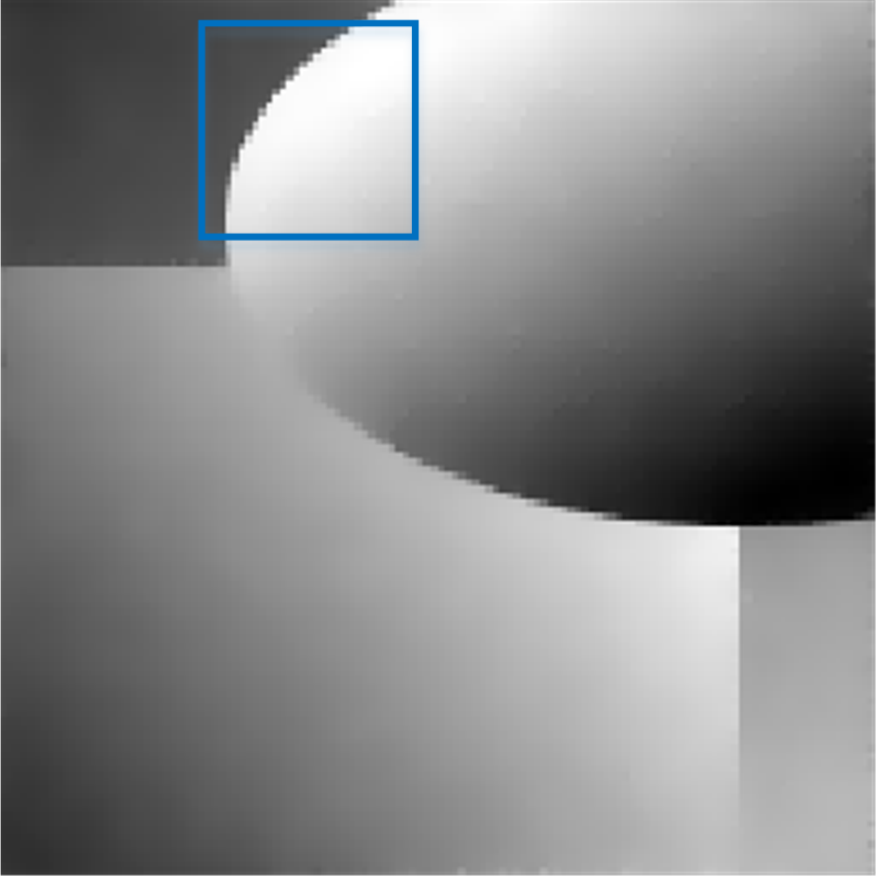}}\hspace{-1ex}
      \subfigure[zoomed]{
			\includegraphics[width=0.18\linewidth]{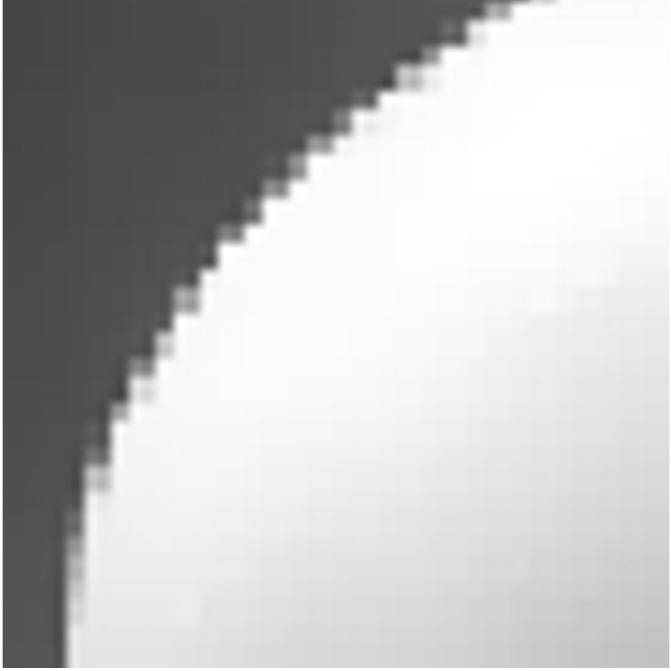}}
	\caption{The denoising results of the smooth images A1, A2 and A3 by the Euler's elastica and our TAC-MC model.}
	\label{smoothimages}
\end{figure*}

\begin{table*}[t]
\caption{The PSNR and SSIM of Gaussian noise removal for the Euler's elastica and our curvature-based models.}
\label{denoise1}
\tiny
\begin{center}
\begin{tabular}{c|c|c|c|c|c|c|c|c}
\hline\hline
Images & Noisy images & Euler & TAC-MC & TAC-GC & TSC-MC & TSC-GC & TRV-MC & TRV-GC \\
\hline\hline
A1($100\times100$) & 28.24 & 36.52 & \bf{38.04} & 37.86 & 37.92 & 37.78 & 37.81 & 37.71\\
PSNR/SSIM & 0.5925 & 0.9515 & \bf{0.9656} & 0.9646 & 0.9645 & 0.9633 & 0.9623 & 0.9616\\
\hline
A2($60\times60$) & 28.29 & 35.02 & \bf{35.98} & 35.74 & 35.70 & 35.63 & 35.62 & 35.84\\
PSNR/SSIM & 0.6458 & 0.9484 & \bf{0.9569} & 0.9552 & 0.9546 & 0.9540 & 0.9528 & 0.9555\\
\hline
A3($128\times128$) & 28.25 & 38.85 & \bf{39.70} & 39.58 & 39.54 & 39.39 & 39.41 & 39.62\\
PSNR/SSIM & 0.5164 & 0.9706 & \bf{0.9775} & 0.9770 & 0.9763 & 0.9750 & 0.9752 & 0.9768\\
\hline\hline
\end{tabular}
\end{center}
\end{table*}

\begin{figure*}[t]
      \centering
      \subfigure[A1-Clean]{
            \includegraphics[width=0.25\linewidth]{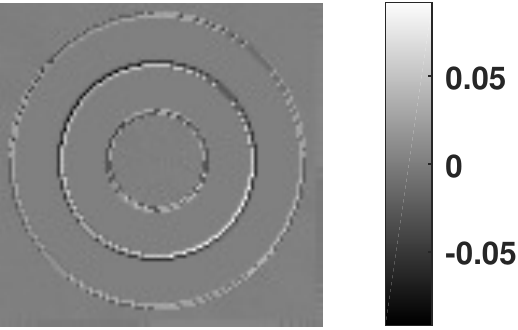}}\hspace{-1.2ex}
      \subfigure[A1-Euler's elastica]{
            \includegraphics[width=0.26\linewidth]{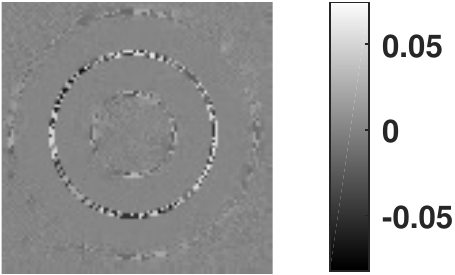}}\hspace{-1.2ex}
      \subfigure[A1-TAC-MC]{
            \includegraphics[width=0.25\linewidth]{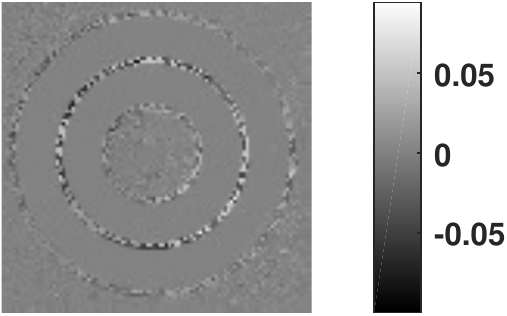}}\hspace{-1.2ex}
      \subfigure[A2-Clean]{
            \includegraphics[width=0.25\linewidth]{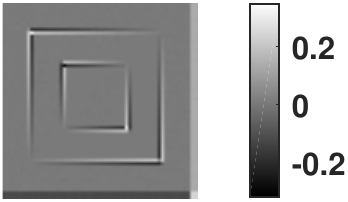}}\hspace{-1.0ex}
      \subfigure[A2-Euler's elastica]{
            \includegraphics[width=0.25\linewidth]{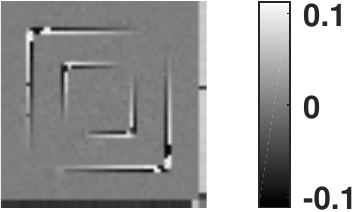}}\hspace{-1.0ex}
      \subfigure[A2-TAC-MC]{
            \includegraphics[width=0.25\linewidth]{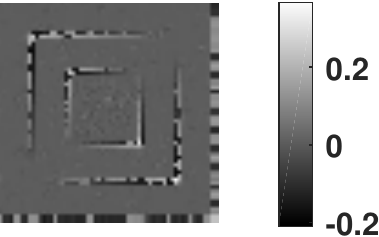}}
	\caption{The numerical MC of the clean images, denoising images obtained by the Euler's elastica and our TAC-MC model.}
	\label{MCimages}
\end{figure*}

\begin{figure*}[t]
      \centering
      \subfigure[A1-Clean]{
			\includegraphics[width=0.3\linewidth]{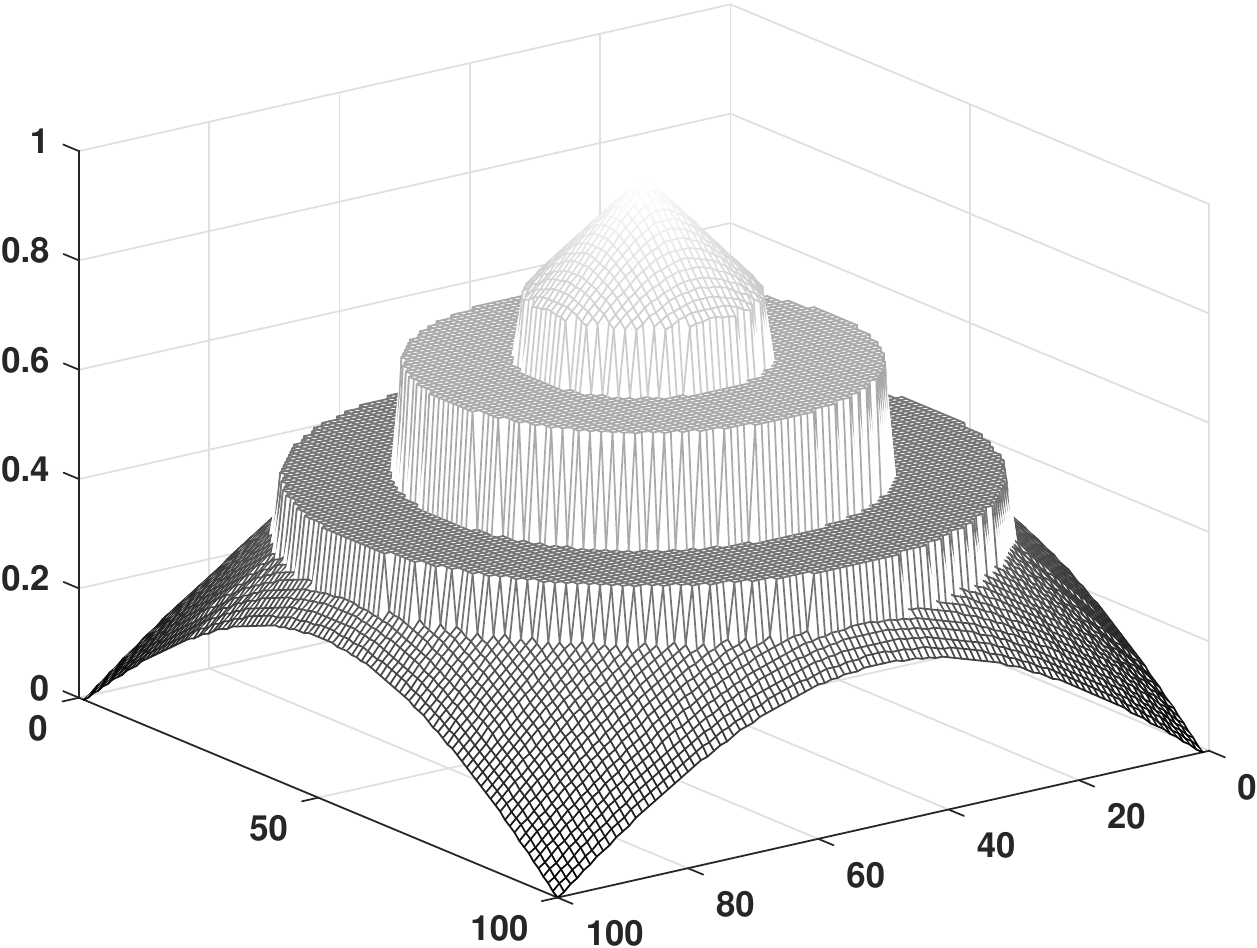}}
      \subfigure[A1-Euler's elastica]{
			\includegraphics[width=0.3\linewidth]{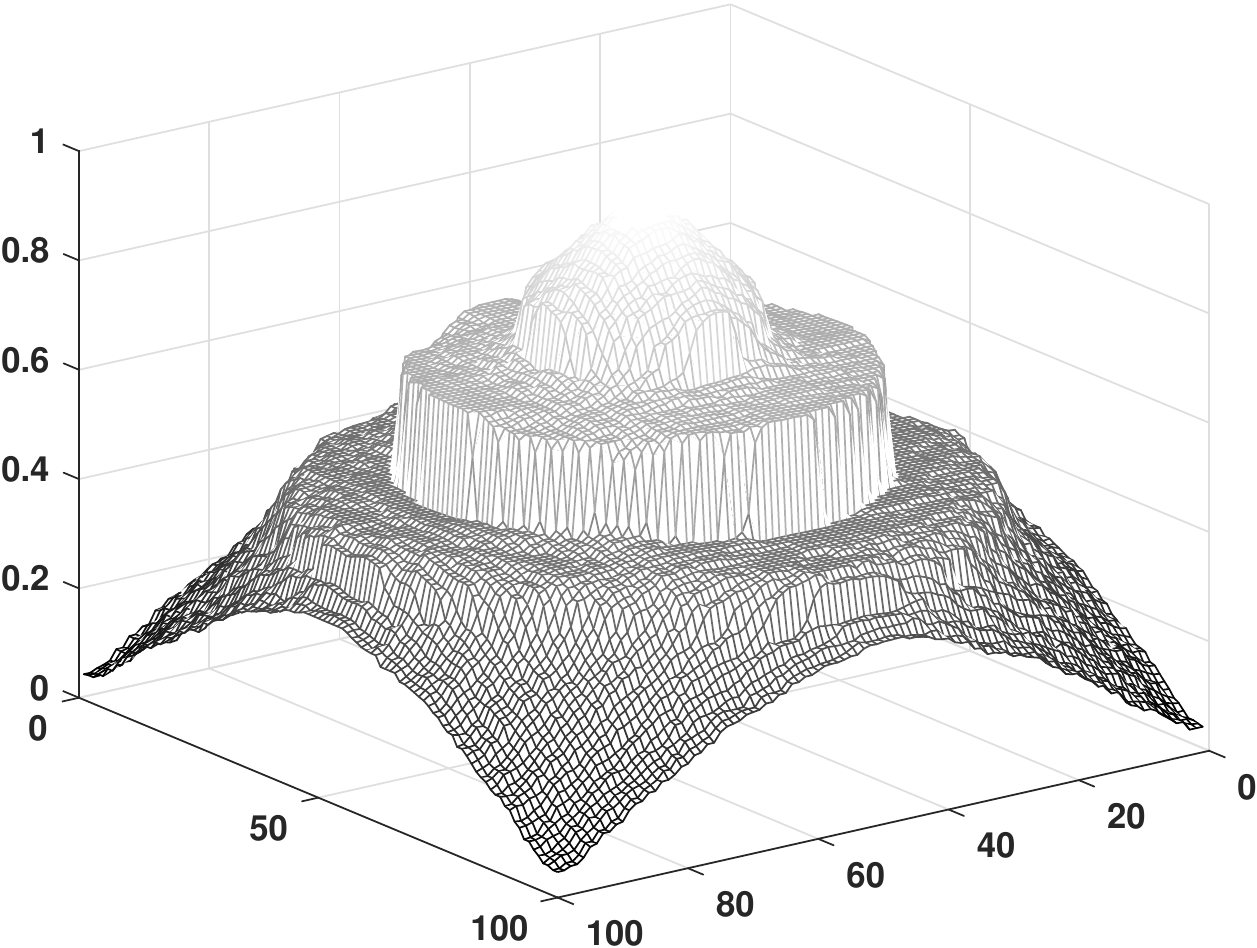}}
      \subfigure[A1-TAC-MC]{
			\includegraphics[width=0.3\linewidth]{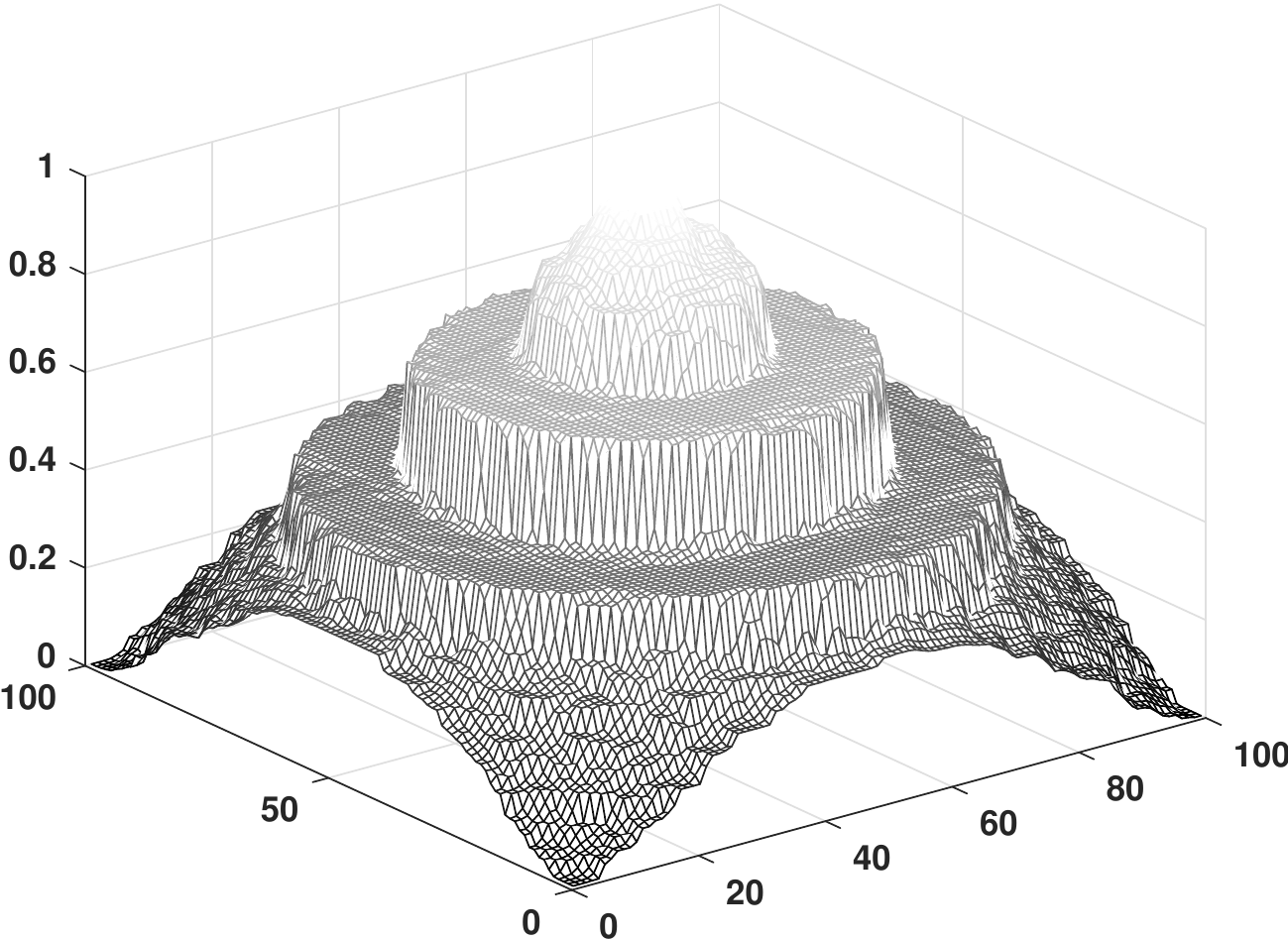}}
      \subfigure[A2-Clean]{
			\includegraphics[width=0.3\linewidth]{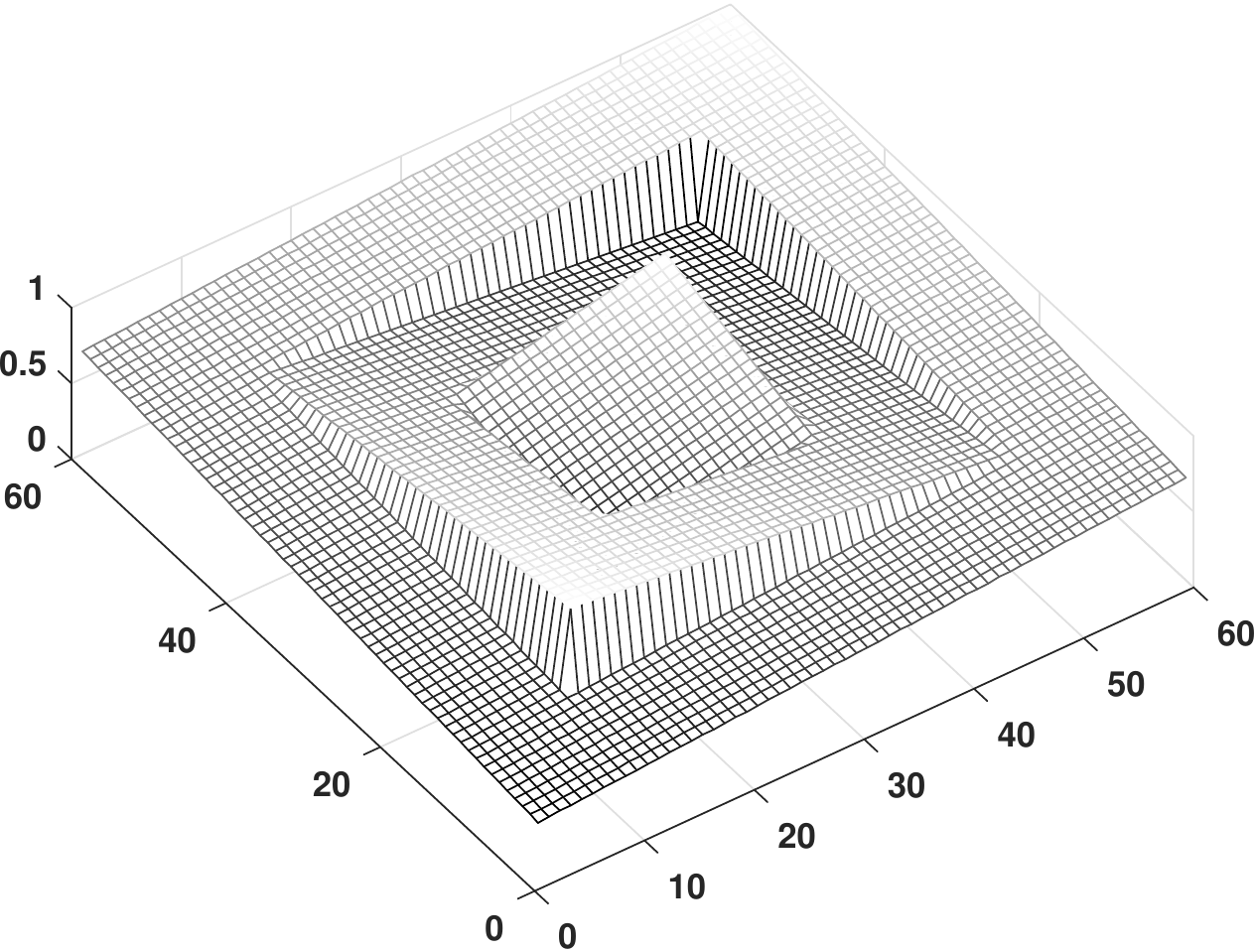}}
      \subfigure[A2-Euler's elastica]{
            \includegraphics[width=0.3\linewidth]{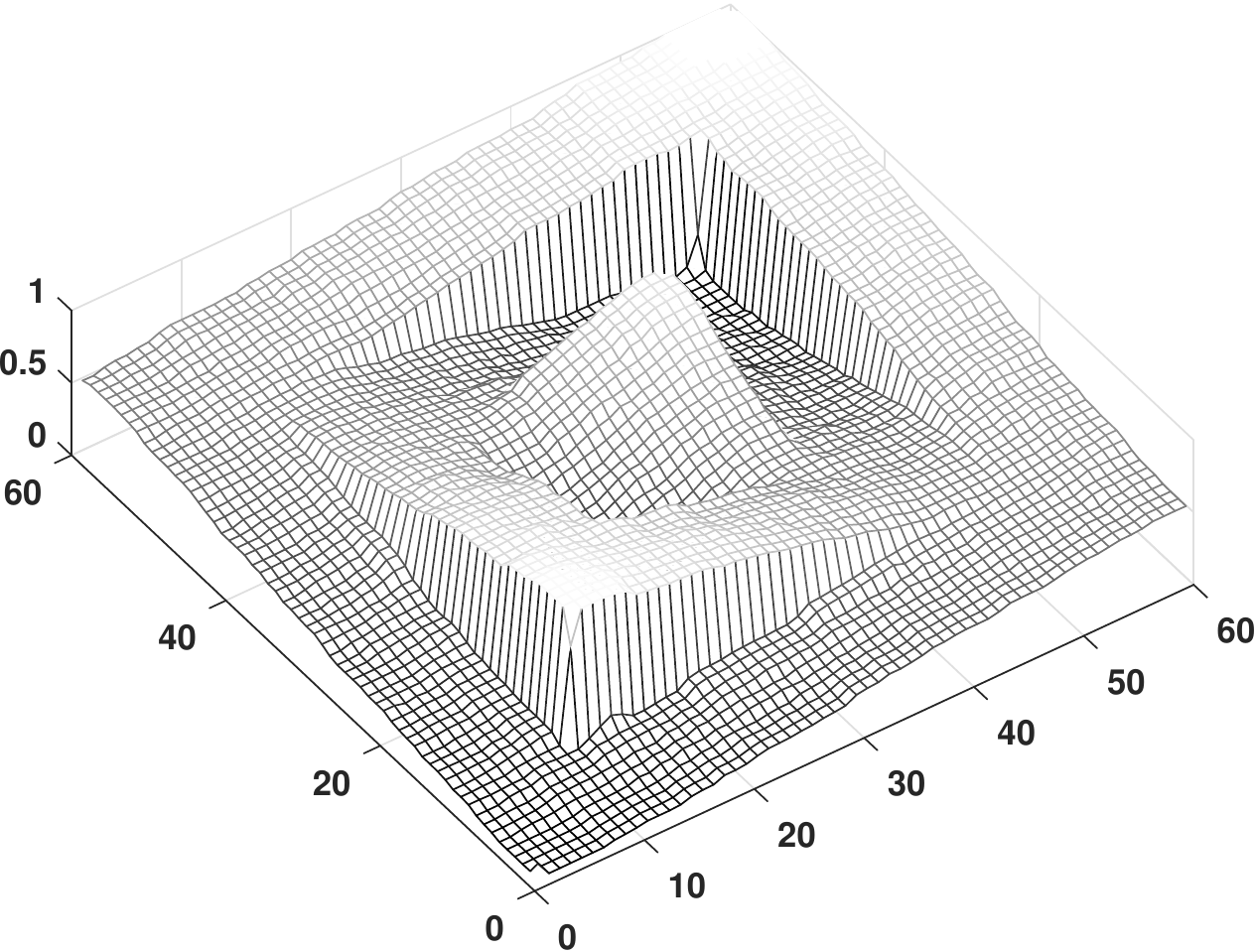}}
      \subfigure[A2-TAC-MC]{
			\includegraphics[width=0.3\linewidth]{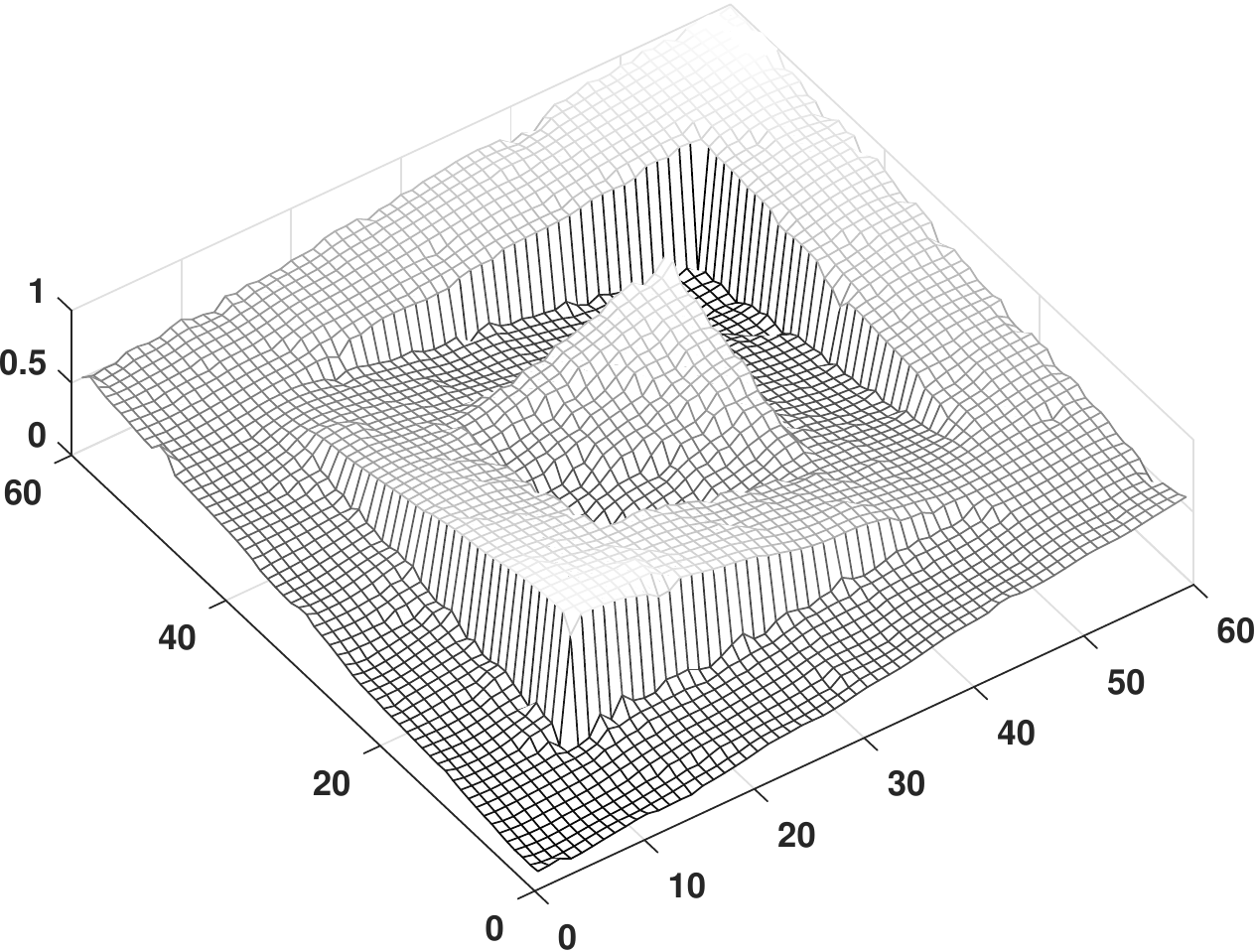}}
	\caption{The image surfaces of the clean images, denoising images obtained by the Euler's elastica and our TAC-MC model.}
	\label{imagesurfaces}
\end{figure*}

\begin{figure*}[t]
      \centering
      %\subfigure[Noisy(22.45/0.4087)]{
		%	\includegraphics[width=0.15\linewidth]{noisyman.pdf}}
      \subfigure{
			\includegraphics[width=0.16\linewidth]{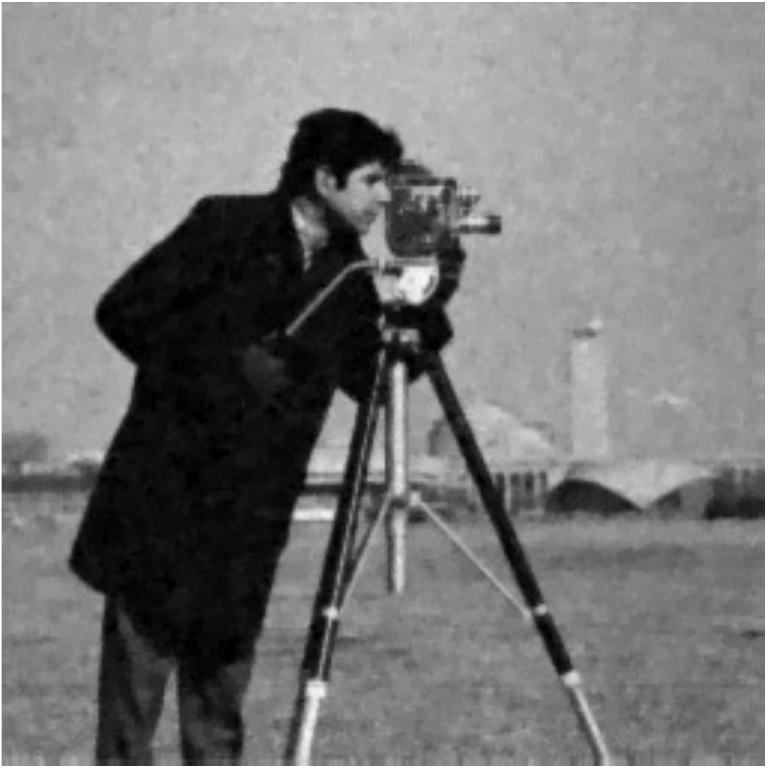}}\hspace{-1ex}
      \subfigure{
            \includegraphics[width=0.16\linewidth]{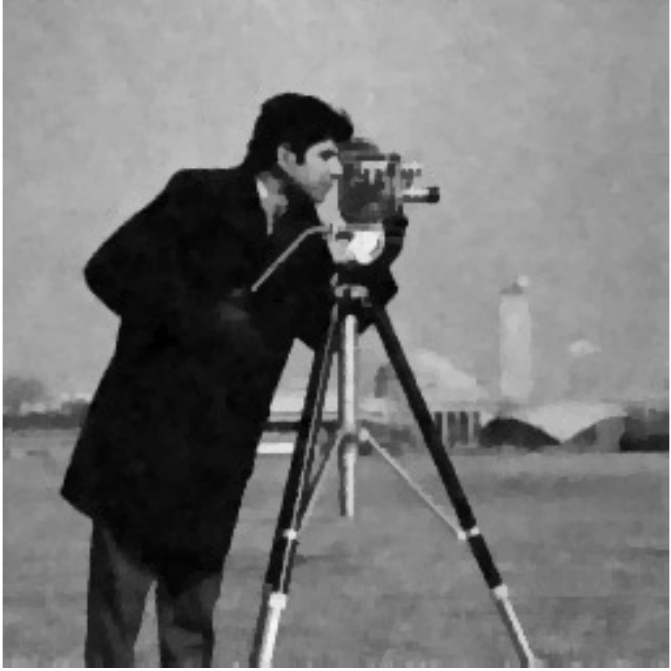}}\hspace{-1ex}
      \subfigure{
			\includegraphics[width=0.16\linewidth]{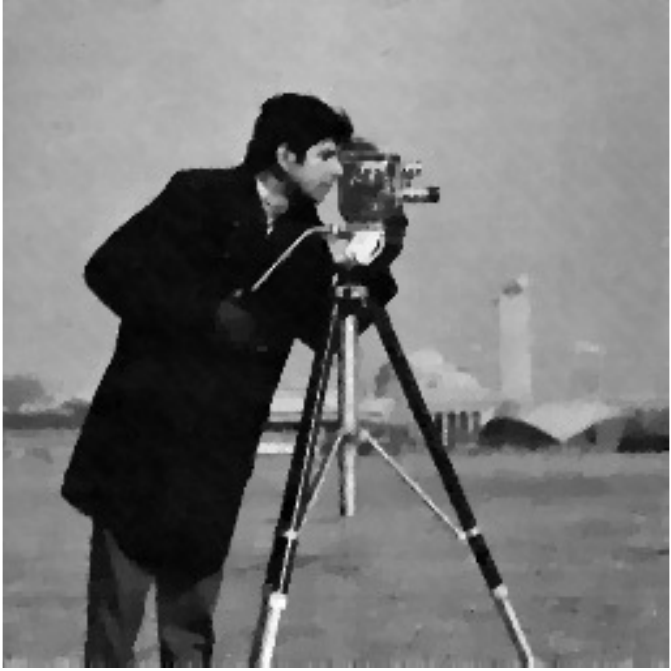}}\hspace{-1ex}
      \subfigure{
			\includegraphics[width=0.16\linewidth]{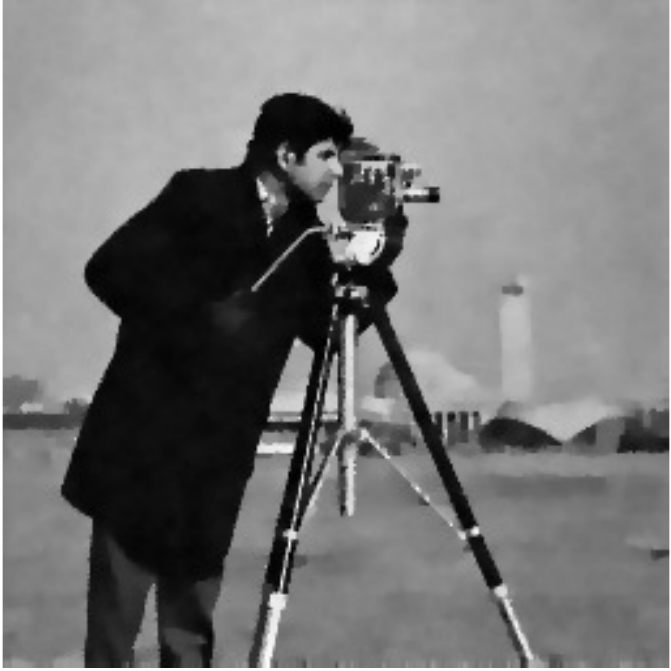}}\hspace{-1ex}
      \subfigure{
			\includegraphics[width=0.16\linewidth]{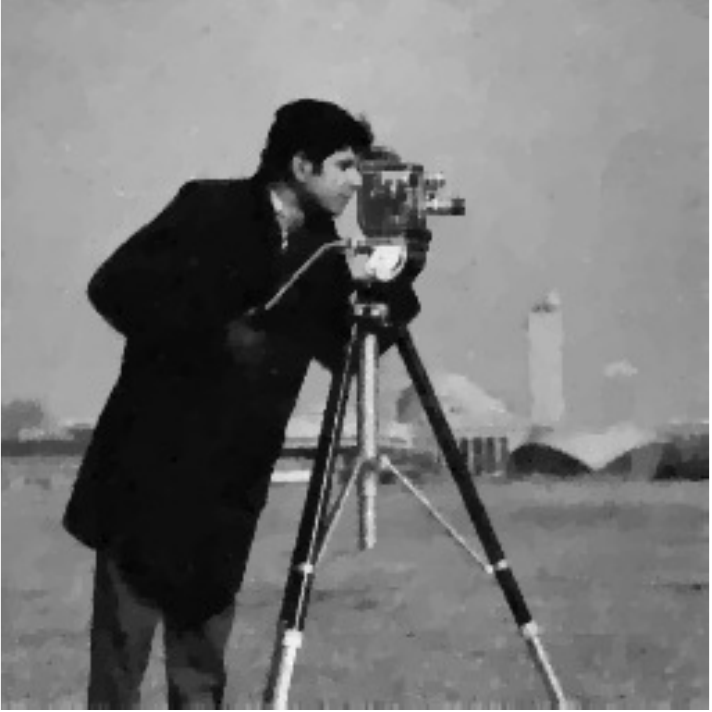}}\hspace{-1ex}
      \subfigure{
			\includegraphics[width=0.16\linewidth]{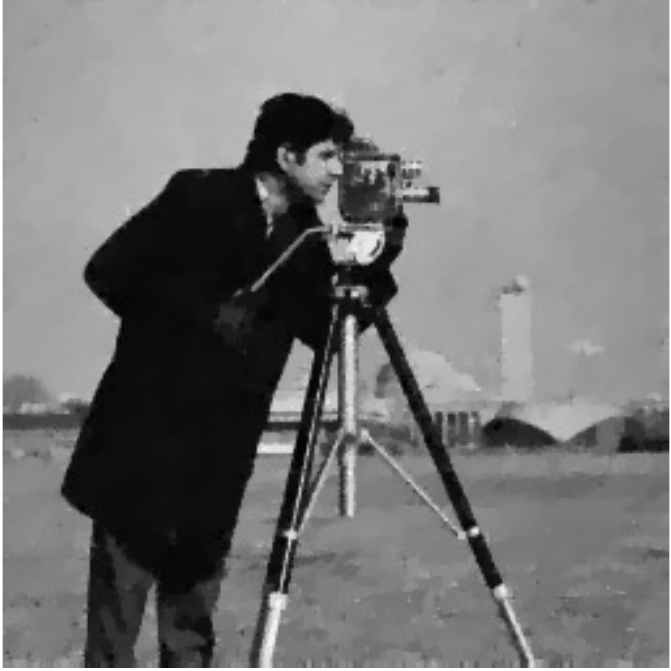}}
      %\subfigure[f-u]{
		%	\includegraphics[width=0.15\linewidth]{manu-f.pdf}}
      \setcounter{subfigure}{0}
      \subfigure[TV]{
            \includegraphics[width=0.16\linewidth]{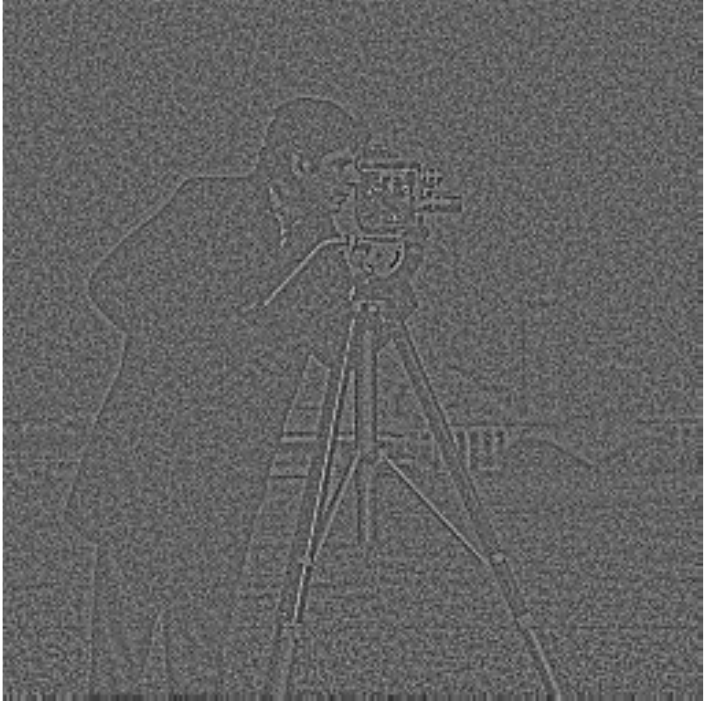}}\hspace{-1ex}
      \subfigure[Euler]{
            \includegraphics[width=0.16\linewidth]{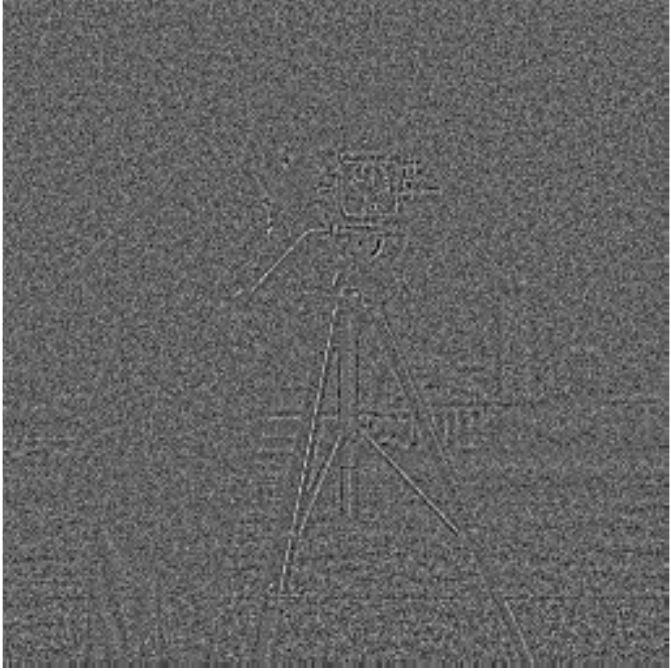}}\hspace{-1ex}
      \subfigure[TGV]{
            \includegraphics[width=0.16\linewidth]{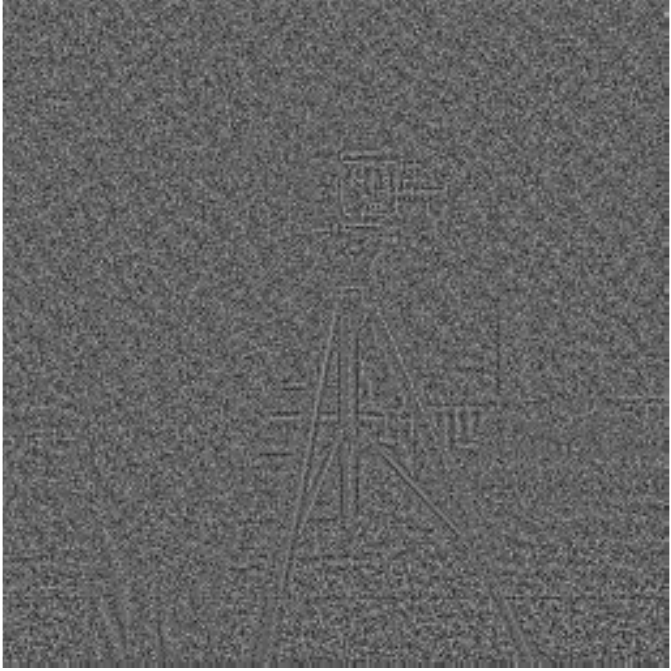}}\hspace{-1ex}
      \subfigure[MEC]{
            \includegraphics[width=0.16\linewidth]{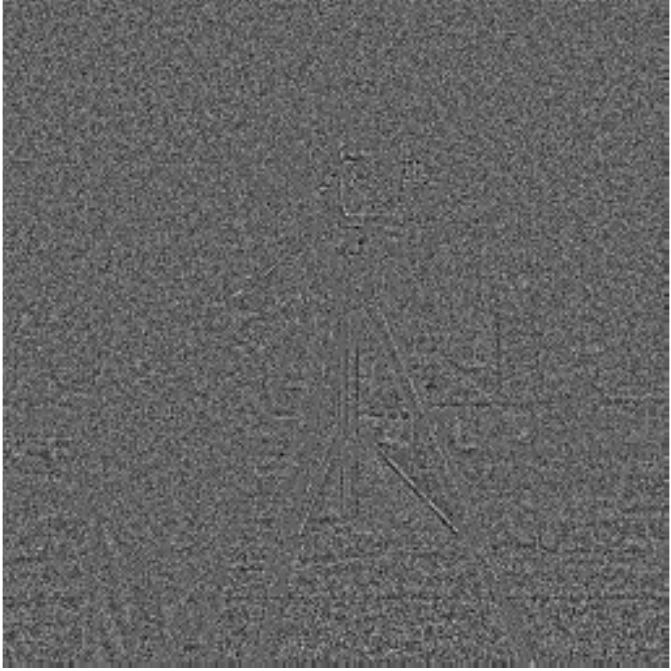}}\hspace{-1ex}
      \subfigure[TAC-MC]{
			\includegraphics[width=0.16\linewidth]{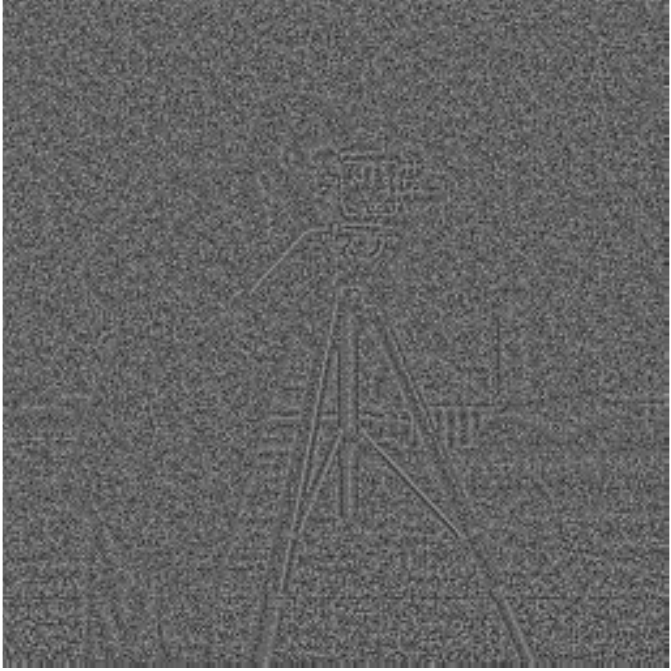}}\hspace{-1ex}
      \subfigure[TAC-GC]{
            \includegraphics[width=0.16\linewidth]{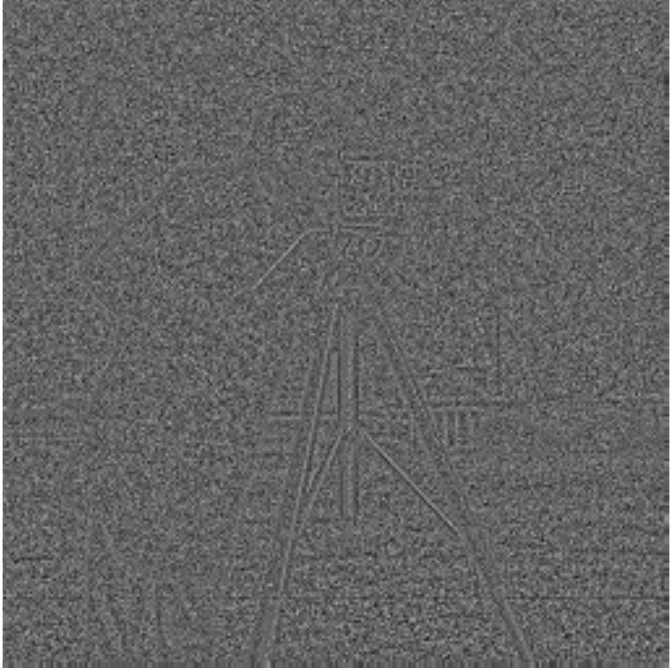}}
	\caption{The denoising results of `Cameraman' (top) and the corresponding residual images (bottom) of the comparative methods.}
	\label{DenoisingCameraman}
\end{figure*}

\begin{figure*}[t]
      \centering
      \subfigure[Relative error in $u^k$]{
			\includegraphics[width=0.23\linewidth]{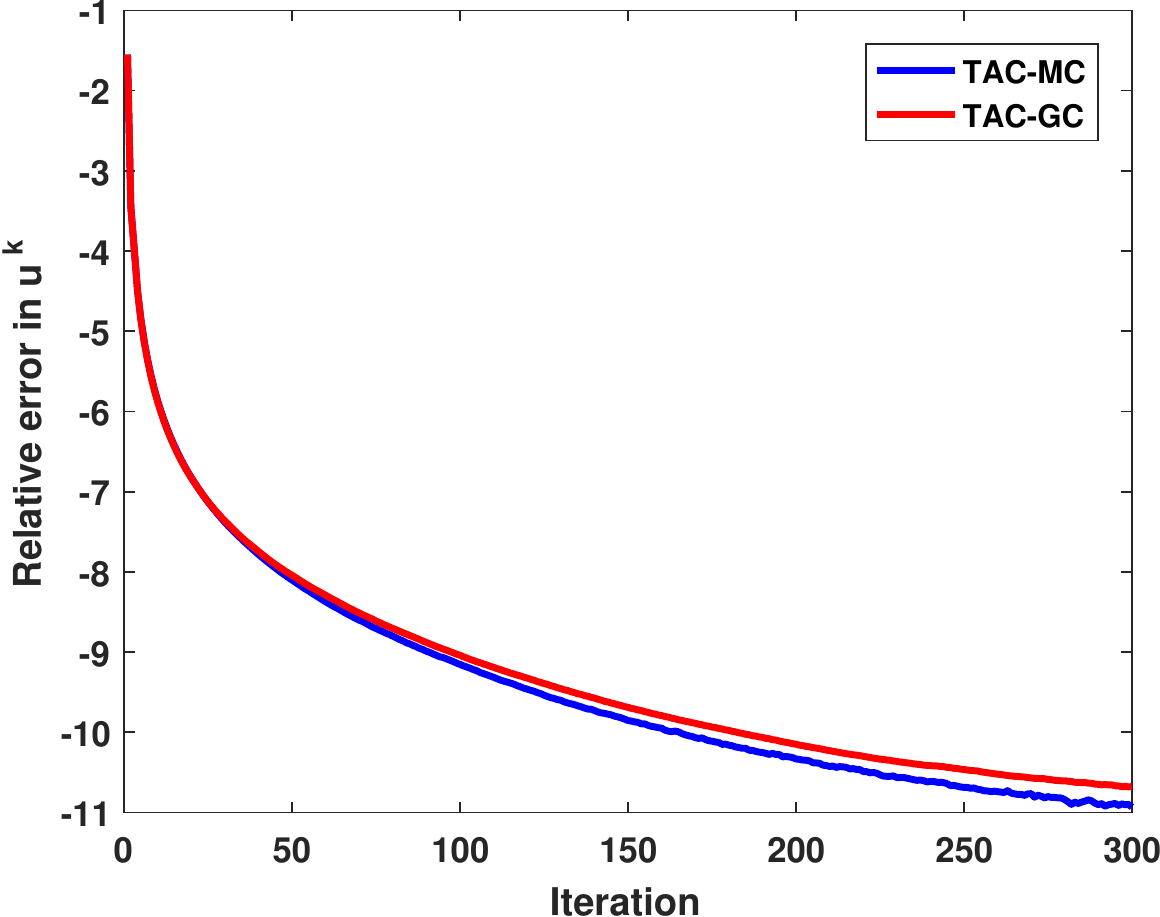}}
      \subfigure[Relative residual]{
			\includegraphics[width=0.23\linewidth]{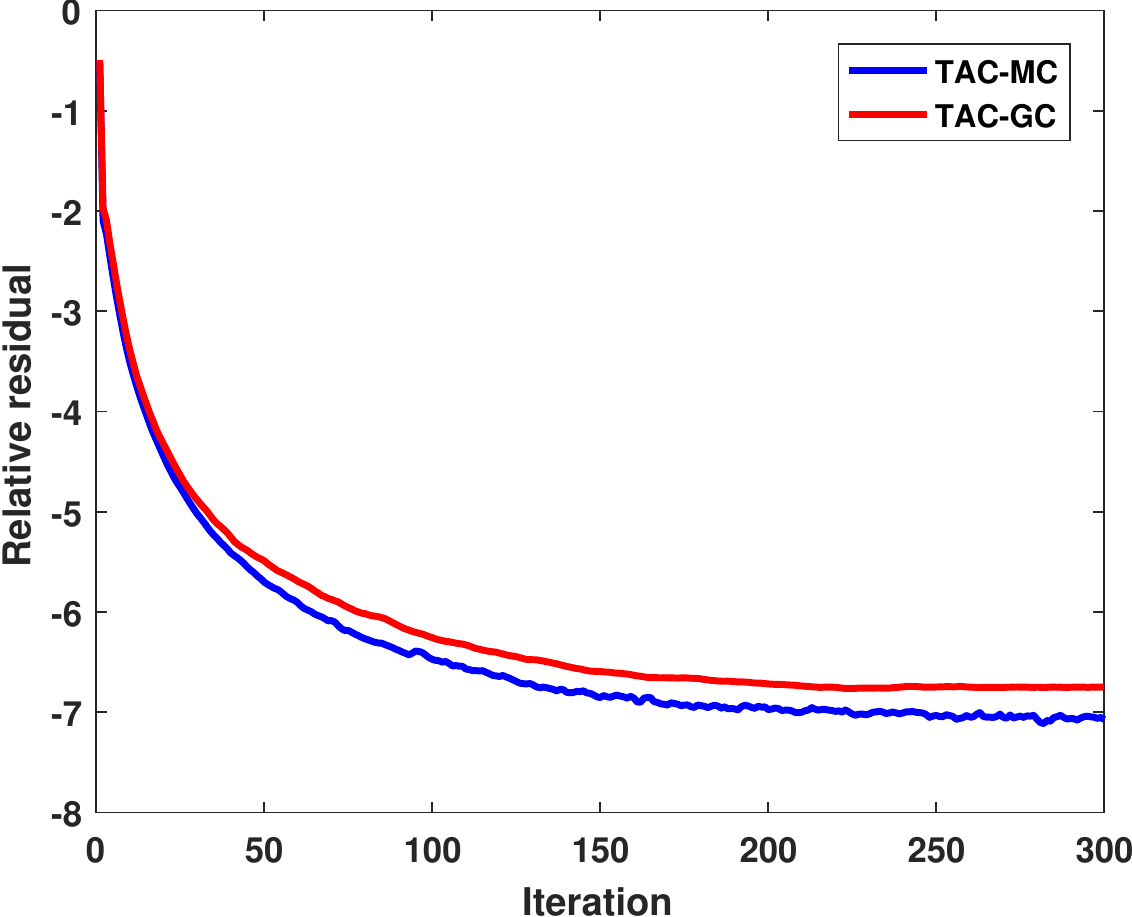}}
      \subfigure[Relative error in $\bm\Lambda^k$]{
            \includegraphics[width=0.23\linewidth]{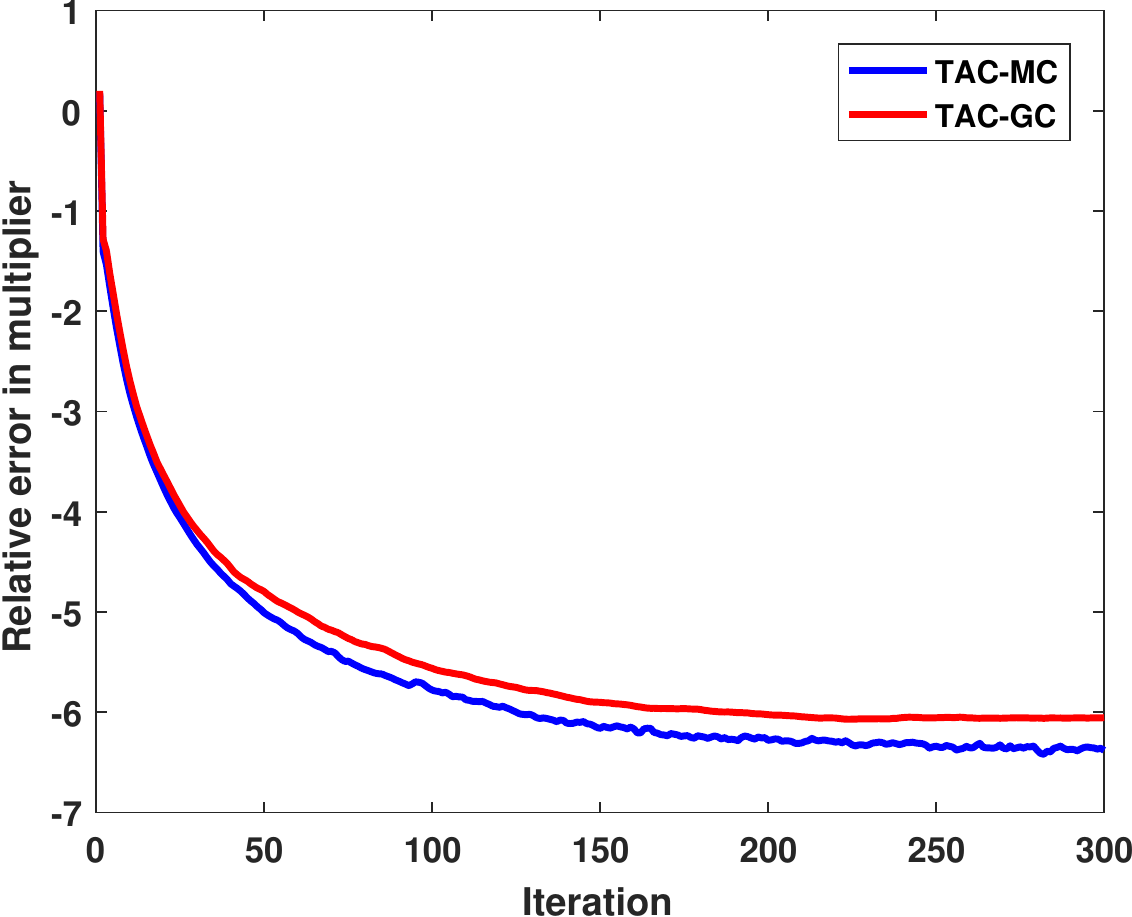}}
      \subfigure[Numerical energy]{
			\includegraphics[width=0.23\linewidth]{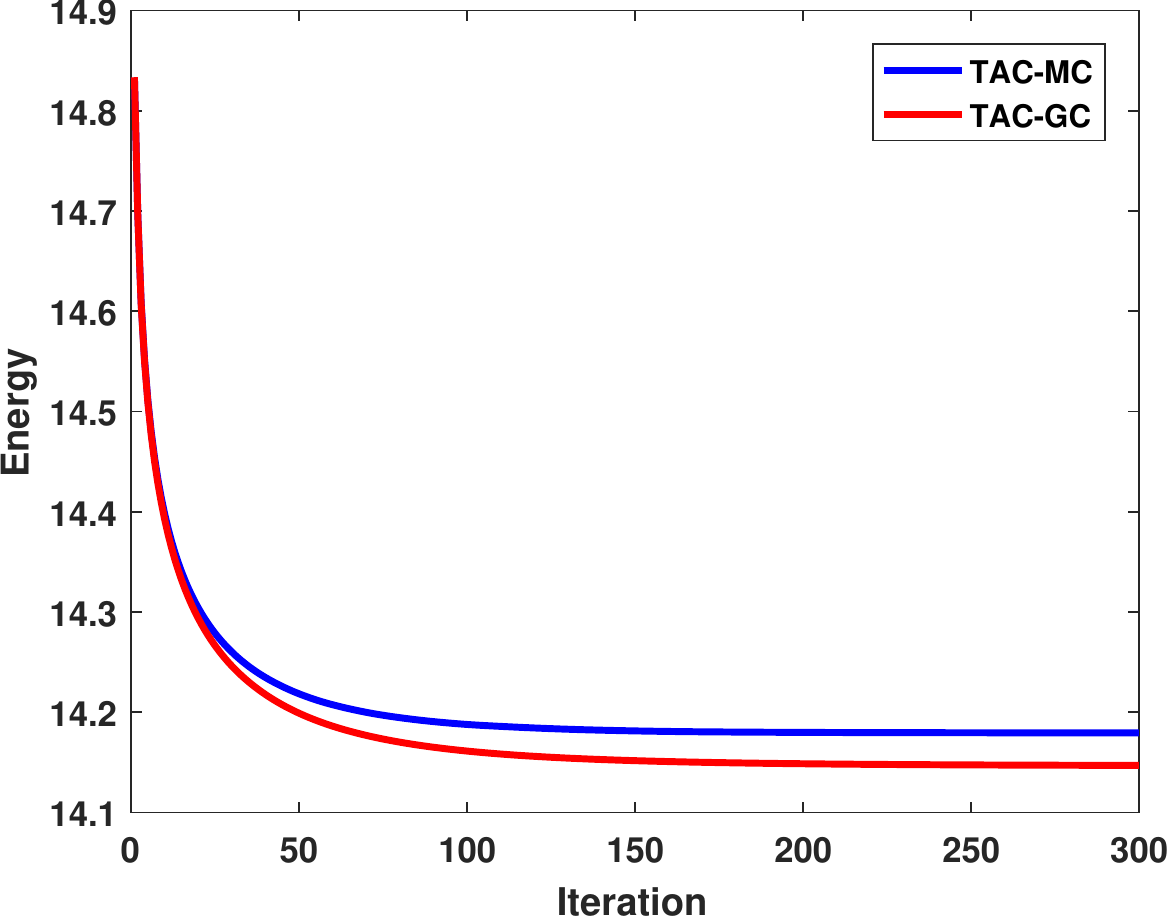}}
	\caption{Evaluations of `Cameraman' by the proposed methods. From left to right: Relative error in $u^k$, relative residual, relative error in multiplier and numerical energy, respectively.}
	\label{EvaluationsCameraman}
\end{figure*}

\begin{table*}[t]
\caption{The PSNR and SSIM of Gaussian noise removal for different methods.}
\label{denoise2}
\tiny
\begin{center}
\begin{tabular}{c|c|c|c|c|c|c|c}
\hline\hline
Images & Noisy images & TV & Euler's elastica & TGV & MEC & TAC-MC & TAC-GC \\
\hline\hline
Cameraman($256\times256$) & 22.45 & 27.29 & 27.93 & 28.22 & 28.38 & 28.65 & \bf{28.92} \\
PSNR/SSIM & 0.4087 & 0.7905 & 0.8187 & 0.8161 & 0.8203 & 0.8295 & \bf{0.8355} \\
\hline
Triangle($214\times254$) & 22.71 & 32.04 & 34.85 & 35.52 & \bf{36.65} & 36.02 & 36.35 \\
PSNR/SSIM & 0.2666 & 0.9247 & 0.9588 & 0.9504 & 0.9654 & 0.9705 & \bf{0.9749} \\
\hline
Lena($256\times256$) & 22.34 & 27.25 & 28.10 & 28.04 & 28.18 & 28.30 & \bf{28.54}   \\
PSNR/SSIM & 0.4855 & 0.8139 & 0.8335 & 0.8307 & 0.8352 & 0.8378 & \bf{0.8422}  \\
\hline
Plane($512\times512$) & 22.12 & 29.48 & 30.22 & 30.16 & 30.35 & 30.58 & \bf{30.85}   \\
PSNR/SSIM & 0.3555 & 0.8505 & 0.8681 & 0.8548 & 0.8719 & 0.8726 & \bf{0.8763}  \\
\hline\hline
\end{tabular}
\end{center}
\end{table*}

\begin{table*}[!ht]
\caption{The CPU time comparison of Gaussian noise removal for comparative methods.}
\label{efficiency}
\tiny
\begin{center}
\begin{tabular}{c|c|c|c|c|c|c|c|c}
\hline\hline
Images & \multicolumn{2}{|c|}{Cameraman($256\times256$)} & \multicolumn{2}{|c|}{Triangle($214\times254$)} & \multicolumn{2}{|c|}{Lena($256\times256$)} & \multicolumn{2}{|c}{Plane($512\times512$)}\\
\hline
Methods & Time & Iterations & Time & Iterations & Time & Iterations & Time & Iterations\\
\hline\hline
TV & {\color{red}6.17} & 300 & {\color{red}6.61} & 300 & {\color{red}5.86} & 300 & {\color{red}31.15} & 275 \\
\hline
Euler's elastica & 21.83 & 300 & 18.61 & 288 & 21.65 & 300  & 137.72 & 296 \\
\hline
TGV & 22.89 & 300 & 21.18 & 300 & 22.85 & 300 & 115.95 & 300 \\
\hline
MEC & 43.61 & 300 & 40.81 & 300 & 44.14 & 300 & 248.61 & 300 \\
\hline
TAC-MC & \bf{15.94} & \bf{232} & \bf{15.87} & \bf{252} & \bf{13.89} & \bf{201} & \bf{55.25} & \bf{144} \\
\hline
TAC-GC & {\color{blue}16.80} & {\color{blue}251} & {\color{blue}16.02} & {\color{blue}260} & {\color{blue}14.78} & {\color{blue}220} & {\color{blue}60.71} & {\color{blue}162} \\
\hline\hline
\end{tabular}
\end{center}
\end{table*}

\subsection{Computational complexity}
In this subsection, we analyze the computational complexity of the Algorithm 3.1. It is apparent that the main computationally expensive components include the calculation of discrete MC or GC, the FFT, inverse FFT and shrinkage operators. Generally speaking, calculating the MC or GC on image surface costs $\mathcal{O}(m^2)$. The computational complexity of FFT, inverse FFT in $u$-subproblem is well-known as $\mathcal{O}[m^2\log(m^2)]$ at each iteration. The $\bm v$-subproblem with two components can be computed at the cost $\mathcal{O}(2m^2)$ using the shrinkage operator. Therefore, the total computational complexity of Algorithm 3.1 is $\mathcal{O}[2m^2\log(m^2)+3m^2]$. On the other hand, the augmented Lagrangian method (ALM) of the Euler elastica model in \cite{tai2011fast} has four subproblems, which are solved by the FFT, inverse FFT and shrinkage operators. Its total computational complexity can be expressed as $\mathcal{O}[6m^2\log(m^2)+4m^2]$ per iteration. In addition, the augmented Lagrangian method for mean curvature regularization model in \cite{zhu2013augmented} has five subproblems, whose total computational complexity can be denoted as $\mathcal{O}[6m^2\log(m^2)+8m^2]$ per iteration. It is obvious that our proposed algorithm has lower computational complexity per iteration compared to the other two curvature-based models.

\begin{figure*}[t]
      \centering
      %\subfigure[Noisy(22.71/0.2666)]{
		%	\includegraphics[width=0.15\linewidth]{noisytriangle.pdf}}
      \subfigure{
			\includegraphics[width=0.16\linewidth]{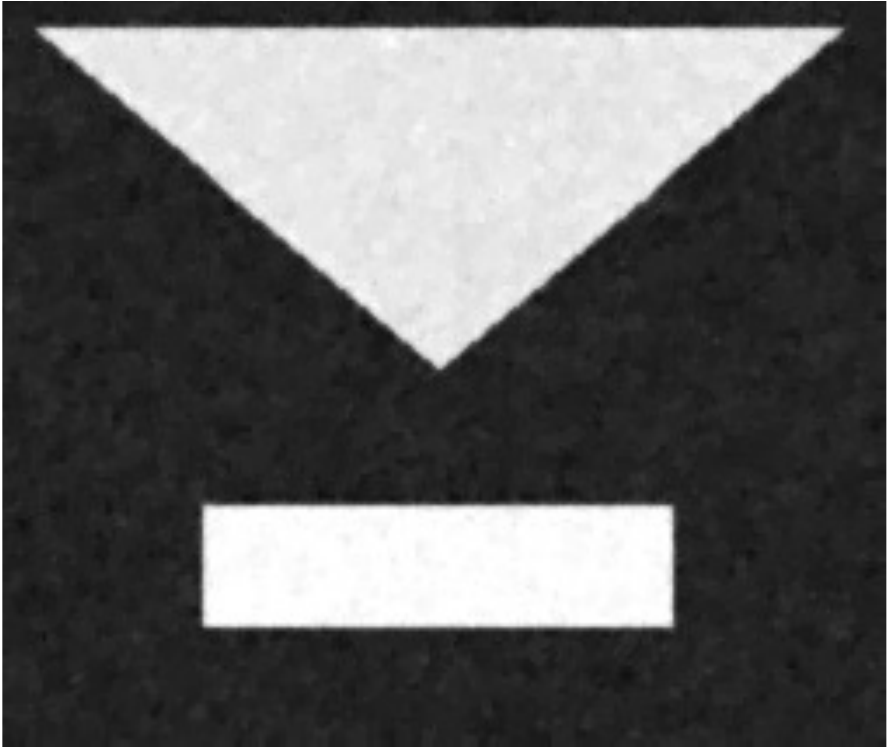}}\hspace{-1ex}
      \subfigure{
            \includegraphics[width=0.16\linewidth]{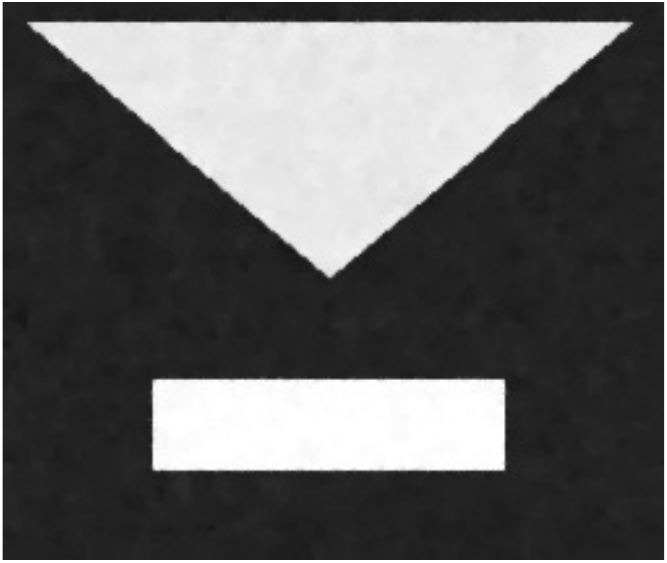}}\hspace{-1ex}
      \subfigure{
			\includegraphics[width=0.16\linewidth]{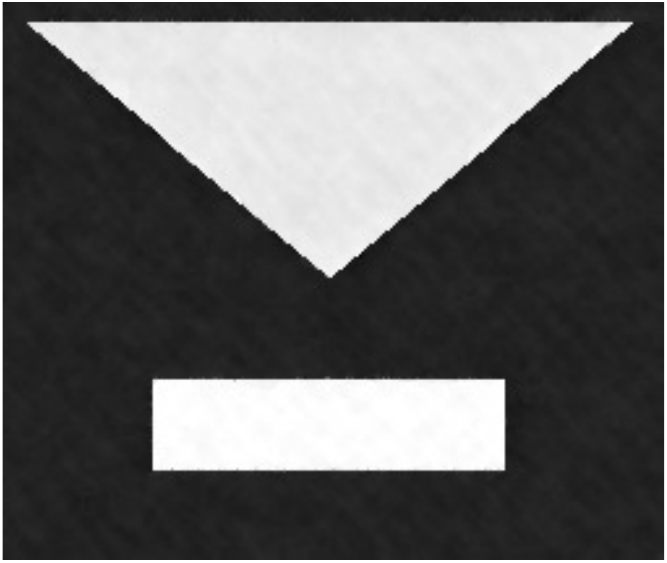}}\hspace{-1ex}
      \subfigure{
			\includegraphics[width=0.16\linewidth]{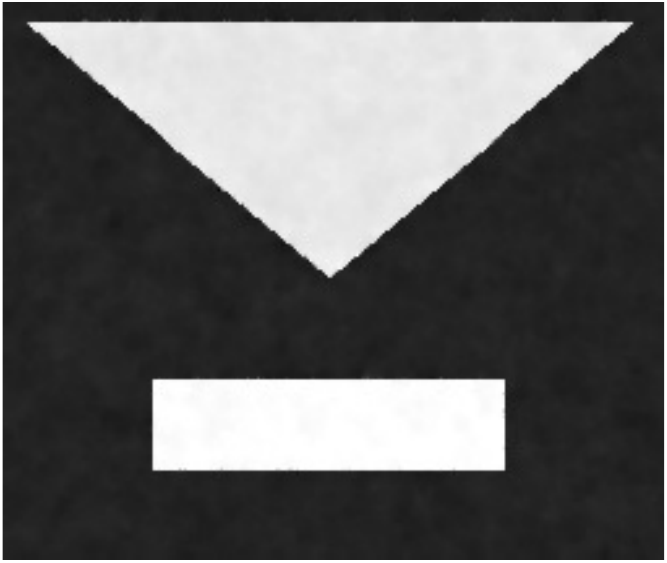}}\hspace{-1ex}
      \subfigure{
			\includegraphics[width=0.16\linewidth]{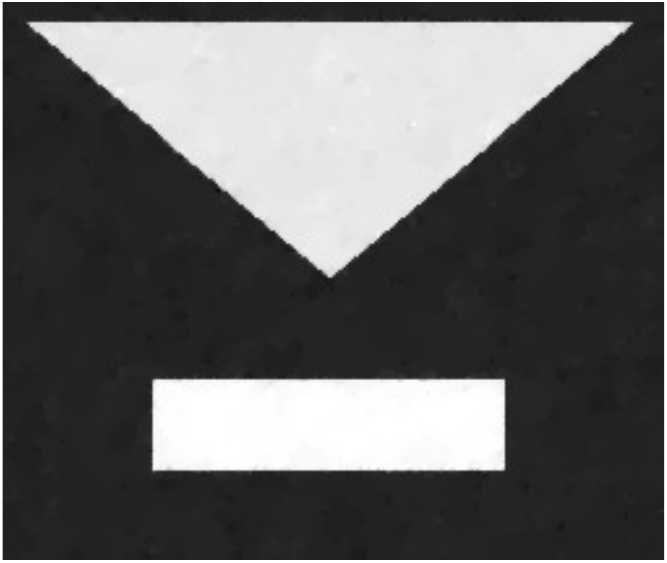}}\hspace{-1ex}
      \subfigure{
			\includegraphics[width=0.16\linewidth]{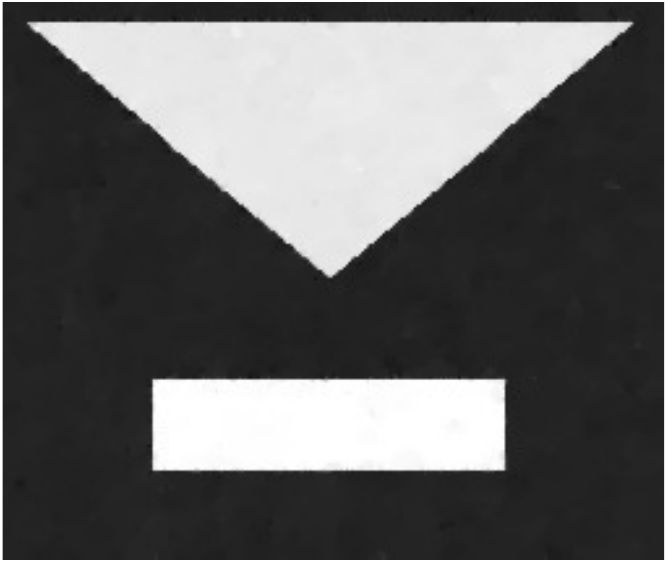}}
      %\subfigure[f-u]{
		%	\includegraphics[width=0.15\linewidth]{triangleu-f.pdf}}
\setcounter{subfigure}{0}
      \subfigure[TV]{
            \includegraphics[width=0.16\linewidth]{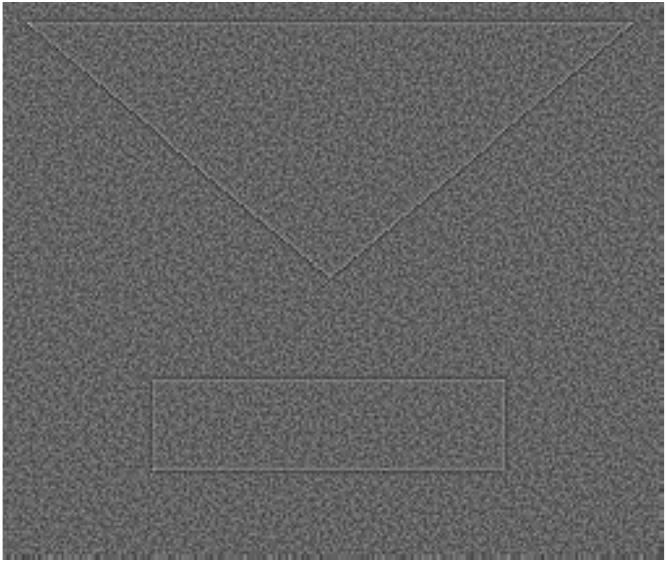}}\hspace{-1ex}
      \subfigure[Euler]{
            \includegraphics[width=0.16\linewidth]{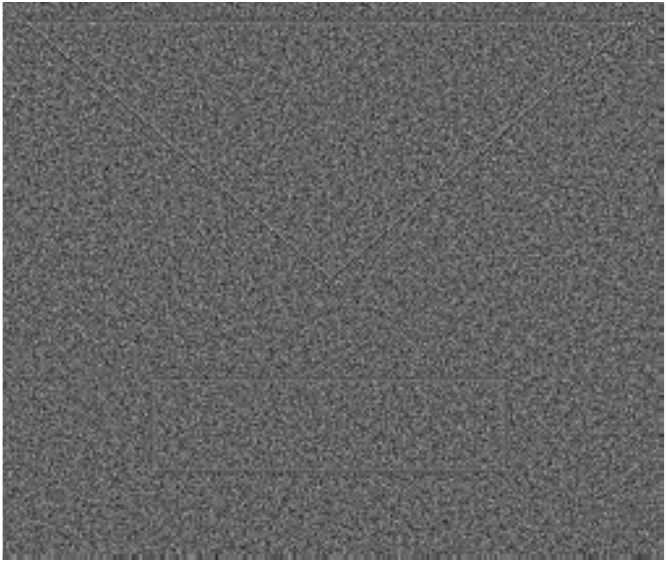}}\hspace{-1ex}
      \subfigure[TGV]{
            \includegraphics[width=0.16\linewidth]{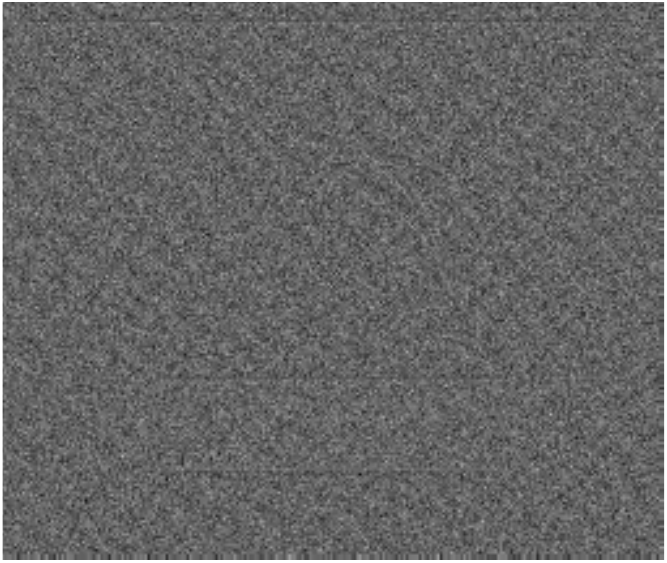}}\hspace{-1ex}
      \subfigure[MEC]{
            \includegraphics[width=0.16\linewidth]{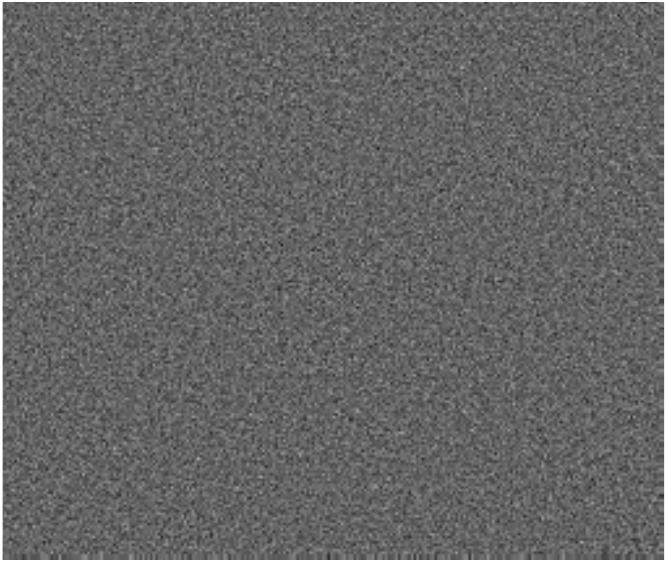}}\hspace{-1ex}
      \subfigure[TAC-MC]{
			\includegraphics[width=0.16\linewidth]{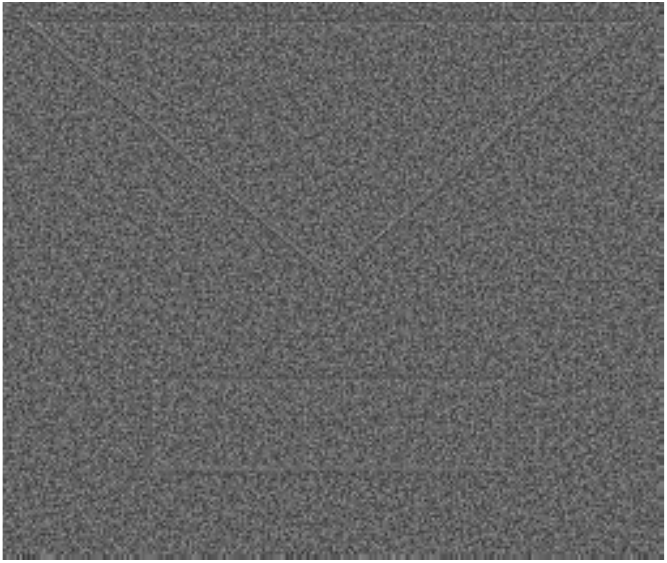}}\hspace{-1ex}
      \subfigure[TAC-GC]{
            \includegraphics[width=0.16\linewidth]{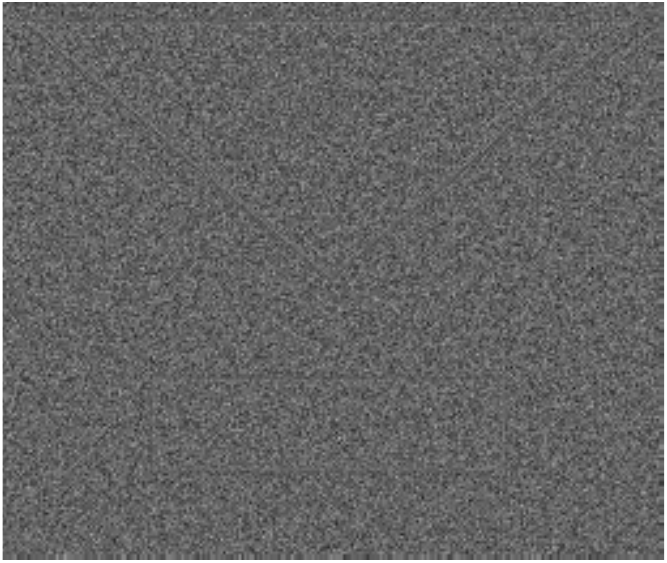}}
	\caption{The denoising results of `Triangle' (top) and the corresponding residual images (bottom) by the comparative methods.}
	\label{DenoisingTriangle}
\end{figure*}

\begin{figure*}[t]
      \centering
      \subfigure[Relative error in $u^k$]{
			\includegraphics[width=0.23\linewidth]{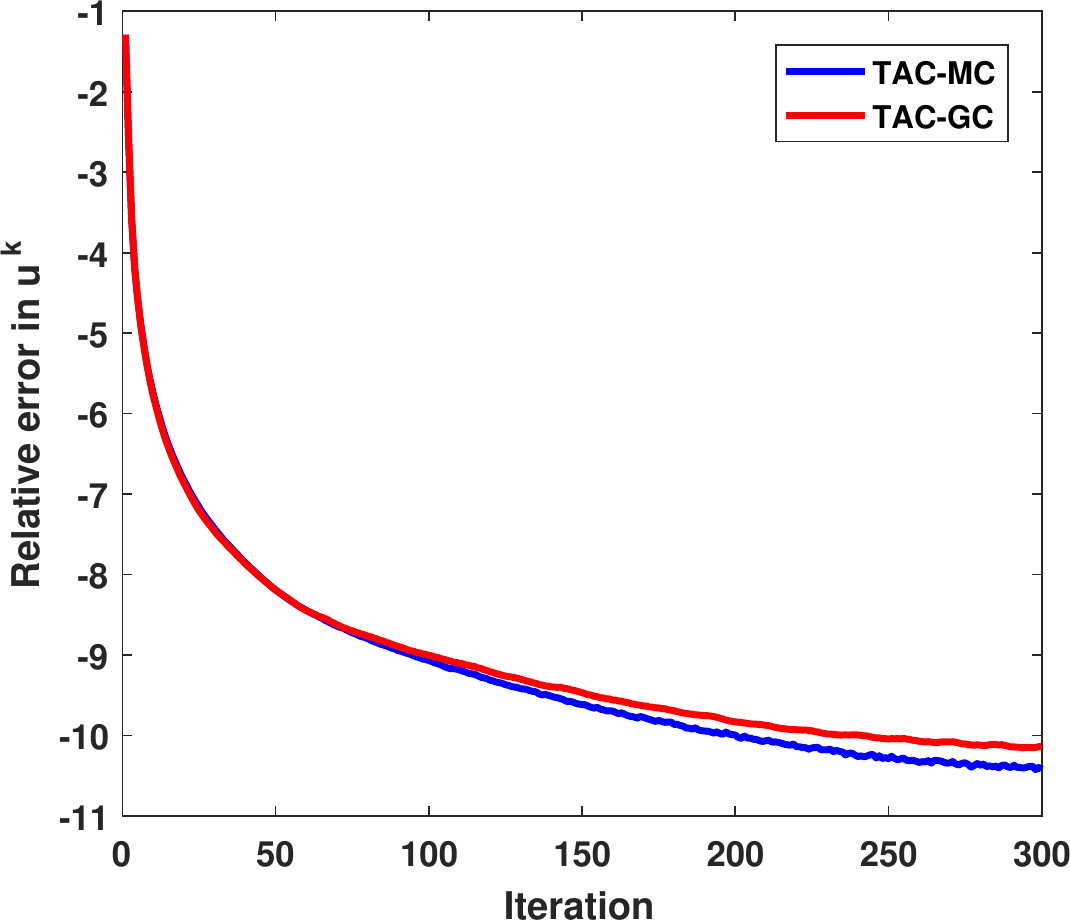}}
      \subfigure[Relative residual]{
			\includegraphics[width=0.23\linewidth]{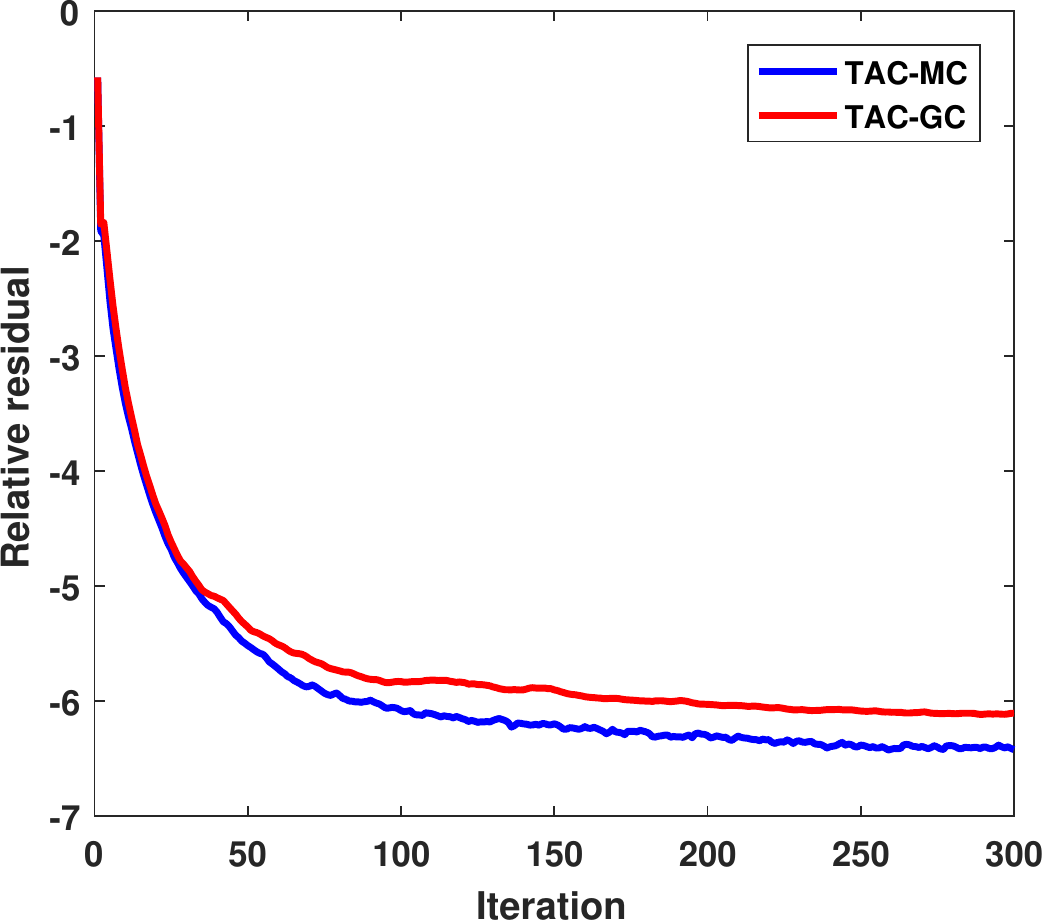}}
      \subfigure[Relative error in $\bm\Lambda^k$]{
            \includegraphics[width=0.23\linewidth]{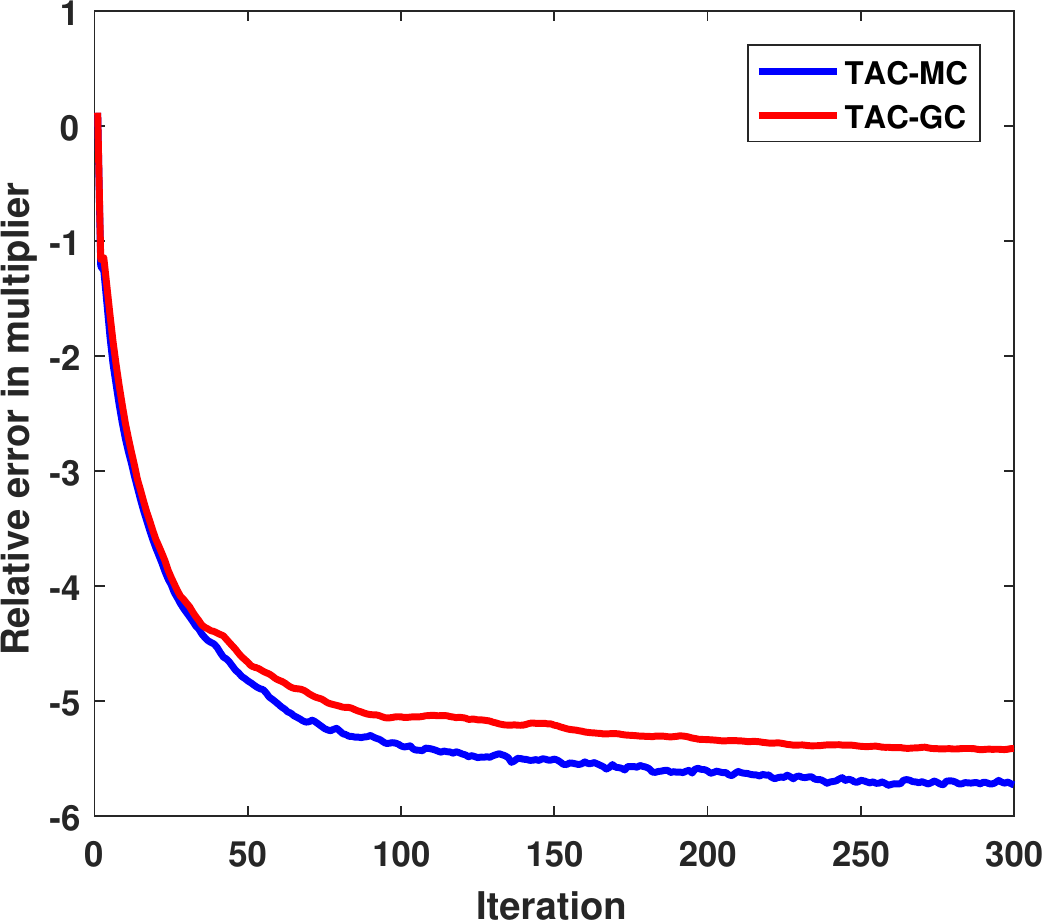}}
      \subfigure[Numerical energy]{
			\includegraphics[width=0.23\linewidth]{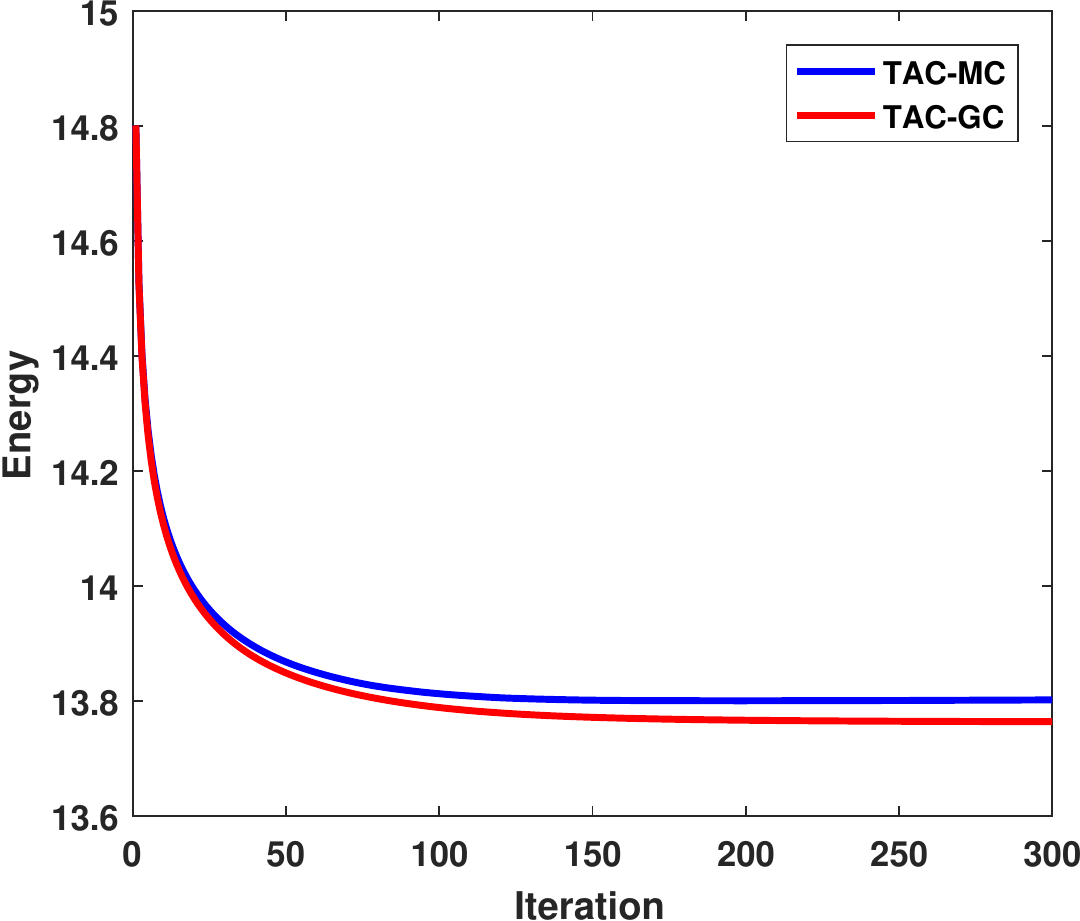}}
	\caption{Evaluations of `Triangle' by the proposed methods. From left to right: Relative error in $u^k$, relative residual, relative error in multiplier and numerical energy, respectively.}
	\label{EvaluationsTriangle}
\end{figure*}

\subsection{Gaussian denoising}
At the first place, we compare the proposed models relying on TAC, TSC and TRV, with the Euler's elastica model on image denoising problems. Three smooth images corrupted by Gaussian noise with zero mean and the standard deviation 10 are used in the evaluation. We fix the parameters $\lambda =0.09$, $\mu = 0.01$, $T_{max}=300$ and $\epsilon=4\times10^{-4}$ for our model, and set $\alpha = 0.1$ for the MC-based variational models (i.e., TAC-MC, TSC-MC and TRV-MC) and $\alpha = 5$ for the GC-based models (i.e., TAC-GC, TSC-GC and TRV-GC). On the other hand, we implement the ALM algorithm in \cite{tai2011fast} with the same parameters as the ones used in the original paper such that $\alpha=10$, $\eta=10^2$, $r_1=1$, $r_2=2\cdot10^2$, $r_4=5\cdot10^2$ and $\epsilon=10^{-2}, 1.3\cdot10^{-3}, 8\cdot10^{-3}$.

In Table \ref{denoise1}, we detail the comparison results in terms of PSNR and SSIM. It can be observed that our discrete curvature model always achieves higher PSNR and SSIM than the Euler's elastica method for all curvature function and curvature type combinations. Moreover, the TAC-MC model gives the best recovery results for all three images among the combinations. In FIG. \ref{smoothimages}, we display the restoration results obtained by the Euler's elastica model and our TAC-MC model, which clearly shows the our model can ideally preserve the structures such as edges and corners. The numerical MC of two of the test images are exhibited in FIG. \ref{MCimages}, which are calculated using the equations \eqref{ki}-\eqref{MCGC} on the clean images, restoration images of the Euler's elastica and our TAC-MC model. For fair comparison, we project all images into $[0,1]$ before calculating the numerical curvatures. It can be observed that the numerical MC is relative small in the homogeneous regions, and jumps across the edges, which give the evidence that MC regularity can preserve the edges and corners. We also find that the values of the MC obtained by our TAC-MC model are in the same range as the values calculated on the clean images, while the Euler's elastica model tends to underestimate the curvatures. Moreover, we display the image surfaces of the clean images and restored images of the Euler's elastica and TAC-MC model in FIG. \ref{imagesurfaces}, which clearly illustrate our discrete curvature regularizer can preserve the edges and sharp corners better than the Euler's elastica.

To further demonstrate the effectiveness and efficiency of the proposed curvature model,  we evaluate the performance on more natural images and compare with several state-of-the-art variational denoising methods including Total variation (TV) in \cite{yang2009efficient}, Euler's elastica (Euler) in \cite{tai2011fast}, the second-order total generalized variation (TGV) in \cite{bredies2010total} and mean curvature regularizer (MEC) in \cite{zhu2013augmented}. Four different test images (i.e., `Cameraman', `Lena', `Triangle' and `Plane') are degraded by the Gaussian noise with zero mean and the standard deviation 20. To setup the experimental comparison as fair as possible, the parameters of the comparative methods are selected as suggested in the corresponding papers, which are set as (a) TV: $r_1=10$ and $\lambda=15$; (b) Euler's elastica: $\alpha=10$, $r_1=1$, $r_2=2\cdot10^2$, $r_4=5\cdot10^2$ and $\eta=2\cdot10^2$; (c) TGV: $\alpha_0=1.5$, $\alpha_1=1.0$, $r_1=10$, $r_2=50$ and $\lambda=10$; (d) MEC: $r_1=40$, $r_2=40$, $r_3=10^5$, $r_4=1.5\cdot10^5$ and $\lambda=10^2$. The experience-dependent parameters in our model are set as $\lambda =0.07$, $\mu = 2$, $T_{max}=300$ and $\epsilon=3\times10^{-5}$. Similar to the previous experiment, we use $\alpha=0.5$ for TAC-MC model and $\alpha=5$ for TAC-GC model.

\begin{figure*}[t]
      \centering
      %\subfigure[Noisy(22.34/0.4855)]{
		%	\includegraphics[width=0.15\linewidth]{noisyman.pdf}}
      \subfigure{
			\includegraphics[width=0.16\linewidth]{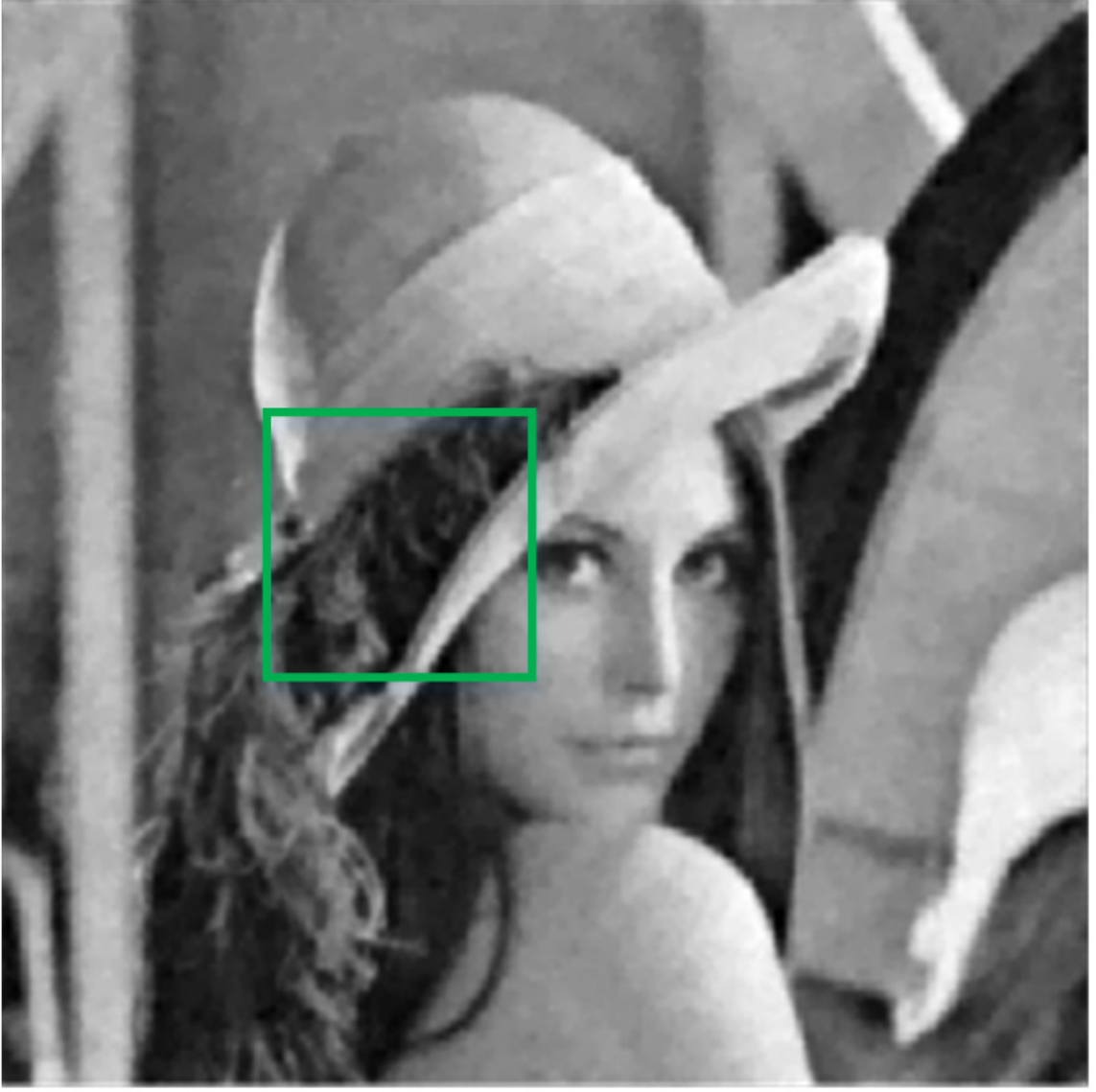}}\hspace{-1ex}
      \subfigure{
            \includegraphics[width=0.16\linewidth]{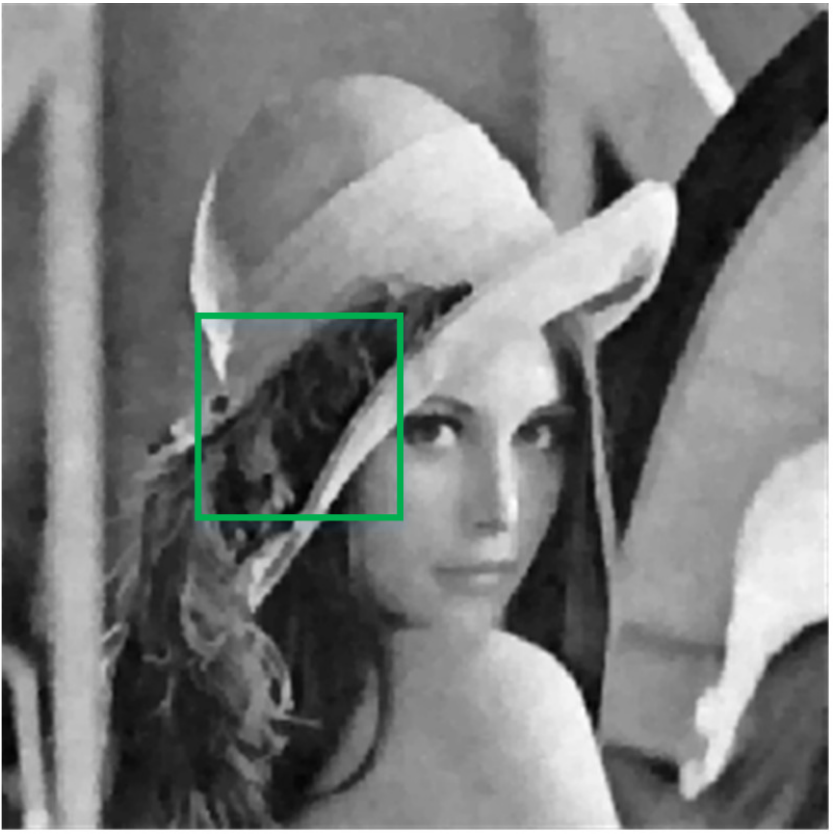}}\hspace{-1ex}
      \subfigure{
			\includegraphics[width=0.16\linewidth]{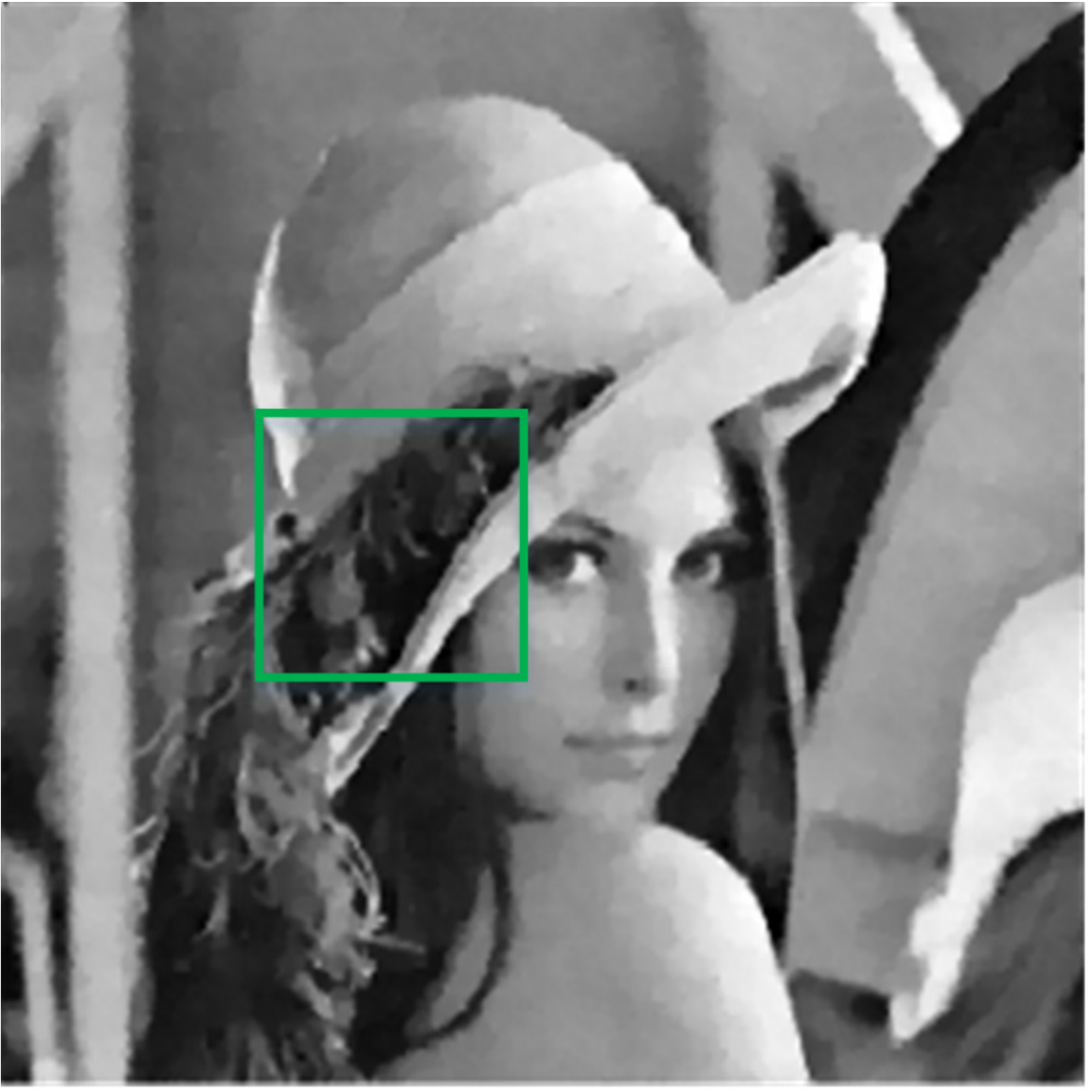}}\hspace{-1ex}
      \subfigure{
			\includegraphics[width=0.16\linewidth]{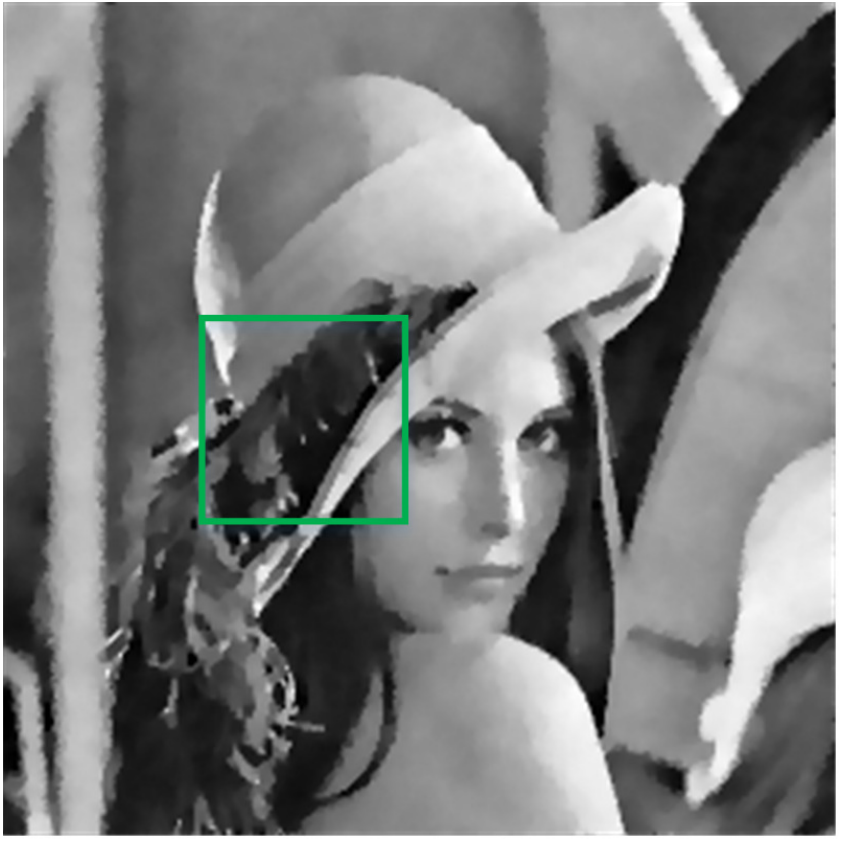}}\hspace{-1ex}
      \subfigure{
			\includegraphics[width=0.16\linewidth]{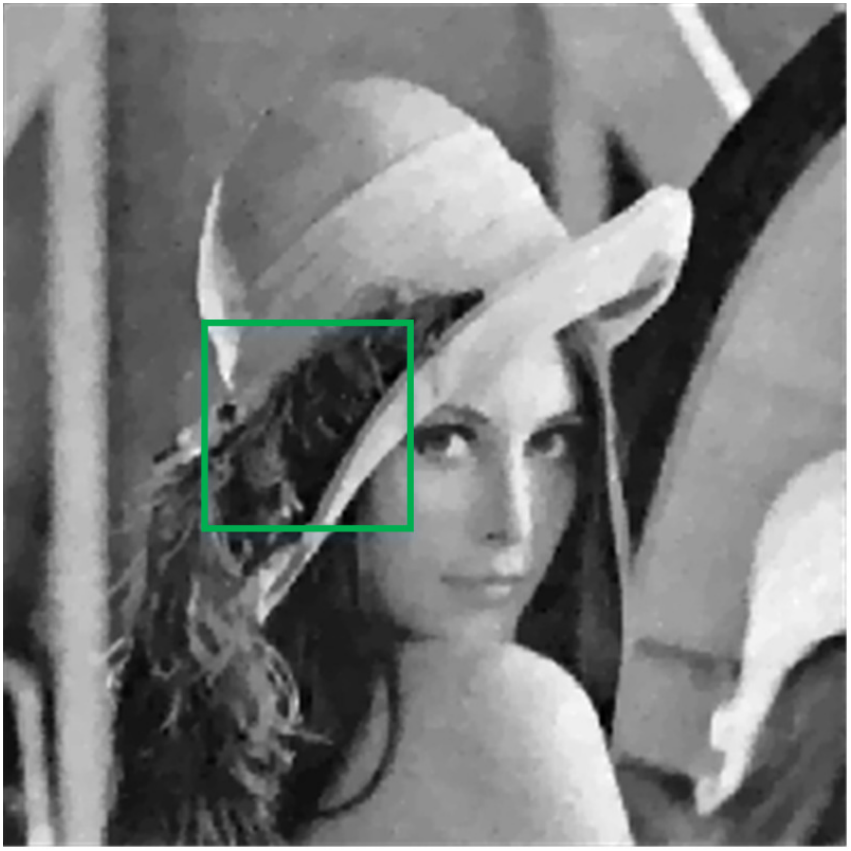}}\hspace{-1ex}
      \subfigure{
			\includegraphics[width=0.16\linewidth]{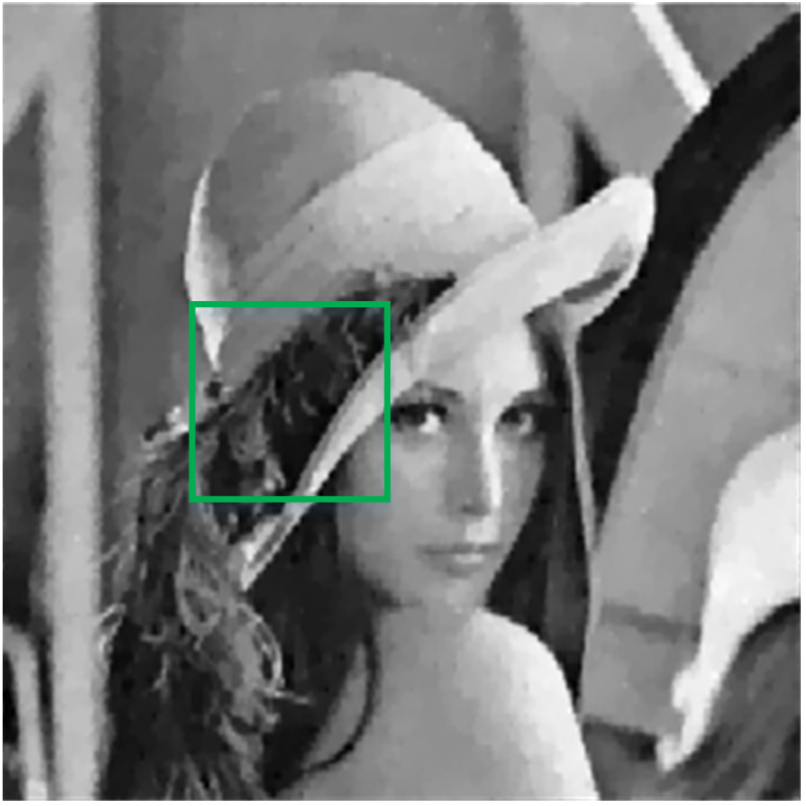}}
\setcounter{subfigure}{0}
      %\subfigure[Noisyzoom]{
		%	\includegraphics[width=0.15\linewidth]{Noisylenazoom.pdf}}
      \subfigure[TV]{
            \includegraphics[width=0.16\linewidth]{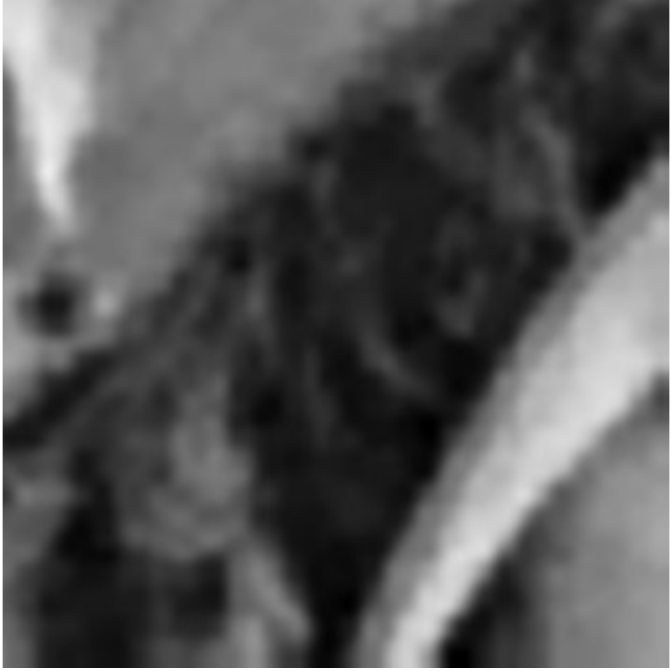}}\hspace{-1ex}
      \subfigure[Euler]{
            \includegraphics[width=0.16\linewidth]{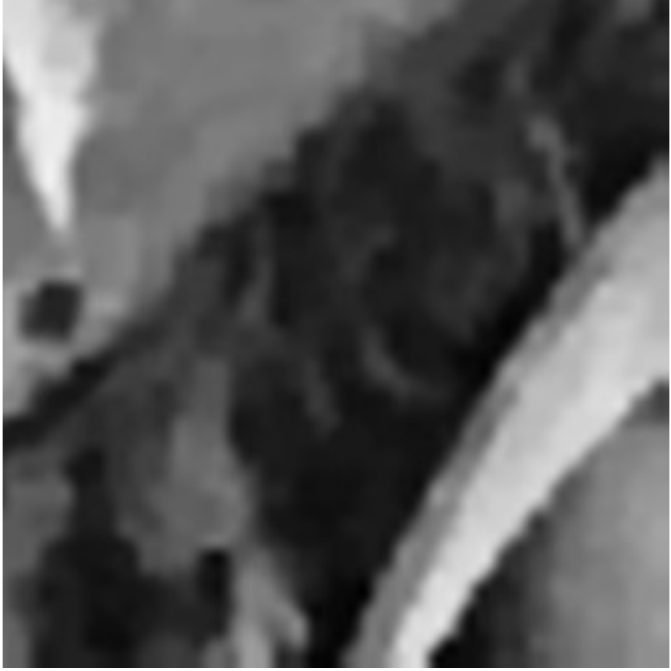}}\hspace{-1ex}
      \subfigure[TGV]{
            \includegraphics[width=0.16\linewidth]{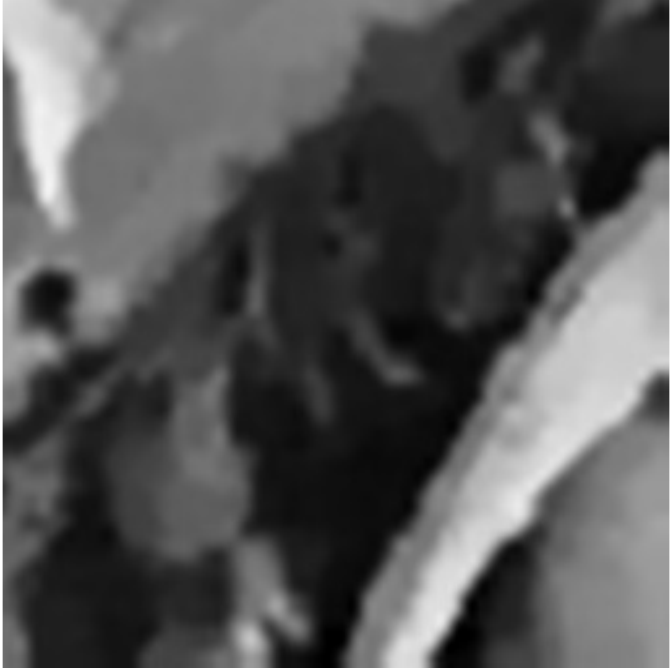}}\hspace{-1ex}
      \subfigure[MEC]{
            \includegraphics[width=0.16\linewidth]{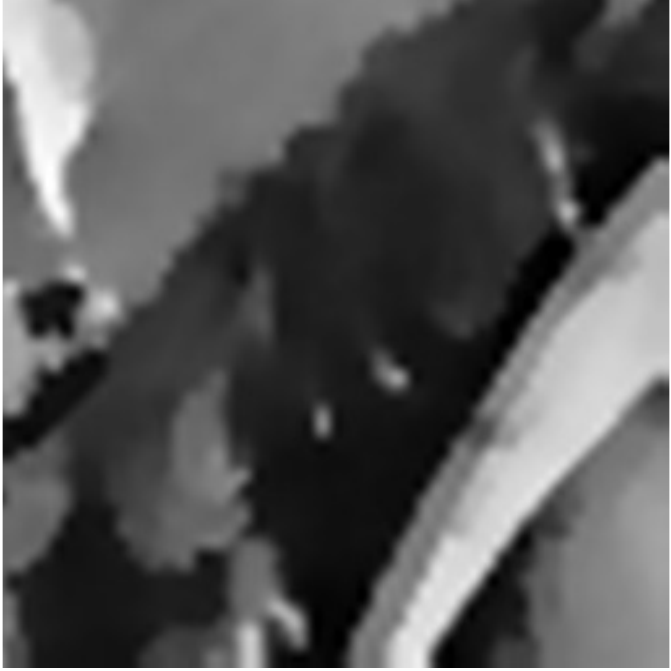}}\hspace{-1ex}
      \subfigure[TAC-MC]{
			\includegraphics[width=0.16\linewidth]{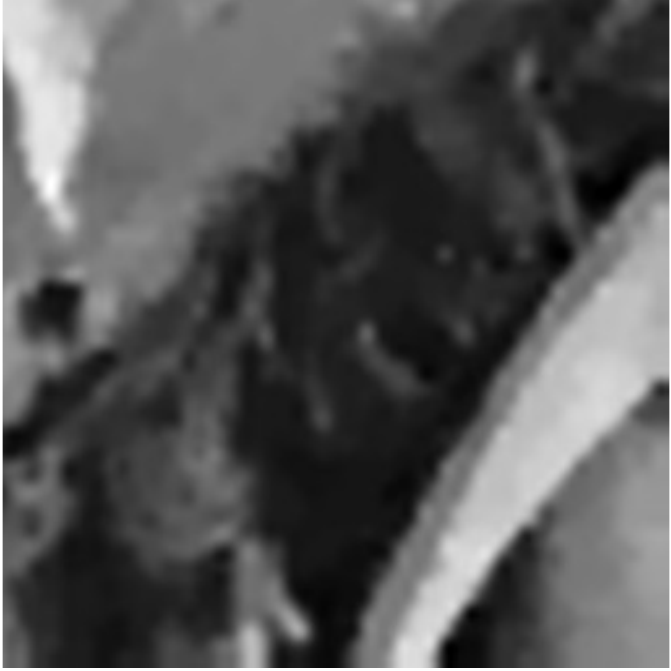}}\hspace{-1ex}
      \subfigure[TAC-GC]{
            \includegraphics[width=0.16\linewidth]{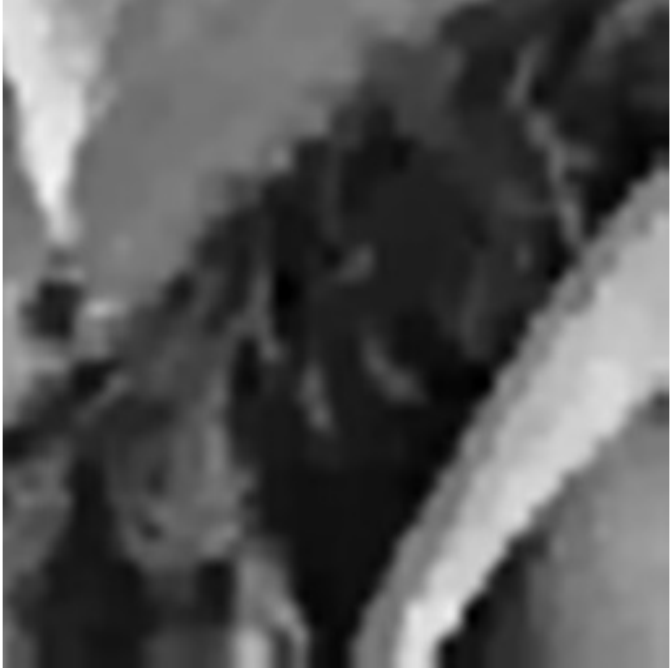}}
	\caption{The denoising results of `Lena' (top) and the corresponding local magnification views (bottom) by the comparative methods.}
	\label{DenoisingLena}
\end{figure*}

\begin{figure*}[t]
      \centering
      \subfigure{
			\includegraphics[width=0.16\linewidth]{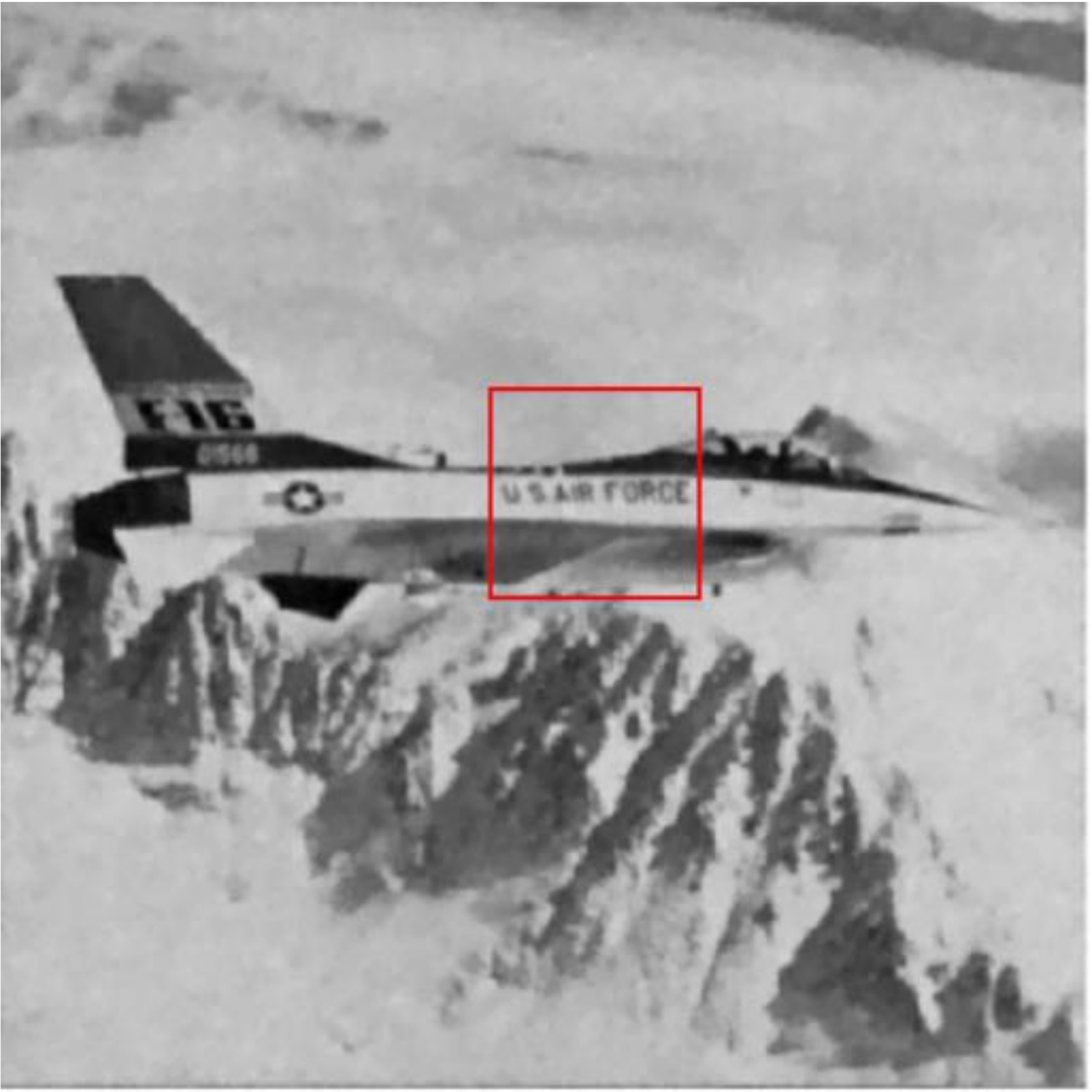}}\hspace{-1ex}
      \subfigure{
            \includegraphics[width=0.16\linewidth]{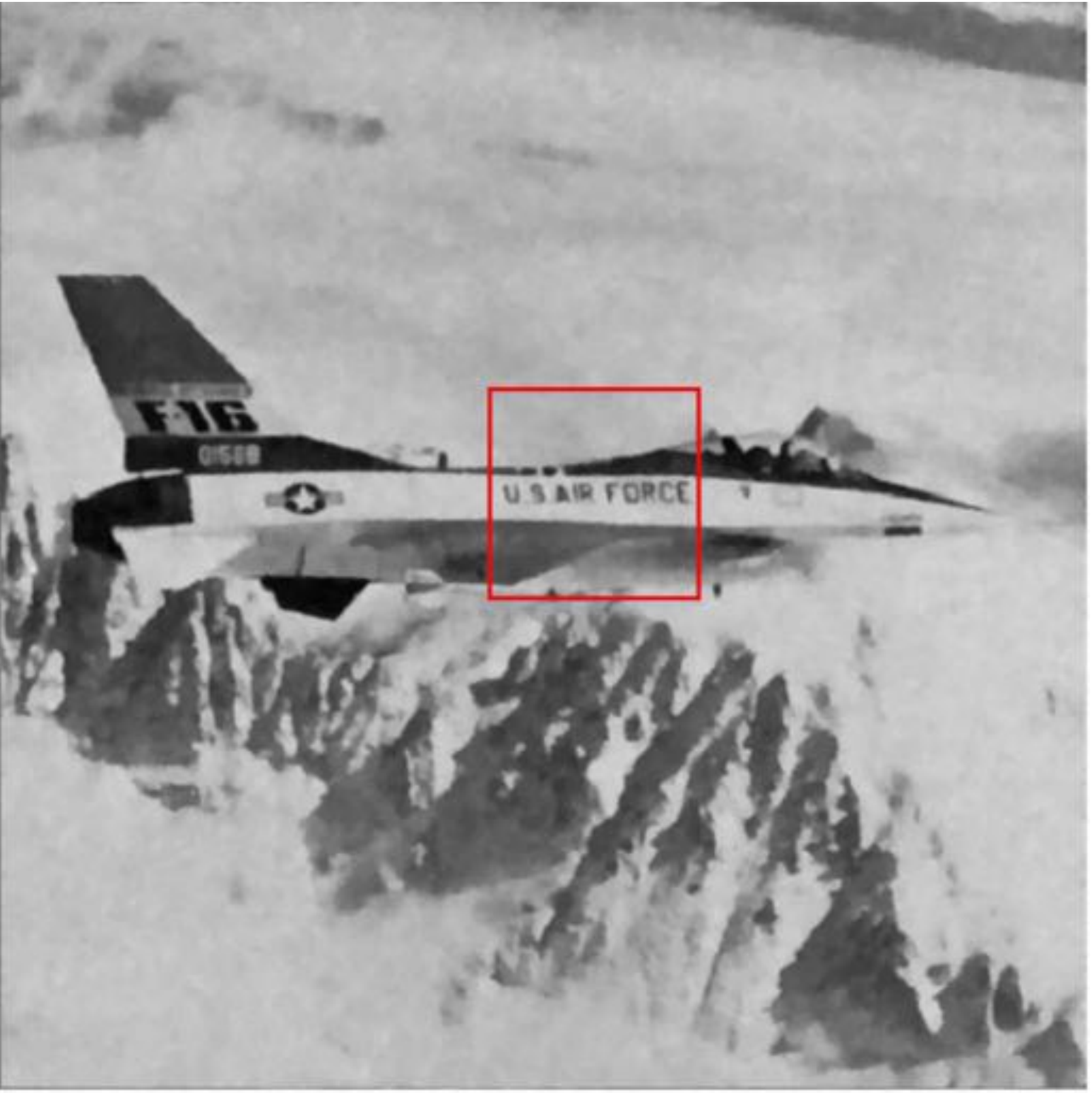}}\hspace{-1ex}
      \subfigure{
			\includegraphics[width=0.16\linewidth]{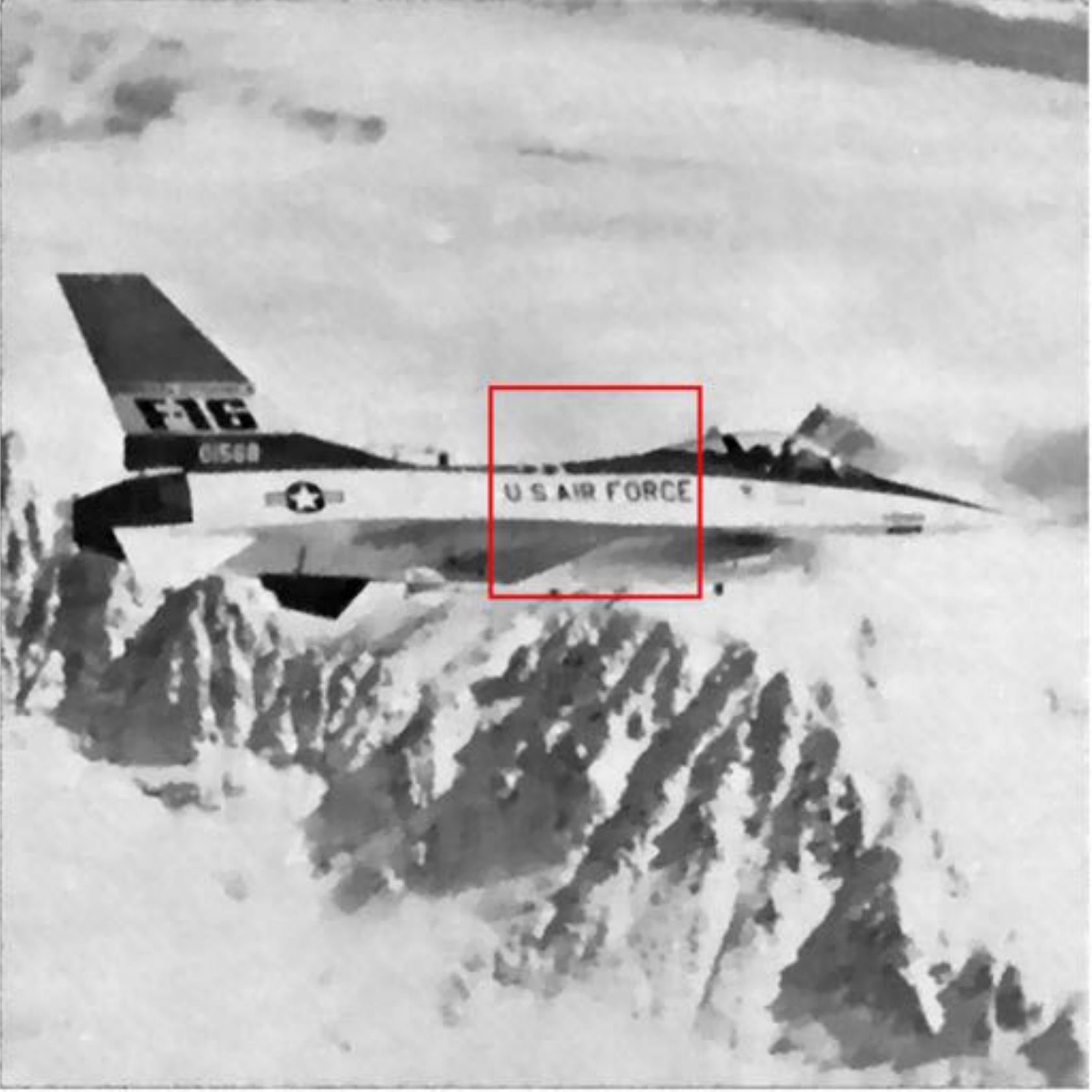}}\hspace{-1ex}
      \subfigure{
			\includegraphics[width=0.16\linewidth]{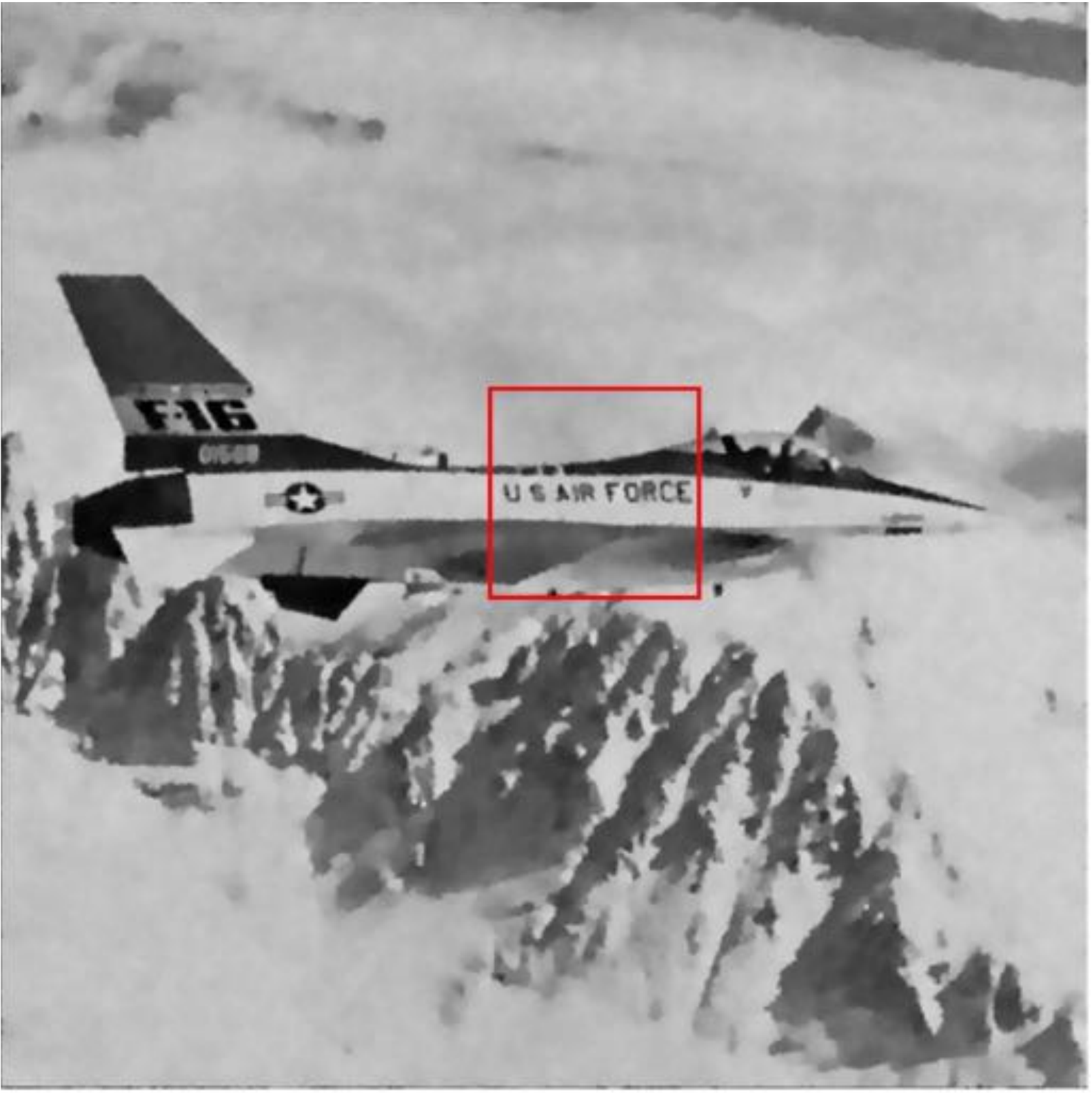}}\hspace{-1ex}
      \subfigure{
			\includegraphics[width=0.16\linewidth]{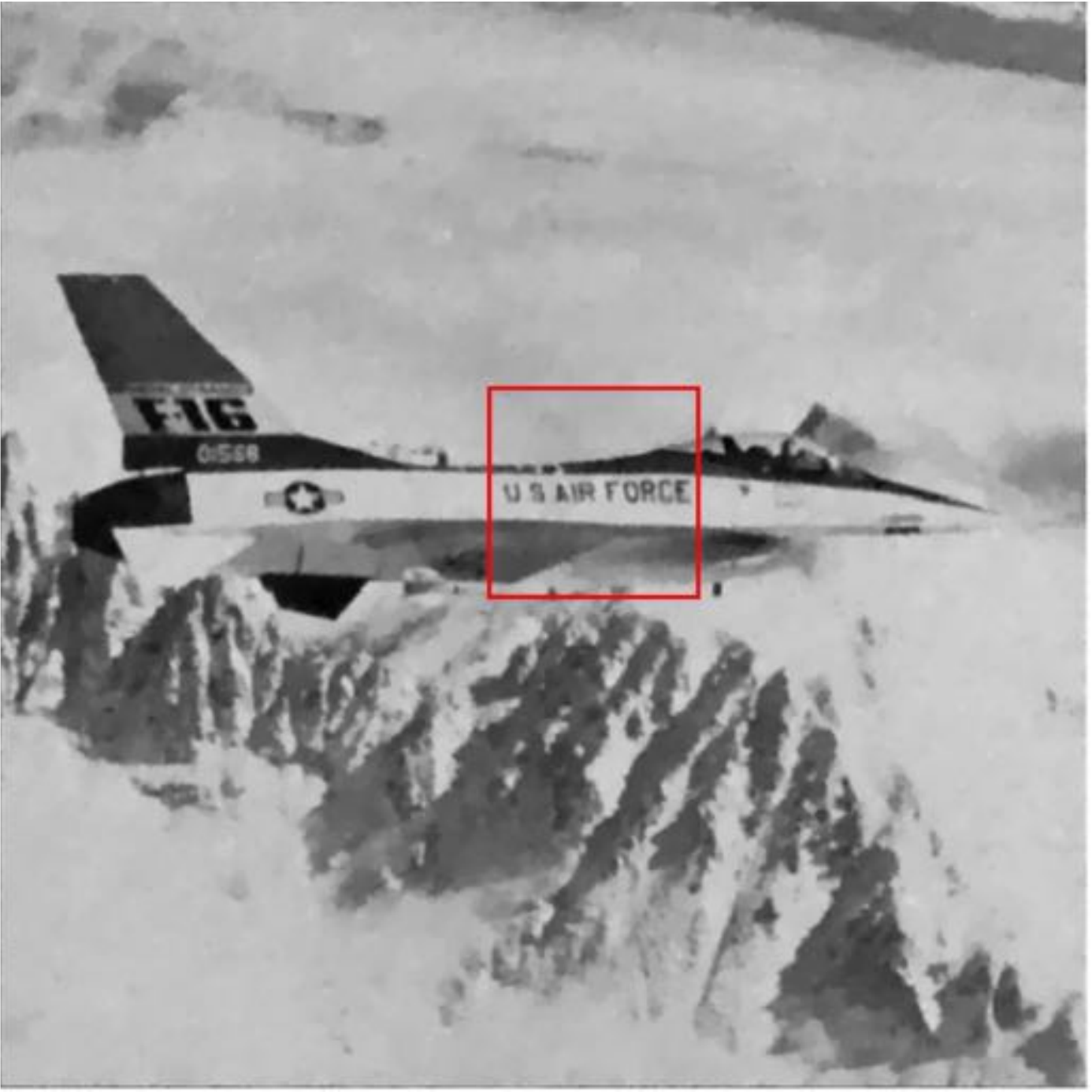}}\hspace{-1ex}
      \subfigure{
			\includegraphics[width=0.16\linewidth]{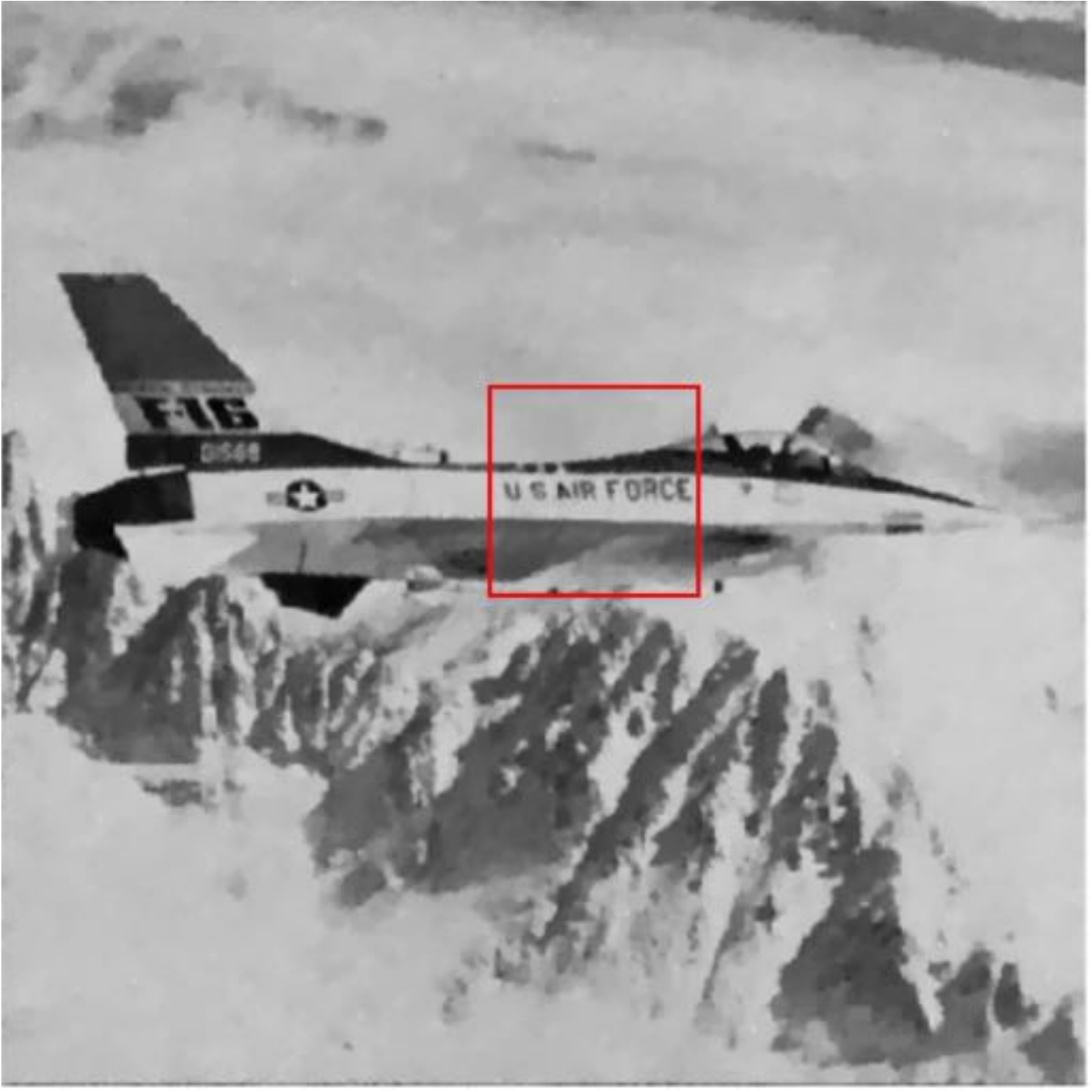}}
      \setcounter{subfigure}{0}
      \subfigure[TV]{
            \includegraphics[width=0.16\linewidth]{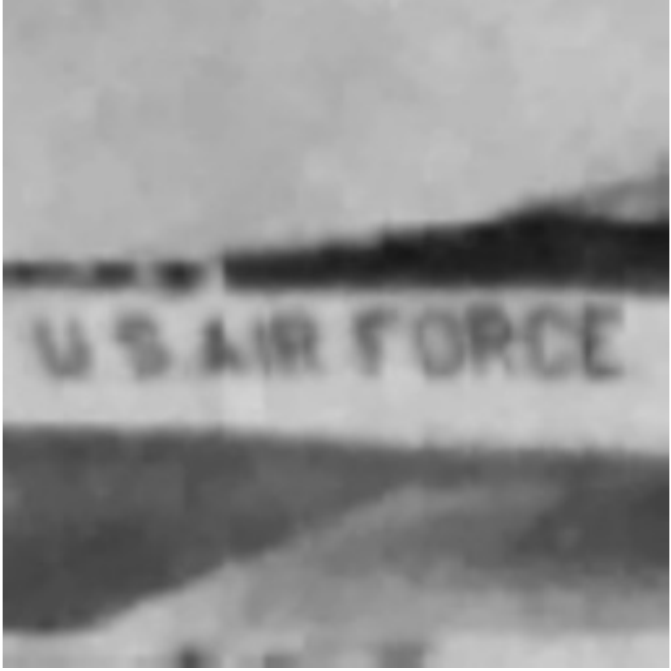}}\hspace{-1ex}
      \subfigure[Euler]{
            \includegraphics[width=0.16\linewidth]{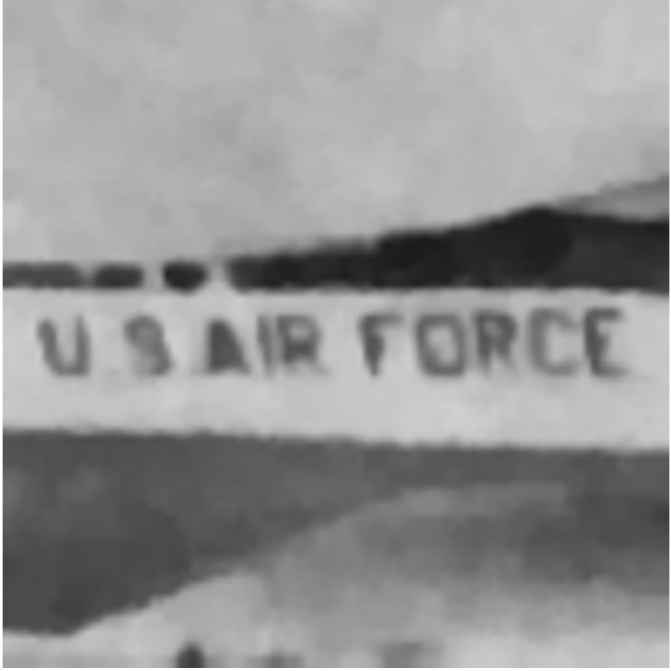}}\hspace{-1ex}
      \subfigure[TGV]{
            \includegraphics[width=0.16\linewidth]{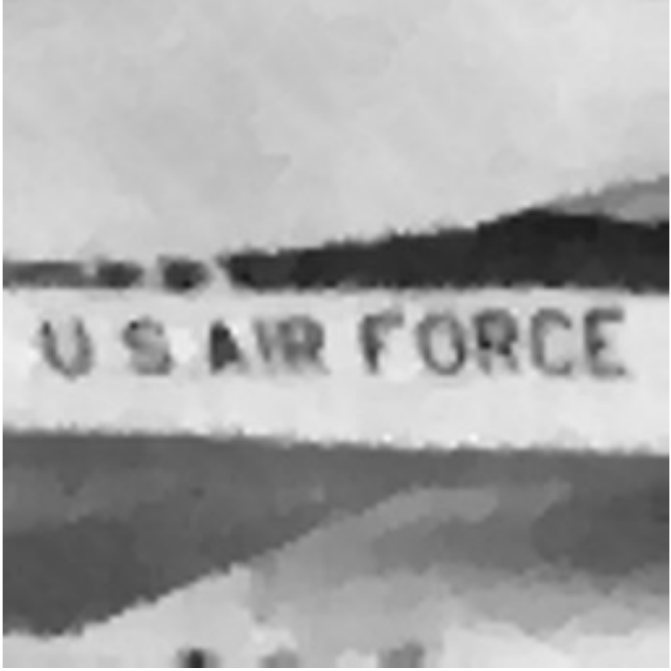}}\hspace{-1ex}
      \subfigure[MEC]{
            \includegraphics[width=0.16\linewidth]{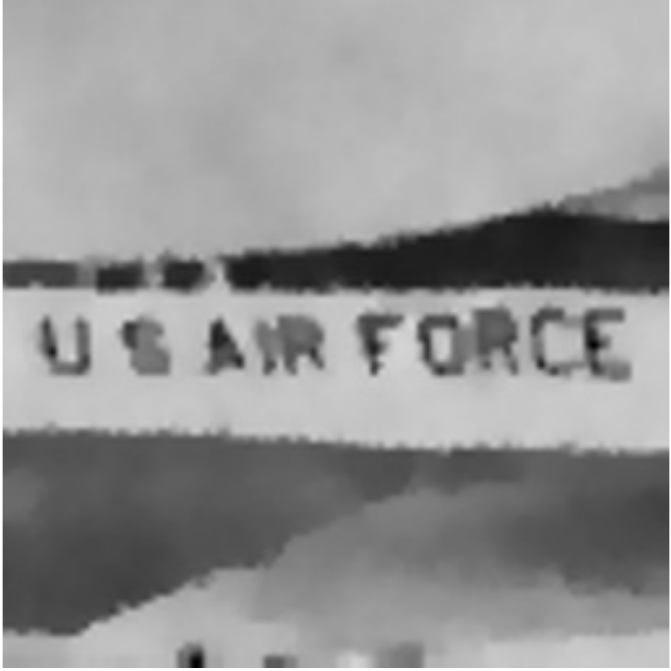}}\hspace{-1ex}
      \subfigure[TAC-MC]{
			\includegraphics[width=0.16\linewidth]{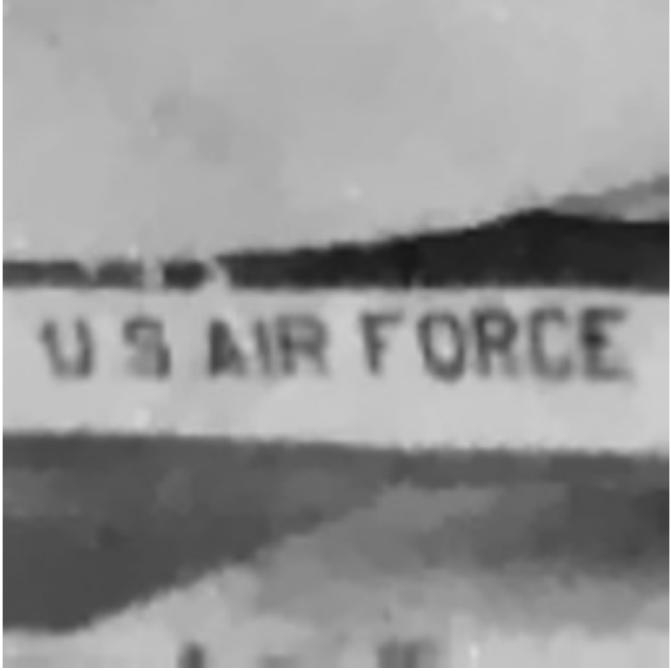}}\hspace{-1ex}
      \subfigure[TAC-GC]{
            \includegraphics[width=0.16\linewidth]{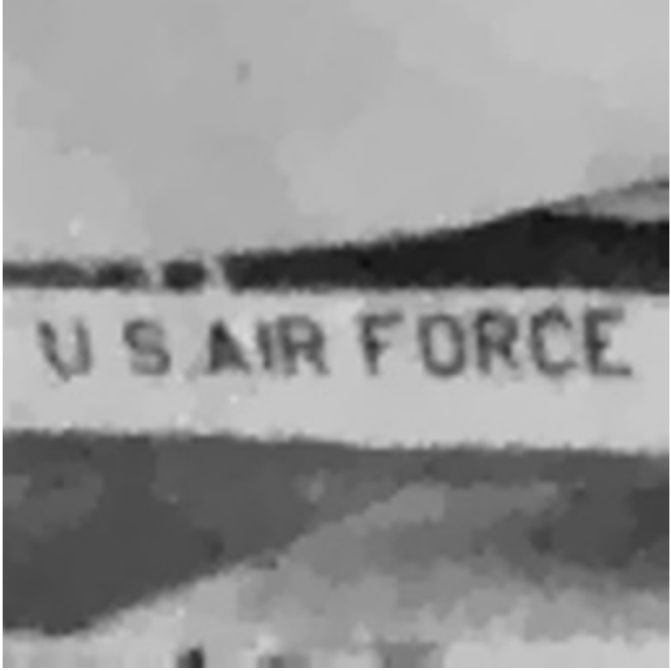}}
	\caption{The denoising results of `Plane' (top) and the corresponding local magnification views (bottom) by the comparative methods.}
	\label{DenoisingPlane}
\end{figure*}

\begin{figure*}[htp]
      \centering
      \subfigure[Lena]{
			\includegraphics[width=0.35\linewidth]{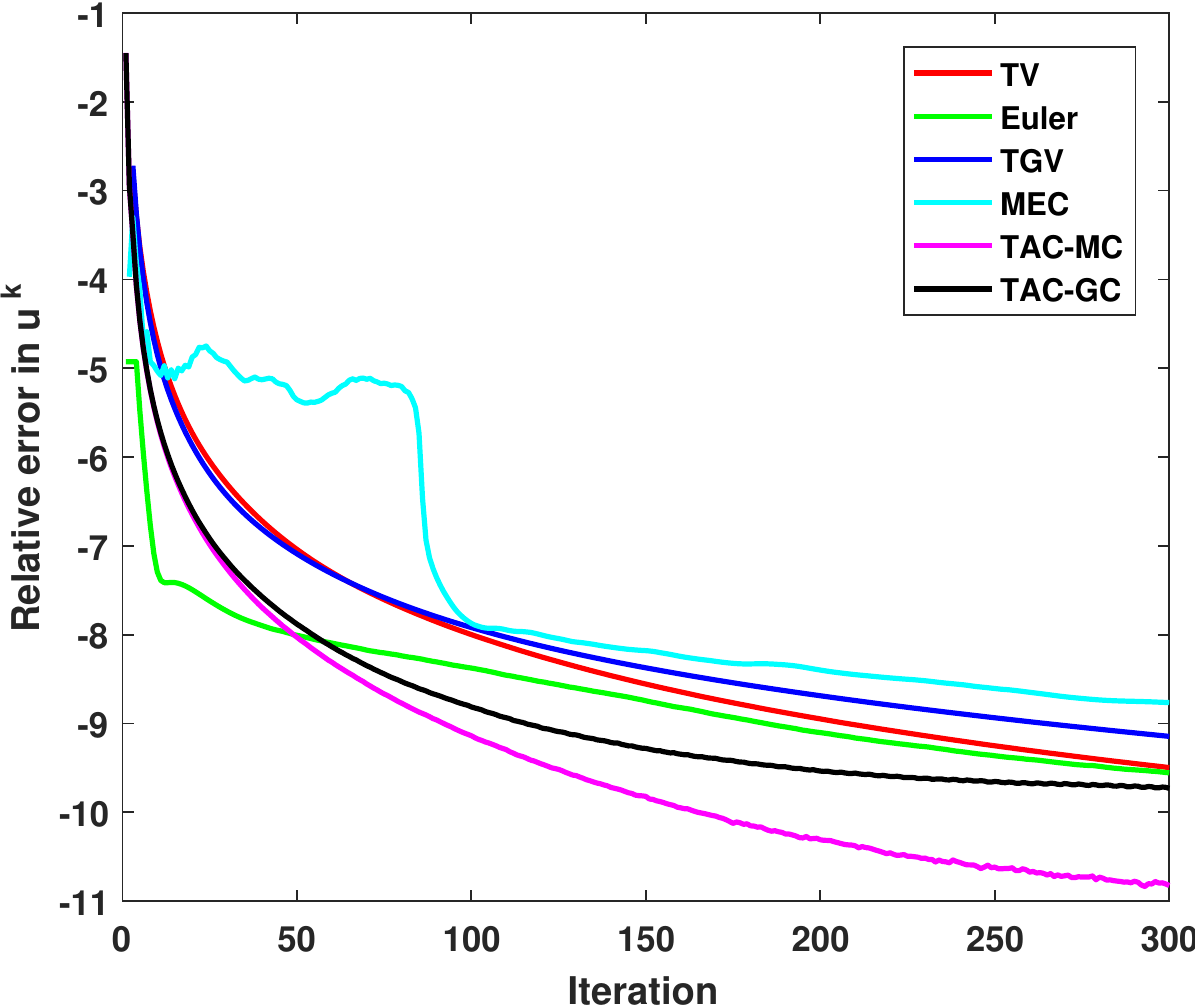}}
      \hspace{.2in}
      \subfigure[Plane]{
			\includegraphics[width=0.35\linewidth]{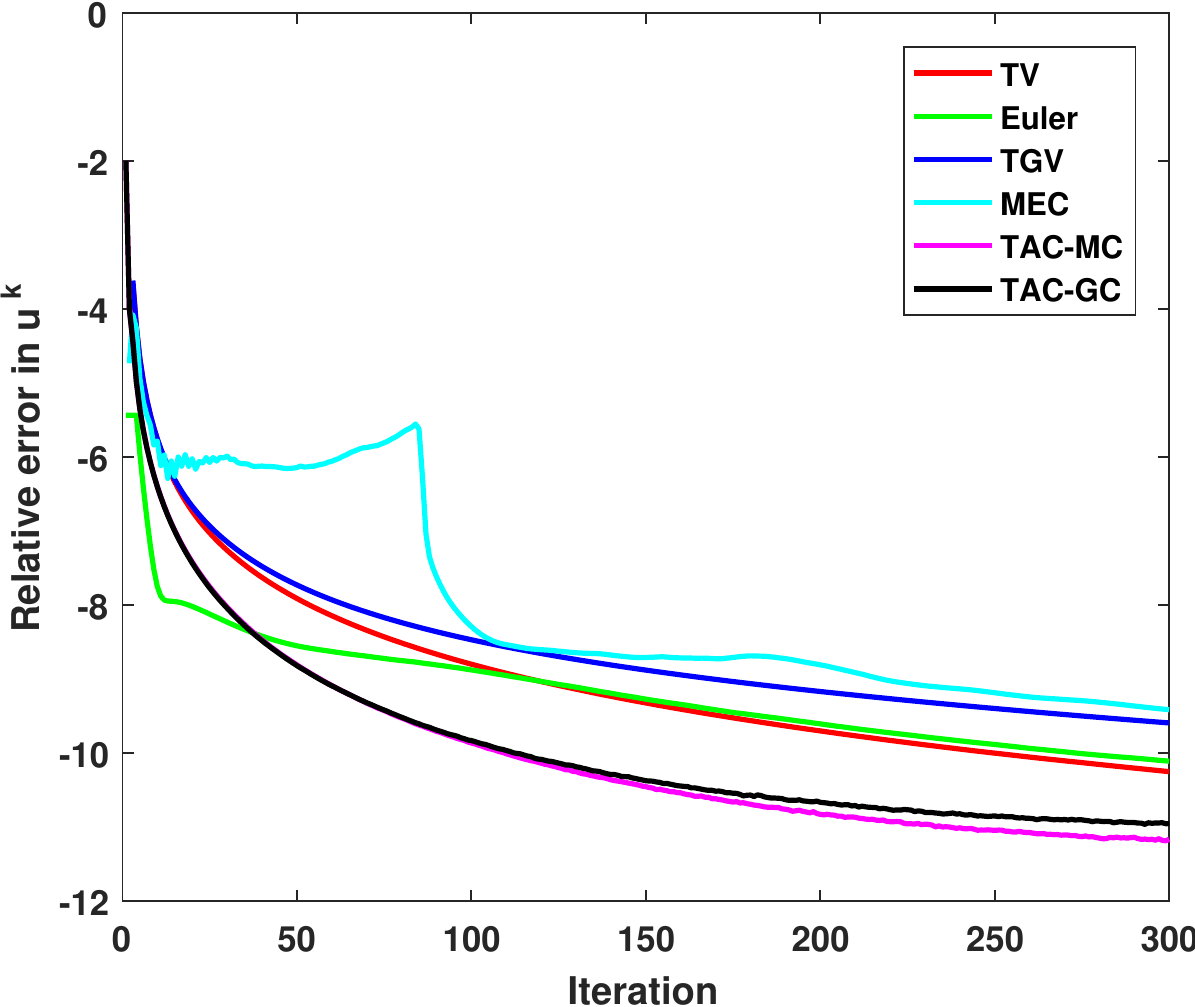}}
	\caption{Relative errors in $u^k$ of `Lena' and `Plane' by the comparative methods.}
	\label{ReErrLenaPlane}
\end{figure*}

\begin{figure*}[t]
      \centering
      \subfigure[MC-Cameraman-N]{
			\includegraphics[width=0.22\linewidth]{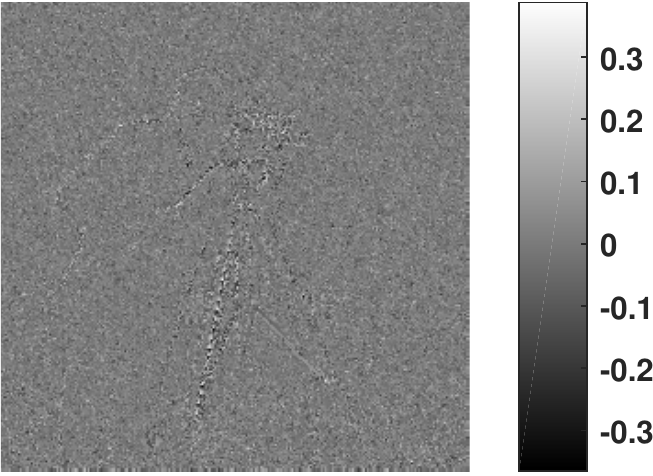}}
      \subfigure[MC-Lena-N]{
			\includegraphics[width=0.22\linewidth]{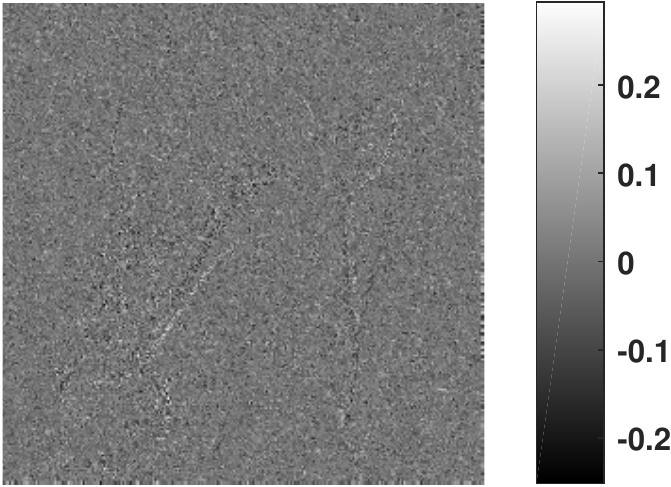}}
      \subfigure[GC-Cameraman-N]{
			\includegraphics[width=0.22\linewidth]{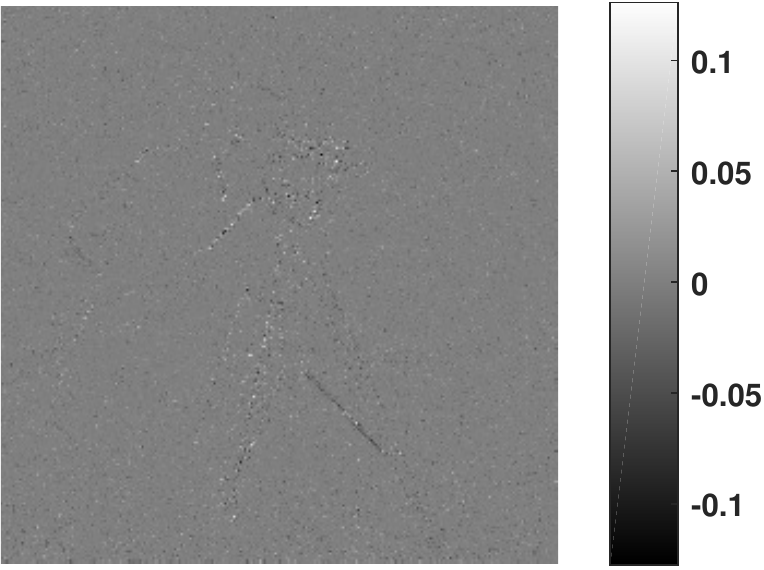}}
      \subfigure[GC-Lena-N]{
			\includegraphics[width=0.23\linewidth]{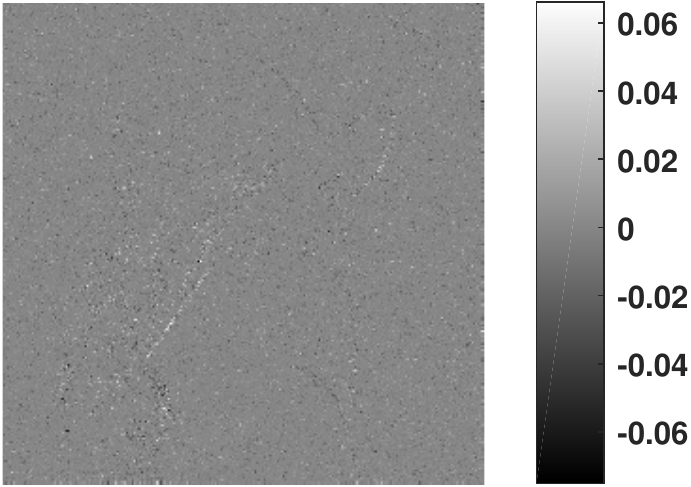}}
      \subfigure[MC-Cameraman-R]{
			\includegraphics[width=0.22\linewidth]{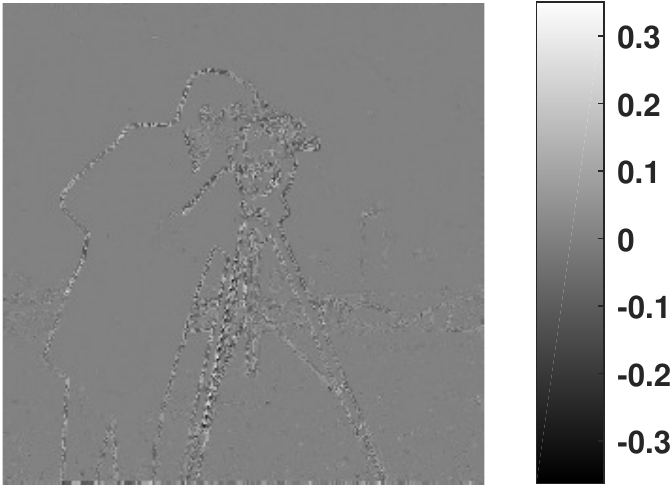}}
      \subfigure[MC-Lena-R]{
			\includegraphics[width=0.22\linewidth]{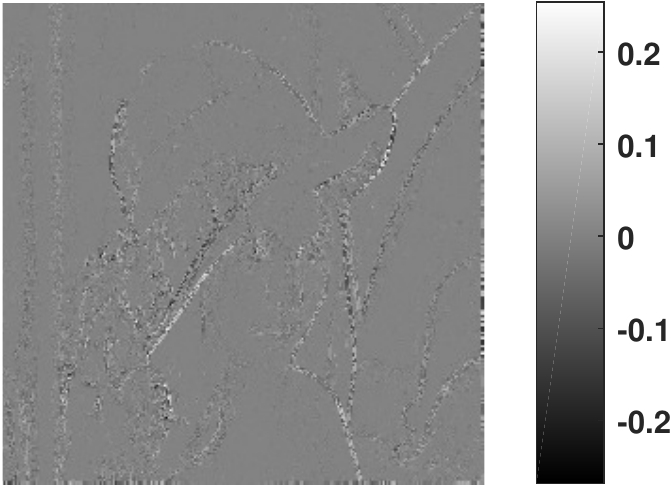}}
      \subfigure[GC-Cameraman-R]{
			\includegraphics[width=0.22\linewidth]{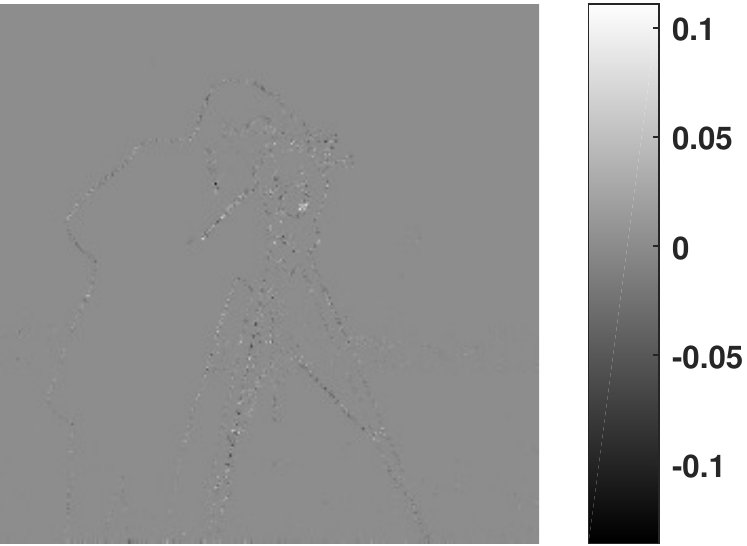}}
      \subfigure[GC-Lena-R]{
			\includegraphics[width=0.23\linewidth]{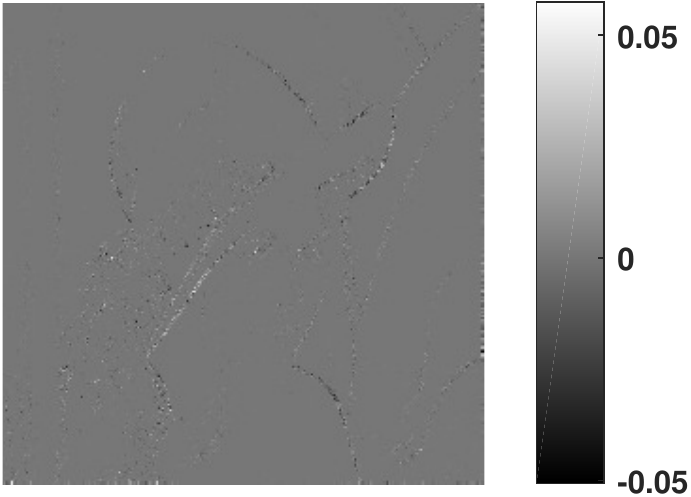}}
      \subfigure[MC-Cameraman-C]{
			\includegraphics[width=0.22\linewidth]{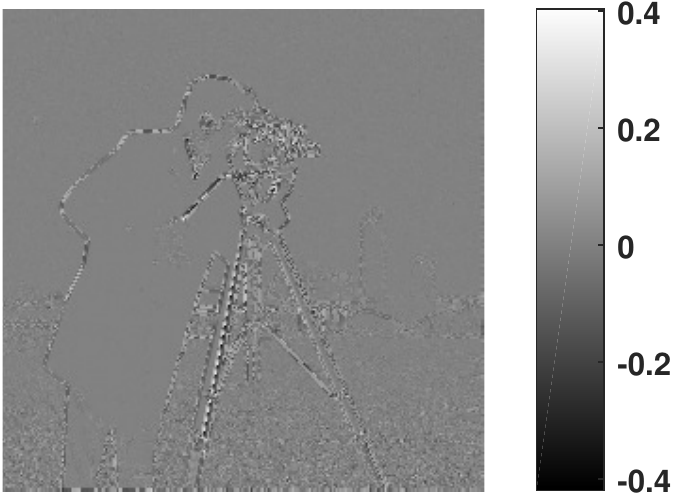}}
      \subfigure[MC-Lena-C]{
			\includegraphics[width=0.22\linewidth]{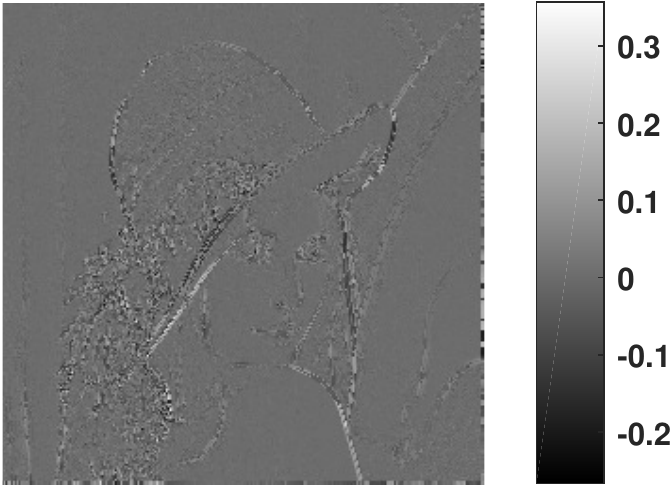}}
      \subfigure[GC-Cameraman-C]{
			\includegraphics[width=0.22\linewidth]{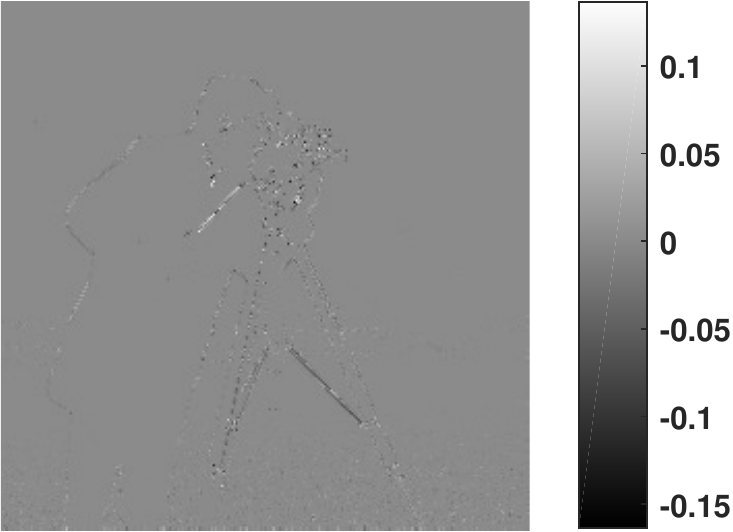}}
      \subfigure[GC-Lena-C]{
			\includegraphics[width=0.23\linewidth]{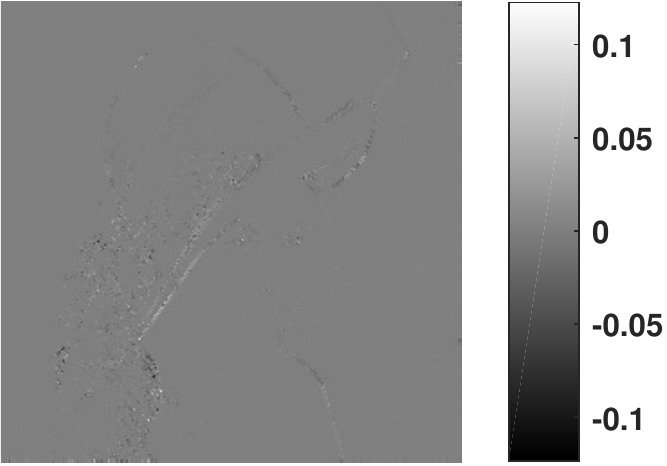}}
	\caption{The numerical MC and GC of the noisy images (N), the restoration results (R) obtained by our proposals and clean images (C) on the first, second and third row, respectively.}
	\label{MCGCimages}
\end{figure*}

We compare the restoration results both quantitatively and qualitatively. The recovery results and the residual images $f-u$ of `Cameraman' and `Triangle' are visually exhibited in FIG. \ref{DenoisingCameraman} and FIG. \ref{DenoisingTriangle}, while the denoising images and the selected local magnification views of `Lena' and `Plane' are shown in FIG. \ref{DenoisingLena} and FIG. \ref{DenoisingPlane}. In general, all methods can remove the noises and recover the major structures and features quite well. However, the TV model suffers from obvious staircase-like artifacts such that lots of image details and textures are observed in the residual images. The Euler's elastica, TGV and MEC method can overcome the staircase effects and preserve image details to some extent due to the high-order regularizer. And our TAC-MC and TAC-GC models still give better recovery results, which produce the smooth and clean images with fine details and textures. In addition, Table \ref{denoise2} presents the PSNR and SSIM in this experiment, which shows our TAC-GC model gives the overall best recovery results. We also record the CPU time in Table \ref{efficiency}, which also illustrates that our TAC-MC and TAC-GC models outperform other high-order methods, significantly faster than the Euler's elastica and mean curvature model.

\begin{figure*}[t]
      \centering
      \subfigure[$\lambda=0.035$]{
			\includegraphics[width=0.3\linewidth]{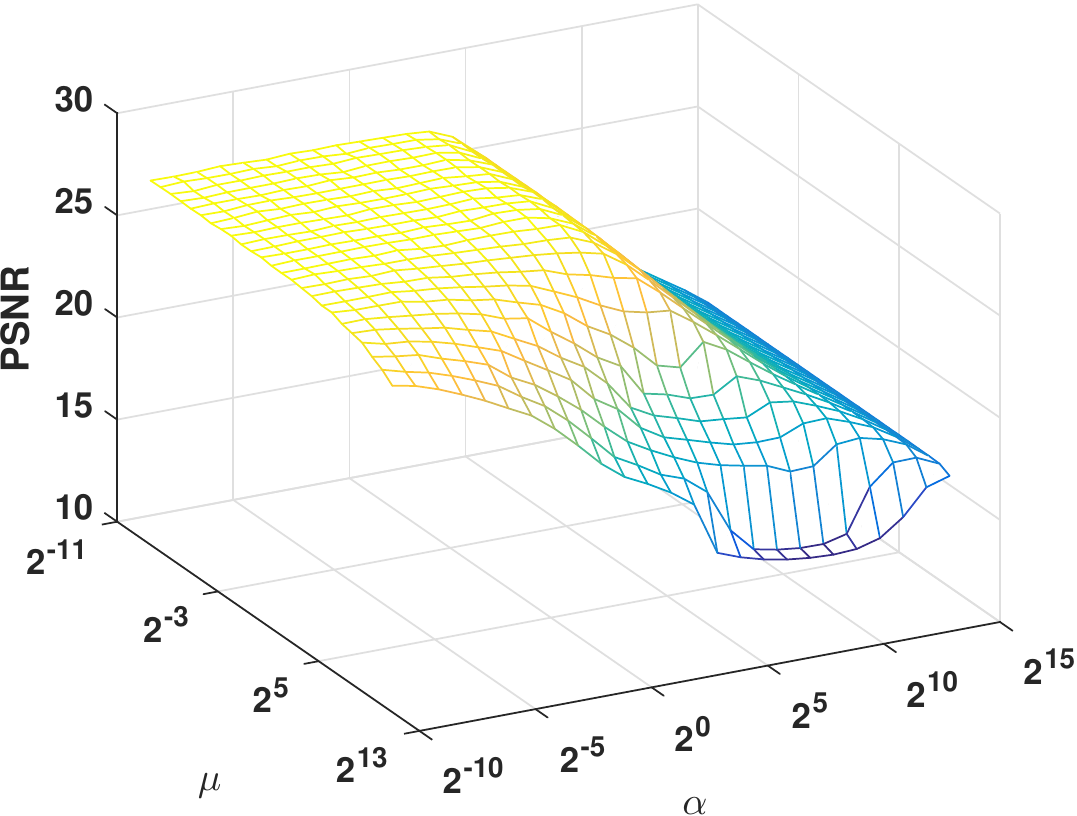}}
      \subfigure[$\lambda=0.07$]{
			\includegraphics[width=0.3\linewidth]{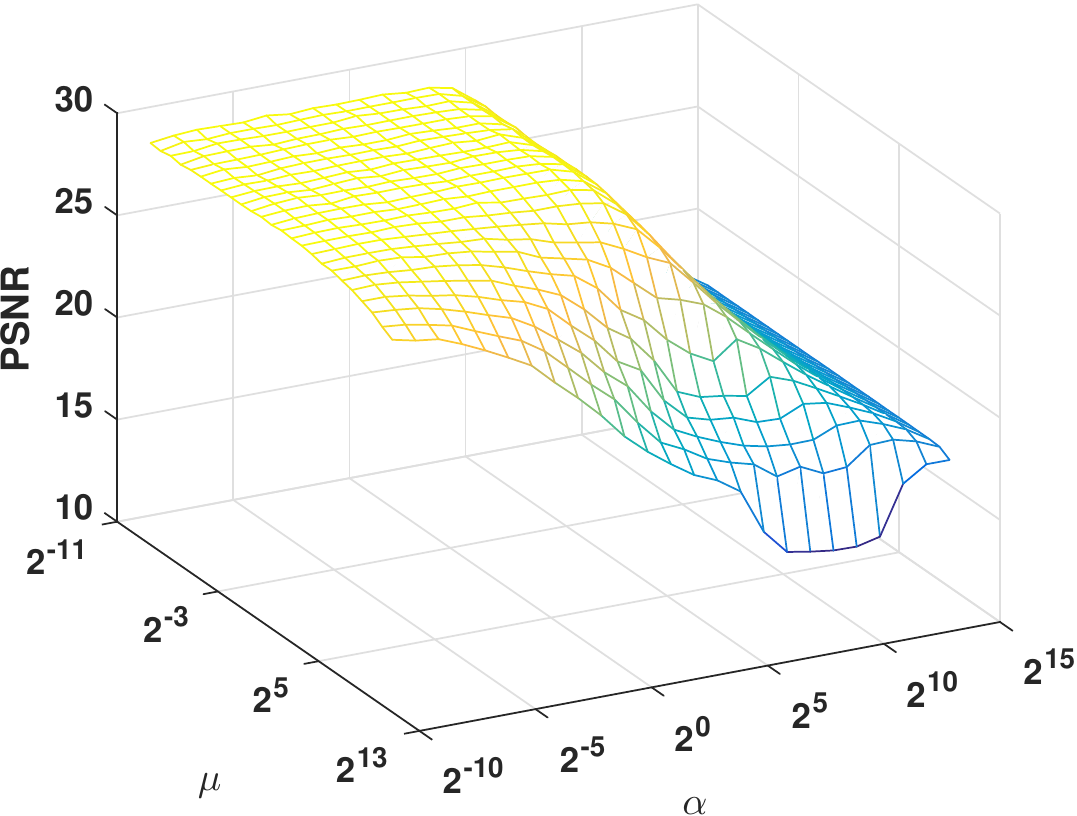}}
      \subfigure[$\lambda=0.14$]{
            \includegraphics[width=0.3\linewidth]{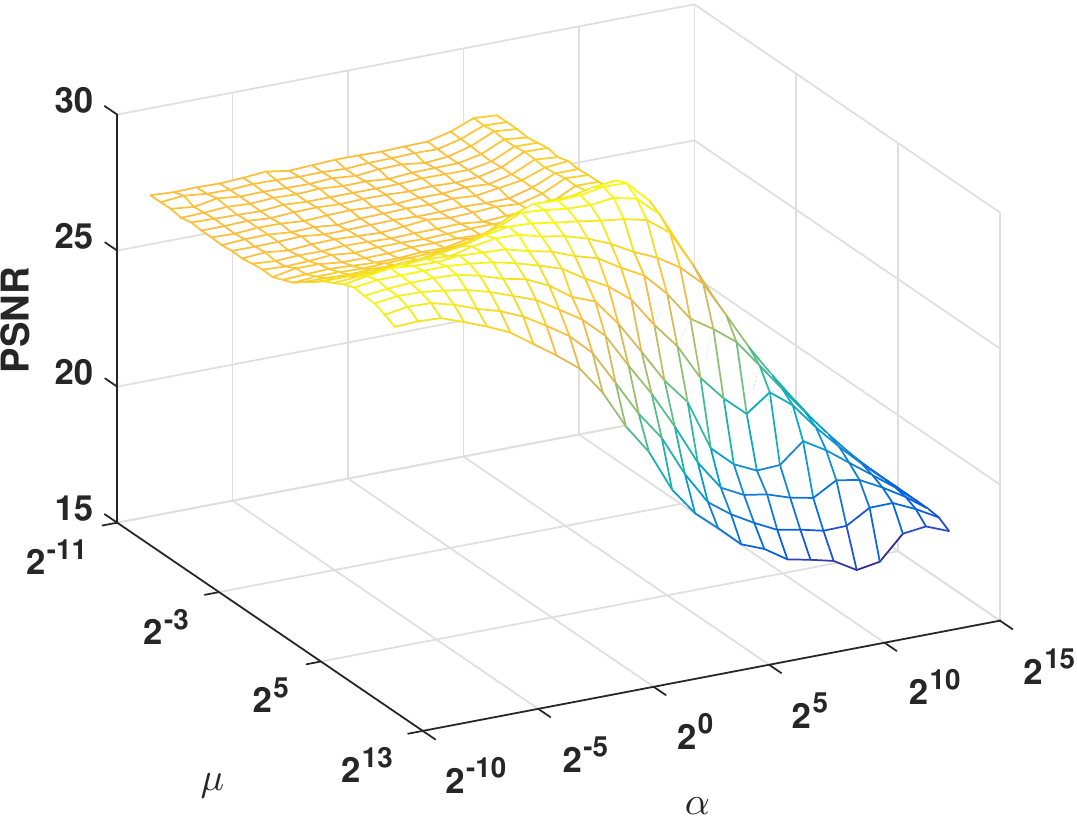}}
      \subfigure[PSNR=27.11]{
			\includegraphics[width=0.25\linewidth]{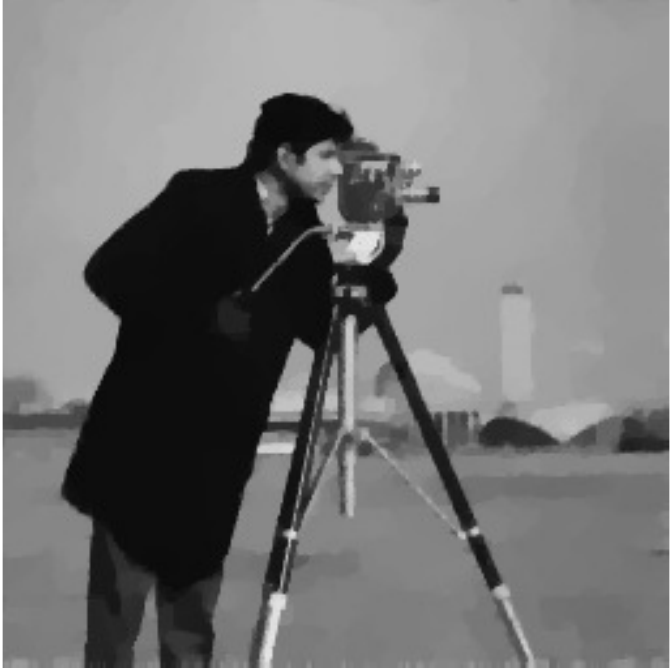}}
      \hspace{.3in}
      \subfigure[PSNR=28.92]{
			\includegraphics[width=0.25\linewidth]{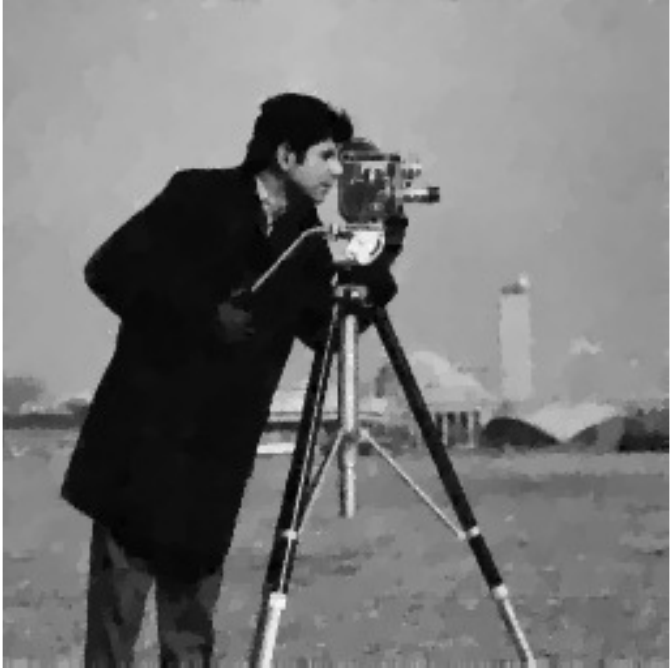}}
      \hspace{.3in}
      \subfigure[PSNR=27.85]{
            \includegraphics[width=0.25\linewidth]{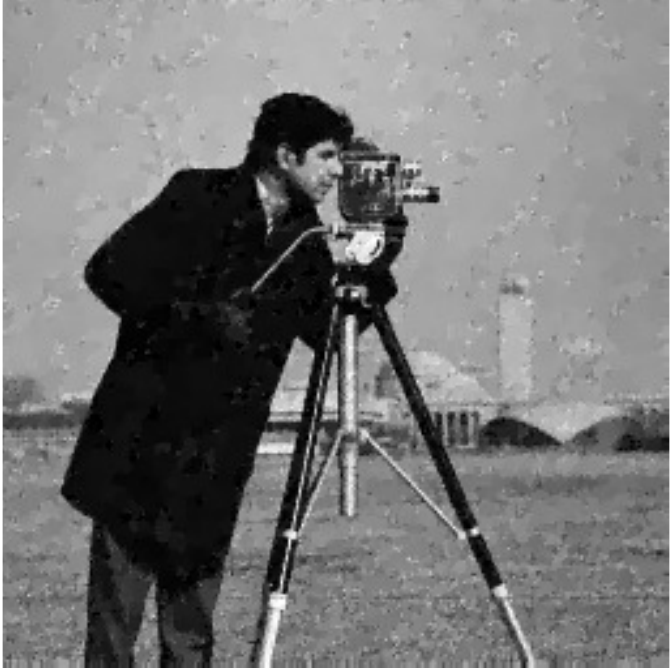}}
	\caption{The PSNR evolutions of `Cameraman' obtained by different combinations of the parameter $r$ and $\alpha$ with fixed regularized parameters $\lambda$ in TAC-GC method.}
	\label{parameters}
\end{figure*}

Furthermore, we track the decay of the relative residuals, relative errors in $\bm\Lambda^k$, relative errors in $u^k$ and the numerical energies of the TAC-MC and TAC-GC methods, which are displayed using log-scale in FIG. \ref{EvaluationsCameraman} and FIG. \ref{EvaluationsTriangle}. These plots demonstrate the convergence of the iterative process and the stability of the proposed methods. As shown, the TAC-GC model usually converges  to a lower numerical energy. To better visualize the convergence of the comparative methods, we plot the relative errors in $u^k$ of `Lena' and `Plane' of these methods in FIG. \ref{ReErrLenaPlane}. Although the relative error of the Euler's elastica energy drops faster at the beginning, our TAC-MC and TAC-GC models can attain smaller relative errors as iteration increases. Thus, our proposal always converges faster than others when a stringent relative error is given as the stopping cretiera.

The visual illustrations of the numerical MC and GC of `Cameraman' and `Lena' estimated on the noisy images, restoration images and the clean images are presented in FIG. \ref{MCGCimages}. Significant noises can be observed in the curvature images of noisy images, while the curvature images of the recovery images are noiseless and only jumps on edges. Indeed, the MC and GC images of the restorations are much alike to the ones obtained by the clean images in visual perceptions. It reveals that the TAC-MC and TAC-GC models successfully reduce the noises contained in MC and GC images, which indicates the reasonability and effectiveness of our proposed models. Through in-depth comparison between the curvature images of the recovery images and clean images, we have the following two observations:
\begin{itemize}
\item Only main edges are presented in the GC images.  The GC measures $\kappa_1\kappa_2$ and has a small magnitude, as long as one principal curvature is small. It well explains why GC regularized model gives lower numerical energy. Thus, minimizing GC allows for fine details and structures, which is more suitable for natural images such as `Lena' and `Cameraman' etc.
\item More details and small edges exist in the MC images. By minimizing the total MC of the noisy image, some tiny structures in the MC images will be smoothed out. Thus, the MC regularity works better for images containing large homogeneous or slowly varying regions e.g., the smooth images in FIG. \ref{smoothimages}.
\end{itemize}

In order to analyze the impact of the parameter $\lambda$, $\alpha$ and $\mu$ in our algorithm, we select the image `Cameraman' as example and test the denoising performance with different combinations of parameters. We select $\alpha$ from three different values $\lambda \in \{0.035, 0.07, 0.14\}$. For each $\lambda$, we vary the parameters $(\alpha,\mu) \in\{\alpha^0\times2^{-\delta}, \alpha^0\times2^{-\delta+1},\cdots, \alpha^0\times2^{\delta-1}, \alpha^0\times2^{\delta}\}\times \{\mu^0\times2^{-\delta}, \mu^0\times2^{-\delta+1},\cdots, \mu^0\times2^{\delta-1}, \mu^0\times2^{\delta}\}$ with $\alpha^0=5$, $\mu^0=2$ and $\delta=12$. In FIG. \ref{parameters}, we plot the PSNR values with different parameters and present the best recovery results for $\lambda = \{0.035, 0.07, 0.14\}$, respectively. As shown, there are relative large intervals for $\alpha$ and $\mu$ to generate good recovery results for fixed $\lambda$. And too small $\lambda$ results in over smoothed recovery results with some details missing, while too large $\lambda$ leads to nonsmooth recovery results with some noise remaining. Therefore, the choice of $\lambda$ is the most important consideration to achieve a high-quality restoration result, which should be tuned according to the noise levels of the test images.

\subsection{Salt $\&$ pepper and Poisson denoising}

In this subsection, both the salt $\&$ pepper and Poisson denoising experiments are operated to further illustrate the excellent performance of our curvature model.
According to the statistical properties of the salt $\&$ pepper noise, we adopt the $L^1$-norm data fidelity term instead of the $L^2$-norm one \cite{dong2009efficient,nikolova2004variational,goldstein2009split}, which gives

\begin{equation}\label{L1norm}
  \min_{u} \sum_{1\leq i,j\leq m} g(\kappa_{i,j}) |\nabla u_{i,j}| + {\lambda}\|u-f\|_1.
\end{equation}
To deal with the above minimization problem, two auxiliary variables are introduced to rewrite the above minimization problem into the following constrained one
\begin{equation*}\label{L1normconstrain}
\begin{split}
& \min_{u,\bm v,w}~ \sum_{1\leq i,j\leq m}g(\kappa_{i,j})|\bm v_{i,j}| + {\lambda}\|w\|_1 \\
& ~\mathrm{s.t.}~~~\bm v=\nabla u,~~w=u-f.
\end{split}
\end{equation*}
More details for solving such constrained minimization problem can be referred to \cite{yang2009efficient,goldstein2009split}.

\begin{figure*}[t]
      \centering
      \subfigure{
			\includegraphics[width=0.135\linewidth]{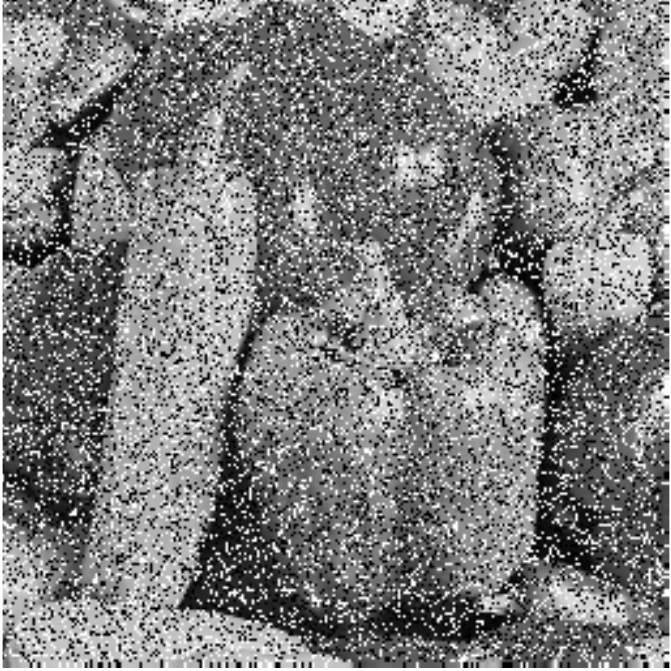}}\hspace{-1ex}
      \subfigure{
			\includegraphics[width=0.135\linewidth]{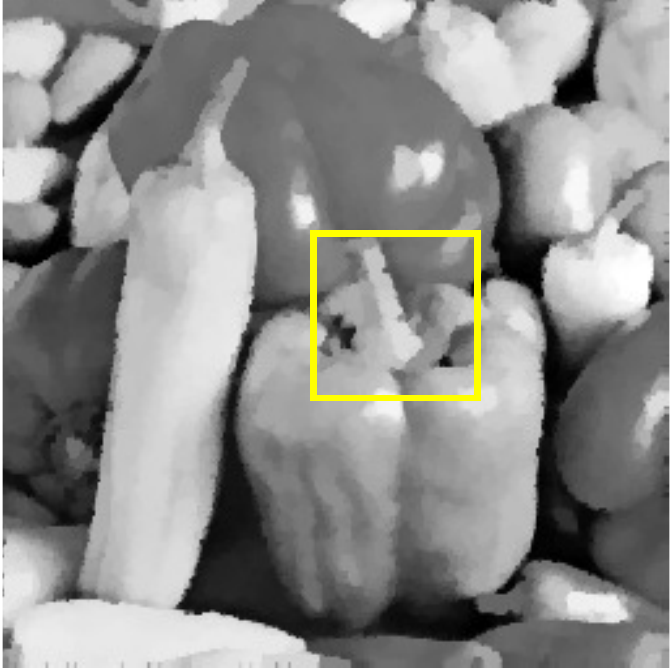}}\hspace{-1ex}
      \subfigure{
			\includegraphics[width=0.135\linewidth]{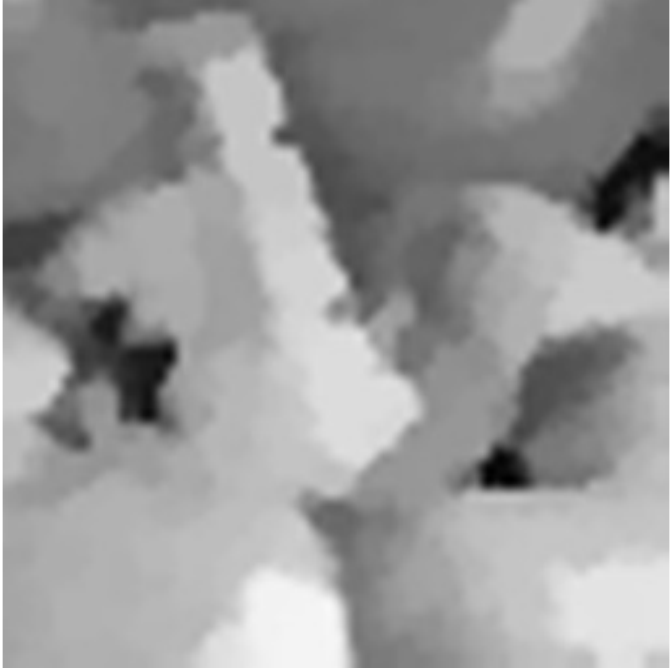}}\hspace{-1ex}
      \subfigure{
			\includegraphics[width=0.135\linewidth]{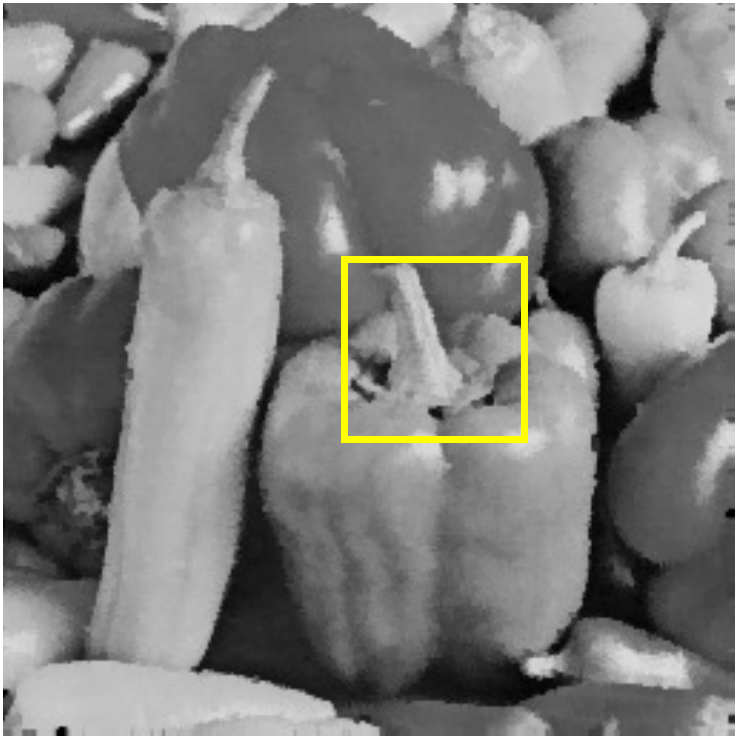}}\hspace{-1ex}
      \subfigure{
			\includegraphics[width=0.135\linewidth]{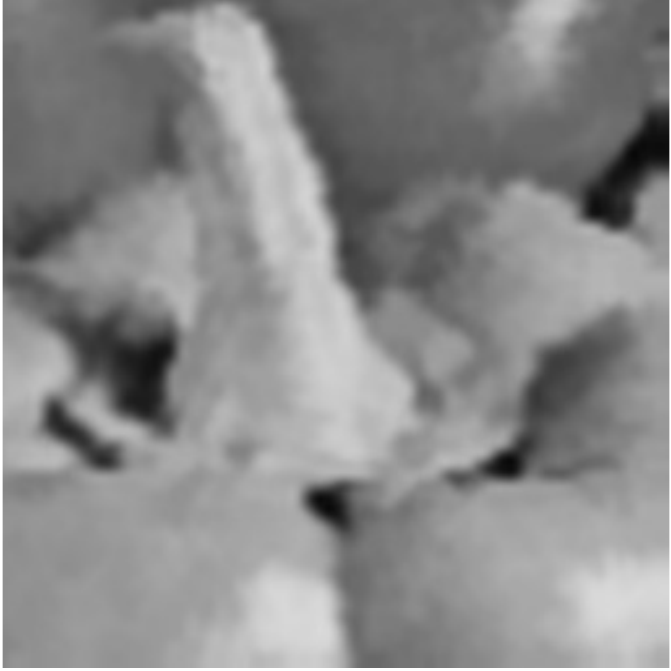}}\hspace{-1ex}
      \subfigure{
			\includegraphics[width=0.135\linewidth]{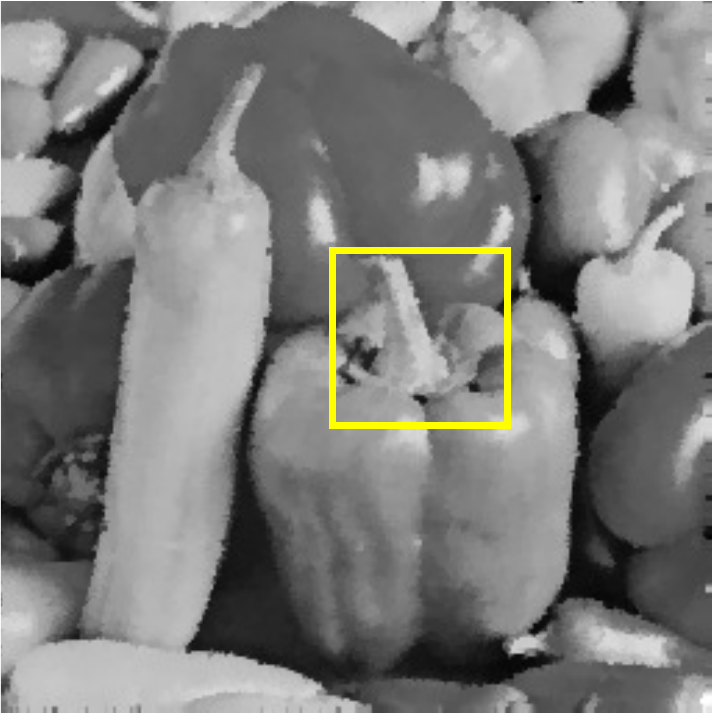}}\hspace{-1ex}
      \subfigure{
			\includegraphics[width=0.135\linewidth]{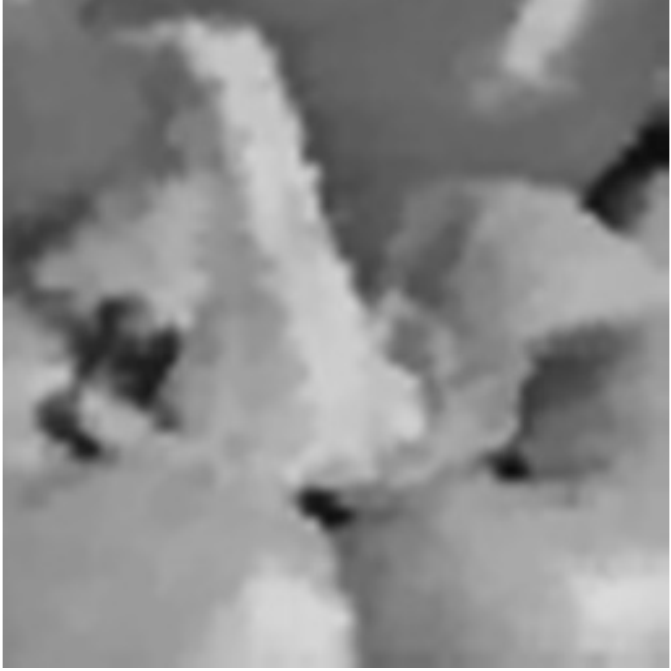}}
      \setcounter{subfigure}{0}
      \subfigure[Noisy]{
			\includegraphics[width=0.135\linewidth]{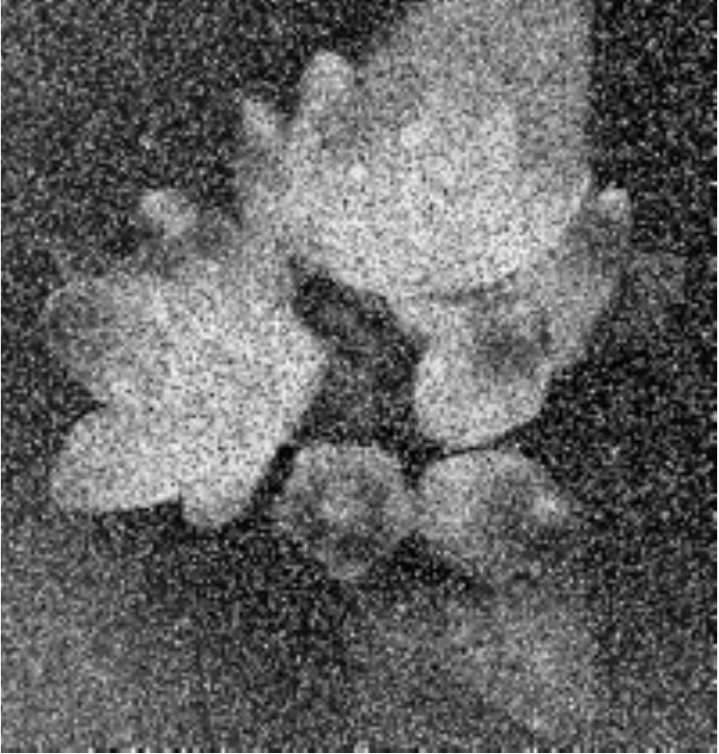}}\hspace{-1ex}
      \subfigure[TV]{
			\includegraphics[width=0.135\linewidth]{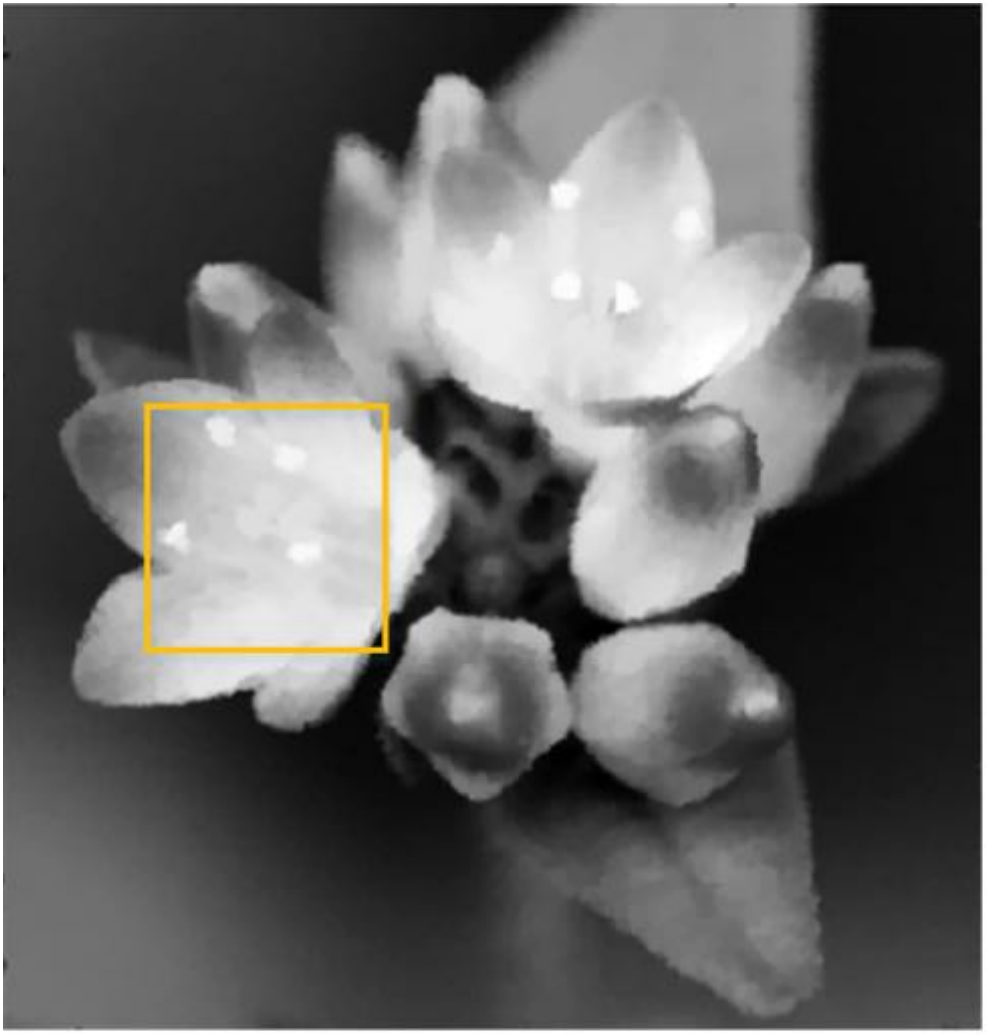}}\hspace{-1ex}
      \subfigure[zoomed]{
			\includegraphics[width=0.135\linewidth]{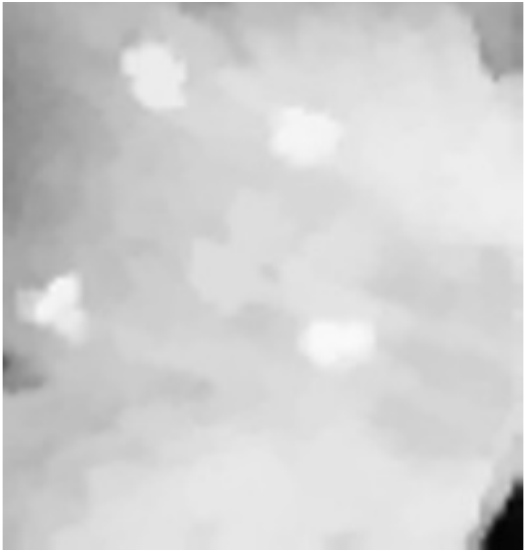}}\hspace{-1ex}
      \subfigure[Euler]{
			\includegraphics[width=0.135\linewidth]{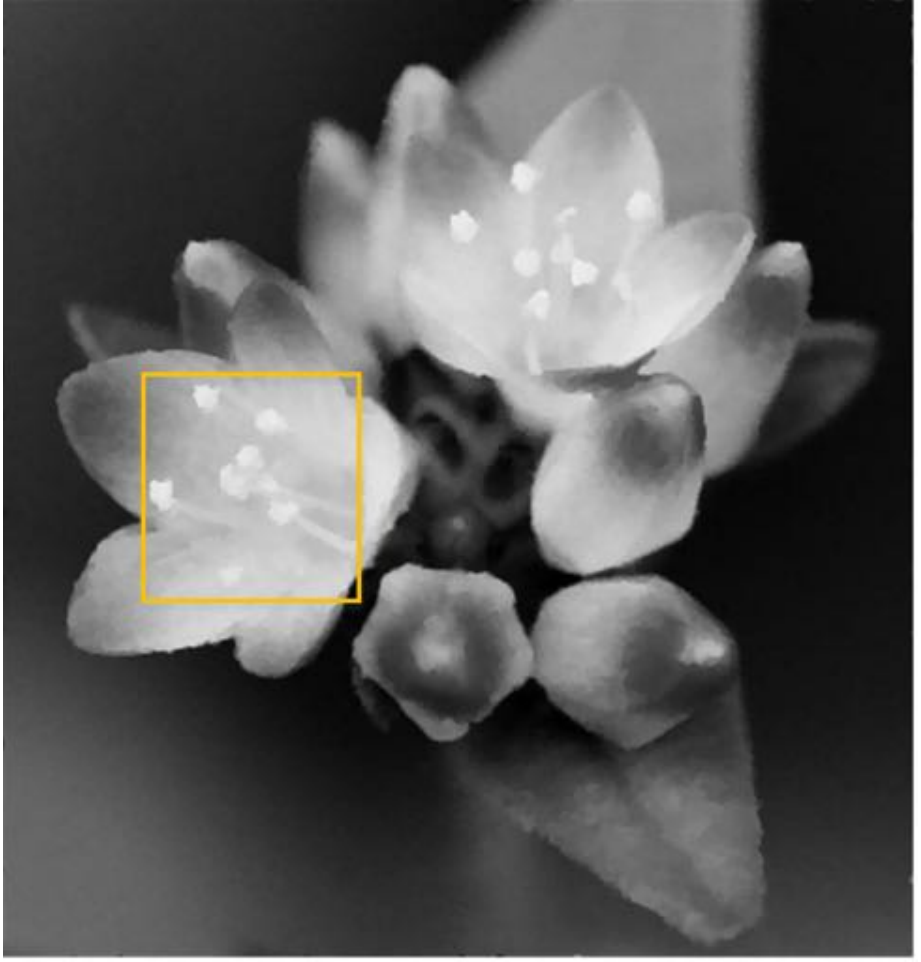}}\hspace{-1ex}
      \subfigure[zoomed]{
			\includegraphics[width=0.135\linewidth]{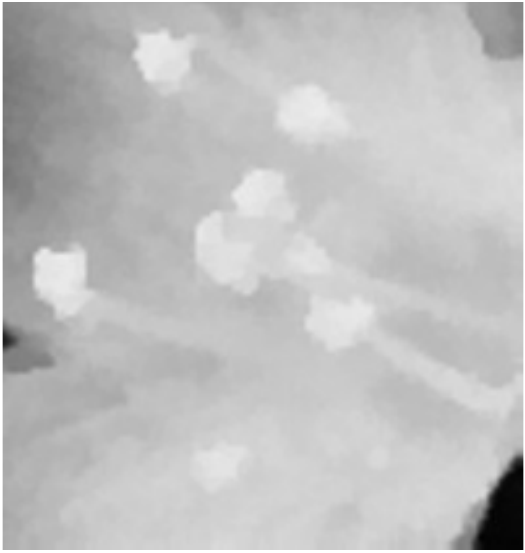}}\hspace{-1ex}
      \subfigure[TAC-GC]{
			\includegraphics[width=0.135\linewidth]{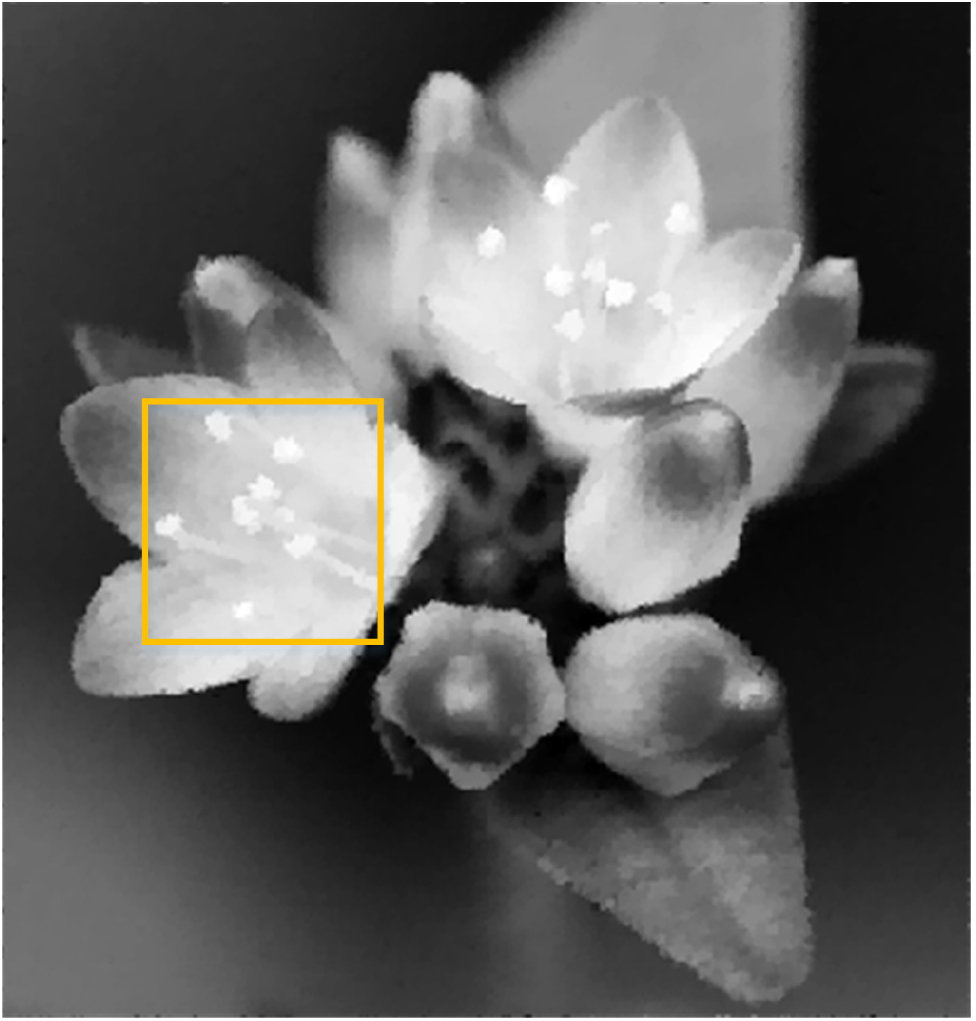}}\hspace{-1ex}
      \subfigure[zoomed]{
			\includegraphics[width=0.135\linewidth]{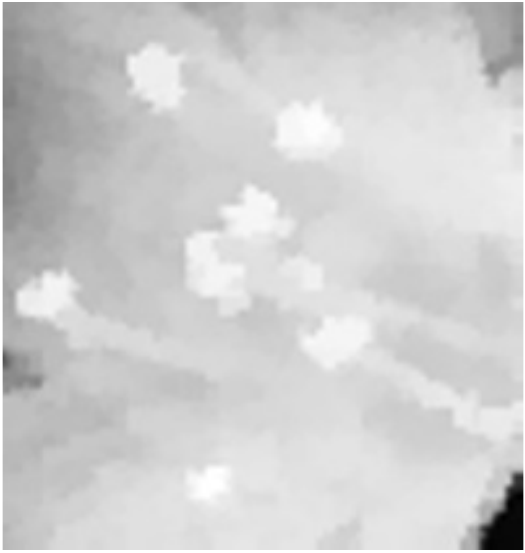}}
	\caption{The Salt $\&$ pepper denoising results of `Peppers' and `Realtest' obtained by the TV, Euler's elastica and TAC-GC methods. The parameters are set as (b) TV: $r_1=5$, $r_2=20$ and $\lambda=15$; (d) Euler: $\alpha=20$, $r_1=1$, $r_2=7\cdot10^2$, $r_3=10^2$, $r_4=5\cdot10^2$ and $\eta=20$; (f) TAC-GC: $\alpha=20$, $\mu_1=30$, $\mu_2=120$ and $\lambda=1.6$.}
	\label{implusenoise}
\end{figure*}

\begin{table*}[!htp]
\caption{The evaluations of salt \& pepper noise removal for the TV, Euler's elastica and TAC-GC methods.}
\label{denoise3}
\tiny
\begin{center}
\begin{tabular}{c|c|c|c|c|c|c|c|c|c|c}
\hline\hline
Methods & \multicolumn{2}{|c|}{TV} & \multicolumn{4}{|c|}{Euler's elastica} & \multicolumn{4}{|c}{TAC-GC}  \\
\hline
Images & PSNR & SSIM & PSNR & SSIM & Iterations & Time & PSNR & SSIM & Iterations & Time \\
\hline\hline
Peppers  & 24.08 & 0.8452 & 25.01 & 0.8689 & 261 & 18.44 & \bf{25.17} & \bf{0.8716} & \bf{193} & \bf{12.40}\\
\hline
Realtest & 31.64 & 0.8862 & \bf{33.05} & \bf{0.9104} & 265 & 67.89 & 32.97 & 0.9067 & ${\bm{172}}$ & ${\bm{44.10}}$\\
\hline\hline
\end{tabular}
\end{center}
\end{table*}

We use two grayscale test images `Peppers' $(256\times256)$ and `Realtest' $(400\times420)$, both of which are corrupted by $30\%$ salt $\&$ pepper noise. The parameters are set as $\lambda=1.6$, $\alpha=20$, and $\mu_1=30$, $\mu_2=120$, while the termination criteria is $\epsilon=3\times10^{-5}$. We compare the TAC-GC model with both the TV and Euler's elastica method. FIG. \ref{implusenoise} shows the recovery results and their local magnification views obtained by the TV, Euler's elastica and our TAC-GC method. It can be observed that the recovery of the TV model tends to lose image details and features due to the apparent staircase-like artifacts in smooth regions, while both Euler's elastica and TAC-GC method can preserve fine image details and textures to a certain extent. Table \ref{denoise3} illustrates that the TAC-GC model can obtain higher PSNR and SSIM than TV model while give the similar PSNR and SSIM as Euler's elastica model. More importantly, we find out that the TAC-GC model always approaches to the stopping criteria with fewer iterations and less computational costs than the Euler's elastica model.

\begin{figure*}[t]
      \centering
      \subfigure{
			\includegraphics[width=0.155\linewidth]{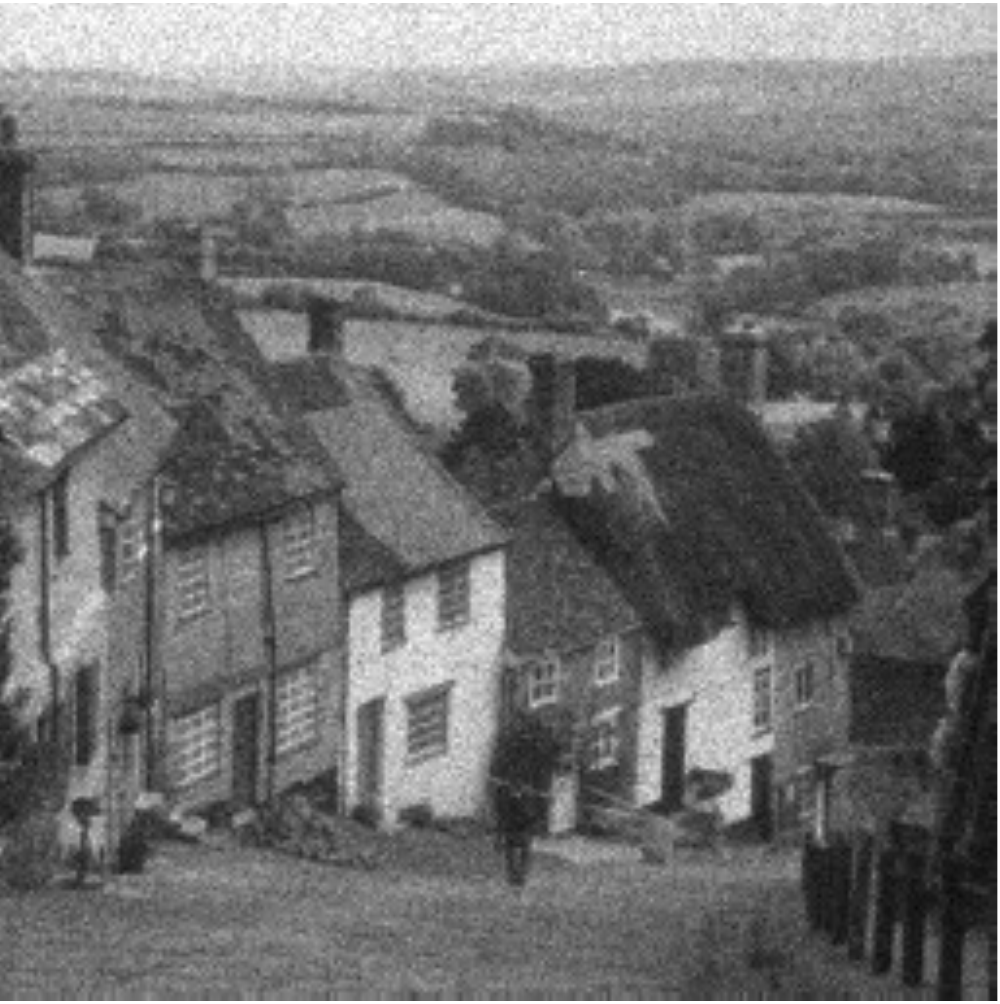}}\hspace{-1.0ex}
      \subfigure{
            \includegraphics[width=0.155\linewidth]{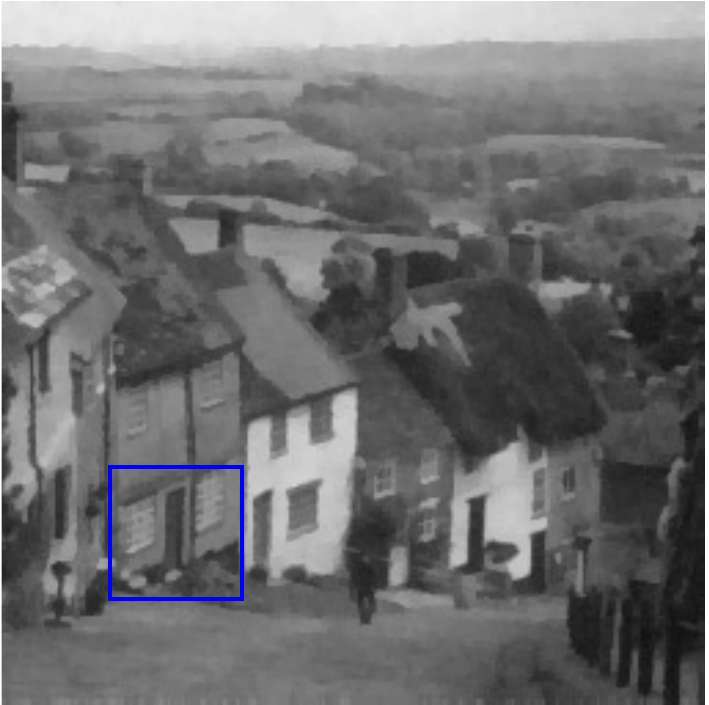}}\hspace{-1.0ex}
      \subfigure{
			\includegraphics[width=0.155\linewidth]{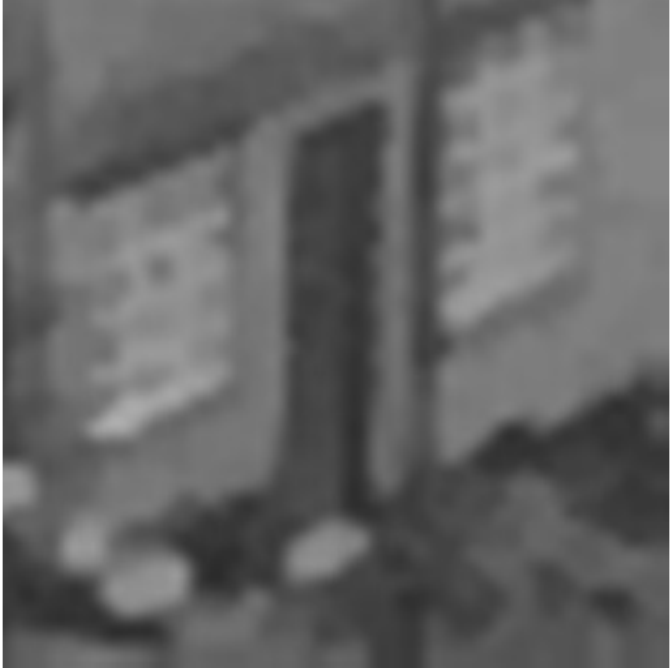}}\hspace{-1.0ex}
      \subfigure{
			\includegraphics[width=0.155\linewidth]{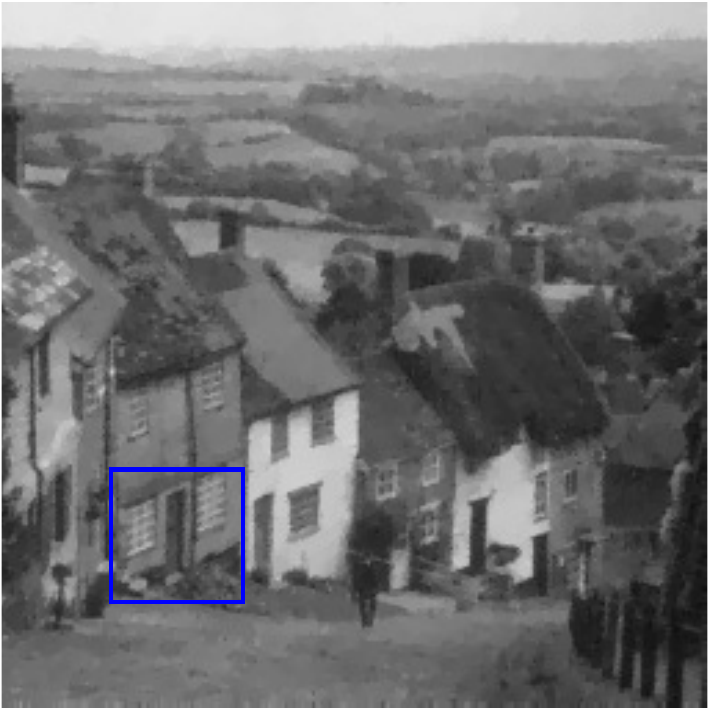}}\hspace{-1.0ex}
      \subfigure{
			\includegraphics[width=0.155\linewidth]{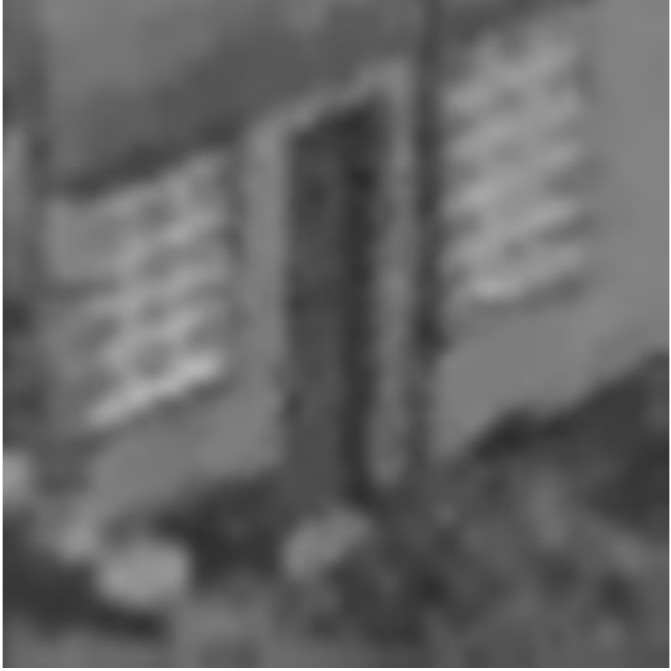}}\hspace{-1.0ex}
      \subfigure{
			\includegraphics[width=0.18\linewidth]{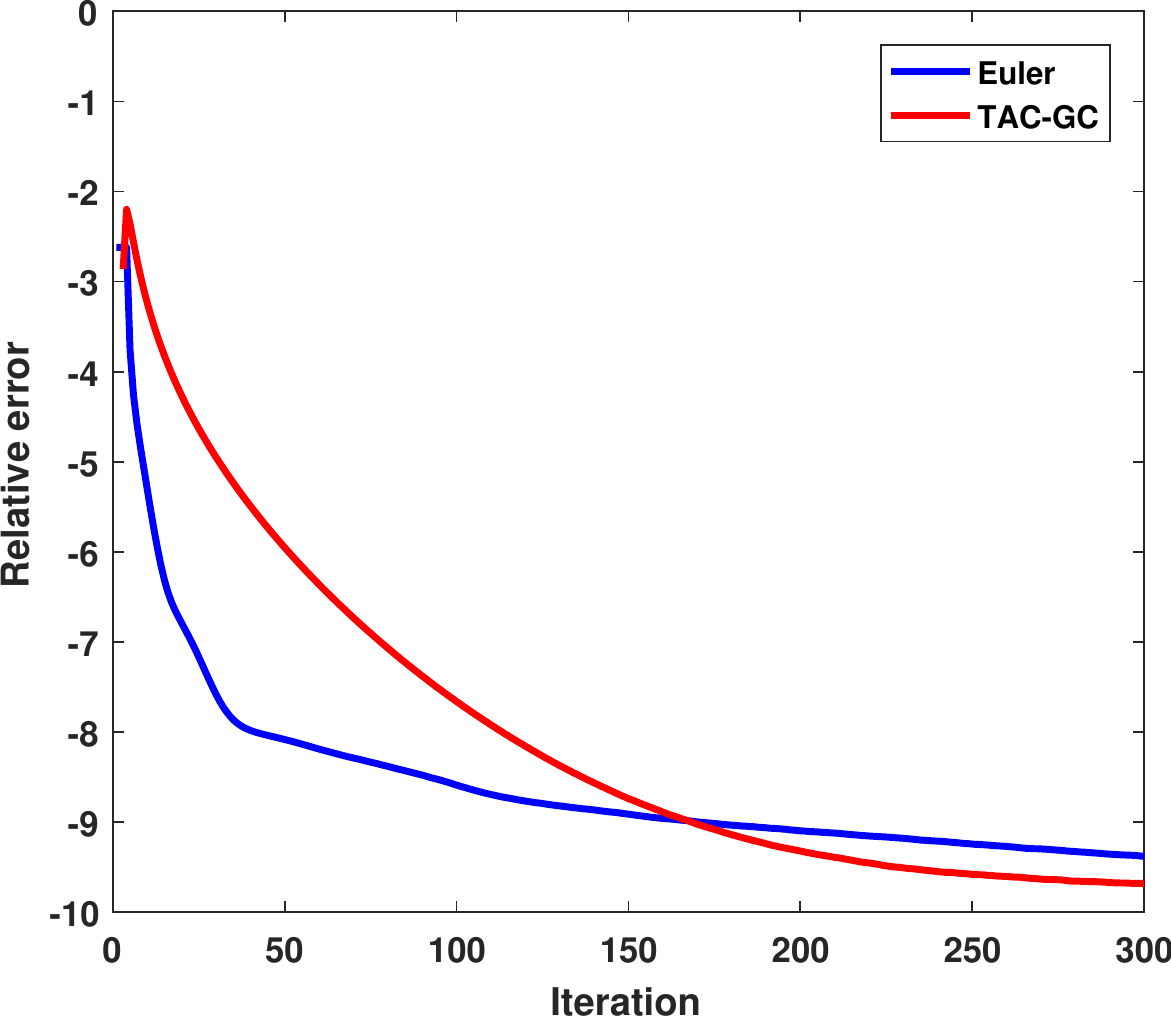}}
\setcounter{subfigure}{0}
      \subfigure[Noisy]{
			\includegraphics[width=0.155\linewidth]{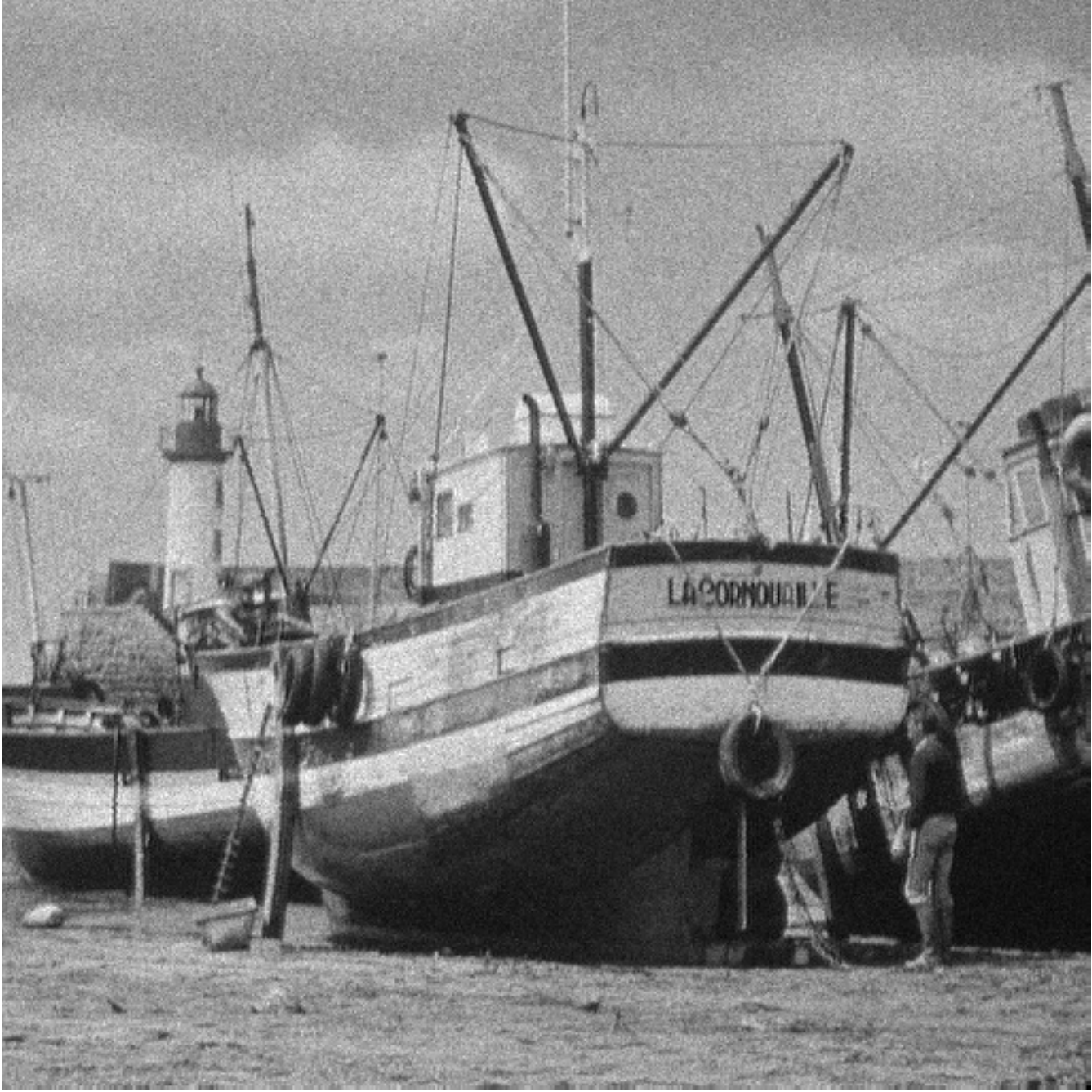}}\hspace{-1.0ex}
      \subfigure[Euler]{
            \includegraphics[width=0.155\linewidth]{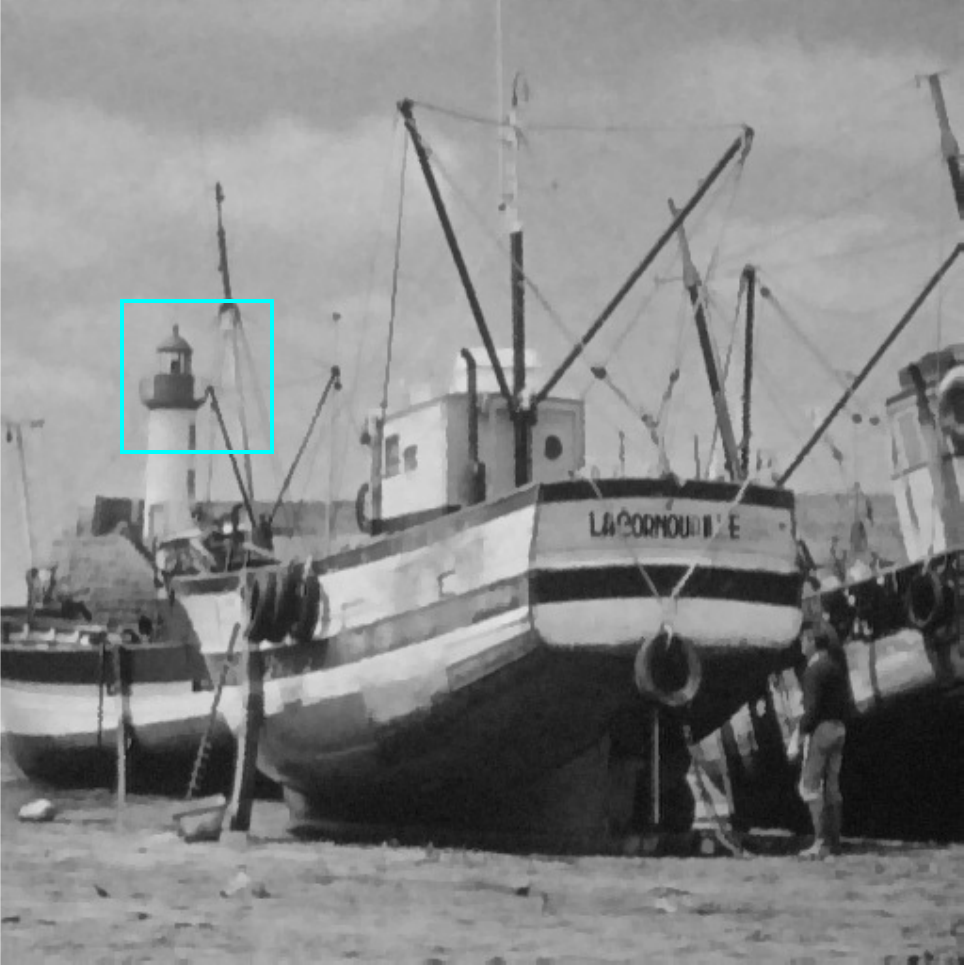}}\hspace{-1.0ex}
      \subfigure[zoomed]{
			\includegraphics[width=0.155\linewidth]{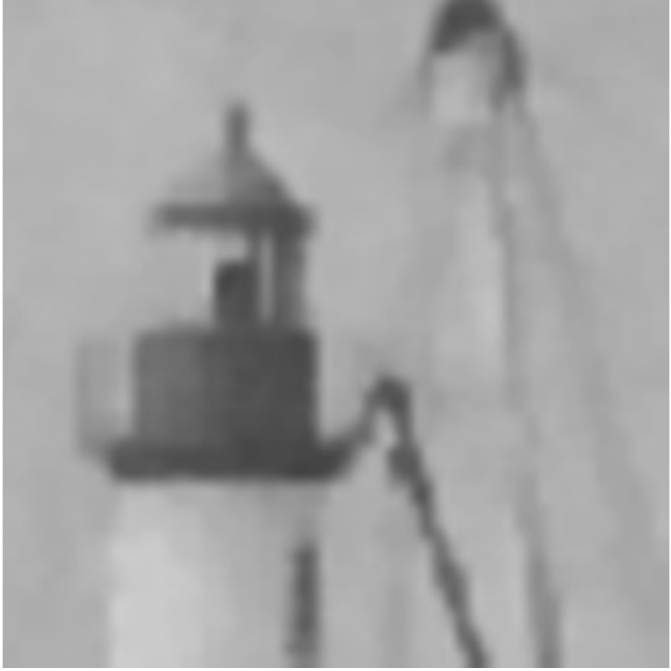}}\hspace{-1.0ex}
      \subfigure[TAC-GC]{
			\includegraphics[width=0.155\linewidth]{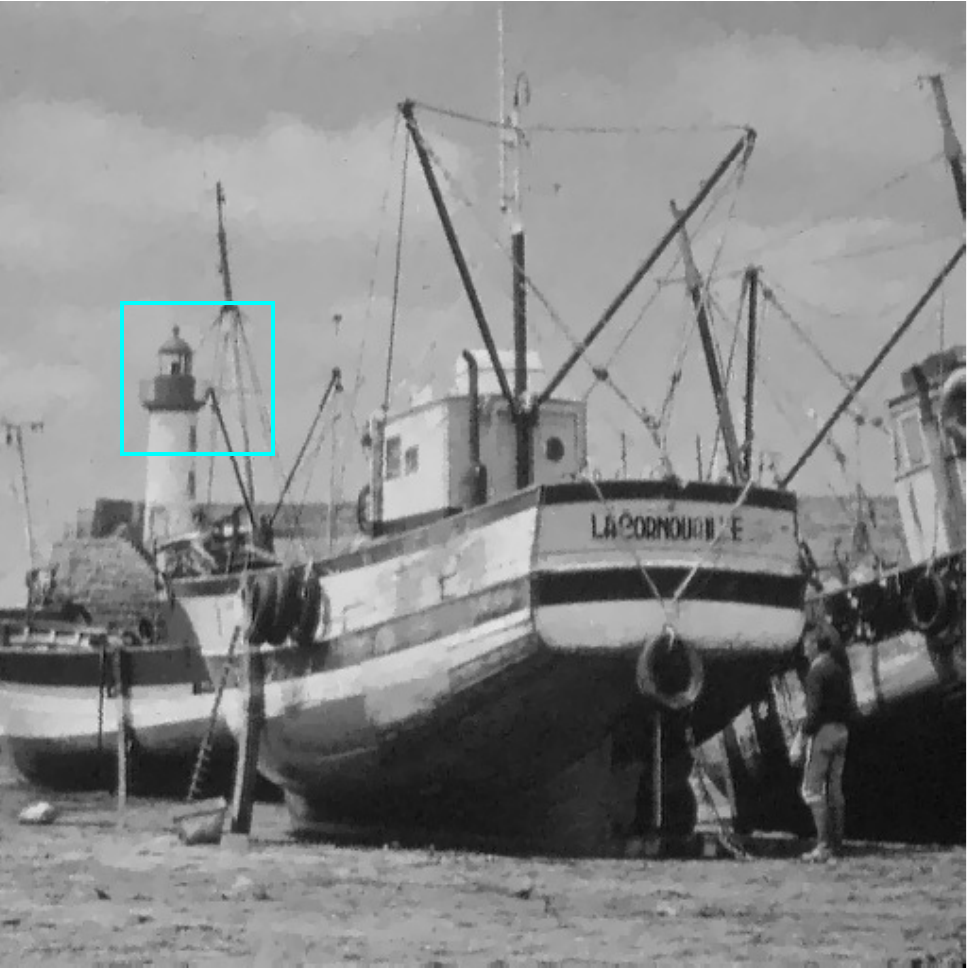}}\hspace{-1.0ex}
      \subfigure[zoomed]{
			\includegraphics[width=0.155\linewidth]{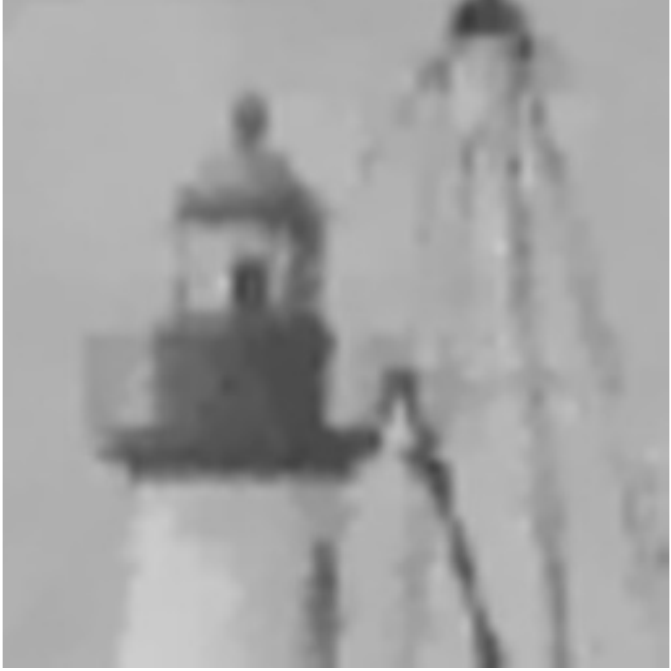}}\hspace{-1.0ex}
      \subfigure[Relative Error]{
			\includegraphics[width=0.18\linewidth]{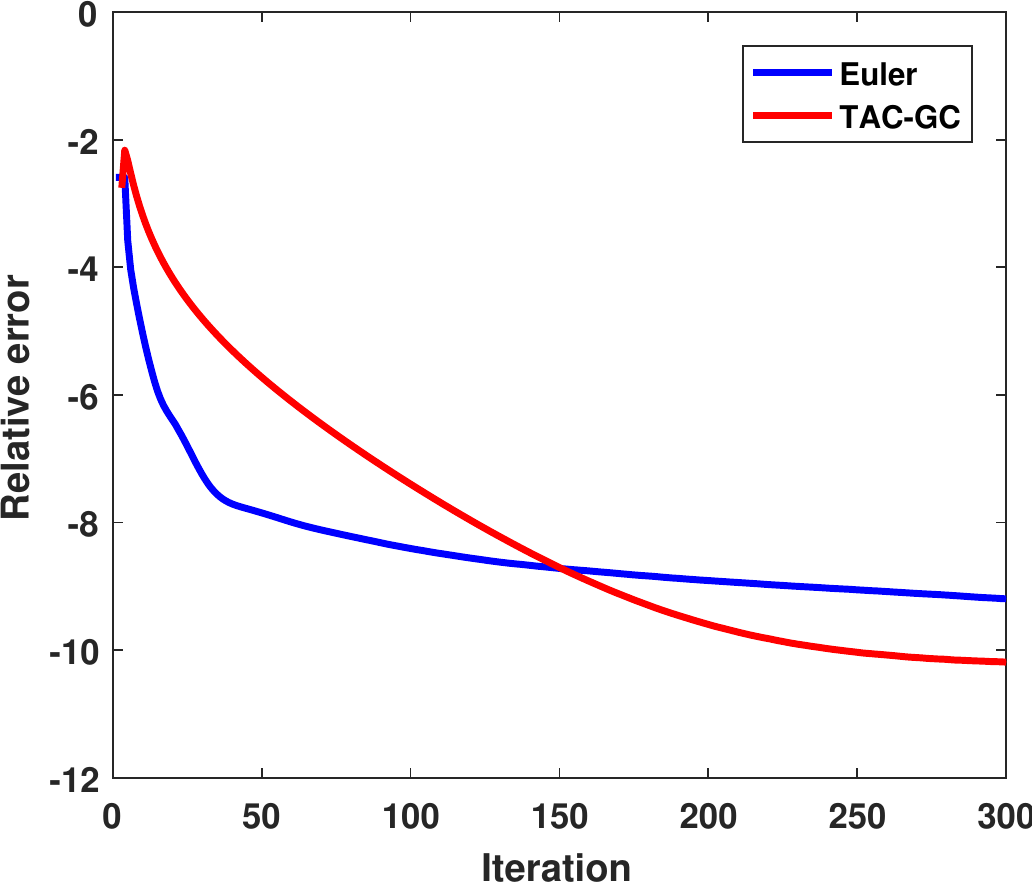}}
	\caption{The Poisson denoising results of `Hill' and `Boats' obtained by the Euler's elastica and TAC-GC methods. The parameters are set as (b) Euler: $\alpha=15$, $r_1=2$, $r_2=5\cdot10^2$, $r_3=10^2$, $r_4=5\cdot10^2$ and $\eta=2\cdot10^2$; (d) TAC-GC: $\alpha=15$, $\mu_1=2$, $\mu_2=4$ and $\lambda=25$.}
	\label{PoissonDenoising}
\end{figure*}

\begin{table*}[t]
\caption{The evaluations of Poisson noise removal for the Euler's elastica and TAC-GC methods.}
\label{denoise4}
\tiny
\begin{center}
\begin{tabular}{c|c|c|c|c|c}
\hline\hline
Images & Methods & PSNR & SSIM & Iterations & Time \\
\hline\hline
Goldhill & Euler's elastica & 31.53 & 0.8689 & 285 & 20.40 \\
$256\times256$ & TAC-GC & \bf{32.16} & \bf{0.8812} & \bf{216} & \bf{14.42} \\
\hline
Boats & Euler's elastica & 31.90 & 0.8772 & 254 & 116.84 \\
$512\times512$ & TAC-GC & \bf{32.76} & \bf{0.8958} & \bf{198} & \bf{79.25} \\
\hline\hline
\end{tabular}
\end{center}
\end{table*}

We also conduct the examples of Poisson noise removal, the variational model of which can be formalized by integrating the Kullback-Leibler (KL) fidelity as
\begin{equation}\label{KLnorm}
   \min_{u} \sum_{1\leq i,j\leq m} g(\kappa_{i,j}) |\nabla u_{i,j}| + \lambda \sum_{1\leq i,j\leq m}(u_{i,j}-f_{i,j}\log u_{i,j}).
\end{equation}
More detailed implementation of \eqref{KLnorm} can be found in \cite{le2007variational,wu2011augmented}.

The Poisson noise is introduced into two clean images, i.e., `Goldhill' $(256\times256)$ and `Boats' $(512\times512)$. We set the parameters in our model as $\lambda=25$, $\alpha=15$, $\mu_1=2$, $\mu_2=4$, and stopping criteria is given as $\epsilon=7\times10^{-5}$. The restoration results of our TAC-GC model are compared with the Euler's elastica model as illustrated in FIG. \ref{PoissonDenoising} and Table \ref{denoise4}.
As shown in FIG. \ref{PoissonDenoising}, the TAC-GC model can preserve more image details and features than the Euler's elastica model, e.g., the window area in `Goldhill' and the mast area in `Boats'. The results are further verified by the PSNR and SSIM in Table \ref{denoise4}. Similar to the previous experiment, our TAC-GC method converges faster than the Euler's elastica method using the same stopping criteria, which demonstrates that our curvature model outperforms the Euler's elastica method in both quality and efficiency in Poisson noise removal.

\begin{figure*}[t]
      \centering
      \subfigure[Noisy($\theta=20$)]{
			\includegraphics[width=0.18\linewidth]{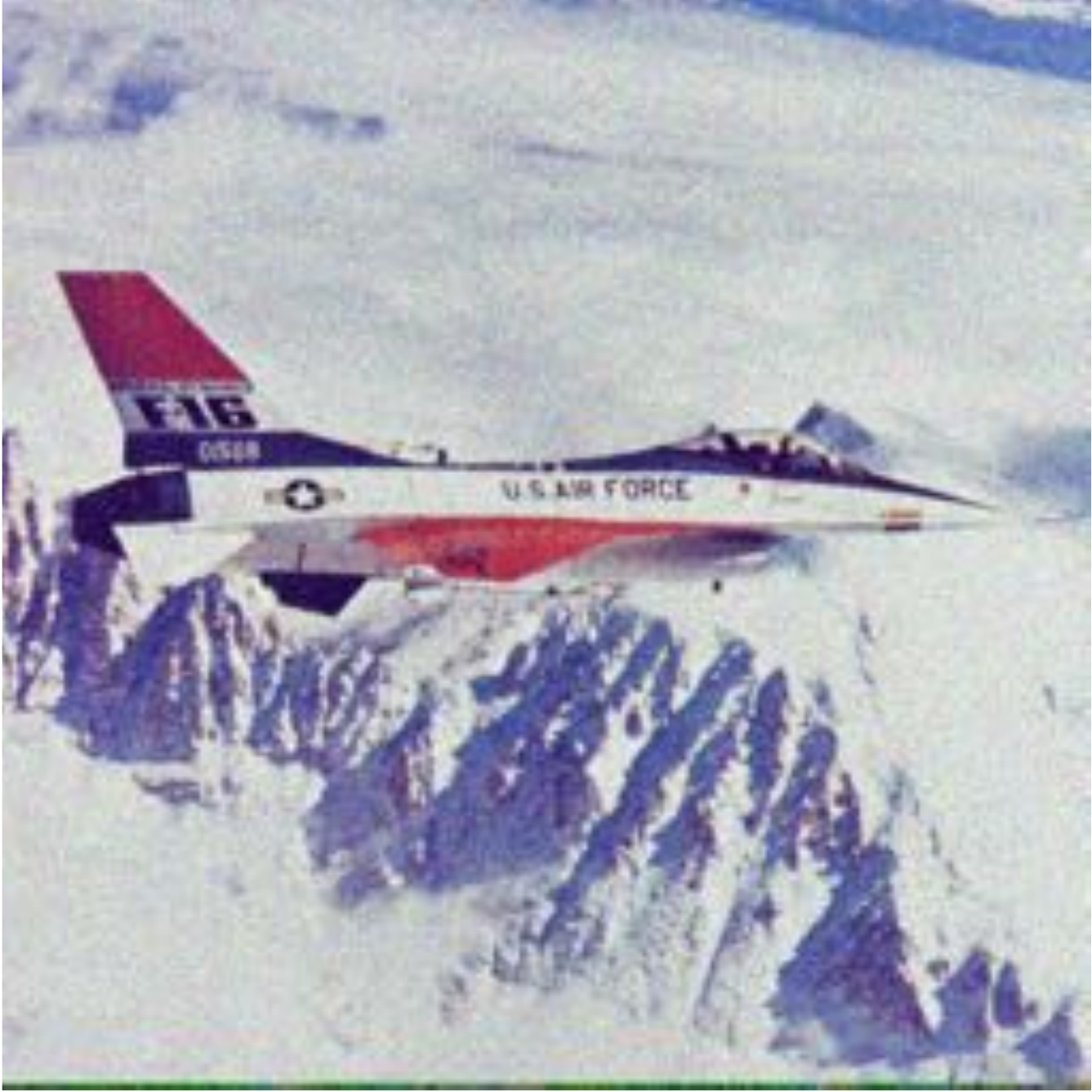}}\hspace{-1ex}
      \subfigure[Euler]{
			\includegraphics[width=0.18\linewidth]{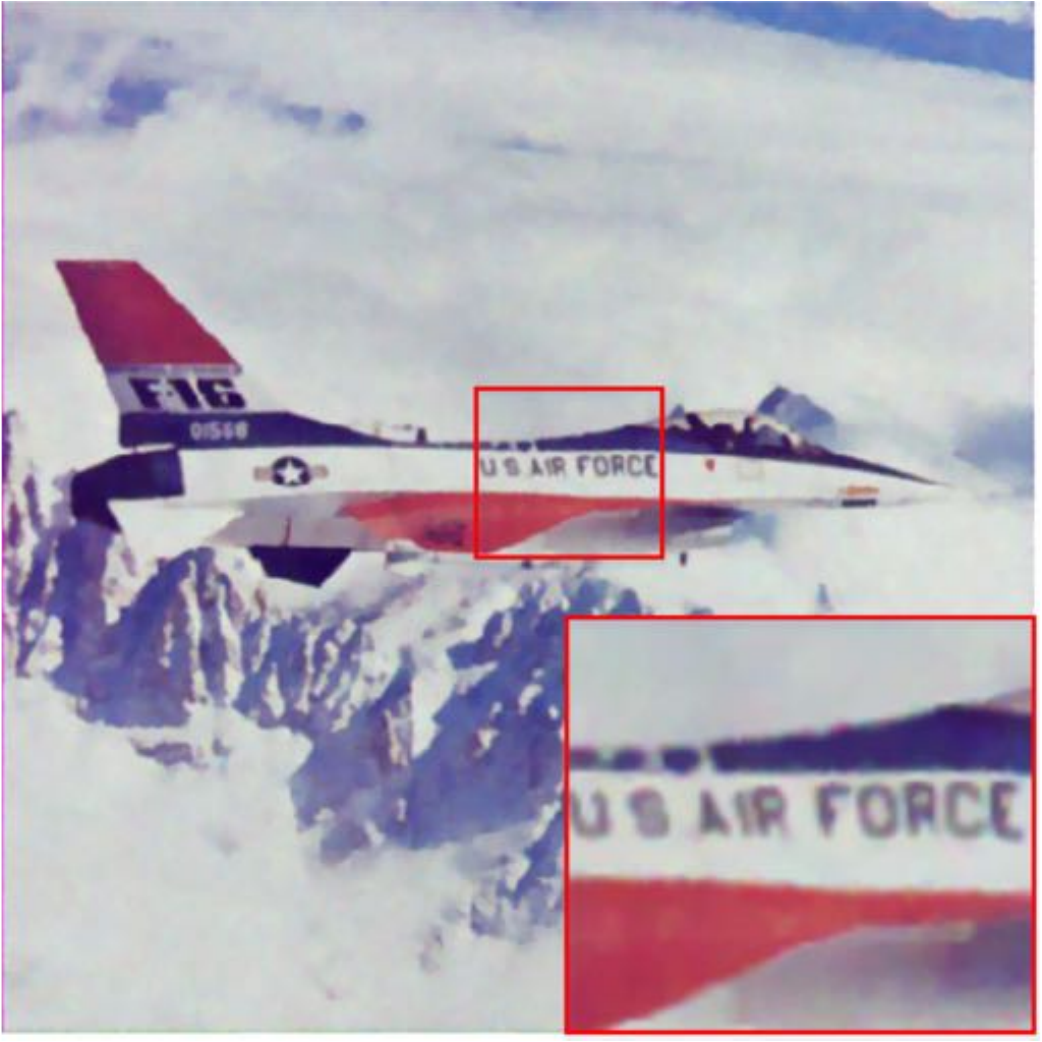}}\hspace{-1ex}
      \subfigure[TAC-GC]{
			\includegraphics[width=0.18\linewidth]{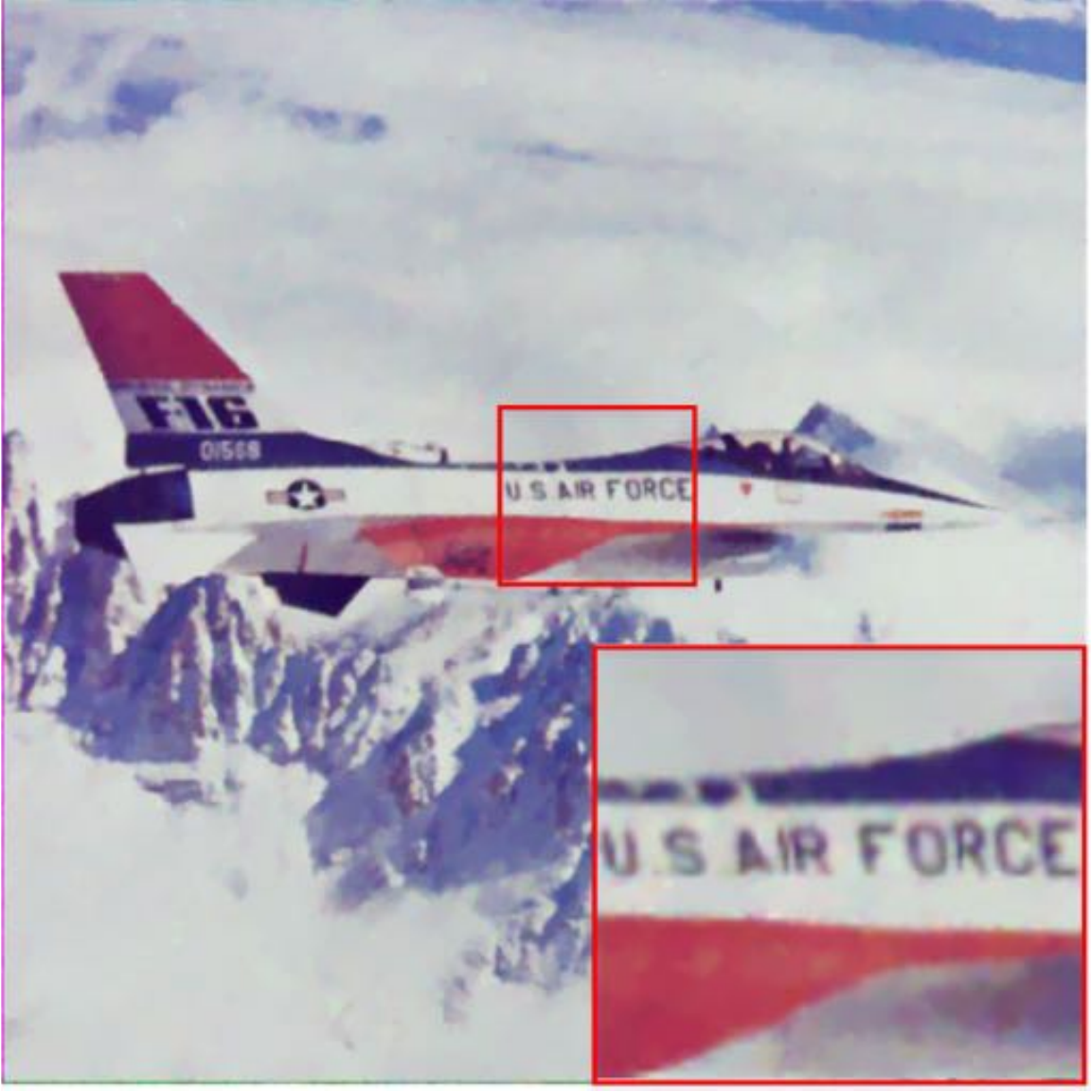}}\hspace{-1ex}
      \subfigure[Energy]{
            \includegraphics[width=0.2\linewidth]{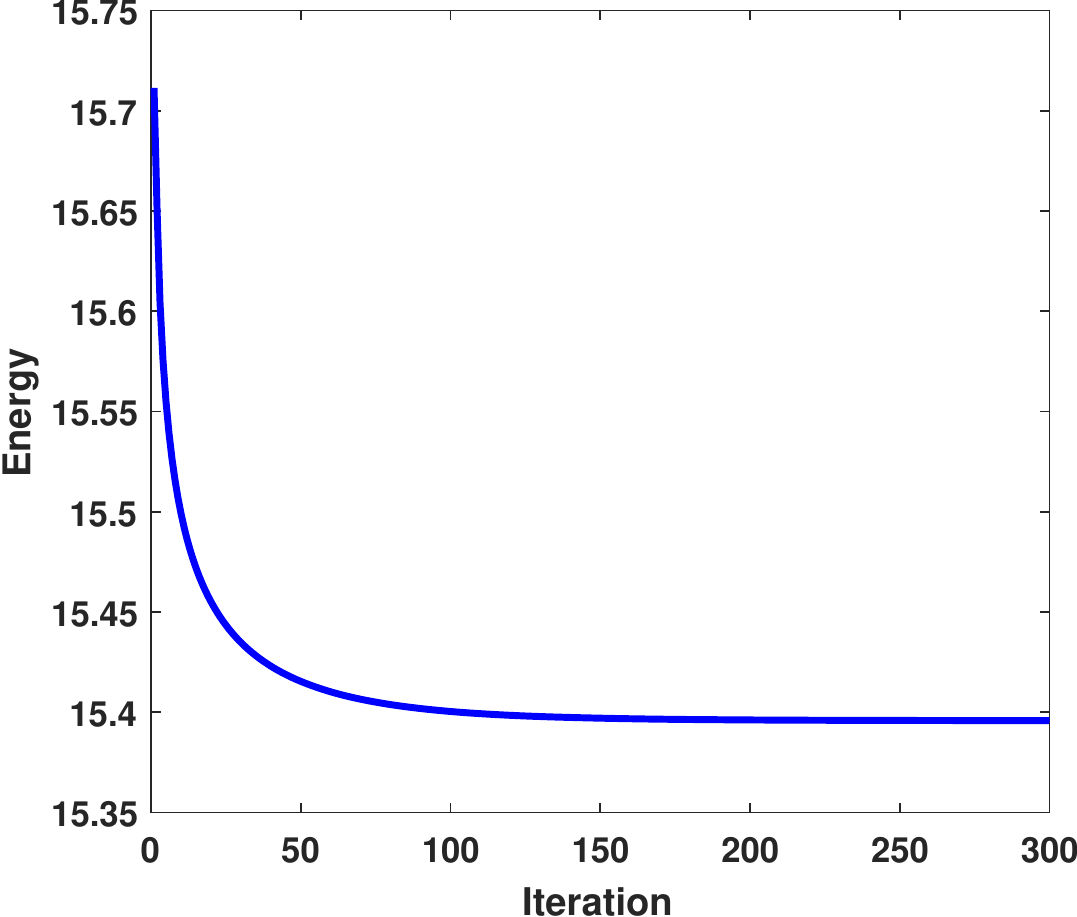}}\hspace{-1ex}
      \subfigure[Relative error]{
			\includegraphics[width=0.21\linewidth]{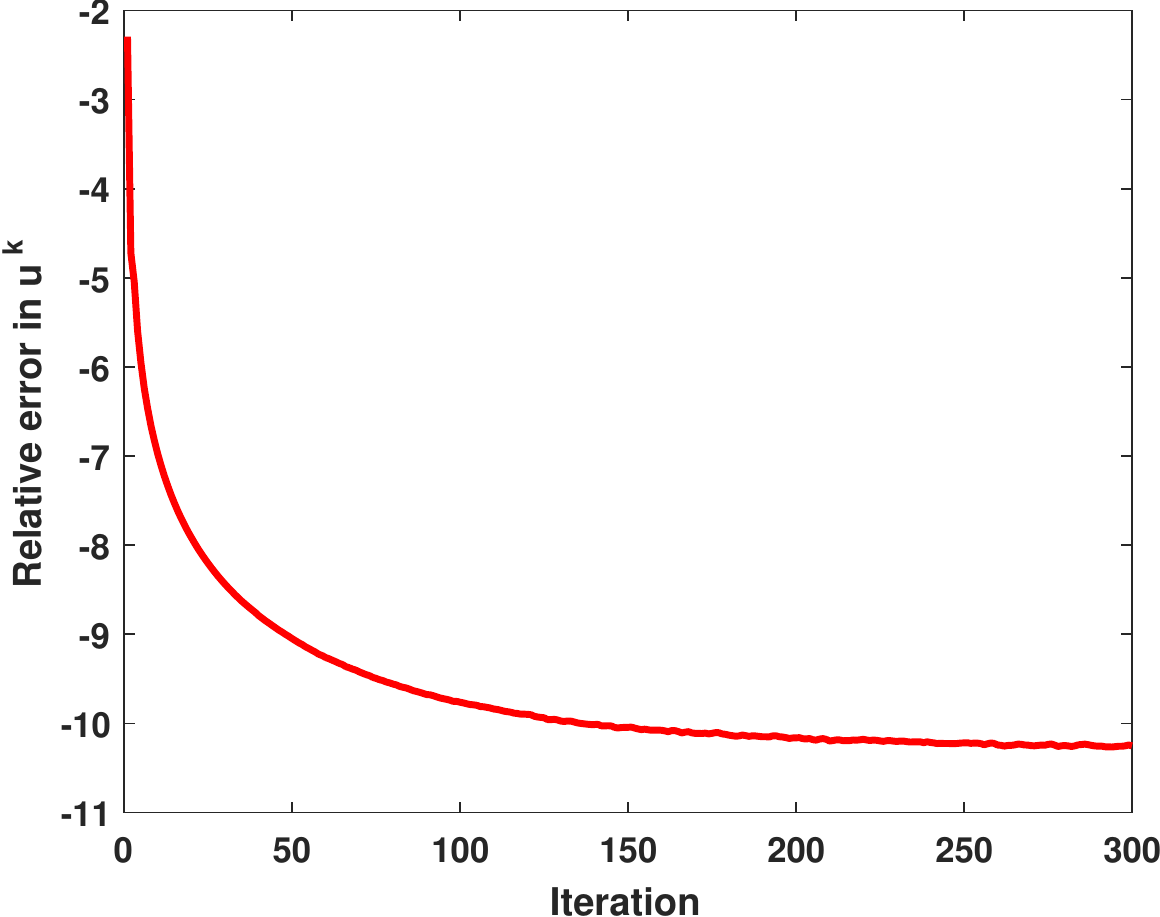}}
	\caption{The denoising results of `Airplane' by the Euler's elastica and TAC-GC methods.}
	\label{airplane}
\end{figure*}

\begin{figure*}[t]
      \centering
      \subfigure[Noisy($\theta=30$)]{
			\includegraphics[width=0.18\linewidth]{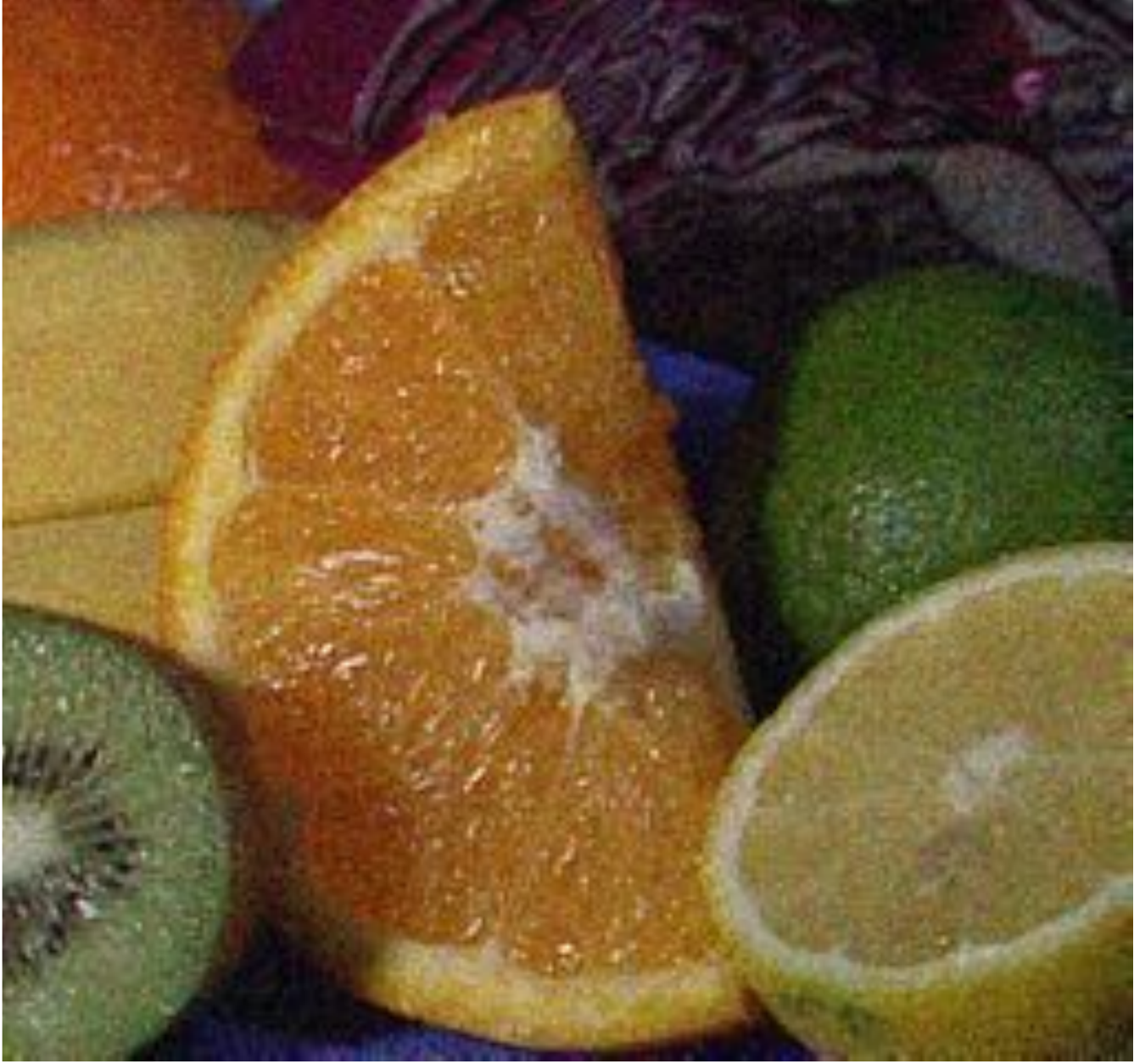}}\hspace{-1ex}
      \subfigure[Euler]{
			\includegraphics[width=0.18\linewidth]{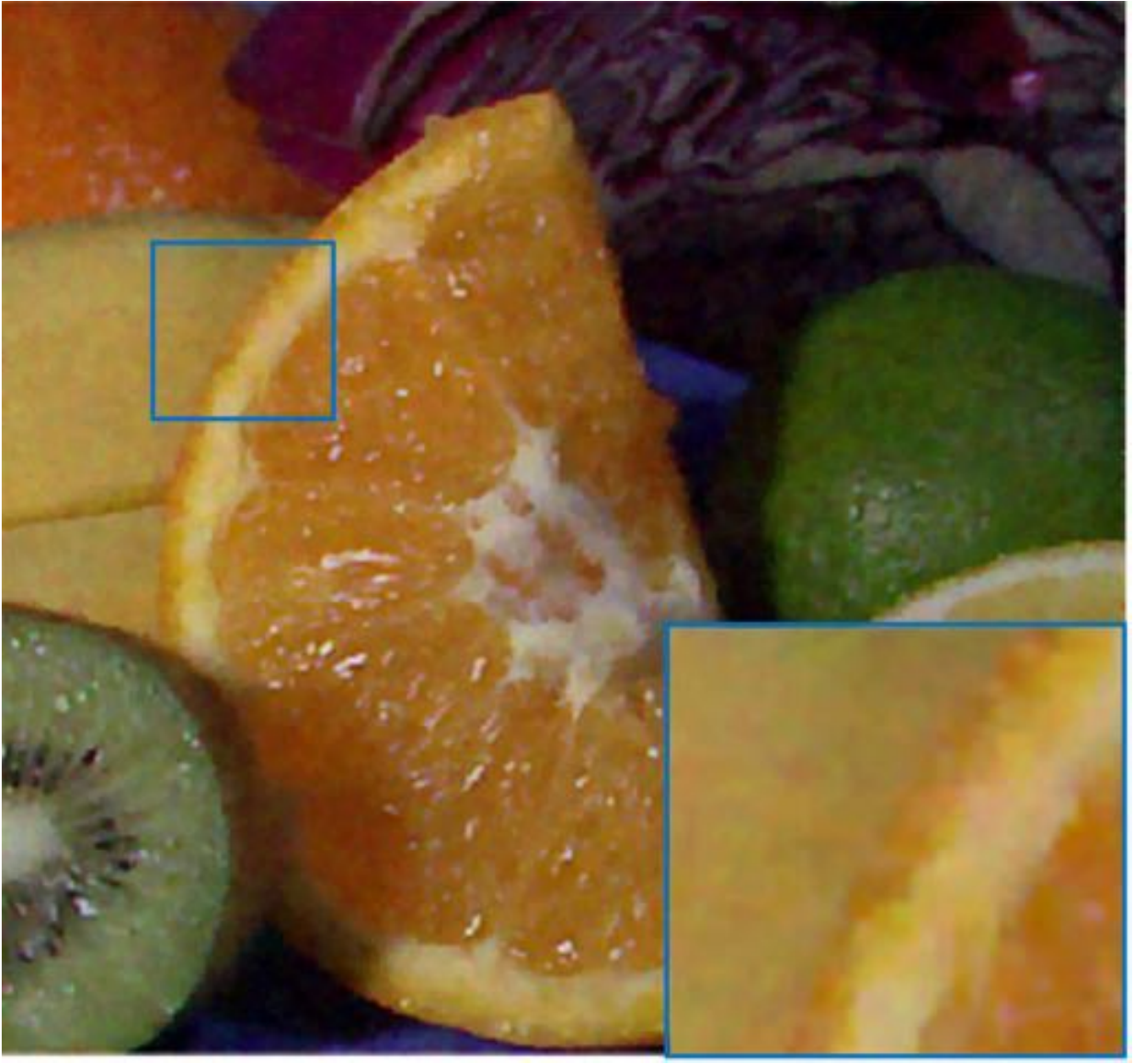}}\hspace{-1ex}
      \subfigure[TAC-GC]{
			\includegraphics[width=0.18\linewidth]{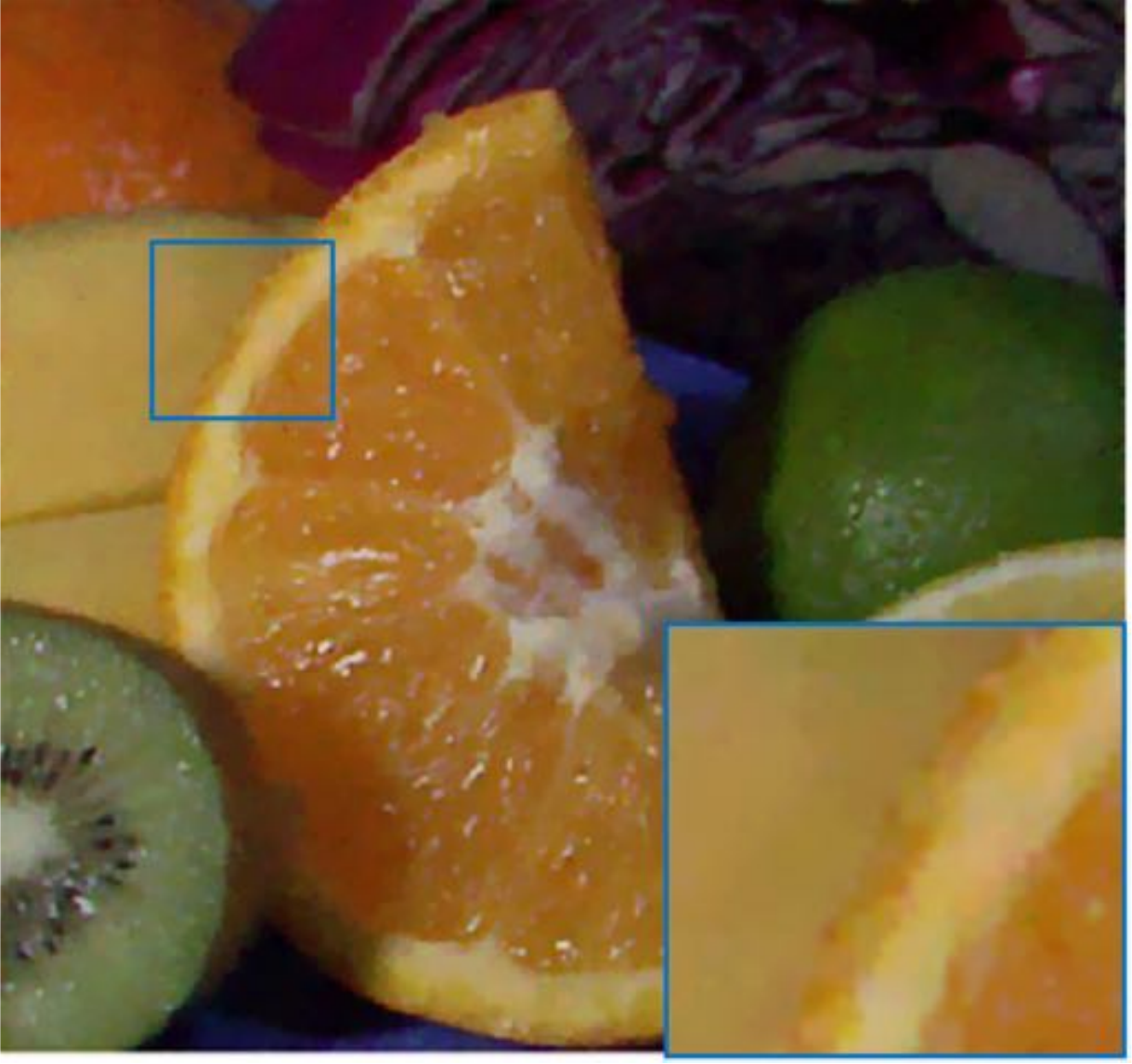}}\hspace{-1ex}
      \subfigure[Energy]{
            \includegraphics[width=0.21\linewidth]{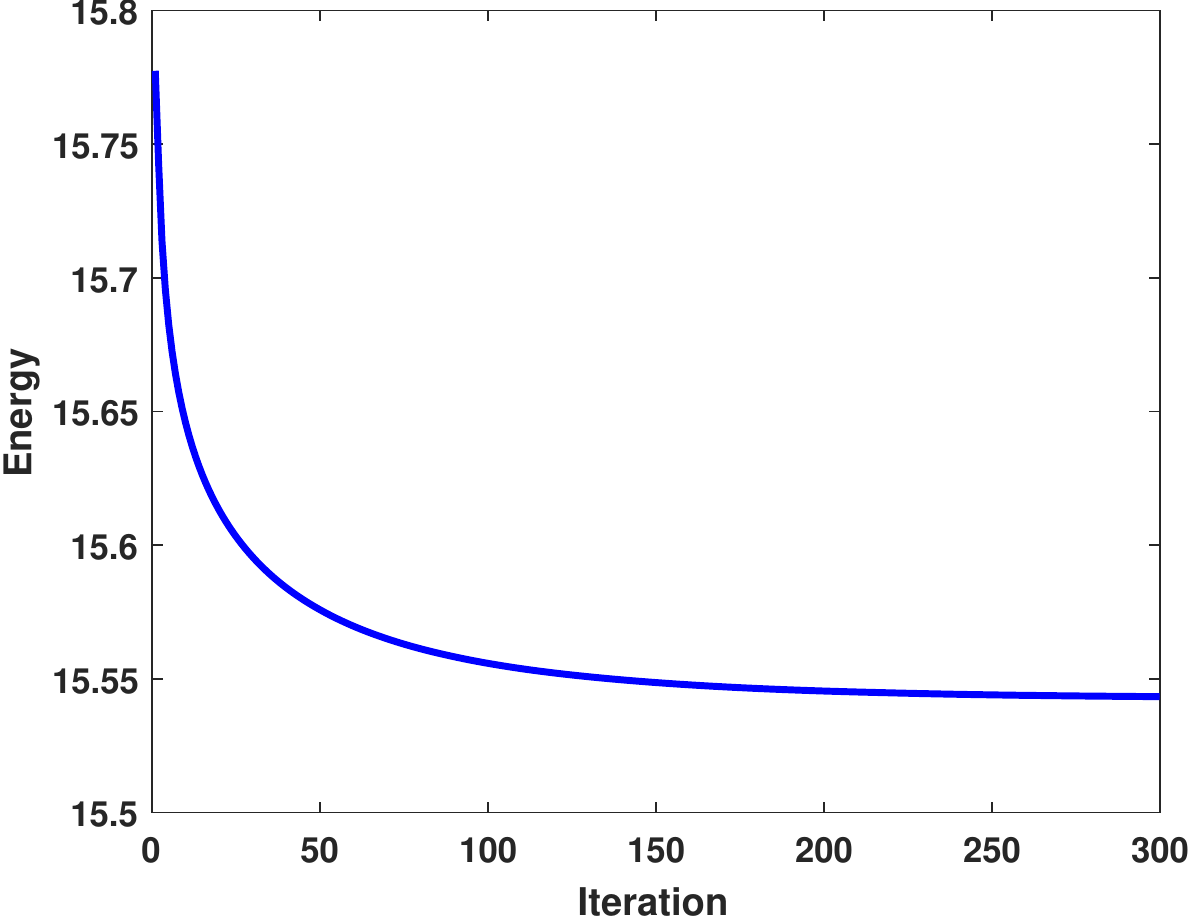}}\hspace{-1ex}
      \subfigure[Relative error]{
			\includegraphics[width=0.2\linewidth]{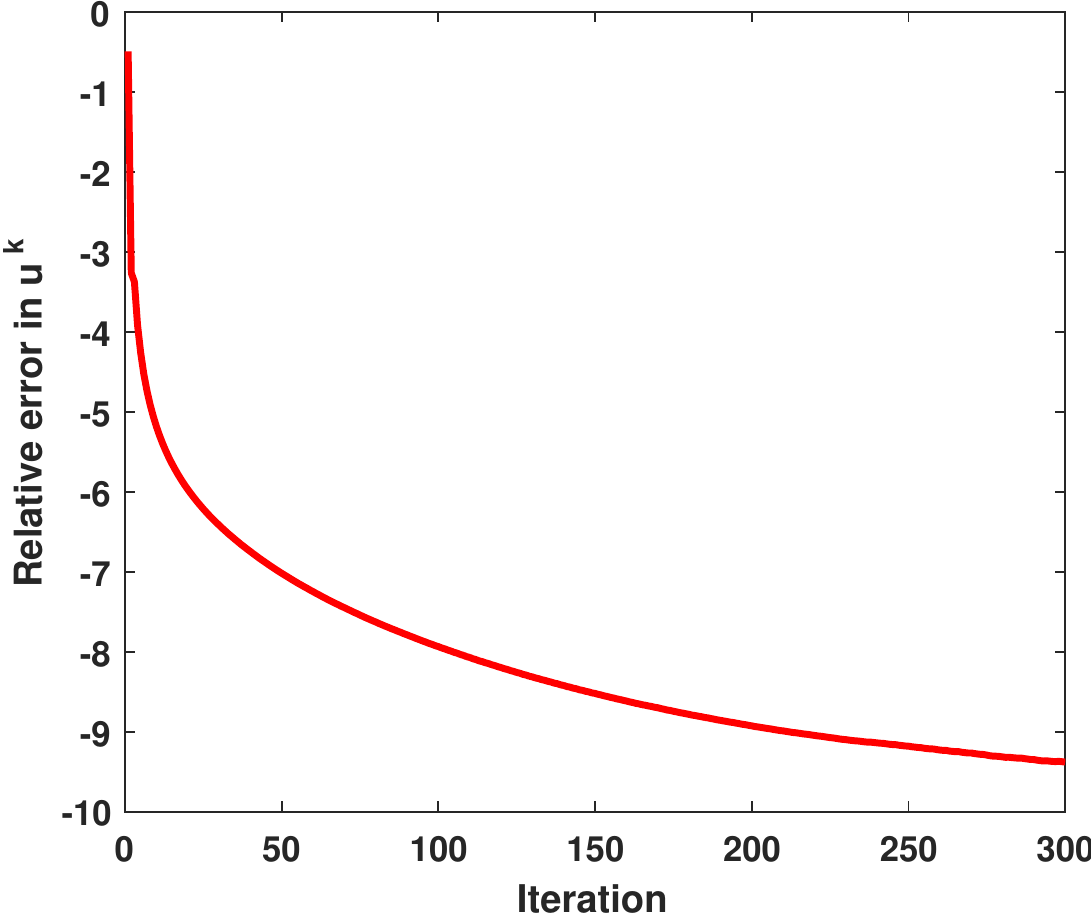}}
	\caption{The denoising results of `Fruits' by the Euler's elastica and TAC-GC methods.}
	\label{fruits}
\end{figure*}

\begin{figure*}[!ht]
      \centering
      \subfigure[Noisy($\theta=40$)]{
			\includegraphics[width=0.18\linewidth]{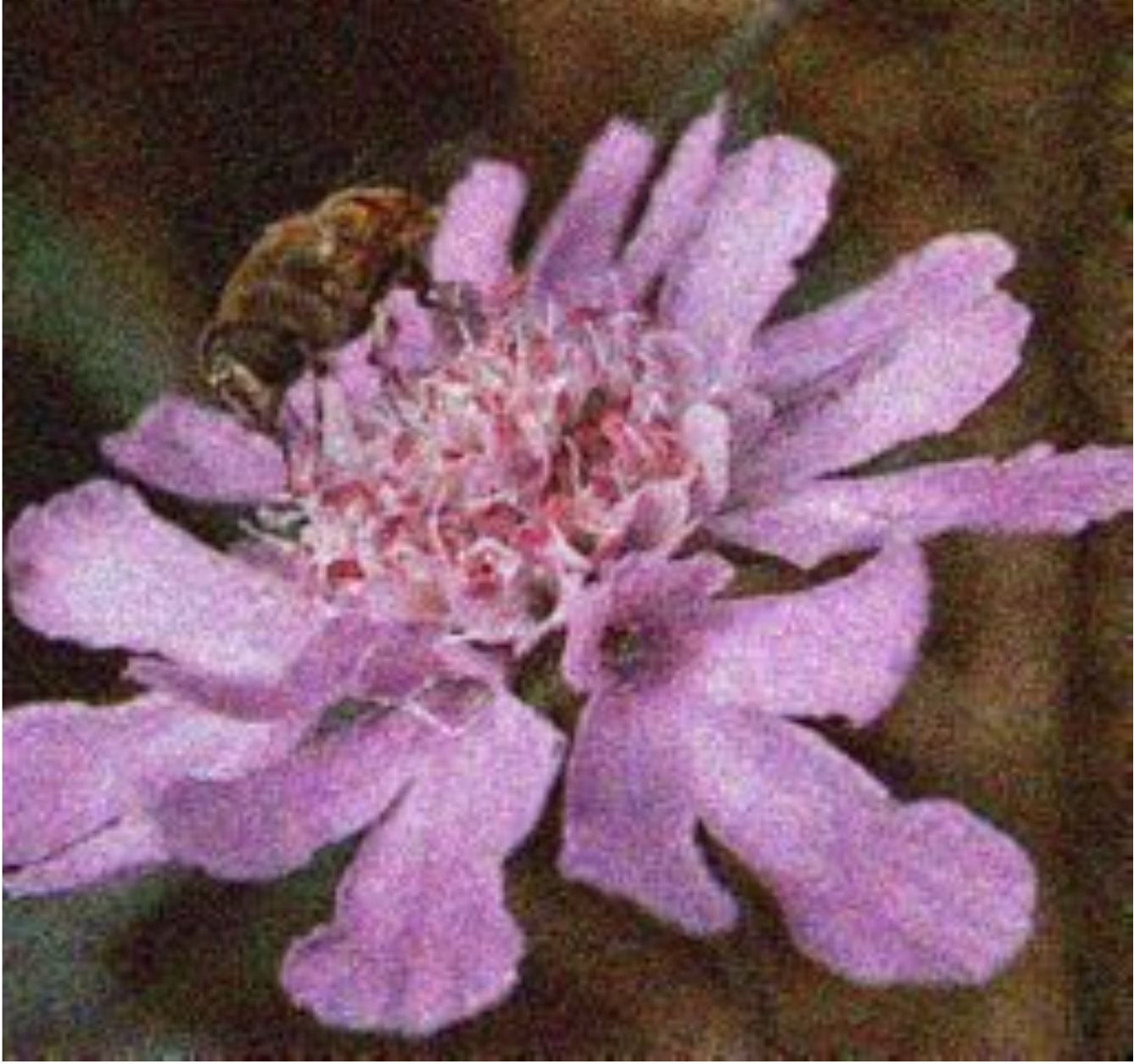}}\hspace{-1ex}
      \subfigure[Euler]{
			\includegraphics[width=0.18\linewidth]{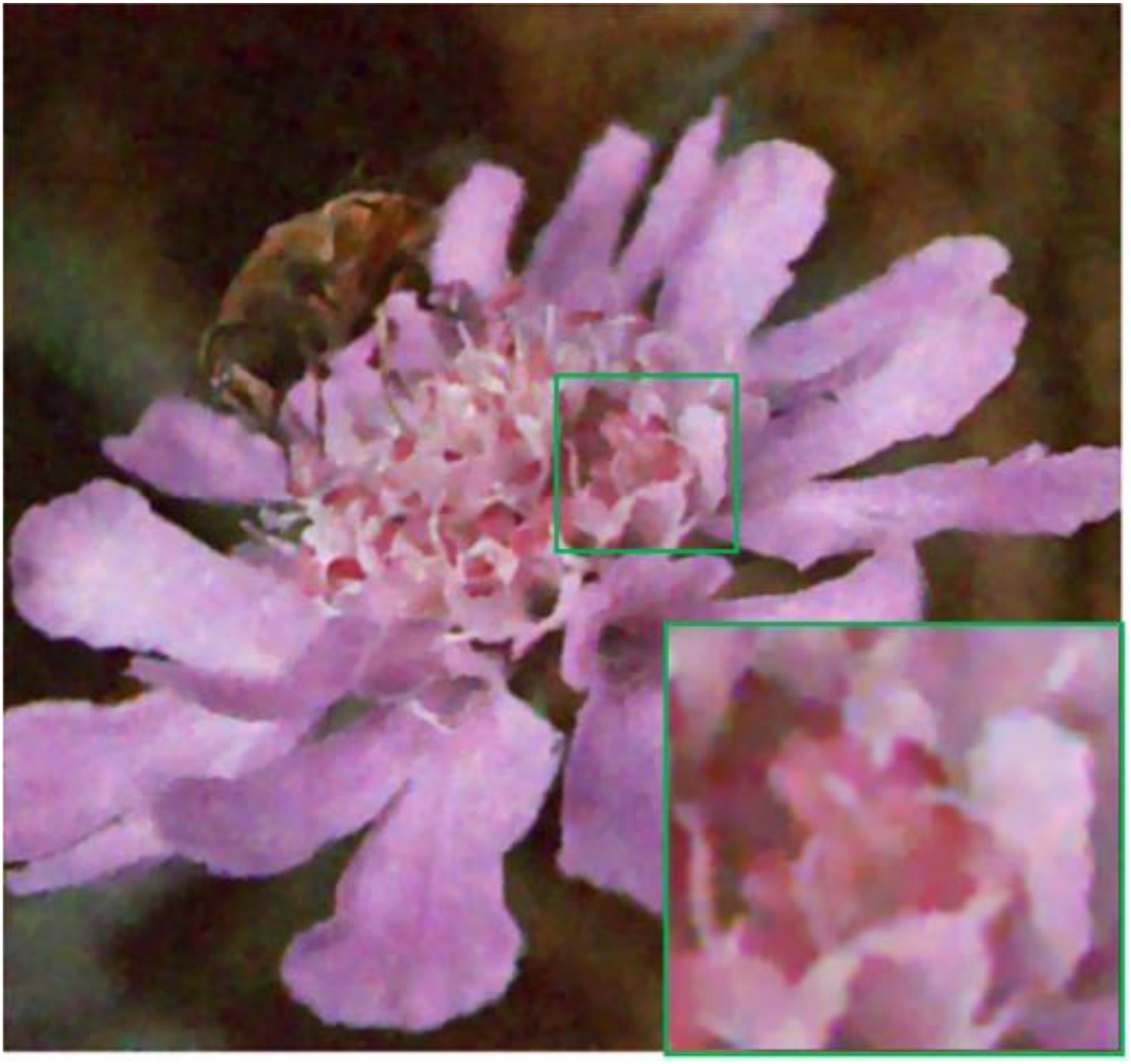}}\hspace{-1ex}
      \subfigure[TAC-GC]{
			\includegraphics[width=0.18\linewidth]{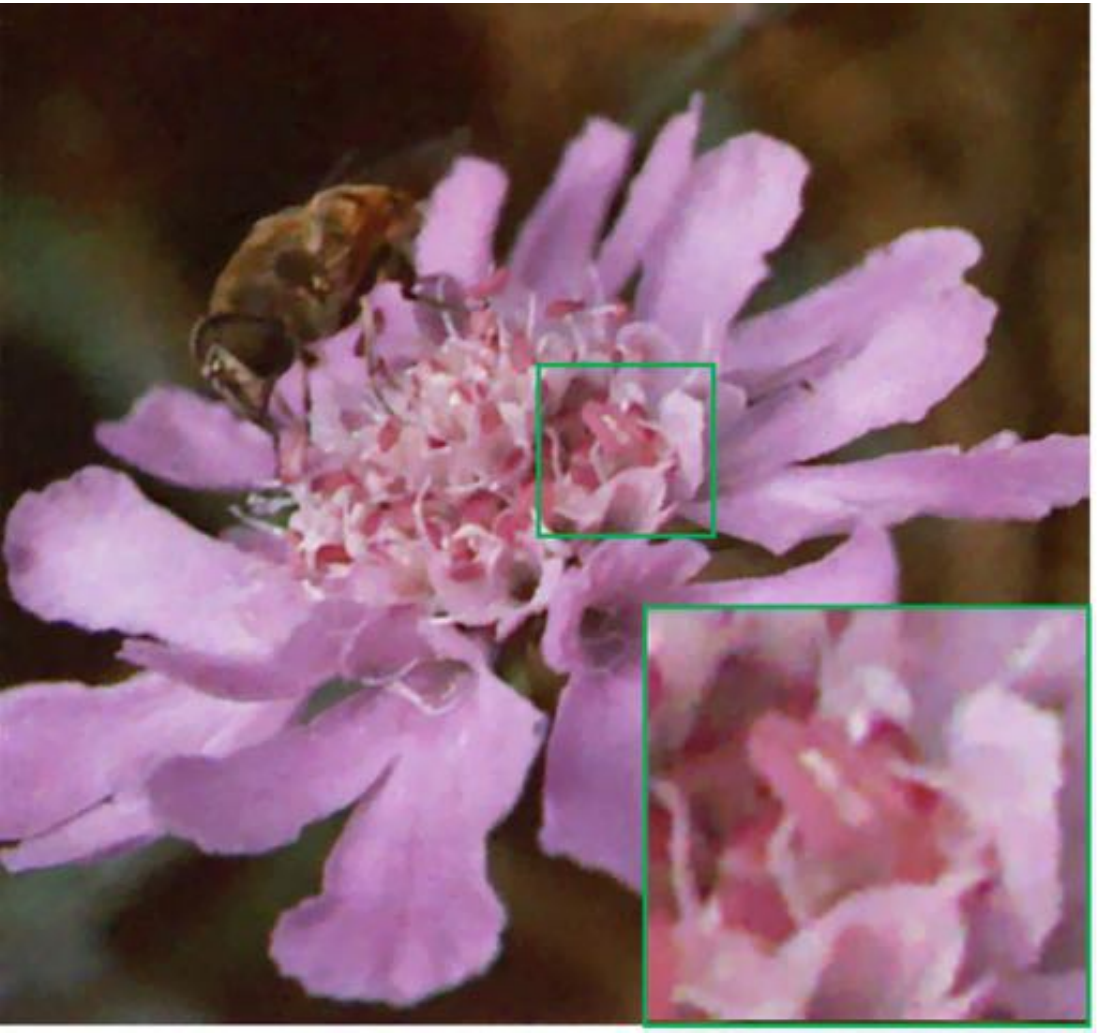}}\hspace{-1ex}
      \subfigure[Energy]{
            \includegraphics[width=0.2\linewidth]{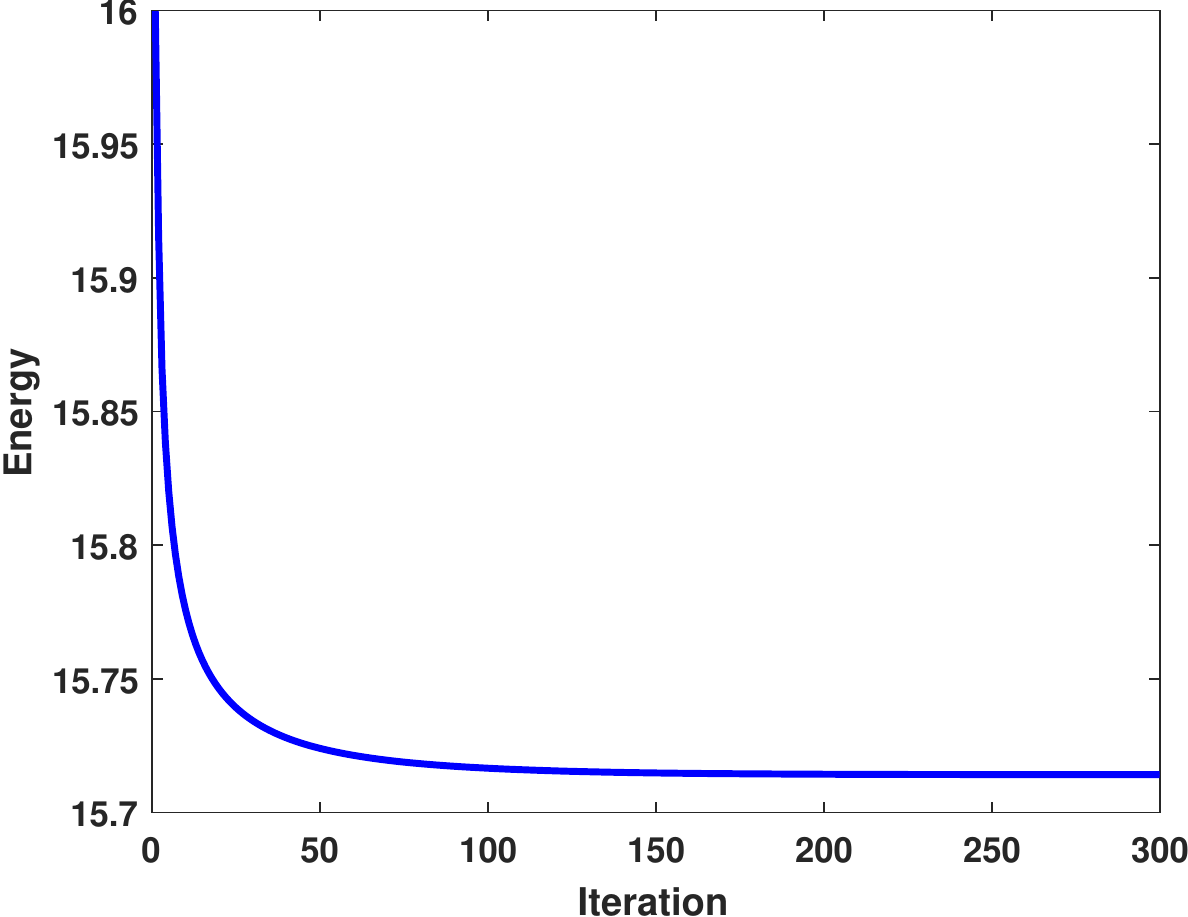}}\hspace{-1ex}
      \subfigure[Relative error]{
			\includegraphics[width=0.2\linewidth]{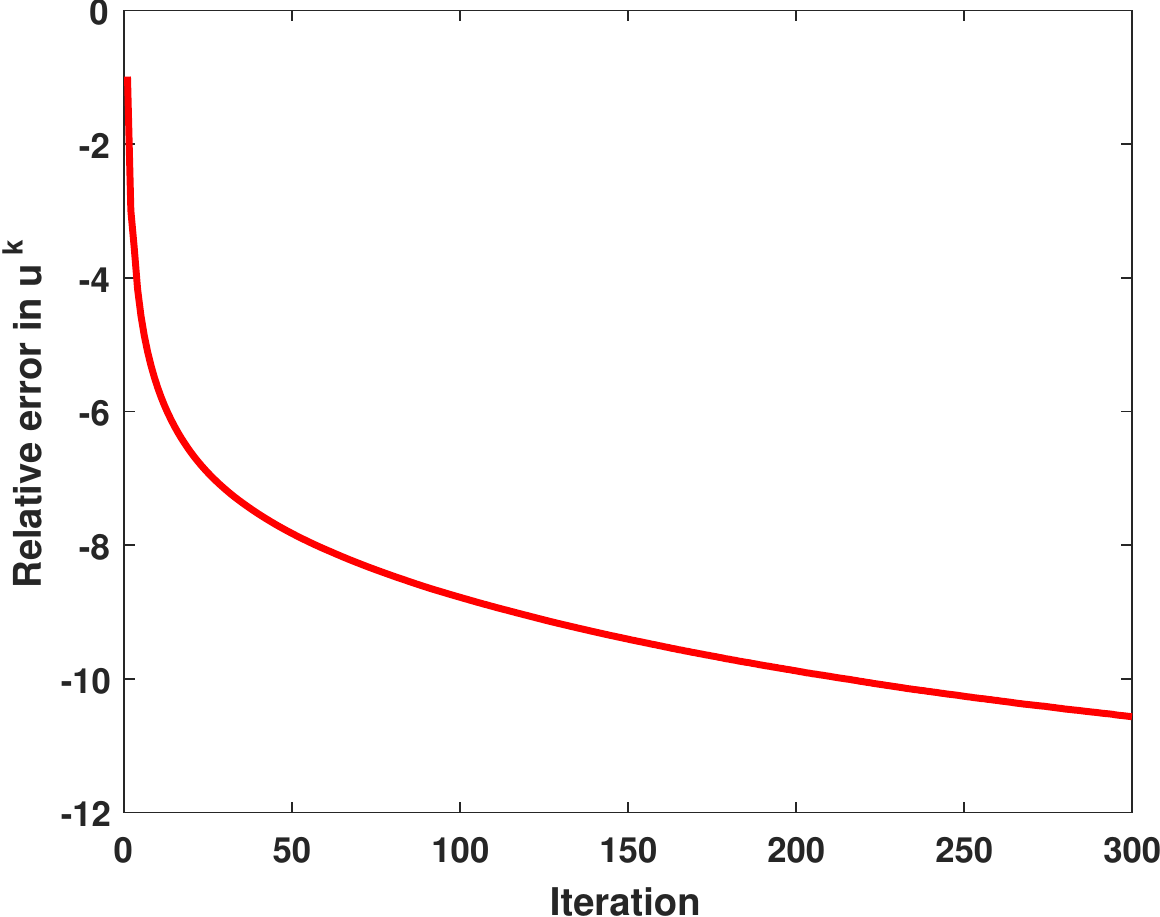}}
	\caption{The denoising results of `Flower' by the Euler's elastica and TAC-GC methods.}
	\label{flower}
\end{figure*}

\subsection{Color images denoising}

\begin{table*}[!ht]
\caption{The evaluations of color image noise removal for the Euler's elastica and TAC-GC methods.}
\label{denoise5}
\tiny
\begin{center}
\begin{tabular}{c|c|c|c|c|c}
\hline\hline
Images & Methods & PSNR & SSIM & Iterations & Time \\
\hline\hline
Airplane($\theta=20$) & Euler's elastica & 30.55 & 0.8955 & 192 & 336.91 \\
$512\times512$ & TAC-GC & \bf{31.07} & \bf{0.9071} & \bf{171} & \bf{205.47} \\
\hline
Fruits($\theta=30$) & Euler's elastica & 28.34 & 0.9156 & 264 & 481.74 \\
$480\times512$ & TAC-GC & \bf{28.96} & \bf{0.9228} & \bf{233} & \bf{241.54} \\
\hline
Flower($\theta=40$) & Euler's elastica & 28.32 & 0.9282 & 245 & 466.01 \\
$480\times512$ & TAC-GC & \bf{28.92} & \bf{0.9379} & \bf{210} & \bf{218.47}\\
\hline\hline
\end{tabular}
\end{center}
\end{table*}

In this subsection, we extend our TAC-GC model to color image restoration \cite{wen2008efficient,mairal2007sparse}. Without loss of generality, we consider a vectorial function ${\bf u}=(u^r,u^g,u^b):{\rm{\Omega}}\rightarrow\mathbb{R}^3$ defined on a bounded open domain ${\rm{\Omega}}\subset \mathbb{R}^2$. For the sake of simplicity, we propose to independently recover each RGB channel of color images, and then generate the final restored image by combining the RGB channels together. Thus, the corresponding color image denoising model with the $L^2$-norm data fidelity term can be described as
\begin{equation}\label{L2colornorm}
   \min_{{\bf u}\in\mathbb{R}^3}~\sum_{\sigma}\sum_{1\leq i,j\leq m} g(\kappa^\sigma_{i,j}) |\nabla u^\sigma_{i,j}|+ \frac{\lambda}{2}\sum_{\sigma}\|u^{\sigma}-f^{\sigma}\|^2.
\end{equation}
where $\sigma \in\{r,g,b\}$. We plan to extend our curvature models to the color TV model \cite{blomgren1998color} and Beltrami color image model \cite{rosman2011semi,rosman2010polyakov} as our further work.

Three different color images are selected as examples to demonstrate the efficiency and superiority of our TAC-GC model, which are `Airplane', `Fruits' and `Flower' degraded by the Gaussian noise with zero mean and the standard deviation $\theta=\{20, 30, 40\}$, respectively. The parameters are set as $\lambda=\{0.07, 0.05, 0.03\}$, $\alpha=5$, $\mu=2$ and $\epsilon=\{4.8\times10^{-5}, 1.0\times10^{-4}, 3.6\times10^{-4}\}$ for different noise levels accordingly to guarantee satisfactory restoration results be achieved. On the other hand, the parameters of the Euler's elastica model are set as $\eta=\{2\cdot10^2, 1.5\cdot10^2, 10^2\}$, $\alpha=10$, $r_1=1$, $r_2=2\cdot10^2$ and $r_4=5\cdot10^2$ for the three images, respectively.

As shown in FIG. \ref{airplane}-FIG. \ref{flower}, the proposed TAC-GC model can preserve sharper image edges and smoother homogenous regions, and the energy curves becomes stable after certain number of iterations. To further evaluate the denoising performance, quantitative results with different degradations are summarized in Table \ref{denoise5}, which obviously shows the TAC-GC model outperforms the Euler's elastica model in both recovery quality and computational efficiency.

\subsection{Image Inpainting}
Last but not least, we demonstrate some examples of our TAC-GC model on applications of image inpainting \cite{shen2003euler}. In general, the task of image inpainting is to reconstruct a missing part of an image using information from the given region. The missing part of the image is called the inpainting domain, denoted by $D\subseteq {\rm{\Omega}}$. In this case, we can formulate the curvature-based model as follows
\begin{equation}\label{Inpaintingnorm}
   \min_{u}~ \sum_{1\leq i,j\leq m} g(\kappa_{i,j}) |\nabla u_{i,j}|+ \frac{\lambda}{2}\|u-f\|^2_{{\rm{\Omega}\setminus D}}.
\end{equation}
More details of the implementation can be found in \cite{yashtini2016fast}.

In FIG. \ref{Inpainting}, three contaminated images (i.e., A1, B1 and C1) are considered, where A2, B2 and C2 are the reconstruction results of our TAC-GC model.  It seems that the reconstructed results are quite natural and extremely similar to the original images. Table \ref{inpainting} records the quantitative numerical results, where the PSNR and SSIM indicate the excellent inpainting performance of the proposed TAC-GC method in inpainting applications.

\begin{figure*}[t]
      \centering
      \subfigure[A1]{
			\includegraphics[width=0.15\linewidth]{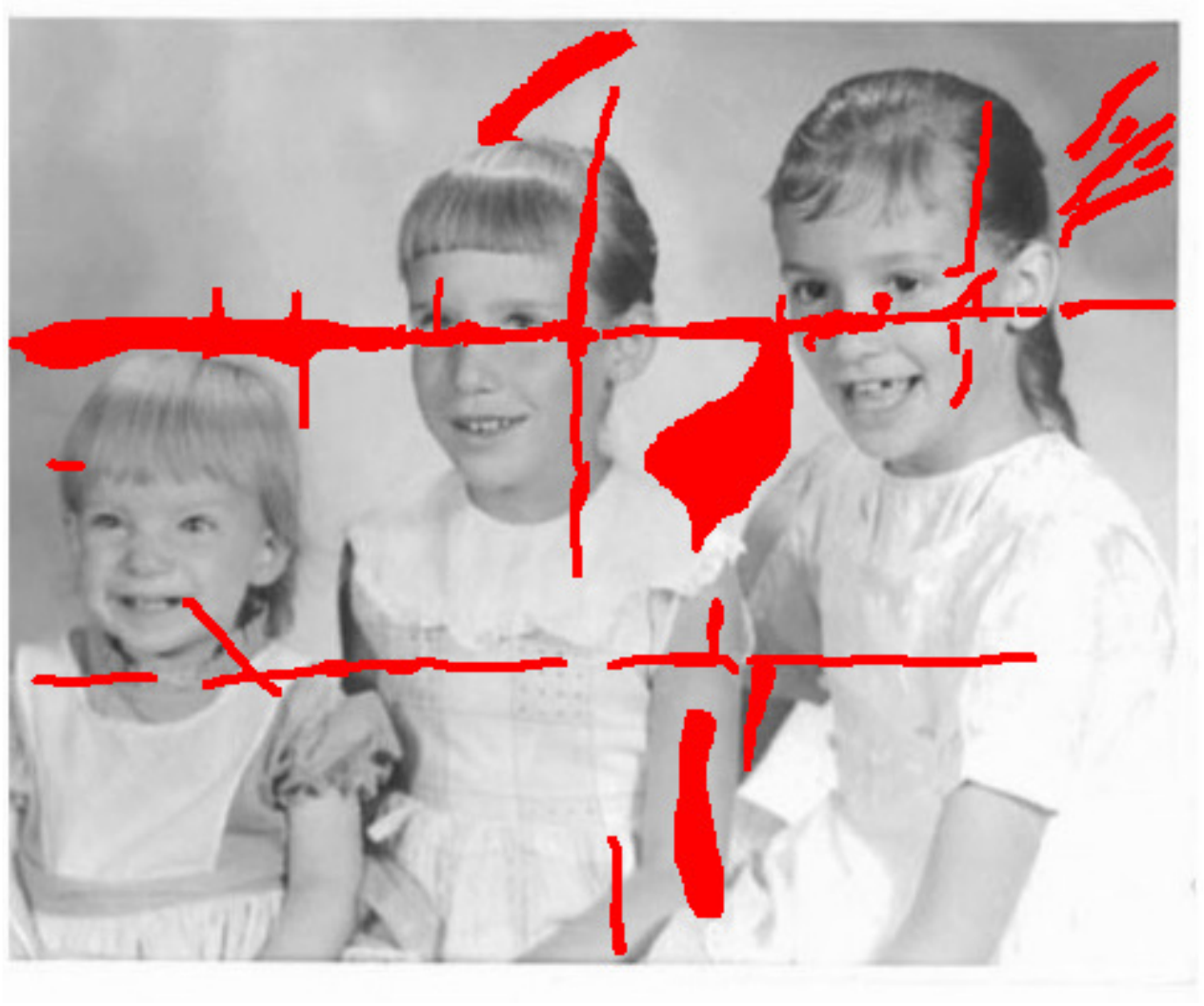}}
      \subfigure[A2]{
			\includegraphics[width=0.15\linewidth]{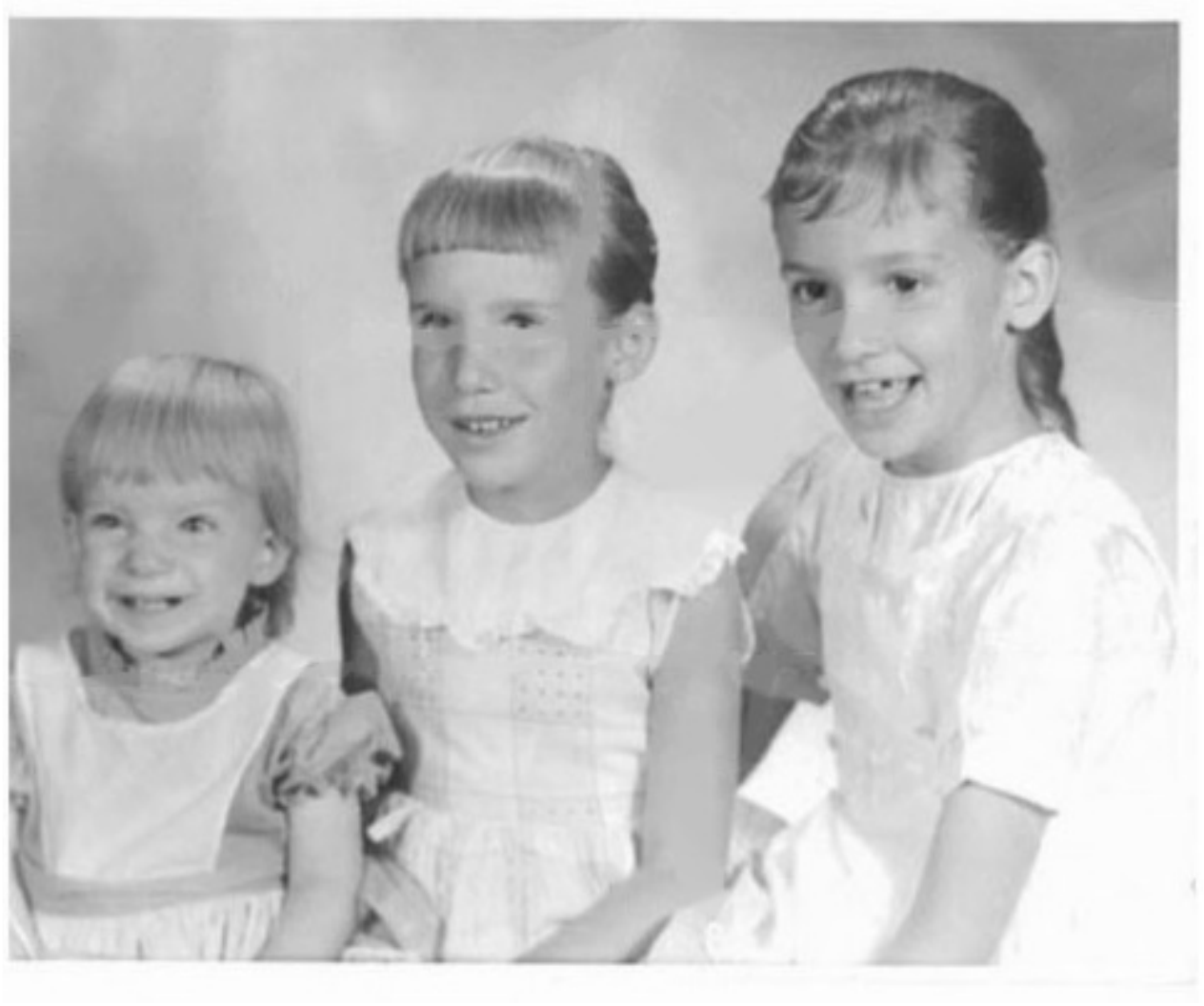}}
      \subfigure[B1]{
			\includegraphics[width=0.15\linewidth]{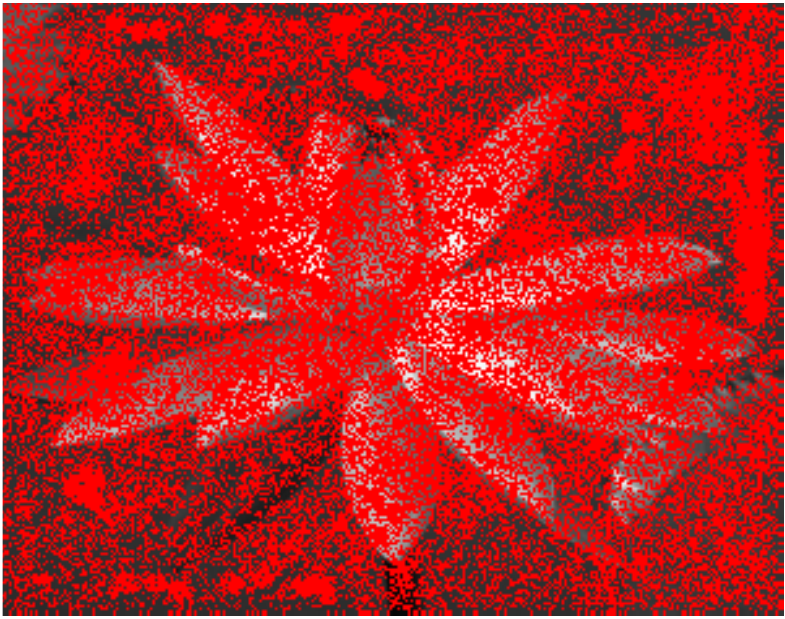}}
      \subfigure[B2]{
			\includegraphics[width=0.15\linewidth]{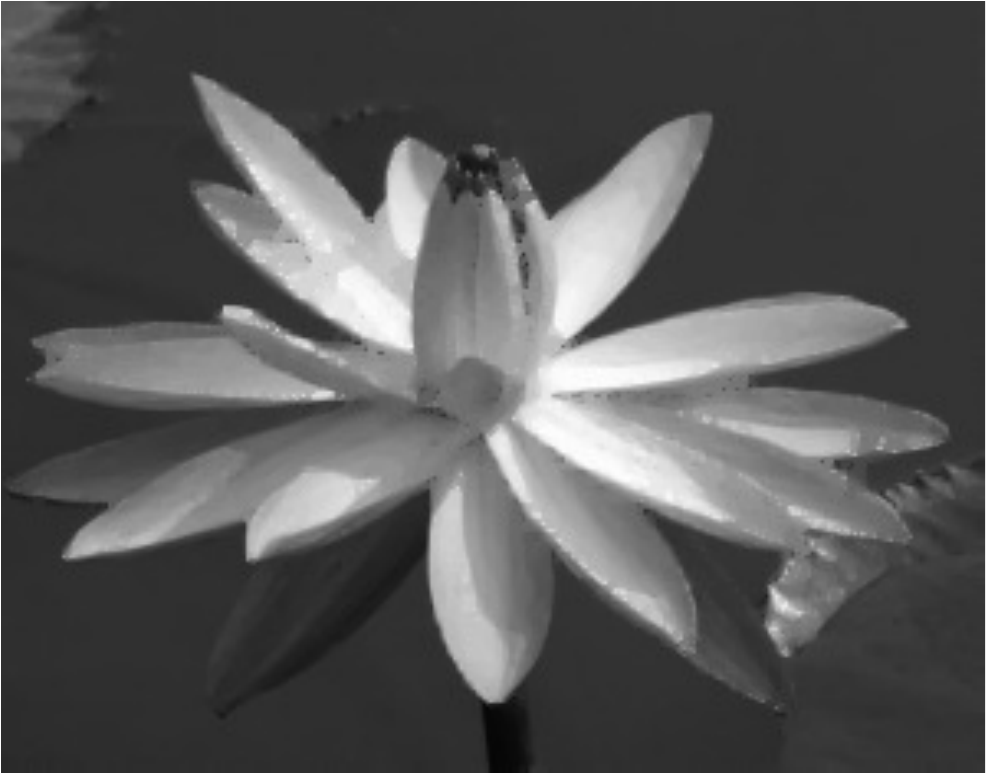}}
      \subfigure[C1]{
            \includegraphics[width=0.15\linewidth]{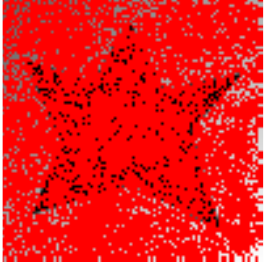}}
      \subfigure[C2]{
			\includegraphics[width=0.15\linewidth]{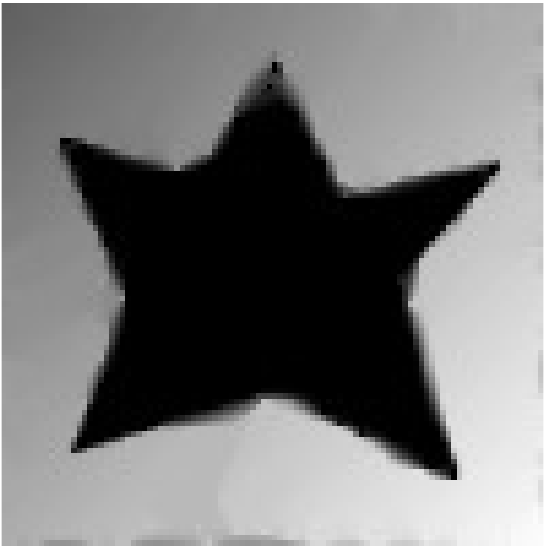}}
	\caption{The inpainting results of different images in TAC-GC method, where the parameters are adopted as $\lambda=10$, $\alpha=5$, $\mu_1=0.5$, $\mu_2=0.1$ and $\epsilon=5\times10^{-4}$.}
	\label{Inpainting}
\end{figure*}

\begin{table*}[t]
\caption{The details of inpainting results for different images.}
\label{inpainting}
\tiny
\begin{center}
\begin{tabular}{c|c|c|c|c|c|c|c}
\hline\hline
Images & Size & Unknowns& PSNR & SSIM & Iterations & Time & Percentage of unknowns\\
\hline\hline
A1 & $484\times404$ & 14258 & 36.42 & 0.9739 & 260 & 75.08 & $7.29\%$\\
\hline
B1 & $300\times235$ & 42114 & 31.25 & 0.9531 & 172 & 15.95 & $59.74\%$\\
\hline
C1 & $100\times100$ & 8496 & 25.76 & 0.9087 & 259 & 4.81 & $84.96\%$\\
\hline\hline
\end{tabular}
\end{center}
\end{table*}

\section{Conclusions}
\label{sect5}
In this work, we proposed the discrete curvature-based regularizers for image reconstruction problems. Both MC and GC were derived and investigated using the normal curvatures in a local window based on the differential geometry. Our proposed model can be regarded as a re-weighted TV model, which was solved by the proximal ADMM-based algorithm. We briefly discussed the convergence of the proximal ADMM-based algorithm under certain assumptions. Numerical experiments on both gray and color images have illustrated the efficacious and superior performance of our proposed method in terms of quantitative and qualitative evaluations. Apparently, the proposed method can be used for other practical applications in image processing and computer vision, for instance image segmentation, image registration, image super-solution etc.

%\section*{APPENDIX}

\section*{Acknowledgment}

The authors would like to thank Dr. Gong and Prof. Sbalza-rini for sharing the MATLAB code of curvature filter. The work was partially supported by National Natural Science Foundation of China (NSFC 11701418), Major Science and Technology Project of Tianjin 18ZXRHSY00160 and Recruitment Program of Global Young Expert. The second author was supported by NSFC 11801200 and a startup grant from HUST.

\bibliographystyle{siam}
\bibliography{RefsCurvature}

\begin{thebibliography}{10}

\bibitem{bae2011graph}
{\sc Egil Bae, Juan Shi, and Xue-Cheng Tai}, {\em Graph cuts for curvature
  based image denoising}, IEEE Transactions on Image Processing, 20 (2011),
  pp.~1199--1210.

\bibitem{bae2017augmented}
{\sc Egil Bae, Xue-Cheng Tai, and Wei Zhu}, {\em Augmented lagrangian method
  for an euler's elastica based segmentation model that promotes convex
  contours}, Inverse Problems \& Imaging, 11 (2017), pp.~1--23.

\bibitem{ballester2001filling}
{\sc C~Ballester, M~Bertalmio, V~Caselles, G~Sapiro, and J~Verdera}, {\em
  Filling-in by joint interpolation of vector fields and gray levels}, IEEE
  Transactions on Image Processing, 10 (2001), pp.~1200--1211.

\bibitem{beck2009fast}
{\sc Amir Beck and Marc Teboulle}, {\em A fast iterative shrinkage-thresholding
  algorithm for linear inverse problems}, SIAM Journal on Imaging Sciences, 2
  (2009), pp.~183--202.

\bibitem{blomgren1998color}
{\sc P~Blomgren and TF~Chan}, {\em Color tv: total variation methods for
  restoration of vector-valued images.}, IEEE Transactions on Image Processing,
  7 (1998), pp.~304--309.

\bibitem{bredies2010total}
{\sc Kristian Bredies, Karl Kunisch, and Thomas Pock}, {\em Total generalized
  variation}, SIAM Journal on Imaging Sciences, 3 (2010), pp.~492--526.

\bibitem{bredies2013convex}
{\sc Kristian Bredies, Thomas Pock, and Benedikt Wirth}, {\em Convex relaxation
  of a class of vertex penalizing functionals}, Journal of mathematical imaging
  and vision, 47 (2013), pp.~278--302.

\bibitem{bredies2015convex}
\leavevmode\vrule height 2pt depth -1.6pt width 23pt, {\em A convex, lower
  semicontinuous approximation of euler's elastica energy}, SIAM journal on
  mathematical analysis, 47 (2015), pp.~566--613.

\bibitem{brito2016image}
{\sc Carlos Brito-Loeza, Ke~Chen, and Victor Uc-Cetina}, {\em Image denoising
  using the g aussian curvature of the image surface}, Numerical Methods for
  Partial Differential Equations, 32 (2016), pp.~1066--1089.

\bibitem{chambolle2011first}
{\sc Antonin Chambolle and Thomas Pock}, {\em A first-order primal-dual
  algorithm for convex problems with applications to imaging}, Journal of
  mathematical imaging and vision, 40 (2011), pp.~120--145.

\bibitem{chambolle2019total}
\leavevmode\vrule height 2pt depth -1.6pt width 23pt, {\em Total
  roto-translational variation}, Numerische Mathematik, 142 (2019),
  pp.~611--666.

\bibitem{chen2013bregman}
{\sc Yunmei Chen, William~W Hager, Maryam Yashtini, Xiaojing Ye, and Hongchao
  Zhang}, {\em Bregman operator splitting with variable stepsize for total
  variation image reconstruction}, Computational Optimization and Applications,
  54 (2013), pp.~317--342.

\bibitem{deng2019new}
{\sc Liang-Jian Deng, Roland Glowinski, and Xue-Cheng Tai}, {\em A new operator
  splitting method for the euler elastica model for image smoothing}, SIAM
  Journal on Imaging Sciences, 12 (2019), pp.~1190--1230.

\bibitem{dong2009efficient}
{\sc Yiqiu Dong, Michael Hinterm{\"u}ller, and Marrick Neri}, {\em An efficient
  primal-dual method for $l_{1}$ tv image restoration}, SIAM Journal on Imaging
  Sciences, 2 (2009), pp.~1168--1189.

\bibitem{duan2013fast}
{\sc Yuping Duan, Yu~Wang, and Jooyoung Hahn}, {\em A fast augmented lagrangian
  method for euler's elastica models}, Numerical Mathematics: Theory, Methods
  and Applications, 6 (2013), pp.~47--71.

\bibitem{el1997mean}
{\sc AI~El-Fallah and GE~Ford}, {\em Mean curvature evolution and surface area
  scaling in image filtering.}, IEEE transactions on image processing: a
  publication of the IEEE Signal Processing Society, 6 (1997), pp.~750--753.

\bibitem{el2010fast}
{\sc Noha~Youssry El-Zehiry and Leo Grady}, {\em Fast global optimization of
  curvature}, in 2010 IEEE Computer Society Conference on Computer Vision and
  Pattern Recognition, IEEE, 2010, pp.~3257--3264.

\bibitem{goldluecke2011introducing}
{\sc Bastian Goldluecke and Daniel Cremers}, {\em Introducing total curvature
  for image processing}, in 2011 International Conference on Computer Vision,
  IEEE, 2011, pp.~1267--1274.

\bibitem{goldstein2009split}
{\sc Tom Goldstein and Stanley Osher}, {\em The split bregman method for
  $l_1$-regularized problems}, SIAM journal on imaging sciences, 2 (2009),
  pp.~323--343.

\bibitem{gong2013local}
{\sc Yuanhao Gong and Ivo~F Sbalzarini}, {\em Local weighted gaussian curvature
  for image processing}, in 2013 IEEE International Conference on Image
  Processing, IEEE, 2013, pp.~534--538.

\bibitem{gong2017curvature}
\leavevmode\vrule height 2pt depth -1.6pt width 23pt, {\em Curvature filters
  efficiently reduce certain variational energies}, IEEE Transactions on Image
  Processing, 26 (2017), pp.~1786--1798.

\bibitem{he2019segmentation}
{\sc Xuan He, Wei Zhu, and Xue-Cheng Tai}, {\em Segmentation by elastica energy
  with $l_1$ and $l_2$ curvatures: a performance comparison}, Numerical
  mathematics-theory methods and applications, 12 (2019), pp.~285--311.

\bibitem{Huang2009A}
{\sc Yu~Mei Huang, Michael~K. Ng, and You~Wei Wen}, {\em A new total variation
  method for multiplicative noise removal}, Siam Journal on Imagingences, 2
  (2009), pp.~20--40.

\bibitem{le2007variational}
{\sc Triet Le, Rick Chartrand, and Thomas~J Asaki}, {\em A variational approach
  to reconstructing images corrupted by poisson noise}, Journal of mathematical
  imaging and vision, 27 (2007), pp.~257--263.

\bibitem{lee2005noise}
{\sc Suk-Ho Lee and Jin~Keun Seo}, {\em Noise removal with gauss
  curvature-driven diffusion}, IEEE Transactions on Image Processing, 14
  (2005), pp.~904--909.

\bibitem{lu2011high}
{\sc Bibo Lu, Hui Wang, and Zhonghua Lin}, {\em High order gaussian curvature
  flow for image smoothing}, in 2011 International Conference on Multimedia
  Technology, IEEE, 2011, pp.~5888--5891.

\bibitem{mairal2007sparse}
{\sc Julien Mairal, Michael Elad, and Guillermo Sapiro}, {\em Sparse
  representation for color image restoration}, IEEE Transactions on image
  processing, 17 (2007), pp.~53--69.

\bibitem{masnou1998level}
{\sc Simon Masnou and J-M Morel}, {\em Level lines based disocclusion}, in
  Proceedings 1998 International Conference on Image Processing. ICIP98 (Cat.
  No. 98CB36269), IEEE, 1998, pp.~259--263.

\bibitem{myllykoski2015new}
{\sc Mirko Myllykoski, Roland Glowinski, T~Karkkainen, and Tuomo Rossi}, {\em A
  new augmented lagrangian approach for $l_{1}$-mean curvature image
  denoising}, SIAM Journal on Imaging Sciences, 8 (2015), pp.~95--125.

\bibitem{nikolova2004variational}
{\sc Mila Nikolova}, {\em A variational approach to remove outliers and impulse
  noise}, Journal of Mathematical Imaging and Vision, 20 (2004), pp.~99--120.

\bibitem{nitzberg1993filtering}
{\sc Mark Nitzberg, David Mumford, and Takahiro Shiota}, {\em Filtering,
  segmentation and depth}, vol.~662, Springer.

\bibitem{rosman2011semi}
{\sc Guy Rosman, Lorina Dascal, Xue-Cheng Tai, and Ron Kimmel}, {\em On
  semi-implicit splitting schemes for the beltrami color image filtering},
  Journal of Mathematical Imaging and Vision, 40 (2011), pp.~199--213.

\bibitem{rosman2010polyakov}
{\sc Guy Rosman, Xue-Cheng Tai, Lorina Dascal, and Ron Kimmel}, {\em Polyakov
  action minimization for efficient color image processing}, in European
  Conference on Computer Vision, Springer, 2010, pp.~50--61.

\bibitem{sarti2001subjective}
{\sc A~Sarti and G~Citti}, {\em Subjective surfaces and riemannian mean
  curvature flow of graphs}, Acta Math. Univ. Comenianae, 70 (2001),
  pp.~85--103.

\bibitem{schoenemann2009curvature}
{\sc Thomas Schoenemann, Fredrik Kahl, and Daniel Cremers}, {\em Curvature
  regularity for region-based image segmentation and inpainting: A linear
  programming relaxation}, in 2009 IEEE 12th International Conference on
  Computer Vision, IEEE, 2009, pp.~17--23.

\bibitem{shefi2014rate}
{\sc Ron Shefi and Marc Teboulle}, {\em Rate of convergence analysis of
  decomposition methods based on the proximal method of multipliers for convex
  minimization}, SIAM Journal on Optimization, 24 (2014), pp.~269--297.

\bibitem{shen2003euler}
{\sc Jianhong Shen, Sung~Ha Kang, and Tony~F Chan}, {\em Euler's elastica and
  curvature-based inpainting}, SIAM Journal on Applied Mathematics, 63 (2003),
  pp.~564--592.

\bibitem{tai2011fast}
{\sc Xue-Cheng Tai, Jooyoung Hahn, and Ginmo~Jason Chung}, {\em A fast
  algorithm for euler's elastica model using augmented lagrangian method}, SIAM
  Journal on Imaging Sciences, 4 (2011), pp.~313--344.

\bibitem{wang2004image}
{\sc Zhou Wang, Alan~C Bovik, Hamid~R Sheikh, and Eero~P Simoncelli}, {\em
  Image quality assessment: from error visibility to structural similarity},
  IEEE Transactions on Image Processing, 13 (2004), pp.~600--612.

\bibitem{wen2008efficient}
{\sc You-Wei Wen, Michael~K Ng, and Yu-Mei Huang}, {\em Efficient total
  variation minimization methods for color image restoration}, IEEE
  Transactions on Image Processing, 17 (2008), pp.~2081--2088.

\bibitem{wu2010augmented}
{\sc Chunlin Wu and Xue-Cheng Tai}, {\em Augmented lagrangian method, dual
  methods, and split bregman iteration for rof, vectorial tv, and high order
  models}, SIAM Journal on Imaging Sciences, 3 (2010), pp.~300--339.

\bibitem{wu2011augmented}
{\sc Chunlin Wu, Juyong Zhang, and Xue-Cheng Tai}, {\em Augmented lagrangian
  method for total variation restoration with non-quadratic fidelity}, Inverse
  Problems \& Imaging, 5 (2011), pp.~237--261.

\bibitem{yang2009efficient}
{\sc Junfeng Yang, Yin Zhang, and Wotao Yin}, {\em An efficient tvl1 algorithm
  for deblurring multichannel images corrupted by impulsive noise}, SIAM
  Journal on Scientific Computing, 31 (2009), pp.~2842--2865.

\bibitem{yashtini2016fast}
{\sc Maryam Yashtini and Sung~Ha Kang}, {\em A fast relaxed normal two split
  method and an effective weighted tv approach for euler's elastica image
  inpainting}, SIAM Journal on Imaging Sciences, 9 (2016), pp.~1552--1581.

\bibitem{yezzi1998modified}
{\sc AR~Yezzi}, {\em Modified curvature motion for image smoothing and
  enhancement.}, IEEE transactions on image processing: a publication of the
  IEEE Signal Processing Society, 7 (1998), p.~345.

\bibitem{zhu2012image}
{\sc Wei Zhu and Tony Chan}, {\em Image denoising using mean curvature of image
  surface}, SIAM Journal on Imaging Sciences, 5 (2012), pp.~1--32.

\bibitem{zhu2013augmented}
{\sc Wei Zhu, Xue-Cheng Tai, and Tony Chan}, {\em Augmented lagrangian method
  for a mean curvature based image denoising model}, Inverse Problems and
  Imaging, 7 (2013), pp.~1409--1432.

\end{thebibliography}

\end{document}